\date{\today}
\newcommand{\Ab}{\mbox{\normalfont\gothfamily A}}
\newcommand{\Cb}{\mathfrak{C}}
\definecolor{r}{rgb}{.9,0.1,.3}
\definecolor{g}{rgb}{0,.6,0.6}
\newcounter{intro}
\newcommand{\ir}{{\rm i}}
\newcommand{\D}{\mathbb{D}}
\newcommand{\R}{\mathbb{R}}
\newcommand{\n}{\mathbb{N}}
\newcommand{\h}{\mathbb{H}}
\newcommand{\sph}{\mathbb{S}}
\newcommand{\partiali}{\partial_{\operatorname{int}}}
\renewcommand{\Im}{\operatorname{Im}}
\renewcommand{\Re}{\operatorname{Re}}
\newcommand{\dist}{\operatorname{dist}}
\newcommand{\imag}{\mathrm{i}}
\def\flux {\operatorname{Flux}}
\newcommand{\CC}{\mathbb C}
\newcommand{\HH}{\mathbb H}
\newcommand{\RR}{\mathbb R}
\newcommand{\ZZ}{\mathbb Z}
\newcommand{\del}{\partial}
\newcommand{\eps}{\epsilon}
\newcommand{\calC}{{\mathcal C}}
\newcommand{\calD}{{\mathcal D}}
\newcommand{\calL}{{\mathcal L}}
\newcommand{\calM}{{\mathcal M}}
\newcommand{\calN}{{\mathcal N}}
\newcommand{\calO}{{\mathcal O}}
\newcommand{\calR}{{\mathcal R}}
\newcommand{\calU}{{\mathcal U}}
\newcommand{\calV}{{\mathcal V}}
\newcommand{\calW}{{\mathcal W}}
\newcommand{\fC}{\mathfrak{C}}
\newcommand{\fA}{\mbox{\normalfont\gothfamily A}}
\newcommand{\fJ}{\mathfrak{J}}
\newcommand{\fU}{\mathfrak{U}}
\newcommand{\calX}{\mathcal X}
\newcommand{\wh}{\widehat}
\newcommand{\wt}{\widetilde}
\def\z{\mathbb{Z}}
\numberwithin{equation}{section}
\newtheorem{theorem}{Theorem}[section]
\newtheorem{corollary}[theorem]{Corollary}
\newtheorem{lemma}[theorem]{Lemma}
\newtheorem{claim}[theorem]{Claim}
\newtheorem{proposition}[theorem]{Proposition}
\newtheorem*{maintheorem*}{Main Theorem}
\newtheorem*{theorem*}{Theorem}
\newtheorem*{mainlemma*}{Main Lemma}
\newtheorem*{keylemma*}{Key Lemma}
\theoremstyle{definition}
\newtheorem{remark}[theorem]{Remark}
\newtheorem*{remark*}{Remark}
\newtheorem{definition}[theorem]{Definition}
\title[Properly embedded minimal annuli in $\h^2 \times \R$]{Properly embedded minimal annuli in $\h^2 \times \R$}%
\author[L. Ferrer]{Leonor Ferrer}
\address[Ferrer]{
  Departamento de Geometr\'\i{}a y Topolog\'\i{}a,
  Universidad de Granada,
  18071 Granada, Spain.
}
\email{lferrer@ugr.es}
\author[F. Martín]{Francisco Mart\'\i{}n}
\address[Martín]{
  Departamento de Geometr\'\i{}a y Topolog\'\i{}a,
  Universidad de Granada,
  18071 Granada, Spain.
}
\email{fmartin@ugr.es}
\author[R. Mazzeo]{Rafe Mazzeo}
\address[Mazzeo]{%  
Department of Mathematics,
Stanford University,
Building 380, Sloan Hall,
Stanford, California 94305, U.S.A.
}
\email{mazzeo@math.stanford.edu}
\author[M. Rodríguez]{Magdalena Rodríguez}
\address[Rodríguez]{
  Departamento de Geometr\'\i{}a y Topolog\'\i{}a,
  Universidad de Granada,
  18071 Granada, Spain.
}
\email{magdarp@ugr.es}
\thanks{
 L. Ferrer, F. Martín and M.M. Rodríguez are partially supported by the
  MINECO/FEDER grant MTM2014-52368-P and MTM2017-89677-P;  R. Mazzeo supported by
the NSF grant DMS-1105050 and DMS-1608223; F. Martin is also partially supported by the
Leverhulme Trust grant IN-2016-019. }
\begin{document}

\maketitle
%\tableofcontents
\begin{abstract}
In this paper we study the moduli space of properly Alexandrov-embedded, minimal annuli in $\h^2 \times \R$ with 
horizontal ends. We say that the ends are horizontal when they are graphs of $\mathcal{C}^{2, \alpha}$ functions over 
$\partial_\infty \h^2$. Contrary to expectation, we show that one can not fully prescribe the two 
boundary curves at infinity, but rather, one can prescribe one of the boundary curves, but the other one only up to
a translation and a tilt, along with the position of the neck and the vertical flux of the annulus. 
We also prove general existence theorems for minimal annuli with discrete groups of symmetries. 
\end{abstract}

\section{Introduction}
This paper studies the space of properly embedded minimal annuli with horizontal ends in $\HH^2 \times \RR$.  
Prototypes of such surfaces are the so-called vertical catenoids $C$. These are surfaces of revolution 
with respect to some vertical axis $\{p\} \times \RR$. Their asymptotic boundary is the union of two 
parallel circles in $\del_\infty \HH^2 \times \RR$ and  there are
functions $u^\pm$ defined on $\HH^2 \setminus K$ for some compact set $K$ such that the ends of 
$C$ are graphs $t = u^\pm(z)$. In this particular case, they are also symmetric around a horizontal plane $\HH^2 \times \{t_0\}$,
so in particular, if we translate so that $t_0 = 0$, then $u^-(z) = - u^+(z)$.  

Here and later,  we use the  Poincar\'e disc model $\{|z| < 1\}$ of $\HH^2$ with metric $g_0 = 4 |dz|^2/(1-|z|^2)^2$, 
so the product metric on $\HH^2 \times \RR$ is $g = g_0 + dt^2$, and also write $z = re^{i\theta}$, $r < 1$.  
To be clear, we regard $z$ as a fixed global coordinate chart on $\HH^2$. 
More generally, we seek minimal annuli $A \subset \HH^2 \times \RR$ for which the asymptotic boundary $\del_\infty A$ is a 
union of two curves $\gamma^\pm$ which can be represented as graphs $t = \gamma^\pm(\theta)$, 
$\theta \in \sph^1 = \del_\infty \HH^2$. We say that such ends are {\bf horizontal}. One of the main results of 
this paper consists of proving that any properly embedded, annular horizontal end can be written (outside a compact set) 
as the graph of a smooth function $\HH^2 \to \RR$ (see Section \ref{sec:graphs}).
For the vertical catenoids described above, the boundary curves are constant graphs, $\gamma^\pm(\theta)
\equiv a^\pm$. The general question is to determine which pairs $\Gamma = (\gamma^\pm)$ (initially with $\gamma^-(\theta) 
\leq \gamma^+(\theta)$, $\theta \in \sph^1$) bound a properly embedded minimal annulus with horizontal 
ends.  % For simplicity, we often denote a pair of boundary curves satisfying this order relation by $\Gamma =
%\{\gamma^+, \gamma^-\}$ and omit the parametrization from the notation.  
%We often omit the subscript $\infty$ in the notation for asymptotic boundary below. 

Taking a broader perspective, the asymptotic Plateau problem in $\HH^2 \times \RR$ asks for a characterization 
of those curves (or closed subsets) in the asymptotic boundary of $\HH^2 \times \RR$ which bound complete 
minimal surfaces. Implicit in this question is a choice of compactification of this space. This question is 
discussed in some generality in \cite{KM}; in the present paper we consider only the product compactification 
$(\overline{\HH^2 \times   \RR})^\times =\overline{\HH^2} \times \overline{\RR}$, which is the product of a 
closed disk and a closed interval, and only consider boundary curves lying in the vertical part of the boundary
$\overline{\HH^2} \times \RR$. The paper \cite{KM} describes a number of different families of examples 
of `admissible' (connected) boundary curves and notes various obstructions for such curves to be asymptotic 
boundaries. %Related work is contained in the paper \cite{Con}. 

As above, a curve $\gamma$ is called horizontal if it lies in the vertical boundary $(\partial_\infty \HH^2) 
\times \RR$ of this product compactification and is a graph $t = \gamma(\theta)$, $\theta \in \sph^1$. 
The simplest problem is to determine whether any connected horizontal curve bounds a minimal surface,
and this was settled by Nelli and Rosenberg \cite{n-r}. They proved that if $\gamma(\theta) \in \calC^0(\sph^1)$, 
then there exists a unique function $u$ defined on the disk $\{|z|<1\}$, with $u = \gamma$ at $r=1$, such that
the graph of $u$ is minimal in $\HH^2 \times \RR$. Moreover, this solution is unique, so any complete 
embedded minimal surface with connected horizontal boundary must be a vertical graph.  We refer to 
\cite{st1, KM, Con} for a list of various general existence and non-existence results for other classes of connected boundary curves. 

The existence result for pairs of horizontal boundary curves, $\gamma^\pm$, one lying above the other, is more 
complicated. As above, we consider only minimal annuli, though certain facts hold even for higher genus surfaces.  
First, not every pair $\gamma^\pm$ is fillable by minimal annuli.  For example, these curves cannot be too far apart.
In Theorem \ref{th:nonexistence} we prove that if $\gamma^+(\theta) -  
 \gamma^-(\theta) > \pi$ for all $\theta$, then no such minimal annulus exists. 

Define
\[
\Cb = \{ (\gamma^+, \gamma^-):  \gamma^\pm \in \calC^{2,\alpha}(\sph^1), \quad \gamma^+(\theta) > 
\gamma^-(\theta)\ \mbox{for all}\ \theta\}.
\]
The restriction on existence above suggests that we focus on the open subset 
\[
\Cb^\pi = \{ (\gamma^+, \gamma^-) \in \Cb:  \sup_{\theta \in \sph^1} \left( \gamma^+(\theta) - \gamma^-(\theta)\right) < \pi\}.
\]
We also define $\Ab$ to be the space of properly embedded minimal annuli with $\del A \in \Cb$.
%, and its open subset $\Ab^\pi$ consisting of minimal annuli with $\del_\infty A \in \Cb^\pi$.  
Our main results will be phrased in terms of properties of the natural projection map 
$$
\Pi: \Ab \longrightarrow \Cb, \qquad \Pi(A):= \del A. 
$$

The first result is the easiest one to state. Consider the subspaces $\fC_m \subset \fC^\pi$ and $\fA_m$ of boundary curves 
and minimal annuli which are invariant under the discrete group of isometries generated by the rotation $R_m$ by angle $2\pi/m$ 
about the axis $\{o\}\times \RR$.  Imposing symmetry eliminates a degeneracy in the problem. 
\begin{theorem} For any $m \geq 2$, 
\[
\left. \, \Pi \, \right|_{\fA_m}: \fA_m \to \fC_m  
\]
is surjective. 
\end{theorem}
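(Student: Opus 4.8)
The plan is to run a continuity argument in the Banach manifold $\fC_m$. Write $S\subset\fC_m$ for the set of boundary pairs that are realised as $\Pi(A)$ for some $A\in\fA_m$; the goal is to show that $S$ is non-empty, open and closed, and that $\fC_m$ is connected, which forces $S=\fC_m$. Connectedness of $\fC_m$ is elementary: if $\Gamma_0,\Gamma_1\in\fC_m$ then the segment $t\mapsto(1-t)\Gamma_0+t\Gamma_1$ stays $R_m$-invariant, keeps $\gamma^+-\gamma^->0$ pointwise, and keeps $\sup_{\theta}(\gamma^+-\gamma^-)<\pi$, so $\fC_m$ is in fact convex. For non-emptiness one uses the vertical catenoids: such a catenoid is a surface of revolution about $\{o\}\times\RR$, hence invariant under the full rotation group and a fortiori under $R_m$, and its asymptotic boundary is a pair of horizontal circles at heights $\pm a$ with $2a<\pi$; this constant pair lies in $\fC_m$, so $S\neq\emptyset$.

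For openness, fix $A_0\in\fA_m$. By the structure result of Section~\ref{sec:graphs} the two ends of $A_0$ are smooth vertical graphs outside a compact set, so nearby $R_m$-invariant annuli can be written as normal graphs $u$ over $A_0$ in a weighted Hölder space tailored to the two ends, in which the asymptotic trace $(\gamma^+,\gamma^-)\in\fC_m$ is well defined and can be prescribed. The minimal surface equation linearises at $A_0$ to the Jacobi operator $\mathcal{L}_{A_0}=\Delta_{A_0}+\operatorname{Ric}(\nu,\nu)+|\secondff|^2$ acting on these spaces, so the first substantial step is a linear one: to show that $\mathcal{L}_{A_0}$, restricted to $R_m$-invariant functions with the appropriate behaviour at the ends, surjects onto the corresponding $R_m$-invariant data in $\fC_m$ (equivalently, that the Dirichlet problem at infinity for $\mathcal{L}_{A_0}$ is solvable for $R_m$-invariant boundary values). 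On the \emph{full}, unsymmetrised space the obstruction is a finite-dimensional space of bounded Jacobi fields: those generated by the horizontal isometries of $\HH^2$ (the ``tilt'' directions) and the one associated with displacing the neck off the axis --- precisely the degeneracies that reappear in the general (non-symmetric) theorem. None of these Jacobi fields is $R_m$-invariant for $m\ge 2$: a tilt has angular frequency $1$, whereas an $R_m$-invariant function only carries frequencies divisible by $m$, and an $R_m$-invariant neck with $m\ge 2$ is centred on the axis. Hence the obstruction vanishes on the symmetric sectors, the implicit function theorem applies, $\fA_m$ is a Banach manifold near $A_0$, and $\Pi|_{\fA_m}$ is a submersion there; in particular its image is open.

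For closedness, let $\Gamma_n=\Pi(A_n)\to\Gamma_\infty$ in $\fC_m$ with $A_n\in\fA_m$. Since $\sup_{\theta}(\gamma_\infty^+-\gamma_\infty^-)<\pi$, fix $\delta>0$ with $\sup_{\theta}(\gamma_n^+-\gamma_n^-)<\pi-\delta$ for all large $n$. Comparison with vertical catenoids of sub-critical asymptotic height, together with the maximum principle, confines the $A_n$ to a fixed slab $\HH^2\times[-T,T]$; more importantly, the uniform bound $\pi-\delta$ prevents the necks from degenerating, since a pinching (or splitting) subsequence would in the limit produce disjoint minimal graphs whose asymptotic boundaries are $\gamma_\infty^{\pm}$, contradicting either Theorem~\ref{th:nonexistence} or the Nelli--Rosenberg uniqueness \cite{n-r}. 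The ends, being minimal graphs, are stable, so the standard curvature estimates for stable minimal surfaces in $\HH^2\times\RR$ combined with this neck control give uniform local curvature bounds; passing to a subsequence, $A_n\to A_\infty$ smoothly on compacta, where $A_\infty$ is a complete minimal surface, $R_m$-invariant, properly embedded with two horizontal graphical ends, with asymptotic boundary $\Gamma_\infty$ (the trace passing to the limit by the uniform graphical control near infinity), and --- because the neck survived --- an annulus. Thus $A_\infty\in\fA_m$ and $\Gamma_\infty\in S$.

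I expect the closedness step to be the main obstacle: one must rule out all degenerations of an $R_m$-invariant minimal annulus along a sequence with uniformly sub-critical boundary heights --- neither the neck collapsing nor loss of compactness near infinity --- and this is exactly where the hypothesis $\sup_{\theta}(\gamma^+-\gamma^-)<\pi$ and the catenoid barriers are indispensable, with the symmetry itself ensuring the surfaces stay ``centred'' and cannot drift off horizontally. A secondary technical core is the identification of the cokernel of $\mathcal{L}_{A_0}$ used in the openness step; once both are in hand, the argument could equally be packaged as the assertion that $\Pi|_{\fA_m}$ is a proper Fredholm map of index zero onto the connected manifold $\fC_m$ with non-zero degree.
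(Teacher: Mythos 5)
Your non-emptiness, connectedness and closedness steps match the paper: $\fC_m$ is convex, the centered catenoids give a first solution, and the properness of $\Pi|_{\fA_m}$ is exactly the content of Theorem \ref{th:last} and Corollary \ref{co:m-proper} (the symmetry rules out the neck drifting off to infinity, and the bound $\sup(\gamma^+-\gamma^-)<\pi$ rules out the neck radius diverging, since that limit forces vertical segments of length $\pi$ in the asymptotic boundary). The genuine gap is in your openness step. You assert that for an \emph{arbitrary} $A_0\in\fA_m$ the obstruction space $\fJ^0(A_0)$ consists of the tilt and neck-displacement Jacobi fields, which carry angular frequency $1$ and hence cannot be $R_m$-invariant. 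But that identification of $\fJ^0$ is proved only for the rotational catenoid (Proposition \ref{th:jacobin}); for a general minimal annulus the decaying Jacobi fields are simply the kernel of the Dirichlet problem for the Jacobi operator, and there is no reason they should be generated by Killing fields, nor that their $R_m$-invariant part should vanish. (Indeed, on a general $A_0$ the Killing-field Jacobi fields do not even lie in $\fJ^0(A_0)$, since they move the non-constant boundary curves.) The paper establishes $R_m$-nondegeneracy only for annuli sufficiently close to $C_h$ (Remark \ref{re:non-degen}), and Proposition \ref{pro:nondegenerate} is needed merely to show that nondegenerate annuli exist at all. So the implicit function theorem cannot be invoked at every point of $\fA_m$, and openness of the image is not established.

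The repair is precisely the packaging you mention in your final sentence, and it is what the paper actually does (Theorem \ref{th:symmetric}): $\Pi|_{\fA_m}$ is a proper Fredholm map of index $0$, hence carries a $\ZZ$-valued degree in the sense of White; the pair of parallel circles $\sph^1\times\{\pm h\}$ is a regular value in the symmetric setting whose fiber, by the Nelli--Sa Earp--Toubiana uniqueness theorem together with $R_m$-invariance (which forces the catenoid to be centered on the axis), is the single point $C_h$; hence the degree is $\pm1$ and the map is surjective onto the connected space $\fC_m$. Degree theory is not an optional reformulation here --- it is what replaces the unavailable openness argument at possibly degenerate annuli.
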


It is not the case that the full map $\Pi: \Ab \to \Cb^\pi$ is surjective, and indeed, we present below a simple and large 
family of examples of pairs of curves which do not bound minimal annuli.  Thus we prove a slightly weaker existence result.
\begin{theorem} 
Given any $(\gamma^+,\gamma^-) \in \calC^{2,\alpha}(\sph^1)^2$, there exist constants $a_0$, $a_1$, $a_2$ so that the pair 
$(\gamma^+ + a_0+ a_1\cos \theta + a_2 \sin \theta, \gamma^-)$ bounds a properly Alexandrov-embedded,  minimal annulus. 
\end{theorem}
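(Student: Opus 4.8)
The plan is to view the problem as one of solving a nonlinear elliptic PDE with prescribed asymptotic data, but where we allow ourselves a finite-dimensional family of modifications of the top boundary curve. Concretely, I would first fix the pair $(\gamma^+,\gamma^-)$ and introduce the three-parameter family of perturbed top curves $\gamma^+_{\vect a}(\theta) := \gamma^+(\theta) + a_0 + a_1\cos\theta + a_2\sin\theta$, where $\vect a = (a_0,a_1,a_2)\in\RR^3$. The functions $1,\cos\theta,\sin\theta$ are precisely the boundary values of the bounded harmonic functions on $\HH^2$ that are not decaying at infinity (equivalently, the first eigenfunctions of the relevant linearized operator), so perturbations in these directions are exactly the ones one cannot control a priori. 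The goal is to find, for each $(\gamma^+,\gamma^-)$, some choice of $\vect a$ for which the pair $(\gamma^+_{\vect a},\gamma^-)$ lies in the image of $\Pi$ (enlarging to the Alexandrov-embedded setting if necessary, so that we need not worry about embeddedness breaking down).

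The main construction step would be a gluing/continuity argument. I would start from a known family of minimal annuli — for instance the vertical catenoids $C_a$ with constant boundary curves $\gamma^\pm\equiv\pm a$, or more generally the explicit rotationally invariant examples — and try to deform them to the desired asymptotic data. One natural route: rescale so that the two boundary curves are very close together (their difference is small in $\calC^{2,\alpha}$), where the annulus looks like a thin neck and the problem becomes a small perturbation of a model we understand; solve there by an implicit function theorem in which the three cokernel directions of the linearization (spanned by $1,\cos\theta,\sin\theta$ on the top end) are absorbed by varying $\vect a$. Then one undoes the rescaling. Alternatively, and perhaps more robustly, one sets up a degree-theoretic or continuity-method argument on the space of admissible boundary data: connect $(\gamma^+,\gamma^-)$ to a pair of constants by a path, show the set of $t$ along the path for which a solution (with some $\vect a(t)$) exists is open (implicit function theorem, using the $\vect a$-directions to kill the cokernel) and closed (a priori estimates plus the a priori bound $\gamma^+-\gamma^-<\pi$ coming from Theorem \ref{th:nonexistence}, which keeps us in the good range $\fC^\pi$ and prevents the neck from pinching or the annulus from degenerating).

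For the analytic backbone I would use the Fredholm theory of the Jacobi (linearized minimal surface) operator on an annulus with two horizontal ends, working in weighted Hölder spaces adapted to the two ends of $\HH^2\times\RR$. On such ends the indicial roots of the linearized operator are $0$ and $-1$, with the root $0$ contributing the constant mode and the root $\pm 1$ contributing the $\cos\theta,\sin\theta$ modes; choosing the weight just below the top threshold makes the operator Fredholm, and its cokernel is spanned exactly by the data dual to $(a_0,a_1,a_2)$ on the perturbable end. One then shows that the full nonlinear map $(\text{annulus})\mapsto(\text{boundary data})$, corrected by the $\RR^3$ parameters, is a submersion, so its image is open, and combine with properness (the closedness step) to conclude surjectivity onto the relevant slice. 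I expect the hard part to be precisely this properness/a priori estimate: one must rule out degeneration of the annulus (neck pinching, ends drifting to infinite separation, loss of the annular topology) along the deformation, and this is where the hypothesis $\gamma^+-\gamma^-<\pi$ and the earlier structural result that horizontal ends are genuinely graphical over the exterior of a compact set in $\HH^2$ must be used in an essential way, together with maximum-principle barriers built from the Nelli–Rosenberg graphs and from catenoid ends.
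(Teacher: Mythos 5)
Your overall framework (allow a three\--dimensional correction of the top curve, set up a Fredholm boundary map, prove openness and properness, conclude by degree or a continuity method) has the right flavor, but the step you yourself flag as the hard one --- properness/closedness --- is exactly where your plan breaks down, and the paper's key idea, which is missing from your sketch, is designed to repair it. You propose to get closedness from the bound $\gamma^+-\gamma^-<\pi$ together with graphicality of the ends; but first, the theorem has no such hypothesis (the constants $a_0,a_1,a_2$ may be large, and $\gamma^+>\gamma^-$ is not even assumed), and second, even for data in $\fC^\pi$ the boundary curves alone do not control the annulus: a sequence $A_n$ with convergent boundary data can lose compactness either by neck pinching (vertical flux $f_0(A_n)\to 0$, $A_n$ converging to two graphical disks) or by the neck drifting to infinity (Case II of Section \ref{sec:compact}, Theorem \ref{th:catenoid}), and Theorem \ref{th:nonexistence} is an obstruction to existence, not an a priori estimate ruling out these degenerations. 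Indeed the paper states explicitly that $\Pi$ itself is likely not proper on $\fA$. The actual proof extends not only the domain by $(a_0,a_1,a_2)$ but also the \emph{target}, by the vertical flux and the ``center'', via $\widetilde{\Pi}(A,a,\eta)=\bigl(\Pi_-(A),\Pi_+(A)+a+\Re(\eta e^{i\theta}),G(A)\bigr)$; keeping $G_0(A)$ bounded and $\mathbf{C}(A)$ in a compact subset of $\D$ is precisely what rules out pinching and drifting (Lemmas \ref{lem:primo}--\ref{lem:quarto}, Theorem \ref{teo:properness}), and the same extension makes $D\widetilde{\Pi}$ invertible at the catenoids so that, with Proposition \ref{prop:uno}, the degree of $\widetilde{\Pi}$ equals $1$ and surjectivity follows.

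A second, related gap concerns your openness step. You assert that the cokernel directions are spanned by $1,\cos\theta,\sin\theta$ on the top end; at the catenoid the cokernel of the boundary map is in fact two\--dimensional, spanned by the \emph{pairs} $(\cos\theta,\cos\theta)$ and $(\sin\theta,\sin\theta)$ (Proposition \ref{th:jacobin}, Remark \ref{re:leading}), and at an arbitrary degenerate annulus $A$ along your path the cokernel is the span of the boundary normal derivatives of $\fJ^0(A)$, which need not lie in your fixed three\--dimensional family at all; so varying $\vect{a}$ cannot be used to kill the cokernel except near the catenoid, and your continuity method has no openness at unknown degenerate solutions (this is also why the paper must separately establish the existence of nondegenerate annuli, Proposition \ref{pro:nondegenerate}). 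Likewise the thin\--neck ``rescaling'' route is unavailable in $\h^2\times\R$: there is no scaling symmetry, and as the two boundary curves approach one another the flux tends to $0$ and the annuli degenerate to a doubly covered disk, so that regime is a singular limit rather than a perturbative one. In short, the degree\--theoretic skeleton is right, but without the flux\--and\--center extension of the target both the properness proof and the degree computation --- the heart of the argument --- are missing.
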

\begin{remark}
There is a very important difference between this result and how we have tried to formulate the result previously. 
First, we are not specifying the boundary curves completely, but allowing a three-dimensional freedom in the
top curve.  Second, and of fundamental importance, we pass from the space of properly embedded to (properly) Alexandrov-embedded 
minimal annuli with embedded ends.  We denote this space by $\fA^*$.  It is most likely impossible to characterize the precise set of pairs of boundary curves 
for which the minimal annuli provided by this theorem are actually embedded, but if we allow Alexandrov-embeddedness, there is a satisfactory 
global existence theorem.   For the subclasses $\fA_m$ and $\fC_m$ however, it is possible to remain within the class of embedded surfaces. 
\end{remark}

The strategy to prove both of these theorems uses degree theory in a familiar way. The main step is to show that $\Pi$
is a proper Fredholm map.  This is true for the restriction of $\Pi$ to $\Ab_m$, but unfortunately may not be the case 
on all of $\Ab$, so instead we consider a finite dimensional extension of $\Pi$ which is proper, but which leads to the 
need to introduce the extra flexibility in the top boundary curve.  

After setting forth some notation and basic analytic and geometric facts in the next section, \S \ref{sec:graphs} contains an extension
of a theorem of Collin, Hauswirth and Rosenberg \cite{chr} and proves that the ends of elements of $\fA$ are indeed vertical graphs.
\begin{proposition} If $A \in \fA$, then there is a compact set $K \subset A$ such that $A \setminus K = E^+ \sqcup E^-$,
where each $E^\pm$ is a vertical graph of a function $u^\pm$ over some region $\{r_0 < |z| < 1\}$. 
\end{proposition}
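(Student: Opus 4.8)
The plan is to treat the two ends of $A$ separately: first confine each to a horizontal slab near infinity, then invoke (an extension of) the slab structure theory of Collin--Hauswirth--Rosenberg \cite{chr}. Since $A$ is an annulus with asymptotic boundary the disjoint union $\gamma^+\sqcup\gamma^-$, for a large compact core $K_0\subset A$ the complement $A\setminus K_0$ has exactly two components, each a half-open cylinder; the asymptotic limit set of such an end is connected (a decreasing intersection of connected compacta), hence equals one of the disjoint curves $\gamma^\pm$, so after relabelling $\del_\infty E^\pm=\gamma^\pm$. Fix $E=E^+$, with $\del_\infty E=\gamma:=\gamma^+\in\calC^{2,\alpha}(\sph^1)$. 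Because $A$ is proper and $\gamma$ lies in the \emph{open} vertical part $(\del_\infty\HH^2)\times\RR$ of the boundary of the product compactification, no sequence $p_k\in E$ can have $t(p_k)\to\pm\infty$, as it would then accumulate on the horizontal part $\overline{\HH^2}\times\{\pm\infty\}$, contradicting $\del_\infty E=\gamma$. Hence $t$ is bounded on $E$, and, again by properness, $\pi_{\HH^2}(p)\to\del_\infty\HH^2$ as $p\to\infty$ in $E$; so, enlarging $K_0$ to a compact $K_1$, one gets $E\setminus K_1\subset\{r_0<|z|<1\}\times[a,b]$ for some $r_0<1$, $a<b$, with $E\setminus K_1$ connected.

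Now $E$ is a properly embedded minimal surface in the slab $\HH^2\times[a,b]$ whose single end converges to $\gamma$. We would invoke \cite{chr} to conclude that, outside a compact set, $E$ is a finite-sheeted vertical graph over an annular collar $\{r_0<|z|<1\}$ of $\del_\infty\HH^2$, with height-ordered sheets each converging to a limit curve in $(\del_\infty\HH^2)\times\RR$. If \cite{chr} is applied in its finite-total-curvature form, one must first check that such an end has finite total curvature in that sense: this should follow by combining the slab confinement with interior curvature estimates for minimal surfaces in $\HH^2\times\RR$ and a monotonicity/area-growth estimate, yielding $\int_E|\secondff|^2<\infty$, identifying $E$ conformally with a punctured disk, and --- since $\gamma$ is a graph over $\sph^1$, hence not the boundary of a vertical geodesic plane --- forcing the end to be asymptotic to a horizontal slice. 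In any case $t|_E$ is then a bounded harmonic function on a punctured disk, so it extends across the puncture, the tangent planes converge to horizontal, and the asymptotic analysis of \cite{chr} produces a smooth graphing function $u$ on $\{r_0<|z|<1\}$ with $u(re^{i\theta})\to\gamma(\theta)$ as $r\to1$.

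The multigraph alternative is eliminated using embeddedness together with the fact that $\gamma$ is a \emph{single} graph over $\sph^1$: such a multigraph must close up over $\sph^1$ (as $\gamma$ does not spiral), and then its distinct, strictly height-ordered sheets would make $E\setminus K_1$ disconnected, contrary to the above, so there is exactly one sheet. Applying the same discussion to $E^-$, and letting $K$ contain $K_1$ together with the analogous compact set for $E^-$, yields the proposition, with $u^\pm$ the graphing functions of $E^\pm$.

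The main obstacle is the promotion, in the second paragraph, of ``properly embedded, confined to a slab, asymptotic boundary a graph over $\sph^1$'' to ``finite total curvature of the end'' (equivalently, to the finite-sheeted graph structure near $\del_\infty\HH^2$). The slab confinement does give uniform bounds on $|\secondff|$ away from the core, but ruling out that $E$ spirals or oscillates in height as $|z|\to1$ --- so that $t|_E$ genuinely is a bounded harmonic function on a punctured disk and $|\secondff|$ is square-integrable --- is precisely where embeddedness and the graph-over-$\sph^1$ nature of $\gamma$ must be used, and is the step going beyond the literal hypotheses of \cite{chr}.
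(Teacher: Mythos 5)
Your first paragraph (separating the two ends, trapping each one in a slab and in an annular neighborhood of $\partial_\infty \HH^2$) is sound and matches the first stage of the paper's argument, which achieves the same confinement with semi-infinite tall rectangles as barriers. But the heart of the proposition is exactly the step you defer: passing from ``properly embedded end, confined near infinity, with asymptotic boundary a graph $\gamma$'' to ``single-sheeted vertical graph outside a compact set.'' Your second paragraph does not prove this; it is a chain of ``we would invoke \cite{chr}'', ``one must first check'', ``this should follow'', and your closing paragraph concedes that this promotion goes beyond the literal hypotheses of \cite{chr}. That is precisely why the paper announces that it \emph{extends and sharpens} \cite{chr} rather than citing it. Moreover, the finite-total-curvature detour you sketch points in the wrong direction: the structure theory of finite total curvature in $\HH^2\times\RR$ (e.g.\ \cite{HMR}) is phrased in terms of the \emph{intrinsic} total curvature $\int_E |K|$, which is infinite for horizontal ends (already for the vertical catenoid, $K\to -1$ while the area is infinite), and its conclusion is that the ends are Scherk-type, asymptotic to vertical geodesic planes --- the opposite of the horizontal graph structure sought here. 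Finiteness of $\int_E|\secondff|^2$ (which does hold for the catenoid) is not the hypothesis of that theory and does not by itself yield the graph representation. Finally, your elimination of the multigraph alternative presupposes that the sheets are globally defined and height-ordered over a full collar $\{r_0<|z|<1\}$, which is part of what has to be proved.

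For comparison, the paper's proof closes this gap with a barrier-plus-connectivity argument rather than any curvature estimate. It reduces the graph property to showing that no point of $E$ far from a compact core has a vertical tangent plane (bijectivity of the projection then follows by a covering-space argument using properness). Assuming such a point exists, one places a tall rectangle $\mathcal{R}$ tangent to $E$ there; sweeping out either side of $\mathcal{R}$ by foliations of tall rectangles and applying the maximum principle forces $E\cap\mathcal{R}^+$ to have at least two distinct connected components near the tangency. The Dragging Lemma of \cite{chr}, applied to a family of truncated vertical catenoids translated along paths avoiding $\partiali E$ (this is the only place where the graphical nature of $\gamma$ enters, via the existence of such catenoids disjoint from $\partiali E$ with boundaries disjoint from $E$), then connects each of these components back to the compact core inside $E\cap\mathcal{R}^+$, producing a path between them and contradicting their disconnectedness. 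If you want to complete your write-up, you would need to supply an argument of comparable force at the step you flagged; as it stands, the proposal identifies the difficulty correctly but does not resolve it.
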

This result is followed by the calculation of fluxes on horizontal ends in \S \ref{subsec:flux}.  
Next, we present the nonexistence
theorems in \S \ref{sec:nonexistence}.

By an observation in \cite{KM} (see the proof of Theorem \ref{manifoldthm}), $u^\pm$ extends to a $\calC^{2,\alpha}$ function up to $|z| = 1$, or equivalently, $\overline{A}$ 
is a $\calC^{2,\alpha}$ surface with boundary.   We prove in \S 6 that the space $\fA$ is a Banach manifold and study the space of 
Jacobi fields on a minimal annulus $A$. This leads to the definition, in \S 7, of the extended boundary map $\widetilde{\Pi}$, and 
an exploration of its infinitesimal properties. The more difficult fact that $\widetilde{\Pi}$ is proper occupies \S 8. We finally prove 
the two main theorems in \S 9 and \S 10. 

\vskip 3mm

\noindent {\bf Acknowledgements.} We thank B. White, J. Pérez and A. Ros for valuable conversations and suggestions. L. Ferrer 
and F. Martín are very grateful to the Mathematics Department of Stanford University for its hospitality 
during part of the time the research and preparation of this article were conducted.  R. Mazzeo is also very grateful to the 
Institute of Mathematics at the University of Granada for sponsoring a visit where this work started. 
Finally, we would like to thank the anonymous referee for many helpful suggestions.
\section{vertical catenoids} \label{sec:2}
%%%%%%%%%%%%%%%%%%%%%%%
%%%%%%%%%%%%%%%%%%%%%%%
In this section we recall the salient geometric and analytic properties of the vertical minimal catenoids.

\medskip

As in the introduction, we use the Poincar\'e disk model for $\HH^2$, with Cartesian coordinates $z \in  \{|z| < 1\}$ and 
polar coordinates $(r,\theta)$. We shall also use the notation $D(z_0,R)$ and $D_{\HH^2}(z_0, R)$ to denote the Euclidean and hyperbolic
disks with center $z_0$ and (Euclidean or hyperbolic) radius $R$.  When $z_0 = o$ is the origin (in this coordinates), we sometimes omit it from the notation.

\subsection{Geometric properties}
The family of vertical minimal catenoids was introduced and studied by Nelli and Rosenberg \cite{n-r} as the unique
family of (non flat) minimal surfaces invariant under rotations around a fixed vertical axis. Indeed, parametrizing a surface of rotation by 
\[
[0,2\pi] \times (a,b) \ni (\theta,t) \mapsto X(\theta,t) := ( r(t) e^{\ir \theta },  t), 
\]
then minimality is equivalent to the equation 
\[
4r r'' - 4 (r')^2 - (1- r^4) = 0.
\]
Integrating this gives that 
\begin{equation} \label{eq:kappa}
\displaystyle \kappa^2= \frac{(1-r^2)^2}{4 r^2}+\left( \frac{r'}{r} \right)^2
\end{equation}
for some constant $\kappa^2 > 0$. It can then be deduced that solutions exist on some interval $(a,b)$ with $b -a = 2h < \pi$, 
and furthermore that the correspondence $(0,\pi/2) \ni h \to \kappa^2 \in \RR^+$ is bijective.   We denote the corresponding
surface by $C_h$, usually with the normalization that $a=-h$, $b=h$, hence $r(-h) = r(h) = 1$.  Note that the first-order equation for $r$ 
implies that
\[
\sqrt{1 + \kappa^2} - \kappa \leq r(t) < 1,
\]
and this lower bound is the minimum of the corresponding solution $r(t)$; this provides a correspondence between $\kappa$ 
and the minimum value $r(0)$. 

%This parametrization can be altered slightly to make it conformal by setting $\bar{\theta} = \theta/\kappa$.  Then $[0, 2\pi/\kappa] \times (a,b) \ni (\bar{\theta},t) 
%\mapsto X(\bar{\theta},t) = ( r(t) e^{\ir \bar{\theta}}, t)$ satisfies 
We now calculate that 
\[
X^* \left( \left. g \right|_{C_h} \right) = \frac{4r^2}{(1-r^2)^2} (  d\theta^2  + \kappa^2 dt^2). 
\]
This is nearly conformal and the extra constant factor (which could obviously be scaled away) does not cause any problems. 

\begin{figure}[htpb]
\begin{center}
\includegraphics[height=.39\textheight]{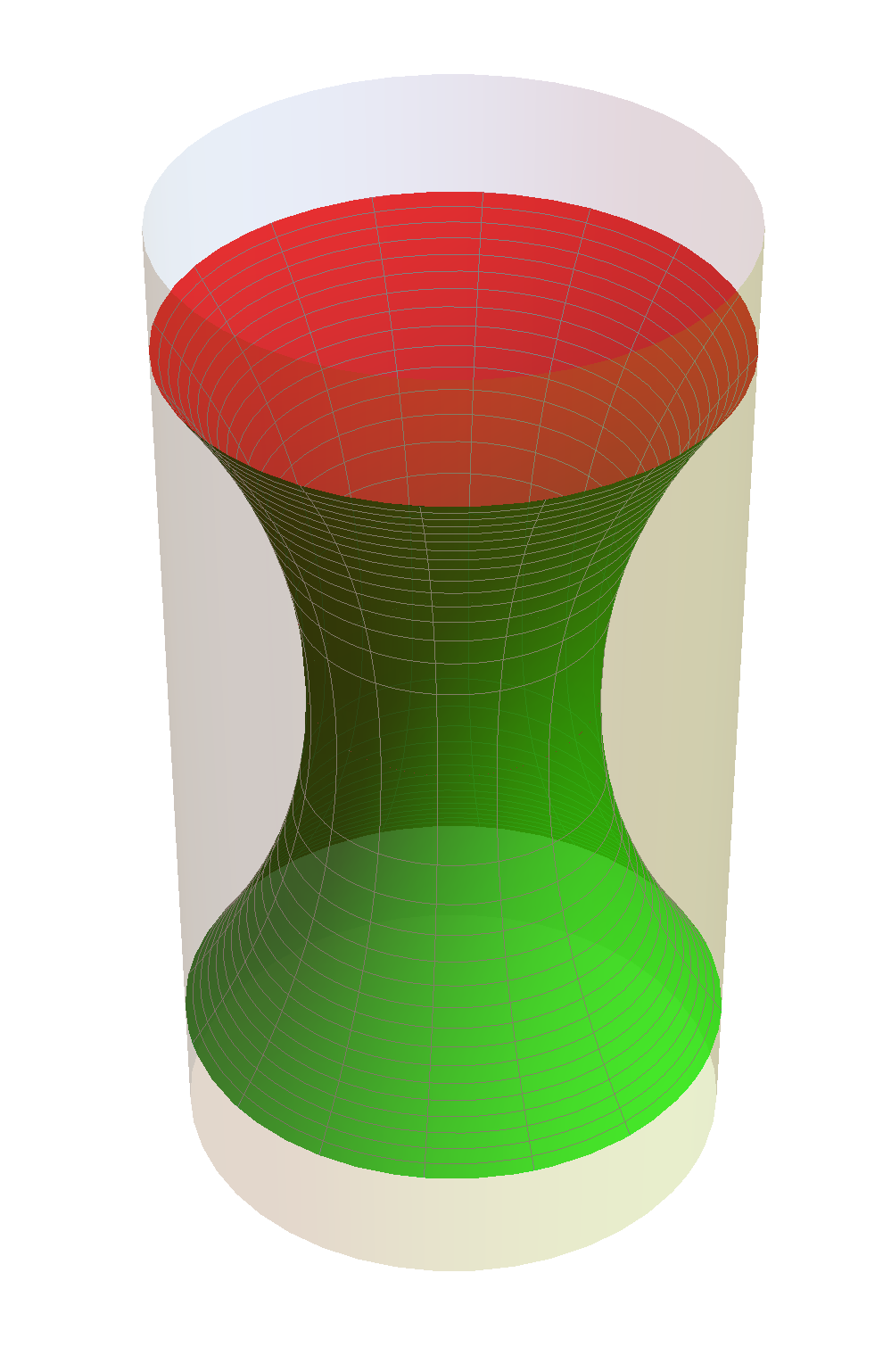} \hspace{3cm} \includegraphics[height=.39\textheight]{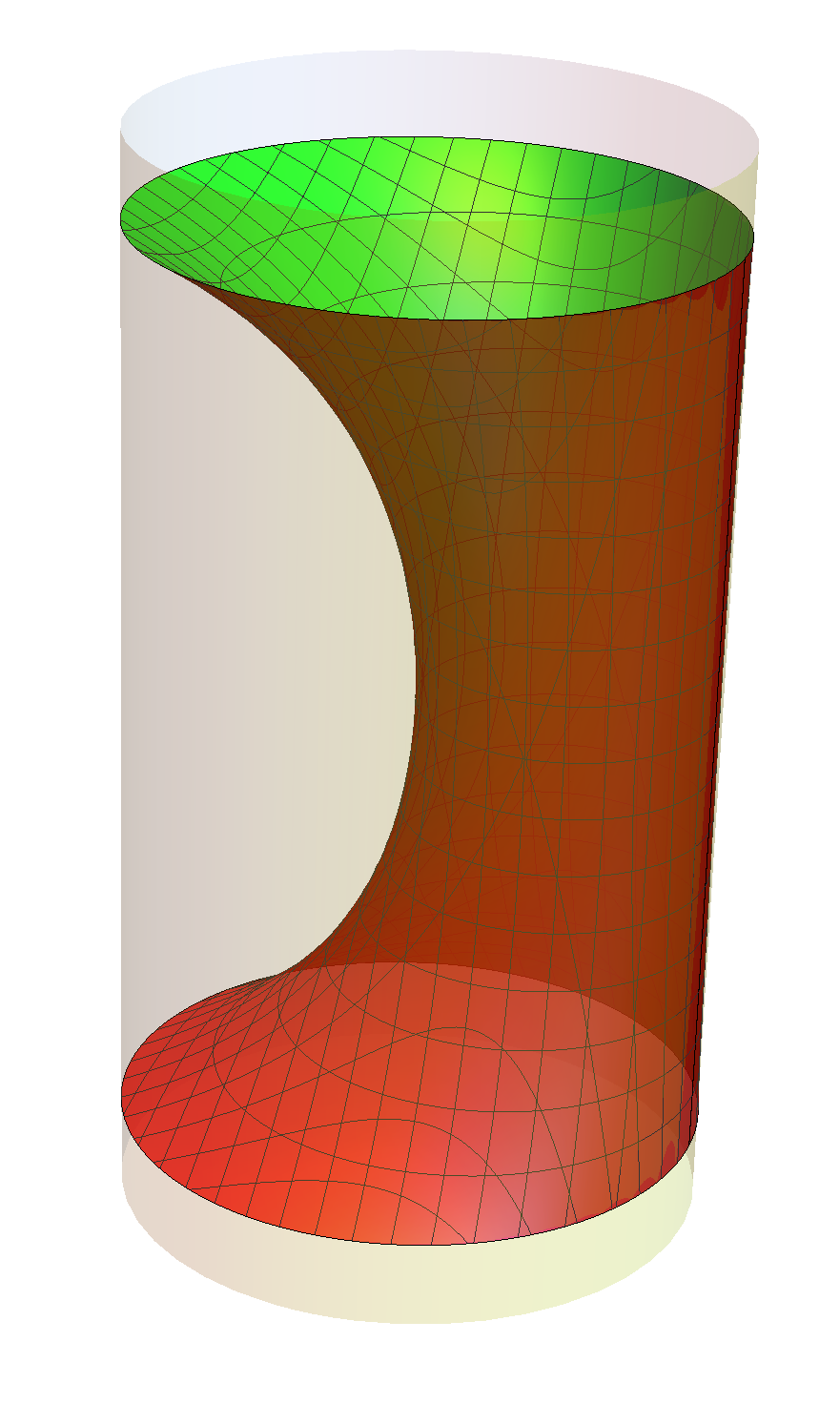}
\end{center}
\caption{Vertical catenoid and parabolic generalized catenoid.} \label{catenoid}
\end{figure}

These surfaces, called catenoids, have the following properties: 
\begin{enumerate}
\item $C_h$ is a bigraph with respect to the horizontal plane $\h^2 \times \{0\}$.
\item As $h \searrow 0$, $C_h$ converges to $\h^2 \times \{0\}$, branched at the origin, with multiplicity $2$.
\item As $h \nearrow \pi/2$, $C_h$ diverges to  $\partial_\infty \HH^2 \times [-\pi/2,\pi/2]$.
\end{enumerate}
\begin{remark} \label{re:catenoids} Let $T_{z_0}$ denote the horizontal dilation that maps $(0,t)$ into $(z_0,t) \in \h^2 \times \RR$, $$T_{z_0}(z,t):=\left( \frac{z+z_0}{\overline{z_0} z+1},t\right),$$
%and $\widetilde T_{z_0} := T_{z_0}  \times \mbox{Id}_\R.$
and define $C_{h,z_0}:=T_{z_0}(C_h)$. When $z_0$ is the origin we simply write $C_h$. Then the family
$\calM:=\{C_{h,z_0} \; : \; (z_0,h) \in \h^2 \times \RR\}$ forms a $3$-dimensional submanifold of the Banach manifold of all annuli.
\end{remark}

\subsection{Parabolic generalized catenoids.} \label{subsec:2} Although $C_h$ diverges as $h \to \pi/2$, one can obtain a nontrivial limit of this family as follows. For each $h$ apply a hyperbolic 
isometry $T_h$ of $\HH^2$, acting trivially on the $\RR$ factor, which translates along a fixed geodesic passing through the origin and ending at a point $q$ in $\partial_\infty \h^2$.  We
fix $T_h$ completely by demanding that $o \in T_h(C_h)$;  the tangent plane at that point is then necessarily vertical.  There exists
a nontrivial limit $\calD = \calD_q$ of the $T_h(C_h)$, as $h \to \pi/2$, discovered originally by Hauswirth \cite{h} and Daniel \cite{d}. Its asymptotic boundary consists of the two
circles $\sph^1 \times \{\pm \pi/2\}$ together with a vertical segment $\{q\} \times [-\pi/2, \pi/2]$.  It is foliated by horocycles 
$H_t = \calD \cap (\h^2 \times \{t\})$ based at the point $(q,t)$.  Note that applying other horizontal dilations along the same geodesic
produces a family of minimal surfaces with the same asymptotic boundary which foliate the slab $\h^2 \times (-\pi/2, \pi/2)$; one limit
of this family is the two disks $\h^2 \times \{ \pm \pi/2\}$. We shall refer them as {\em parabolic generalized catenoids}.
\medskip

These families of surfaces enjoy the following uniqueness properties: 
\begin{enumerate}
\item (Nelli, Sa Earp \& Toubiana \cite{n-s-t}): A minimal annulus bounded by $(\sph^1 \times \{\pm h\})$, for any $h \in (0, \pi/2)$, must 
equal $C_{h,z_0}$ for some $z_0 \in \h^2$. 
\item (Daniel, Hauswirth \cite{d, h}):  If $X$ is any (possibly incomplete) minimal surface in $\h^2 \times \R$, the intersection of which
with any horizontal plane $\h^2 \times \{t\}$ is a piece of a horocycle, then it must be a piece of some parabolic generalized catenoid. 
\end{enumerate}

These are interesting model examples and provide very useful barriers. 

%%%%%%%%%%%%%%%%%%%%%%%%%%%%%%%%%
\subsection{The Jacobi operator}
We now derive an explicit expression for the Jacobi operator $L$ on a catenoid $C$ and determine the space of decaying Jacobi fields.

We first recall the Jacobi operator
\[
L_C = \Delta_C + |S_C|^2 + \mathrm{Ric}(\nu, \nu),
\]
where $S_C$ is the shape operator (or second fundamental form) of $C$ and $\nu$ is the unit normal vector field to $C$. Rather than computing the last two terms explicitly, we use the coordinates and explicit form of the metric above to note that
\[
L_C= \frac{ (1-r^2)^2}{4r^2} ( \kappa^{-2} \del_t^2 + \del_\theta^2  +q(t)),
\]
for some function $q$, where $\kappa$ is the parameter given by \eqref{eq:kappa}. We can determine $q$ by plugging in a known solution of $L_C w = 0$, and we shall use the Jacobi field $w$ 
arising from vertical translations.

To calculate this Jacobi field, we first compute the unit normal to $C$,
\[
\nu = \left( \frac{ (1-r^2)^2}{4\kappa r} \cos \theta, \frac{ (1-r^2)^2}{4\kappa r} \sin \theta, - \frac{r'}{\kappa r} \right). 
\]
The Killing field generated by vertical translation is $(0,0,1)$, hence its projection onto $\nu$ is simply $-1/\kappa$ times the
function $r'/r$.  In other words,  $L_C (r'/r) = 0$. A short calculation then gives that
\[
q = \frac{1}{2\kappa^2} ( r^{-2} + r^2), 
\]
so altogether,
\begin{equation} \label{eq:jacob}
L_C = \frac{ (1-r^2)^2}{4\kappa^2 r^2} \left( \del_t^2 + \kappa^2 \del_\theta^2  + \frac12 (\frac{1}{r^2} + r^2) \right).
\end{equation}

Now set 
\[
\fJ(C) = \{\psi \in L^\infty(C): L_C \psi = 0\}, \qquad \fJ^0(C)= \fJ(C) \cap L^2(C);
\]
we shall actually consider only the subclass of Jacobi fields which extend to be $\calC^{2,\alpha}$ on $\overline{C}$,
but this will be discussed in detail in \S 6, along with many further properties of the Jacobi operator, both at $C$ and
at any other $A \in \fA$. The space $\fJ(C)$ is infinite dimensional and is (almost) parametrized by its 
asymptotic boundary values. We note one special fact that if $\phi \in \fJ^0(C)$, then $\phi$ is automatically smooth 
up to $\del C$ as a function of $(r,\theta)$, and the $L^2$ condition means that it vanishes like $1-r$. 

Expanding any function $u$ on $C$ as 
\[
u(t,\theta)=\sum_{n=0}^\infty ( \alpha_n(t) \cos (n \theta)+\beta_n(t) \sin (n \theta) ), \qquad \qquad (\mbox{with}\ \beta_0 \equiv 0),
\]
then
\[
L_C u = \sum_{n=0}^\infty (  (L_n \alpha_n) \cos (n \theta) + (L_n \beta_n) \sin (n \theta) ),
\]
where 
\[
L_n = \frac{ (1-r^2)^2}{4\kappa^2 r^2} \left( \del_t^2 - \kappa^2 n^2  + \frac12 (\frac{1}{r^2} + r^2) \right).
\]

\begin{proposition} \label{th:jacobin}
The space of decaying Jacobi fields $\fJ^0(C_h)$ is spanned by $\varphi_1 = \phi(r) \cos \theta $ and 
$\varphi_2 = \phi(r) \sin \theta$, where 
$$
\displaystyle \phi(r) := \frac{1}{r} - r. 
$$ 
These are the Jacobi fields generated by horizontal dilations. 
\end{proposition}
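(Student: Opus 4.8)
The plan is to reduce the Jacobi equation $L_{C_h}\psi=0$ to a one–dimensional Sturm–Liouville eigenvalue problem and to identify the decaying solutions Fourier mode by Fourier mode. Since $L_{C_h}$ commutes with $\partial_\theta$, I would expand $\psi\in\fJ^0(C_h)$ as $\psi=\sum_{n\geq0}\bigl(\alpha_n(t)\cos n\theta+\beta_n(t)\sin n\theta\bigr)$; using that the conformal factor in \eqref{eq:jacob} is positive on the open annulus, and that (by the regularity and $L^2$ facts recalled above) an element of $\fJ^0(C_h)$ is smooth up to $\partial C_h$ and vanishes there, each coefficient satisfies $\alpha_n''+\bigl(V(t)-\kappa^2n^2\bigr)\alpha_n=0$ on $(-h,h)$ with $\alpha_n(\pm h)=0$, where $V(t):=\tfrac12\bigl(r(t)^{-2}+r(t)^2\bigr)$, and similarly for $\beta_n$. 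Equivalently, $\psi\in\fJ^0(C_h)$ exactly when, for every $n$, the number $-\kappa^2n^2$ is a Dirichlet eigenvalue on $[-h,h]$ of the Schr\"odinger operator $P:=-\partial_t^2-V$, with $\alpha_n$, $\beta_n$ among its eigenfunctions. Here $r$ (hence $V$) extends smoothly to the closed interval $[-h,h]$, so $P$ is a regular Sturm–Liouville operator: its Dirichlet spectrum is a strictly increasing sequence of simple eigenvalues, and its eigenfunctions obey the classical Sturm oscillation and separation theorems.

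Two explicit solutions then control everything. A direct substitution using $4rr''-4(r')^2-(1-r^4)=0$ and \eqref{eq:kappa} shows that $\phi(t):=r(t)^{-1}-r(t)$ satisfies $P\phi=-\kappa^2\phi$; moreover $\phi>0$ on $(-h,h)$ (where $0<r<1$) and $\phi(\pm h)=0$. Thus $-\kappa^2$ is a Dirichlet eigenvalue with a positive eigenfunction, hence is the bottom eigenvalue $\lambda_1$, and the Dirichlet kernel of $P+\kappa^2$ is exactly $\langle\phi\rangle$. (Geometrically, $\varphi_1=\phi(r)\cos\theta$ and $\varphi_2=\phi(r)\sin\theta$ are the normal parts along $C_h$ of the Killing fields generating the horizontal translations $T_{z_0}$, i.e.\ of $\tfrac{\partial}{\partial z_0}C_{h,z_0}$ at $z_0=o$; since each $T_{z_0}$ fixes the asymptotic circles $\sph^1\times\{\pm h\}$ these fields decay, which re–confirms $\varphi_1,\varphi_2\in\fJ^0(C_h)$, and the rotational symmetry of $C_h$ forces their angular part to be a first harmonic.) The second solution is $w(t):=r'(t)/r(t)$, which satisfies $Pw=0$ — this is the vertical–translation Jacobi field, $L_{C_h}(r'/r)=0$, as computed above; and since $r''=r(\kappa^2+\tfrac12-\tfrac{r^2}{2})>0$, the profile $r$ is strictly convex on $[-h,h]$ with its only critical point at $t=0$, so $w$ has exactly one zero in $(-h,h)$ while $w(\pm h)=\pm\kappa\neq0$.

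With these facts the count is mode by mode. For $n\geq2$ one has $-\kappa^2n^2<-\kappa^2=\lambda_1=\min\operatorname{Spec}_{\mathrm{Dir}}(P)$, so $-\kappa^2n^2$ is not an eigenvalue and these modes are absent. For $n=1$ the eigenvalue $-\kappa^2=\lambda_1$ is simple, so $\alpha_1,\beta_1\in\langle\phi\rangle$, contributing exactly $\varphi_1=\phi(r)\cos\theta$ and $\varphi_2=\phi(r)\sin\theta$. For $n=0$ I must show $0$ is not a Dirichlet eigenvalue of $P$. Suppose $\chi\not\equiv0$ solves $\chi''+V\chi=0$ on $[-h,h]$ with $\chi(\pm h)=0$. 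Then $\chi$ and $w$ are linearly independent solutions of $y''+Vy=0$ — independent because $\chi(\pm h)=0$ but $w(\pm h)\neq0$ — so by the Sturm separation theorem their zeros interlace; since $w$ has a single zero in $(-h,h)$, $\chi$ has none there (any interior zero of $\chi$, together with $\chi(\pm h)=0$, would force two zeros of $w$ in the interval). Hence, after a sign change, $\chi$ is a positive Dirichlet eigenfunction of $P$ with eigenvalue $0$; but the positive Dirichlet eigenfunction of $P$ is unique and equals $\phi$, with eigenvalue $-\kappa^2\neq0$ — a contradiction. So the $n=0$ sector is trivial, and summing over $n$ gives $\fJ^0(C_h)=\langle\phi(r)\cos\theta,\phi(r)\sin\theta\rangle$.

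The main obstacle is precisely this last ($n=0$) case: unlike $n\geq2$, killed by positivity of the bottom eigenvalue, and $n=1$, answered by $\phi$ itself, ruling out an angle–independent decaying Jacobi field genuinely uses the vertical–translation solution $r'/r$ and the fact that it has exactly one interior zero. The two supporting computations to carry out carefully are the identity $P(r^{-1}-r)=-\kappa^2(r^{-1}-r)$ and the convexity $r''>0$, both short manipulations of the ODEs \eqref{eq:kappa} and $4rr''-4(r')^2-(1-r^4)=0$.
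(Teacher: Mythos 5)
Your proof is correct, and it rests on the same skeleton as the paper's: expand in Fourier modes and reduce to the Dirichlet problem for $L_n$ on $[-h,h]$, with $\phi(r)=r^{-1}-r$ supplying the explicit mode-one solution. The differences are in how the modes are killed. For $n\geq 2$ the paper invokes the Sturm--Picone comparison theorem against $\phi$ directly, while you phrase the same obstruction spectrally: $\phi>0$ with Dirichlet data identifies $-\kappa^2$ as the bottom Dirichlet eigenvalue of $P=-\partial_t^2-V$, so $-\kappa^2n^2$ lies strictly below the spectrum --- essentially equivalent, just packaged as ground-state positivity rather than zero comparison. The genuine divergence is at $n=0$: the paper argues that the two-dimensional solution space of $L_0u=0$ is spanned by the geometric Jacobi fields coming from vertical translation ($r'/r$) and from varying $h$, and asserts that neither (hence, implicitly using parity, no combination) vanishes at both endpoints; you instead use only $w=r'/r$, the convexity identity $r''=r(\kappa^2+\tfrac12-\tfrac{r^2}{2})>0$ to show $w$ has exactly one interior zero, Sturm separation to exclude a linearly independent Dirichlet solution, and uniqueness of the positive ground state to finish. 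Your $n=0$ argument is more self-contained than the paper's terse ``it is easy to see'' (it never needs the $h$-variation field or its boundary behavior), at the cost of the two short verifications $P\phi=-\kappa^2\phi$ and $r''>0$, both of which do follow from \eqref{eq:kappa} and $4rr''-4(r')^2-(1-r^4)=0$ as you indicate.
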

\begin{proof}
We must determine all solutions to $L_n u = 0$ with $u(\pm h) = 0$.   First observe that $L_1 \phi = 0$ where $\phi$ is
given in the statement of the theorem. The Sturm-Picone comparison theorem then gives that any solution of the 
Dirichlet problem for $L_n$, $n \geq 1$, must be proportional to $\phi$, which is impossible for $n \geq 2$. 

There is at most a two dimensional space of solutions to $L_0 u = 0$, and a basis for this space is given by the Jacobi fields 
generated by vertical translations and by varying the parameter $h$. We have already computed the first of  these, which 
is the function $r'/r$, which does not vanish at $\pm h$. It is easy to see that the second one cannot vanish at $\pm h$ either.
\end{proof}

%%%%%%%%%%%%GRAPHICAL ENDS%%%%%%%%%%%

\section{Graphical parametrization of horizontal ends}
\label{sec:graphs}
In this section we extend and sharpen a result of Collin, Hauswirth
and Rosenberg~\cite{chr} and prove that any properly
Alexandrov-embedded minimal annulus $A$ with embedded ends can be
written as a vertical graph near infinity.

One key tool in the argument below is the family of `tall rectangles'
obtained in~\cite{d,h,st} (see also~\cite{mrr2,st1}), which we now
recall. Let $\sigma$ be any connected arc in $\sph^1 = \partial_\infty \HH^2$ and
denote by $\eta$ the geodesic in $\HH^2$ with the same endpoints as
$\sigma$. Also fix $a, b \in \RR$ with $b-a > \pi$. Then there is a
minimal disk $R(\sigma, a, b)$ with asymptotic boundary the rectangle
$(\sigma \times \{a, b\}) \cup (\del \sigma \times [a,b])$, and such
that for any $t_0 \in (a,b)$, the projection onto $\HH^2$ of the
intersection $R(\sigma, a, b) \cap (\HH^2 \times \{t_0\})$ is a curve
equidistant from the geodesic $\eta$. Furthermore, $R(\sigma, a, b)$
is symmetric with respect to the horizontal slice at height $(a+b)/2$,
and is a vertical bigraph. If we denote by $\eta_0$ the projection of
this central slice, all other horizontal slices project
to curves `outside' $\eta_0$, i.e., in the component of
$\HH^2 \setminus \eta_0$ not containing $\eta$. The distance
between $\eta$ and $\eta_0$  tends to $0$ 
as $b-a \nearrow \infty$ so it makes sense to define $R(\sigma, -\infty, \infty)$ to 
be the vertical plane $\eta \times \RR$; \label{tallrectangles}
$R(\sigma, -\infty, b)$ and $R(\sigma, a, \infty)$ to be the vertical graphs over the 
domain bounded by $\eta \cup \sigma$ with the corresponding boundary values, 
called `semi-infinite tall rectangles'. The distance from 
the central slice $\eta_0$ to $\eta$ tends to infinity as $b-a \searrow \pi$, and in fact if we 
simultaneously let $\sigma$ 
converges to the entire circle and $b-a \searrow \pi$ then $R(\sigma, a, b)$ converges 
to a parabolic generalized catenoid, which is foliated by horocycles (see Section \ref{subsec:2}).  

\begin{figure}[htbp]
\begin{center}
\includegraphics[height=.36\textheight]{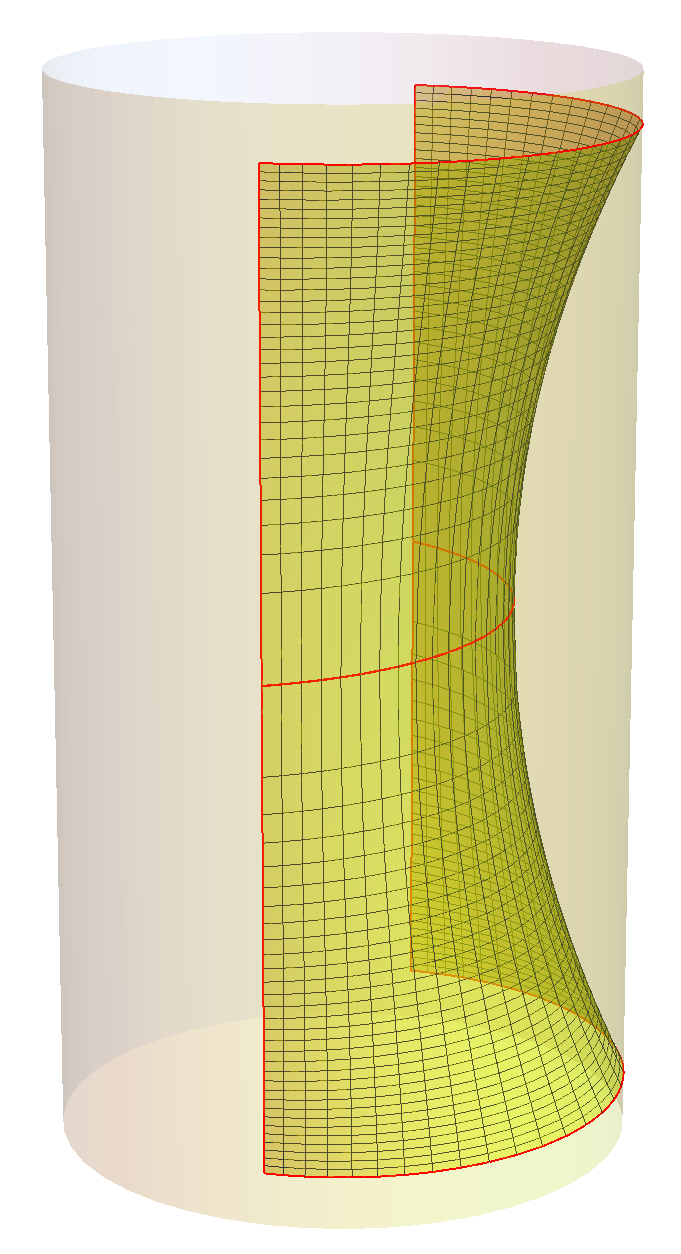}
\caption{The tall rectangle for $\sigma$ equals to half the
  circumference at infinity and $b-a=\frac95 \pi$.}
\label{graph-1}
\end{center}
\end{figure}

The other tool in this proof is the so-called `Dragging Lemma' (which
we state in a slightly simplified version adapted to our purposes),
inspired in Colding and Minicozzi ideas.

\begin{lemma}[\cite{chr}] \label{lem:drag} Let $E$ be a properly
  embedded annular end in $\h^2\times\R$ so that one component of its
  boundary, $\partiali E$, is a closed loop in the interior of this
  space and its virtual boundary at infinity,
  $\partial_\infty E = \gamma$, is a vertical graph of a continuous
  function $\gamma(\theta)$ over $\partial_\infty \HH^2$.  Suppose that $Y_s$ is a
  one-parameter family of compact minimal surfaces with boundary in
  $\HH^2 \times \RR$ such that $Y_s \cap \partial E = \varnothing$ and
  $\del Y_s \cap E = \varnothing$ for each $s\in[0,1]$.  Suppose that
  $p_0 \in Y_0 \cap E$.  Then there exists a continuous curve
  $s \mapsto p_s$ such that $p_s \in E \cap Y_s$ for each $s$ which
  equals $p_0$ at $s=0$.
\end{lemma}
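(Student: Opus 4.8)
The plan is to argue by a continuity/connectedness method on the set of parameters $s$ for which the conclusion holds on $[0,s]$. Concretely, let $\Sigma$ denote the set of all $s \in [0,1]$ for which there exists a continuous curve $\tau \mapsto p_\tau$, $\tau \in [0,s]$, with $p_0$ the given point and $p_\tau \in E \cap Y_\tau$ for all $\tau$. Then $0 \in \Sigma$, and I will show $\Sigma$ is both open and closed in $[0,1]$, so that $\Sigma = [0,1]$ and in particular $1 \in \Sigma$; this yields the curve asserted in the statement. The geometric inputs that make this work are: (i) a local uniqueness/transversality-type statement coming from the maximum principle, which says that near a point of transverse or tangential intersection of $E$ with $Y_s$ one can continue the intersection point continuously; and (ii) a global \emph{a priori} barrier statement preventing the chosen intersection point from escaping to the boundary $\partial E$ or to infinity, which is exactly where the hypotheses $Y_s \cap \partial E = \varnothing$, $\partial Y_s \cap E = \varnothing$, and the graphical virtual boundary come in.

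For openness: suppose $s_0 \in \Sigma$ with associated endpoint $p_{s_0} \in E \cap Y_{s_0}$. Since both $E$ and $Y_{s_0}$ are smooth minimal surfaces in the $3$-manifold $\HH^2 \times \RR$, their intersection near $p_{s_0}$ is, by the local structure of intersections of minimal surfaces (the intersection is either a submanifold or a finite union of arcs through the point, by the unique continuation and the structure theorem for nodal sets), a one-dimensional set through $p_{s_0}$; perturbing $s$ slightly, the surfaces $Y_s$ vary smoothly and continue to meet $E$ near $p_{s_0}$ because intersection is stable under small perturbations as long as the surfaces are not everywhere disjoint near that point — and they cannot become disjoint near $p_{s_0}$ without violating the maximum principle, since a minimal surface touching another from one side and then pulling away would force an interior tangency contradicting the strong maximum principle unless they coincide, which is excluded by the boundary hypotheses. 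Hence one can track $p_s$ continuously on a small interval around $s_0$, so $\Sigma$ is open.

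For closedness: let $s_k \to s_*$ with $s_k \in \Sigma$, and consider the corresponding curves. The key point is to produce a limit intersection point $p_{s_*} \in E \cap Y_{s_*}$ and to see that one can glue. Here I would use the barriers from the `tall rectangles' $R(\sigma,a,b)$ and the parabolic generalized catenoids recalled just above the lemma: because $\partial_\infty E = \gamma$ is a graph over $\partial_\infty \HH^2$, one can place, on either side of $E$, a family of these barrier surfaces so that the intersection points $p_{s_k}$ cannot run off to $\partial_\infty \HH^2$, and since $Y_s \cap \partial E = \varnothing$ and $\partial Y_s \cap E = \varnothing$, the points cannot run off to the finite boundary loop $\partiali E$ either; hence $\{p_{s_k}\}$ stays in a fixed compact subset of the open manifold $\HH^2 \times \RR$ and subconverges to some $p_{s_*}$ which lies in $E \cap Y_{s_*}$ by continuity of the family $Y_s$ and closedness of $E$ in that compact region. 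Combining this limit point with the openness argument at $s_*$ allows one to extend any curve defined on $[0,s_*)$ continuously to $[0,s_*]$, so $s_* \in \Sigma$.

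The main obstacle is the closedness step, and more precisely the \emph{a priori} confinement of the intersection points to a compact set: one must rule out the intersection point drifting out to the asymptotic boundary, and this is exactly the place where one genuinely needs the graphical/horizontal nature of $\partial_\infty E$ together with the explicit barrier surfaces (tall rectangles degenerating to parabolic generalized catenoids) to fence the end $E$ off from infinity in a way compatible with the family $Y_s$. A secondary subtlety is that the intersection $E \cap Y_s$ may be tangential rather than transverse at isolated parameters $s$; there one cannot use a naive implicit function theorem, and instead must invoke the maximum principle (as in the Colding–Minicozzi-type arguments referenced) to guarantee that the intersection persists and that a continuous — though possibly non-smooth — selection of the intersection point exists across such parameters.
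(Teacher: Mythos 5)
First, a point of comparison: the paper does not prove this lemma at all --- it is imported verbatim (in simplified form) from Collin--Hauswirth--Rosenberg \cite{chr} --- so there is no internal proof to measure you against; your attempt has to stand on its own. Your overall strategy (a continuity method in $s$, the maximum principle and the local nodal structure of intersections of minimal surfaces for persistence, and the hypotheses $Y_s\cap\partial E=\varnothing$, $\partial Y_s\cap E=\varnothing$ to rule out escape through the boundaries) is the right one, and the openness discussion is essentially sound once phrased correctly: at a tangency two distinct minimal surfaces meet locally in $2k\ge 4$ arcs, so $E$ genuinely crosses $Y_{s_0}$ near $p_{s_0}$, has points on both sides of it, and therefore still meets the nearby $Y_s$.

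Two points need repair, one of them a genuine gap. (i) The confinement step is misplaced: you do not need tall rectangles or parabolic generalized catenoids to keep the points $p_{s_k}$ in a compact set. Each $Y_s$ is compact and the family is continuous on $[0,1]$, so $\bigcup_{s}Y_s$ is already compact; properness of $E$ then yields a limit point $p_{s_*}\in E\cap Y_{s_*}$, and the two disjointness hypotheses force it to be interior to both $E$ and $Y_{s_*}$. The graphical hypothesis on $\partial_\infty E$ plays no role inside the lemma; as the paper itself observes after the proof of Proposition~\ref{prop:graph}, it is used only in the application, to construct the translated catenoids satisfying \eqref{eq:dobleonada}. Your sketched barrier construction ``on either side of $E$'' is vague, and had it really been needed it would itself be a gap. (ii) More seriously, the closedness step as written does not close. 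From $s_k\in\Sigma$, $s_k\to s_*$, you produce \emph{some} intersection point $p_{s_*}$, but the continuous selections on $[0,s_k]$ need not be restrictions of one another, and a single selection on $[0,s_*)$ need not converge as $s\to s_*$: it can oscillate among different arcs or components of $E\cap Y_s$. Having $p_{s_*}\in E\cap Y_{s_*}$ together with ``openness at $s_*$'' does not let you glue it to the curve you already have, so $s_*\in\Sigma$ is not established. The standard repair is to work instead with the set $\mathcal{C}=\{(p,s):\,p\in E\cap Y_s\}$, show that the connected component of $(p_0,0)$ is compact (by the compactness in (i)) and that its projection to $[0,1]$ is open (your crossing argument) and closed, hence all of $[0,1]$, and then extract the continuous path using the local structure of $E\cap Y_s$ as a finite union of analytic arcs (which gives local path-connectedness of that component). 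Without an argument of this kind, the induction on your set $\Sigma$ does not go through.
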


We now turn to the main result of this section:

\begin{proposition}\label{prop:graph}
  Let $E$ be a properly embedded annular end as in the previous
  lemma. Then for a sufficiently large $R > 0$, there exists a
  function $u : \overline{\h^2} \setminus D_{\HH^2}(o,R) \to \RR$ with
  $u(1,\theta) = \gamma(\theta)$ such that the graph of $u$ equals
  $E \cap (( \overline{\h^2} \setminus D_{\HH^2}(o,R)) \times \RR)$.
\end{proposition}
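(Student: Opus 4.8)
The plan is to establish, in order, three facts: (a) near infinity $E$ lies in a horizontal slab $\h^2\times[m,M]$; (b) outside a large hyperbolic disk the tangent planes of $E$ are nowhere vertical, so that each horizontal section $E\cap(\h^2\times\{t\})$ is, near infinity, a finite disjoint union of embedded arcs, each graphical over a subarc of $\del_\infty\h^2$; and (c) these sheets assemble into a \emph{single} graph over $\{R<|z|<1\}$, i.e.\ the multiplicity is one. The tall rectangles from Section~\ref{sec:graphs} are the barriers that drive (b), and the Dragging Lemma (Lemma~\ref{lem:drag}) drives (c). For (a): since $\gamma\in\calC^0(\sph^1)$ it is bounded, $a_0\le\gamma\le b_0$, and $\partiali E$ is a compact loop. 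By the definition of the virtual boundary, $\del_\infty E=\gamma\subset\del_\infty\h^2\times[a_0,b_0]$, so any sequence $p_k\in E$ with $|z(p_k)|\to1$ has heights accumulating in $[a_0,b_0]$; properness of $E$ then yields $\rho_0<1$ with $E\cap(\{|z|>\rho_0\}\times\R)\subset\h^2\times[a_0-1,b_0+1]$, and since the remaining part of $E$ is compact we obtain $E\subset\h^2\times[m,M]$ (one may equally deduce the slab directly from the maximum principle against the totally geodesic slices $\h^2\times\{t\}$).

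For (b), fix $\xi\in\sph^1$, set $h=\gamma(\xi)$, and choose a short arc $\sigma\ni\xi$ on which $|\gamma-h|$ is as small as we wish; let $\eta=\eta(\sigma)$ be the corresponding geodesic. Using the tall rectangles $R(\sigma,a,b)$ and the semi‑infinite rectangles $R(\sigma,-\infty,b)$, $R(\sigma,a,\infty)$, one places barriers whose finite slice over $\sigma$ sits just above $\max_\sigma\gamma$, resp.\ just below $\min_\sigma\gamma$, and which extend monotonically across $\h^2\setminus\eta$; after translating each past the slab $[m,M]$ on the $\eta$ side, these surfaces are disjoint from $\del_\infty E$ and, by the maximum principle, trap $E$ over a neighborhood of $\xi$. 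Because the horizontal sections of the tall rectangles are curves equidistant from $\eta$, sliding them and invoking the interior maximum principle together with unique continuation (a full tall rectangle has an asymptotic boundary different from $\gamma$) rules out that any horizontal section of $E$ near infinity be a closed curve or carry a vertical tangent; hence $\langle\nu,\del_t\rangle\neq0$ on $E\cap(\{|z|>\rho\}\times\R)$ for some $\rho<1$. The vertical projection $E\cap(\{\rho<|z|<1\}\times\R)\to\{\rho<|z|<1\}$ is then a proper local diffeomorphism, hence a finite covering of some multiplicity $k\ge1$, so $E$ is a $k$‑valued graph near infinity and every horizontal section there is a union of $k$ arcs graphical over $\sph^1$.

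For (c), note that following the $k$ ordered sheets once around $\sph^1$ permutes them cyclically, so $E$ is connected near infinity with asymptotic boundary the single curve $\gamma$. To see $k=1$ one runs a monodromy/winding argument built on Lemma~\ref{lem:drag}: take $Y_s$ a one‑parameter family of truncated tall rectangles whose central geodesics rotate once around $\del_\infty\h^2$, so that $Y_1=Y_0$, start from a point $p_0\in Y_0\cap E$ on the lowest sheet, and track on which sheet the dragged intersection point $p_s$ lies (the set $Y_s\cap E$ being linearly ordered along $Y_s$); returning to $Y_0$ one would land on a strictly higher sheet, contradicting that $Y_0\cap E$ is the same finite ordered set, unless $k=1$. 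Hence there exist $R>0$ and $u\colon\overline{\h^2}\setminus D_{\h^2}(o,R)\to\R$ whose graph is $E\cap((\overline{\h^2}\setminus D_{\h^2}(o,R))\times\R)$, and $u(1,\theta)=\gamma(\theta)$ because $\del_\infty E=\gamma$.

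The main obstacle is Step (b): excluding vertical tangent planes of $E$ near infinity \emph{without} a finite–total–curvature hypothesis, which is precisely the point where this sharpens \cite{chr}. The tall rectangles must therefore be used not as isolated one‑sided barriers but as an almost‑vertical sweepout of a neighborhood of the relevant part of $\del_\infty\h^2\times\R$, with heights $b-a$ kept large so that the central slices lie uniformly close to $\eta$; one must check simultaneously that such a family controls $E$ near every asymptotic boundary point and that the degeneration of $R(\sigma,a,b)$ as $\sigma\to\sph^1$ with $b-a\searrow\pi$ (toward a parabolic generalized catenoid) does not interfere with the confinement. Once transversality is secured, Steps (a) and (c) are comparatively soft.
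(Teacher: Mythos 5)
There is a genuine gap, and it sits exactly where you flag it: step (b), the exclusion of vertical tangent planes near infinity. Your argument there is that trapping $E$ between semi-infinite tall rectangles and then ``sliding them and invoking the interior maximum principle together with unique continuation'' rules out vertical tangents. This does not work: at a point where $T_{(q,t)}E$ is vertical, $E$ is tangent to the barrier but \emph{crosses} it (locally the intersection is a union of $n\ge 2$ curves meeting at equal angles), so there is no one-sided contact and the maximum principle gives no contradiction; and tangency does not imply coincidence, so unique continuation is irrelevant. Likewise, confinement of $E$ in an arbitrarily thin neighborhood of the graph of $\gamma$ near infinity is perfectly compatible with small vertical ``wrinkles'' or necks, so the sweepout alone cannot finish the job. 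The paper's mechanism is different and this is the actual content of the proposition: at a putative vertical tangency one places a tall rectangle $\mathcal{R}=R(\hat\sigma,-h',h')$ tangent to $T(E)$ at that point and disjoint from a connected compact core $T(K)$ (Claim~\ref{claim:yes}); sweeping out each side $\mathcal{R}^\pm$ by foliations of tall rectangles and using the maximum principle shows the local crossing curves cannot bound a disk on either side, so $T(E)\cap\mathcal{R}^+$ has at least two components $\Sigma_1,\Sigma_2$; and then the Dragging Lemma~\ref{lem:drag}, applied to a continuous family of \emph{truncated catenoids} $C(\xi_i(s),\tau(\xi_i(s)))$ satisfying \eqref{eq:dobleonada}, drags a point of each $\Sigma_i$ back to $T(K_0)$ inside $T(E)\cap\mathcal{R}^+$, producing a path joining $\Sigma_1$ to $\Sigma_2$ and a contradiction. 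Nothing in your proposal replaces this connectivity argument, and you yourself identify (b) as unresolved.

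A secondary issue: you reserve the Dragging Lemma for step (c), a monodromy count of sheets along a rotating family of truncated tall rectangles. As written this is not justified (why should the dragged point land on a strictly higher sheet after one turn, and why do the boundaries of your truncated rectangles avoid $E$ as required by Lemma~\ref{lem:drag}?), and it is also unnecessary: once vertical tangencies are excluded, the vertical projection of $E_R$ is a proper local diffeomorphism, hence a covering of the outer annulus, and embeddedness/properness together with the tangent-cylinder argument of the paper give bijectivity directly. So the soft steps (a) and (c) are fine in spirit, but the proposal is missing the key idea that makes (b) go through.
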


\begin{remark} \label{re:extension}
  The graph function $u$ is smooth in the interior and as regular at
  $\del_\infty E$ as the function $\gamma(\theta)$, see the proof of Theorem \ref{manifoldthm} in Section
  \S \ref{sec:manifold}.
\end{remark}

\begin{proof}
  In the following we fix Euclidean coordinates $z = re^{i\theta}$ on
  $\overline{\HH^2}$, and when we refer to the length of an arc on
  $\partial_\infty \HH^2$, we mean with respect to the Euclidean metric and these
  coordinates.

  For each $\epsilon > 0$, there exists $\delta > 0$ so that if
  $\sigma$ is an arc in $\sph^1$ of length less than $\delta$, then
  $\mathrm{osc}_\sigma \gamma \ (= \sup_\sigma \gamma - \inf_\sigma
  \gamma) < \epsilon$.
  If the length of $\sigma$ is sufficiently small, then the
  semi-infinite tall rectangles
  $R(\sigma, -\infty, \inf_\sigma \gamma)$ and
  $R(\sigma, \sup_\sigma \gamma, \infty)$ do not intersect
  $\partiali E$, and by the maximum principle, neither of these
  intersect $E$ in its interior.

  Fixing $\epsilon > 0$ and a large constant
  $C_1<\frac{\pi}{4\epsilon}$, there exists a curve $\tilde{\eta}$
  equidistant from the geodesic $\eta$ associated to $\sigma$ so that
  in the lens-shaped region $D_\sigma$ between $\sigma$ and
  $\tilde{\eta}$ the vertical distance between these upper and lower
  semi-infinite tall rectangles is less than $C_1 \epsilon$ (see
  Fig. \ref{fig:Dsigma}.)  We then cover $\sph^1$ by finitely many
  such arcs $\sigma$; the union of the corresponding lenses $D_\sigma$
  covers an outer annular region $\HH^2 \setminus D_{\HH^2}(o, R_0)$,
  and in this region, the difference between the maximum and minimum
  height of $E \cap (\{p\} \times \RR)$ is less than $C_1\epsilon$.
  Clearly $E \cap ((\HH^2 \setminus D_{\HH^2}(o, R_0)) \times \RR)$ is
  trapped in the region between the union of the upper and of the
  lower semi-infinite tall rectangles.

  \begin{figure}[htpb]
    \begin{center}
      \includegraphics[height=.2\textheight]{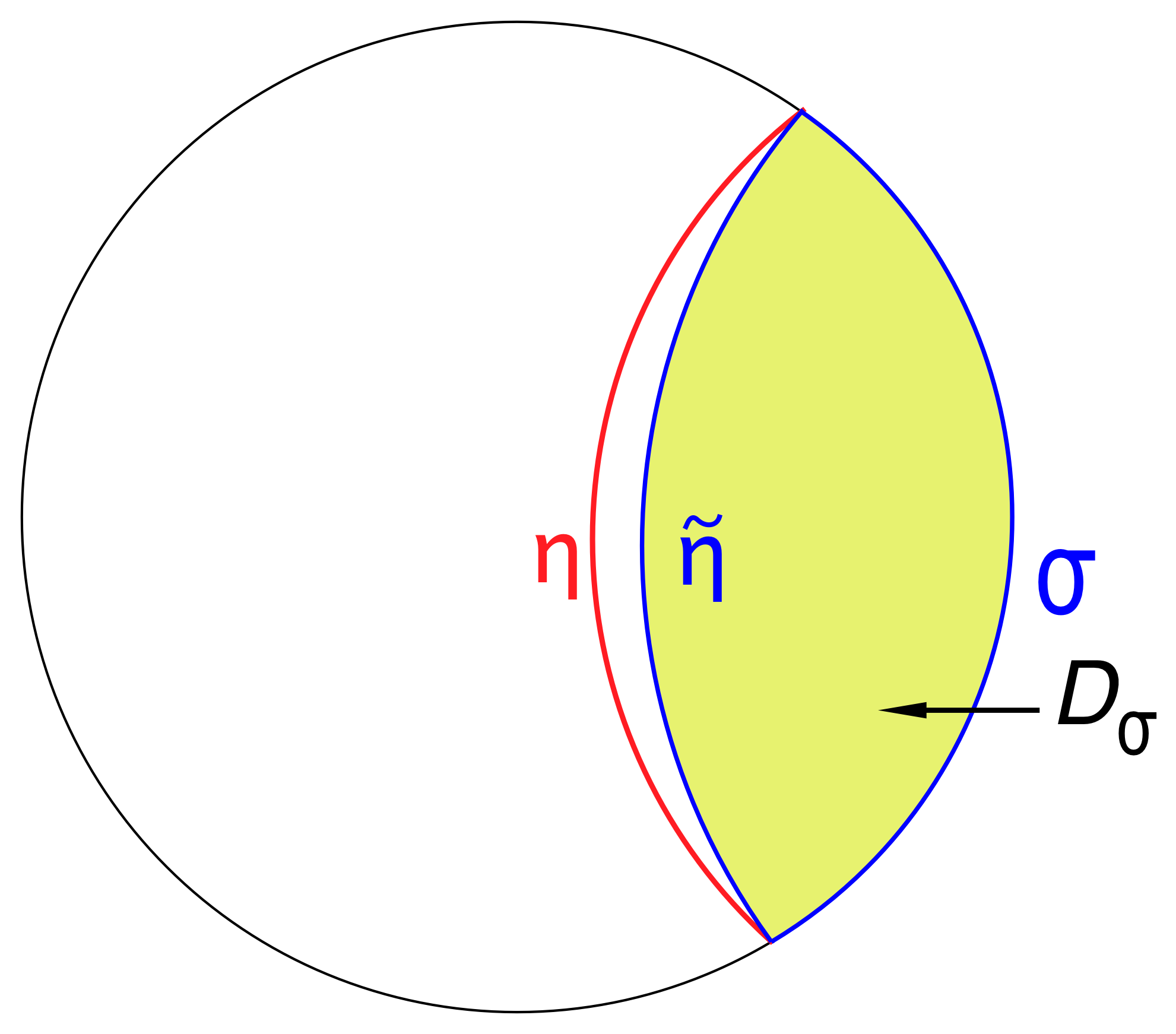}
    \end{center}
    \caption{ The region $D_\sigma$. }
    \label{fig:Dsigma}
  \end{figure}

  Next fix a truncated vertical catenoid $C$, i.e., the intersection
  of the catenoid centered on the axis $\{o\} \times \RR$ of height
  $4C_1\epsilon$ with the cylinder $D_{\HH^2}(o, \rho) \times \RR$.
  We choose $\rho$ sufficiently large so that the vertical separation
  between the upper and lower boundaries of $C$ is greater than
  $3C_1\epsilon$.  Denote by $C(q,\tau)$ the translate of this
  truncated catenoid by isometries of $\HH^2 \times \RR$ so that it is
  centered at $(q,\tau) \in \HH^2 \times \RR$.  By the construction
  above, we may choose a radius $R_1 \gg R_0$ so that
  $\partiali E \subset D_{\HH^2}(o,R_1) \times \R$, and a continuous
  function $q \mapsto \tau(q)$,
  $q \in \HH^2 \setminus D_{\HH^2}(o, R_1)$, satisfying 
\begin{equation} \label{eq:dobleonada} C(q, \tau(q)) \cap \partiali
    E = \varnothing, \mbox{ and
      $\del C(q, \tau(q)) \cap E = \varnothing$}
  \end{equation}
  for every $q$ in this exterior region
  $\HH^2 \setminus D_{\HH^2}(o, R_1)$.

  Denote by
  $E_R := E \cap (({\HH^2} \setminus D_{\HH^2}(o,R)) \times \RR)$ the
  region in $E$ outside a large cylinder. To prove
  Proposition~\ref{prop:graph}, we must verify the following two
  assertions for $R \gg R_1$:
  \begin{enumerate}
  \item[i)] the projection $E_{R} \to \HH^2 \setminus D_{\HH^2}(o,R)$
    is bijective and
  \item[ii)] there are no points $(q,t) \in E_R$ such that
    $T_{(q,t)} E$ is vertical. 
  \end{enumerate}

  First note that i) is a consequence of ii).  Indeed, if the
  projection of $E_R$ does not contain a full neighborhood of
  infinity, then there exists a sequence of points $q_j\in\HH^2$ which
  tend to infinity and which do not lie in this image. Because $E$ is
  properly embedded, its projection on $\HH^2$ is closed, so for each
  $j$ there exists $\kappa_j > 0$ such that $D_{\HH^2}(q_j, \kappa_j)$
  is also disjoint from this image. Next, by translating the center
  $q_j$ in the component of the complement of the image of the
  projection of $E$, we may arrange that
  $\del D_{\HH^2}(q_j, \kappa_j) \times \RR$ is tangent to $E$ at some
  point, and clearly the tangent plane of $E$ must be vertical there.
  This proves the surjectivity of this projection. Furthermore, if ii)
  holds, then the projection is a covering map, and properness of the
  embedding prevents there from being more than one sheet. Hence it
  must be bijective, and therefore a diffeomorphism.

  \medskip
  
  We therefore turn to assertion ii). Choose $R_2>R_1+2\rho$ and let
  $K_0=(\overline{D_{\h^2}(o,R_2)}\times \R)\cap E$ be the
  corresponding compact portion of $E$. Properness of $E$ guarantees
  that $K_0$ has a finite number of connected components and also that
  there exists a connected compact subset $K \subset E$ such that
  $K_0 \subset K$. In particular, any two points in $K_0$ can be
  joined by a path contained in $K$. 

  Now suppose there exists a point $(q,t) \in E \setminus K$ such that
  $T_{(q,t)} E$ is the vertical plane $\wt{\eta} \times \R$, where
  $\wt{\eta}$ is a geodesic in $\h^2$.  Transform the whole ensemble
  by an isometry $T$ carrying $\wt{\eta}$ to the geodesic
  $\eta=\{z \in \h^2, \Re(z)= 0 \}$, with $T(q,t)= (o, 0)$; we also
  write $T(o,0)=(q_0,t_0)$ for the image point, which lies in
  $\{z \in \h^2, \Re(z)\geq 0 \} \times \R$.  Note that there exists
  $C_2 > 0$ so that $T(E)$ lies in the horizontal slab
  $\h^2 \times [-C_2,C_2]$.

  Fix $\delta' > 0$ and consider the geodesic
  $\wh{\eta} \subset \{z \in \h^2, \Re(z)<0 \}$ which is orthogonal to
  $\{\Im (z) = 0\}$ and at hyperbolic distance $\delta'$ from $\eta$
  (thus $o$ is the closest point on $\eta$ to $\wh{\eta}$). Also let
  $\wh{\sigma}\subset\partial_\infty \HH^2$ be the boundary arc connecting the
  endpoints of $\wh{\eta}$ and containing the endpoints of $\eta$. We
  denote by $\wh{\eta}(\delta')$ the curve equidistant from
  $\wh{\eta}$ at distance $\delta'$ and on the same side of
  $\h^2 \setminus \hat{\eta}$ as $\eta$  (see
  Fig. \ref{fig:lunette}.) There is a unique tall
  rectangle $R(\hat{\sigma}, -h', h')$ which meets
  $\HH^2 \times \{0\}$ at $\hat{\eta}(\delta')$, and is tangent to
  $\eta \times \RR$, and hence $T(E)$, at $(o,0)$.  Note that
  $\delta'$ and $h'$ are determined by one another, and
  $h' \to \infty$ as $\delta' \to 0$.
 \begin{figure}[htpb]
    \begin{center}
      \includegraphics[height=.25\textheight]{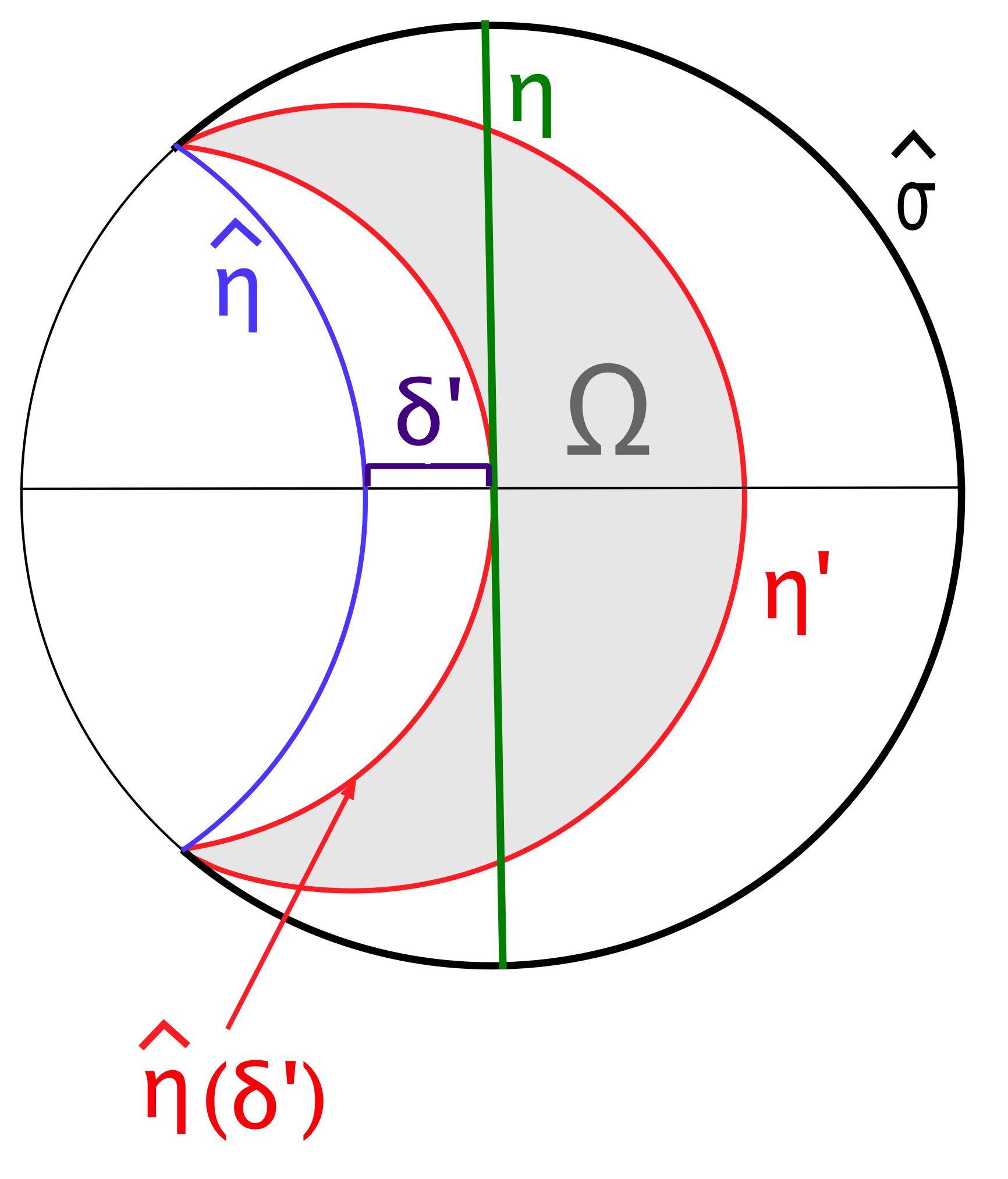}
    \end{center}
    \caption{ The lunette region $\Omega$ between the core curve
      $\hat{\eta}(\delta')=R(\hat{\sigma}, -h', h') \cap \{t=0\}$ and
      the equidistant curve $\eta'$, which is the projection to
      $\HH^2$ of $R(\hat{\sigma}, -h', h') \cap \{t = \pm 2C_2\}$. }
    \label{fig:lunette}
  \end{figure}
  \begin{claim} \label{claim:yes}
    For $h'$ and $\dist_{\h^2}(o,q_0) (=\dist_{\h^2}(q,o))$
    sufficiently large, $\mathcal{R}=R(\hat{\sigma}, -h', h')$ does
    not intersect $T(K)$.
  \end{claim}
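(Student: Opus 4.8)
The plan is to separate $\mathcal R=R(\widehat\sigma,-h',h')$ from $T(K)$ in two stages. First, since $T(E)$ — hence $T(K)$ — lies in the slab $\h^2\times[-C_2,C_2]$, only the part of $\mathcal R$ of height at most $C_2$ can meet $T(K)$, and as $h'\to\infty$ this part degenerates onto a thin tube about $\widehat\eta$; second, because $\widehat\eta$ was placed in $\{\Re z<0\}$ while $q_0\in\{\Re z\ge 0\}$, the compact set $T(K)$ is pushed off that tube once $\dist_{\h^2}(o,q_0)$ is large. For the first stage, recall from the tall‑rectangle construction above that for $t_0\in(-h',h')$ the slice $\mathcal R\cap(\h^2\times\{t_0\})$ projects to a curve equidistant from $\widehat\eta$, these equidistant curves being nested, at distance $\delta'$ from $\widehat\eta$ when $t_0=0$ and at distance tending to $\infty$ as $|t_0|\nearrow h'$. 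Hence, once $h'$ is large enough that $2h'>\pi$ and $h'>2C_2$, the $\h^2$‑distance $\delta''$ from $\widehat\eta$ to the projection of $\mathcal R\cap\{t=\pm 2C_2\}$ is finite, with $\delta''\to0$ as $h'\to\infty$. Writing $\eta'$ for that projection and $\Omega$ for the lunette between $\widehat\eta(\delta')$ and $\eta'$, nestedness shows that every point of $\mathcal R\cap(\h^2\times[-C_2,C_2])$ — a fortiori every possible point of $\mathcal R\cap T(K)$ — projects into $\Omega\subset\{z:\dist_{\h^2}(z,\widehat\eta)\le\delta''\}$.

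For the second stage, recall that $K$ is a fixed connected compact subset of $E$ containing $K_0=(\overline{D_{\h^2}(o,R_2)}\times\R)\cap E$, which is nonempty since $\partiali E\subset D_{\h^2}(o,R_1)\times\R$ and $R_2>R_1$. Write $T=T_1\times T_2$ with $T_1\in\operatorname{Isom}(\h^2)$, so that $T_1(o)=q_0$, and set $d:=\diam K=\diam T(K)$. Picking any $p_*\in K_0$, its $\h^2$‑coordinate lies in $\overline{D_{\h^2}(o,R_2)}$, so the $\h^2$‑coordinate of $T(p_*)$ lies in $\overline{D_{\h^2}(q_0,R_2)}$, and by connectedness the $\h^2$‑projection of $T(K)$ is contained in $D_{\h^2}(q_0,d+R_2)$.

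It therefore suffices to show $D_{\h^2}(q_0,d+R_2)\cap\{z:\dist_{\h^2}(z,\widehat\eta)\le\delta''\}=\varnothing$, i.e. to arrange $\dist_{\h^2}(q_0,\widehat\eta)>d+R_2+\delta''$; this is the one genuinely geometric point and the main obstacle. Since $\dist_{\h^2}(\eta,\widehat\eta)=\delta'>0$, the geodesic $\widehat\eta\subset\{\Re z<0\}$ shares neither ideal endpoint with $\eta$, so both of its endpoints lie strictly inside the open semicircle $\{\Re z<0\}\cap\partial_\infty\h^2$. Consequently, for any fixed $M>0$ the set $\{z\in\overline{\{\Re z\ge 0\}}:\dist_{\h^2}(z,\widehat\eta)\le M\}$ has compact closure in $\overline{\h^2}$ not meeting $\partial_\infty\h^2$, hence is a bounded subset of $\h^2$, say contained in $D_{\h^2}(o,N_0)$ for some $N_0=N_0(M,h')$. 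Taking $M=d+R_2+\delta''$ and requiring $\dist_{\h^2}(o,q_0)=\dist_{\h^2}(q,o)>N_0$ then forces $\dist_{\h^2}(q_0,\widehat\eta)>M$, whence $D_{\h^2}(q_0,d+R_2)\cap\Omega=\varnothing$; since any point of $\mathcal R\cap T(K)$ would have $|t|\le C_2$ and therefore project into $\Omega$, we conclude $\mathcal R\cap T(K)=\varnothing$. Thus it is enough to take $h'$ large (so that $2h'>\pi$, $h'>2C_2$, and $\widehat\eta,\widehat\sigma$ are well defined) and then $\dist_{\h^2}(o,q_0)$ large.

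The step I expect to require the most care is the tube description of $\Omega$ in the first stage: one must extract from the tall‑rectangle construction the precise monotone dependence of the slice‑to‑$\widehat\eta$ distance on the height, so that the bounded‑height part of $\mathcal R$ genuinely sits inside a finite‑width — and, for $h'$ large, arbitrarily thin — neighborhood of $\widehat\eta$. Once that is in hand, the separation in the third paragraph is elementary planar hyperbolic geometry, using only that $q_0$ and $\widehat\eta$ were placed on opposite sides of $\eta$.
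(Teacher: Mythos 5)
Your proof is correct and takes essentially the same route as the paper's: both confine $T(E)\cap\mathcal{R}$ to the lunette $\Omega\times\R$ between $\hat{\eta}(\delta')$ and $\eta'$ using the slab bound $T(E)\subset\h^2\times[-C_2,C_2]$ together with the nested equidistant slices of the tall rectangle, and then separate $\Omega$ from the projection of $T(K)$ once $\dist_{\h^2}(o,q_0)$ is large. The paper asserts this last separation without detail; you supply the (correct) compactness argument, namely that the bounded-width neighborhood of $\hat{\eta}$ meets $\overline{\{\Re z\ge 0\}}$ in a bounded set because the ideal endpoints of $\hat{\eta}$ lie strictly in $\{\Re z<0\}$.
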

  Indeed, suppose that $h'> 3 C_2$ and $\eta'$ be the curve
  equidistant from $\hat{\eta}$ which is the projection to $\h^2$ of
  $\mathcal{R}\cap \{t=\pm 2 C_2\}$.  Let $\Omega$ be the region
  between $\hat{\eta}(\delta')$ and $\eta'$ (see
  Fig. \ref{fig:lunette}.) We know that
  $T(E)\cap\mathcal{R}\subset \Omega \times \R$, as
  $T(E)\subset\h^2 \times [-C_2,C_2]$.  Furthermore,
  if $\dist_{\h^2}(o,q_0)$ is large enough,
  $(\Omega \times \R) \cap T(K)=\varnothing$, and Claim
  \ref{claim:yes} follows.
  
  Let $\mathcal{R}^\pm$ be the connected components of
  $(\h^2 \times \R) \setminus \mathcal{R}$, and assume that
  $T(K) \subset \mathcal{R}^+$.  Locally around $(o,0)$,
  $T(E) \cap \mathcal{R}$ is the union of $n$ smooth curves meeting at
  equal angles.  There are many foliations of $\h^2 \times (-h',h')$ by
  families of tall rectangles of which $\mathcal{R}$ is one
  element. These may be used to sweep out either
  $\mathcal R^\pm\times (-h',h')$, and together with the maximum
  principle show that $T(E) \cap \mathcal{R}$ cannot bound a disk
  lying either in $\mathcal{R}^+$ or $\mathcal{R}^-$. In particular,
  $T(E) \cap \mathcal{R}^+$ has at least two distinct connected
  components $\Sigma_1$, $\Sigma_2$.
 
  If $\lambda>0$ is arbitrary, denote by
  $\mathcal{R}(\lambda) \subset \mathcal{R}^+$ the hyperbolic
  translation of $\mathcal{R}$ along $\{\Im (z) = 0\}$ by the distance
  $\lambda$. We also write
  $\mu=\mathcal{R}(\lambda)\cap (\h^2 \times \{0\})$ and
  $\mathcal{R}(\lambda)^+$ for the connected component of
  $(\h^2\times\R)\setminus \mathcal{R}(\lambda)$ in $\mathcal{R}^+$.
  When $\lambda$ is sufficiently small, $\mathcal{R}(\lambda)$ meets
  both of the $\Sigma_j$, and in addition,
  $T(K) \subset \mathcal{R}(\lambda)^+$.

  Let $\mathcal{U}$ be a connected component of
  $\Sigma_i \cap \mathcal{R}(\lambda)^+$, $i = 1$ or $2$. If
  $\overline{\mathcal{U}}$ is compact, then the same argument
   as above shows that the case where
  $\partial \mathcal{U} \subset \mathcal{R}(\lambda)$ is
  impossible. We thus turn to the remaining case
  $\partiali T(E) \subset \partial \mathcal{U}$. We can then consider
  a continuous path in $\mathcal{U}\subset T(E) \cap \mathcal{R}^+$
  from a point $x_i\in\Sigma_i \cap \mathcal{R}(\lambda)$ to a point
  $y_i\in \partiali T(E)\subset T(K_0)$.

  Suppose now that $\overline{\mathcal{U}}$ is non-compact.  We are
  also going to construct a continuous path in
  $T(E) \cap \mathcal{R}^+$ from a point $x_i\in\Sigma_i$ to a point
  in $y_i\in T(K_0)$. Adjoining this to a continuous path in $T(K)$
  connecting $y_1$ and $y_2$ gives a continuous path in
  $T(E)\cap\mathcal{R}^+$ between  $x_1 \in \Sigma_1$ and
  $x_2\in \Sigma_2$. This contradicts that $\Sigma_1$ and
  $\Sigma_2$ are different components of $E \cap \mathcal{R}^+$. Hence
  there would not exist a point $(q,t) \in E \setminus K$ whose
  tangent plane $T_{(q,t)} E$ is the vertical, if $\dist_{\h^2}(q,o)$
  is large enough, and assertion ii) would follow.  Then in order to
  complete the proof of assertion ii) is suffices to construct such a
  path in the case $\overline{\mathcal{U}}$ is non-compact.

  Since $\overline{\mathcal{U}}$ is non-compact, there exist a point
  $x_i\in\Sigma_i\cap \mathcal{R}(\lambda)^+$ sufficiently far from both
  $\mathcal{R}$ and $T(D_{\HH^2}(o,R_1))\times \R$ so that a truncated
  catenoid $C(\hat x _i, \tau(\hat x _i))$ passes through $x_i$, for
  some point $\hat x _i\in \HH^2 \setminus T(D_{\HH^2}(o, R_1))$.  Let
  $\xi_i(s)$ be a curve in $\h^2\times \mathcal{R(\lambda)}^+$ joining
  $\hat x_i$ to a point in $T(D_{\HH^2}(o,R_1+\rho))\times \R)$.  We
  can assume that $\xi_i$ is at a distance bigger than $2\rho$ from
  $\mathcal{R}$ at any point.  From \eqref{eq:dobleonada}, the
  translated catenoids $C(\xi_i(s), \tau(\xi_i(s)))$ satisfy
  \[
  C(\xi_i(s), \tau(\xi_i(s))) \cap \partiali T(E)= \varnothing \quad {\rm and} \quad \partial
  C(\xi_i(s), \tau(\xi_i(s))) \cap T(E) = \varnothing.
  \]
  Lemma \ref{lem:drag} shows that there exists a continuous curve
  $s \mapsto p_s$ such that $p_s \in E\cap C(\xi_i(s), \tau(\xi_i(s)))$ for each $s$. This
  gives a continuous path in $E \cap \mathcal{R}^+$ from a point $x_i$
  in $\Sigma_i$ to a point $y_i \in T(K_0)\subset T(K)$, as desired.
\end{proof}

\begin{remark} We observe that the graphical behaviour of the asymptotic boundary of the annulus has only been used
to ensure the existence of truncated catenoids satisfying \eqref{eq:dobleonada}.  This conclusion may also be obtained
in the following setting where somewhat less is known. Consider two functions $\alpha^\pm  \in \calC^{2 , \alpha}(\sph^1)$ 
such that $0 < \alpha^+(\theta)- \alpha^-(\theta)<\pi$ for all $\theta \in \sph^1$. Suppose also that $E$ is a properly 
embedded minimal annulus with one compact boundary component and $\del_\infty E$ lies between the graphs of $\alpha^\pm$, 
i.e., $\del_\infty E \subset \{ (\theta,t) \in \sph^1 \times\R \; : \; \alpha^-(\theta) < t <\alpha^+(\theta)\}$.  
Then $E$ is graphical in some region $\{|z| > 1-\epsilon\}$. In particular, necessarily $\del_\infty E$ is
a vertical graph. 

If we remove the hypothesis of embeddedness in Proposition~\ref{prop:graph}, then the assertion is not
longer necessarily true.  However,  the proof above still shows that for small enough $\epsilon > 0$, 
there is no point in $E \cap (\{ |z| > 1-\epsilon\} \times \R)$
where the tangent plane to $E$ is vertical. This means that near infinity, $E$ is a multi-graph. 
\end{remark}

We conclude this section with a closely related result about the shape of the set of points on $A$ where the tangent plane is vertical.
\begin{proposition} \label{prop:V}
Let $A$ be a properly Alexandrov-embedded, minimal annulus with embedded ends such that $\Pi(A)=(\gamma^+,\gamma^-)$ consists of two 
$\calC^{2, \alpha}$ graphs over $\sph^1$. Then the set $V$ of all points on $A$ where the tangent plane is vertical (or equivalently, 
where the normal has no vertical component) is a regular curve which generates $H_1(A,\z)$. Moreover, the Gauss map of $A$ 
restricted to $V$, $\nu|_V$, is a diffeomorphism from $V$ to the equator of the unit sphere $\sph^2$.
\footnote{The Gauss map is well defined in any Cartan-Hadamard manifold since there is a natural identification of the unit sphere bundle 
at a point with the sphere at infinity. In the present setting the horizontal equator is also well-defined since it corresponds to the unit normals
which have no $\RR$ component.}
%the projection of $V$ onto $\HH^2$ bounds a convex region $\calO$.
\end{proposition}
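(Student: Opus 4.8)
The plan is to study the function $n:=\inner{\nu}{\partial_t}$ on $A$, where $\nu$ is a continuous unit normal field; by definition $V=n^{-1}(0)$ is exactly the locus where $\partial_t$ is tangent to $A$. Since $\partial_t$ is a parallel Killing field of $\h^{2}\times\R$, the function $n$ is a Jacobi field, and differentiating it yields the pointwise identity $\nabla^{A}n=-S(\partial_t^{\top})$, where $S$ is the shape operator of $A$ and $\partial_t^{\top}$ denotes the tangential component of $\partial_t$. Because $A$ is minimal, $S$ is self-adjoint and trace-free, so $S^{2}=\kappa^{2}\,\mathrm{Id}$, where $\kappa$ is the common absolute value of the principal curvatures, and $\det(d\nu)=\det(-S)=-\kappa^{2}$ (so the Gauss map reverses orientation wherever $\kappa>0$). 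Along $V$ one has $\partial_t^{\top}=\partial_t$, a unit tangent vector, hence $|\nabla^{A}n|\equiv\kappa$ on $V$ and $|d\nu(e)|=\kappa$ for every unit $e\in TA$. Consequently, at a point $p\in V$, the conditions that $V$ be a regular curve near $p$, that $0$ be a regular value of $n$ near $p$, and that $\nu|_{V}$ be an immersion near $p$ are all equivalent to $\kappa(p)>0$, i.e.\ to $p$ not being a flat (totally geodesic) point of $A$.

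Next I would localize $V$ and extend the Gauss map. By Proposition~\ref{prop:graph} and Remark~\ref{re:extension}, each end $E^{\pm}$ is the vertical graph of a $\calC^{2,\alpha}$ function $u^{\pm}$ over an exterior region of $\h^{2}$; since the Euclidean gradient of $u^{\pm}$ stays bounded while the Poincar\'e conformal factor degenerates, $|\nabla_{\h^{2}}u^{\pm}|\to0$ at $\partial_\infty\h^{2}$, whence $n=\pm(1+|\nabla_{\h^{2}}u^{\pm}|^{2})^{-1/2}\to\pm1$ on $E^{\pm}$. Thus $V$ lies in a compact part of $A$, and $\nu$ extends continuously to the compact annulus $\overline{A}$ obtained by adjoining the curves $\gamma^{\pm}$, sending each boundary circle to one of the poles $\pm\partial_t$; write $\overline{\nu}\colon\overline{A}\to\sph^{2}$ for this extension, so that $\overline{\nu}^{-1}(\text{equator})=V$. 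Moreover, since $A$ separates $\h^{2}\times\R$ (its closure is a null-homologous annulus in the compactified space) and its two ends approach the region between the graphs $u^{-}$ and $u^{+}$ from opposite sides, $n$ has limits of opposite sign on the two ends; hence $\overline{\nu}$ sends the two boundary circles of $\overline{A}$ to opposite poles.

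The heart of the matter is to show that $A$ has no flat point on $V$ (in fact, I expect $A$ to have no flat point at all). If $p\in V$ were flat, then $S(p)=0$ and $A$ would be tangent at $p$, to order at least two, to the totally geodesic vertical plane $P:=T_{p}A$. Realising $P$ as one leaf of a foliation of $\h^{2}\times\R$ by totally geodesic vertical planes $\{\eta_{s}\times\R\}$ (over a geodesic foliation $\{\eta_{s}\}$ of $\h^{2}$), the interior maximum principle forbids $A$ from lying locally on one side of $P$ near $p$, while the higher-order contact forces $A\cap P$ to consist, near $p$, of at least four analytic arcs meeting at equal angles. Sweeping $P$ across the foliation and arguing as in the proof of Proposition~\ref{prop:graph} — in particular, using that any loop of $A\cap P$ bounding a disc in $A$ would force $A\subset P$, together with the description of $\partial_\infty A$ — one rules out this local configuration as incompatible with $A$ being an annulus with two graphical ends. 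I expect this to be the step requiring the most work.

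Granting it, $\kappa>0$ along $V$, so $V$ is a finite disjoint union of embedded circles $V_{i}$, the map $\overline{\nu}$ is transverse to the equator, and $\nu|_{V}\colon V\to\text{equator}$ is a covering map (hence each $\nu|_{V_{i}}$ has nonzero degree). Since $d\nu=-S$ has negative determinant and the transversality of $\overline{\nu}$ to the equator is compatible with the natural co-orientations (the conormal pointing into $\{n>0\}$ is sent to the conormal pointing into the upper hemisphere), the degrees $\deg(\nu|_{V_{i}})$ all have the same sign, so $|\deg\overline{\nu}|=\sum_{i}|\deg(\nu|_{V_{i}})|\ge\#\{V_{i}\}$. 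It then suffices to show $|\deg\overline{\nu}|=1$: for $w\in\sph^{2}$ close to a pole, $\overline{\nu}^{-1}(w)$ lies in the single graphical end on which $n\to+1$ and is described by the equation $\nabla_{\h^{2}}u=v_{w}$ near $\partial_\infty\h^{2}$, so the signed number of solutions equals the winding number of $\nabla_{\h^{2}}u$ along a circle near infinity, which is $1$ once one knows the radial sign of $\nabla_{\h^{2}}u$ near the end is definite — a consequence of the trapping of the end between the tall rectangles of \S\ref{sec:graphs} (alternatively, $|\deg\overline{\nu}|$ is a deformation invariant and equals $1$ for the vertical catenoid). Therefore $V$ is connected and $\nu|_{V}$ is a diffeomorphism onto the equator; and since $V$ separates the two ends it generates $H_{1}(A,\z)$.
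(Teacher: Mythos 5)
Your reduction of the problem to ``no flat points on $V$'' (equivalently, no degenerate tangencies of $A$ with vertical totally geodesic planes) is sensible, but that is precisely where the real work lies, and you have only asserted it: ``Sweeping $P$ across the foliation and arguing as in the proof of Proposition~\ref{prop:graph} \dots I expect this to be the step requiring the most work.'' In the paper this step is the entire content of the proof: one sweeps $\h^2\times\R$ by vertical planes $H_s$, shows that the resulting height function $K_A$ has only simple tangencies (the local intersection $H_{s_1}\cap A$ consists of $\ell$ curves through the tangency point, and ruling out $\ell\ge 3$ uses the maximum principle with foliations by vertical planes, the fact that a loop in $H_{s_1}\cap A$ cannot bound a disk in $A$ and must generate $H_1(A,\z)$, the four graphical arcs of $H_s\cap A$ going out to $\gamma^\pm$, and the compactness of one side of the loop), and then uses Morse theory on the annulus to conclude there are \emph{exactly two} index-one critical points for every sweepout direction. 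This simultaneously gives regularity of $V$, that $V$ generates $H_1(A,\z)$, and — because each horizontal direction is achieved at exactly two points with opposite normals — the bijectivity of $\nu|_V$ onto the equator. None of this is routine local analysis, and your sketch does not supply it.

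Moreover, even granting the absence of flat points, your route to connectedness of $V$ and to $\nu|_V$ being a diffeomorphism is not closed. The claim $|\deg\overline{\nu}|=1$ rests on (a) the assertion that all preimages of a value $w$ near the pole lie in the far graphical end — but $n$ can be close to $\pm1$ at interior points of $A$ as well; (b) a definite radial sign of $\nabla_{\h^2}u$ near infinity on the top end, which is not established and is not a consequence of the tall-rectangle trapping (that controls height oscillation, not the sign of $u_r$; comparison with the minimal disk spanned by $\gamma^+$ only gives $u_r\ge v_r$ at $r=1$, and $v_r$ has no sign); and (c) the parenthetical ``deformation invariance, equal to $1$ for the catenoid,'' which is circular here, since connectedness of the space of such annuli is exactly what the paper's degree theory is meant to address, so you cannot assume a path from $A$ to a catenoid. (A smaller technical point: with the Gauss map defined via the sphere at infinity of $\h^2\times\R$, $d\nu$ is not literally $-S$, since the ambient identification is not parallel; your formula $\nabla^A n=-S(\partial_t^{\top})$ is fine, but the immersion/covering statements for $\nu|_V$ need a word of justification.) In short, the key lemma is missing and the concluding degree argument has unjustified steps, so the proposal does not yet constitute a proof; the paper's sweepout--Morse argument both fills the gap and yields the exact two-point count per direction that your approach would still need.
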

\begin{proof}
Suppose $H_s$ is a smooth `sweepout' of $\HH^2\times \RR$ by vertical planes. In other words, the $H_s$ are leaves of a smooth 
foliation. Assuming that the parameter $s$ varies over $\RR$, we define a height function $K: \HH^2 \times \RR \to \RR$ by 
setting $H_s = \{K= s\}$. Let $K_A$ denote the restriction of $K$ to $A$.  We claim that $K_A$ is a Morse function with precisely 
two critical points, each of index $1$. 
\begin{figure}[htbp]
    \begin{center}
        \includegraphics[width=.55\textwidth]{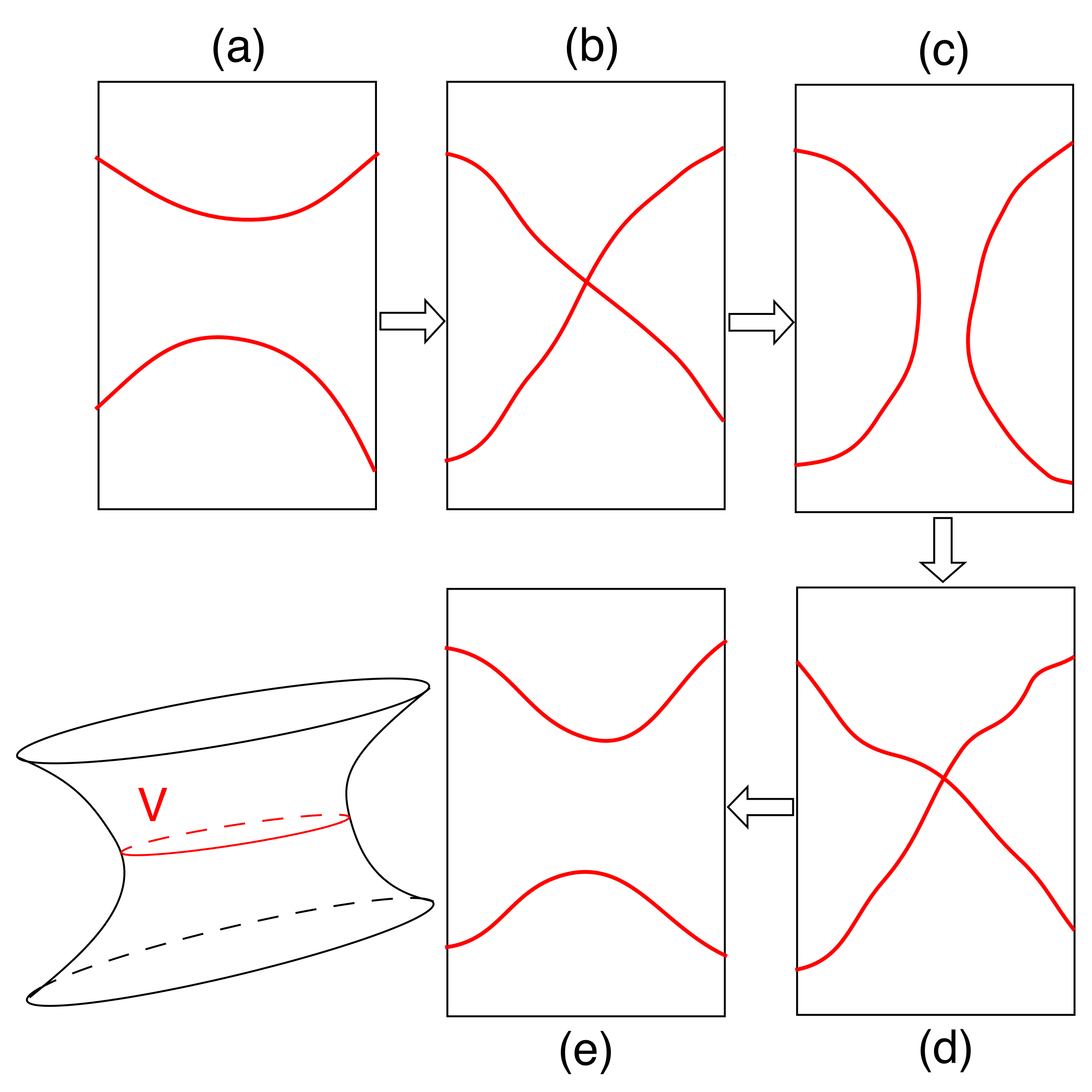}       
   \end{center}
   \caption{Evolution of $H_{s}\cap A$ when there are no loops. } \label{fig:good}
\end{figure} 

The graphical representation theorem proved in this section shows that for $s$ very negative, $H_s \cap A$ is a union of two arcs, 
each one lying in an end of  $A$. In fact,  for any value of $s$, $H_s \cap A$ intersects a neighborhood of $\partial_\infty \HH^2 \times \RR$
in four simple arcs, two arriving in $\gamma^+$ and two arriving in $\gamma^-$.  Now as $s$ increases from $-\infty$ there is a 
point of first tangency with $A$, say at $s = s_1$, which occurs at a point $p_1 \in V$.   Locally around $p_1$, $H_{s_1} \cap A$ is 
a union of $\ell$ curves intersecting at equal angles for some $\ell \geq 2$. 

If there is not any closed loop then the shape of $H_{s_1} \cap A$ is as in Figure \ref{fig:good}-(b). In particular $\ell=2$. If there is 
$\sigma$ a closed loop in $H_{s_1}$, then (by the maximum principle) $\sigma$ cannot bound a compact disk in $A$. So $\sigma$ 
generates $H_1(A,\z)$. By the maximum principle again, there is only one such loop. In particular, $\ell \leq 3$. 
If $\ell=3$ then $\sigma$ separates $\gamma^+$ and $\gamma^-$ and so it separates the diverging arcs in $H_{s_1} \cap A$, two on 
each region of $H_{s_1} \setminus \sigma$. But this is absurd because one of the components of $H_{s_1} \setminus \sigma$ is compact. 
Hence $\ell=2$ and  the shape of $H_{s_1} \cap A$ is either as in Figure \ref{fig:bad}-(b) or as in Figure \ref{fig:bad-1}-(c).
\begin{figure}[htbp]
    \begin{center}
        \includegraphics[width=.55\textwidth]{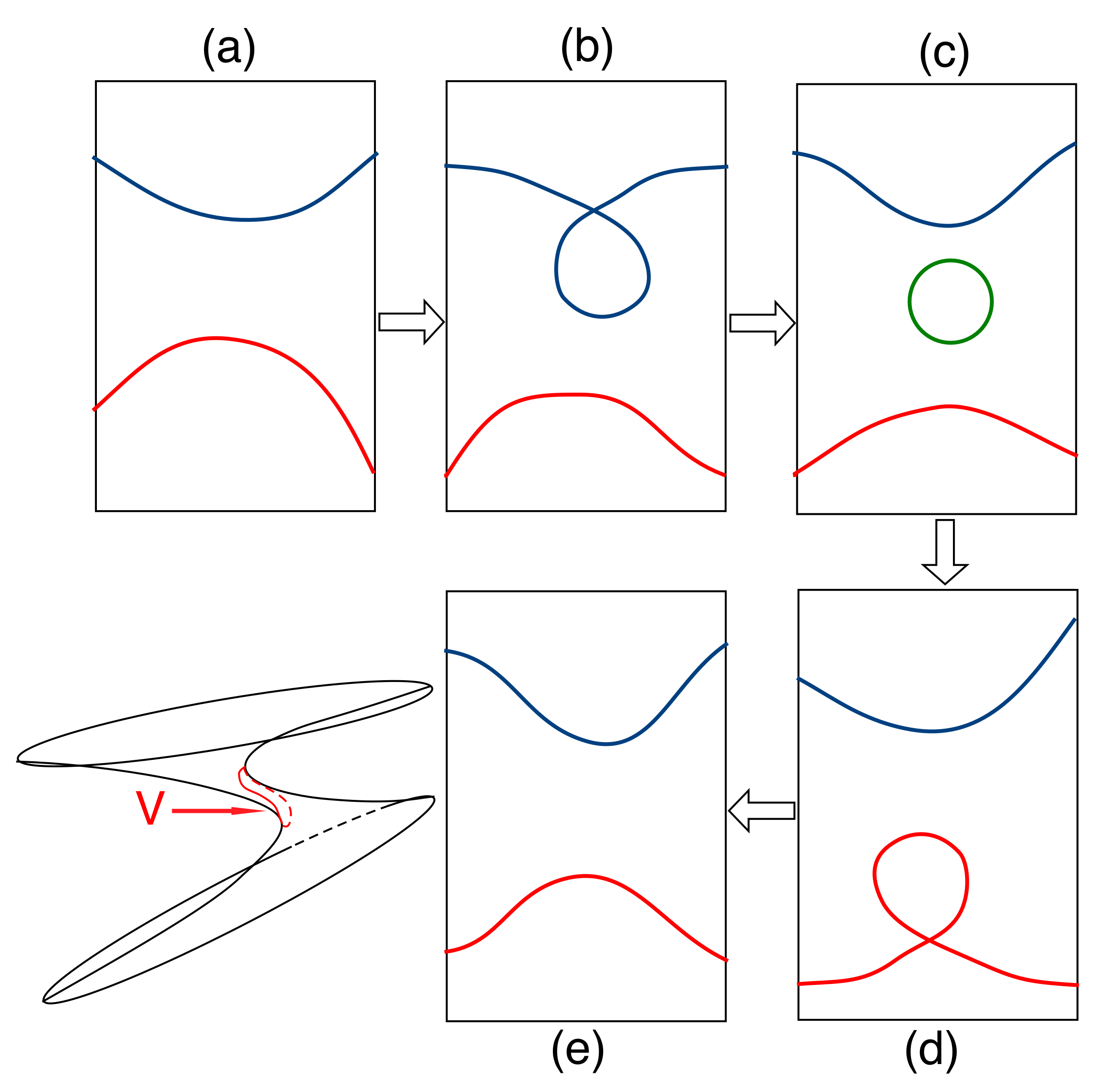}     
     \end{center}
   \caption{Possible evolution of $H_{s}\cap A$  and the curve $\sigma$, when $\sigma$ is a generator
   of $H_1(A,\z)$.} \label{fig:bad}
\end{figure} 
\begin{figure}[htbp]
    \begin{center}
        
              \includegraphics[width=.55\textwidth]{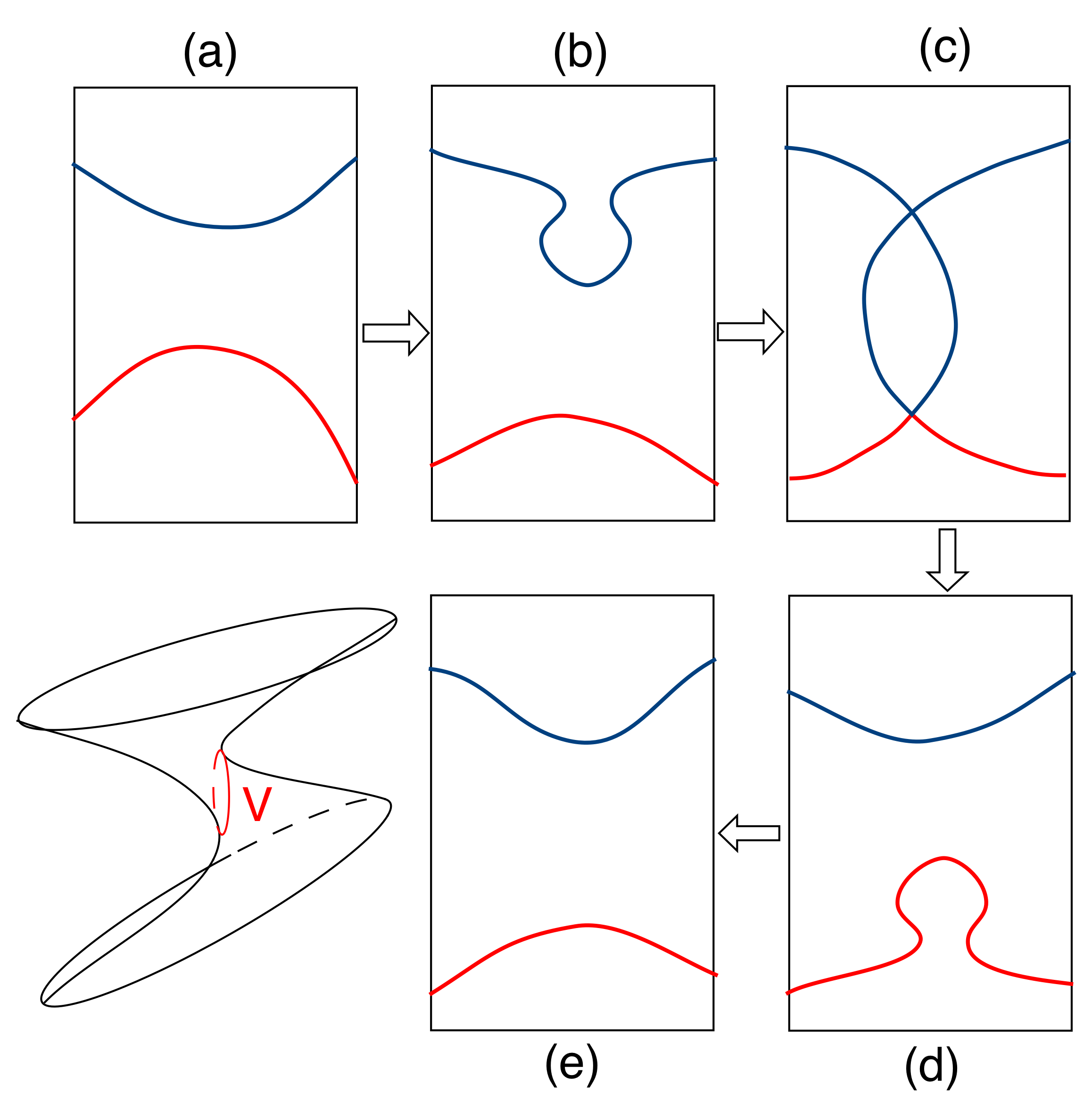}   
  
     \end{center}
   \caption{The other possible evolution of $H_{s}\cap A$  and the curve $\sigma$, when it contains a generator
   of $H_1(A,\z)$.} \label{fig:bad-1}
\end{figure} 

In any case $\ell = 2$, and hence this is a simple tangency, or 
in other words, the function $K_A$ has a nondegenerate critical point of index $1$ at $s_1$.  Letting $s$ increase further, we encounter some number of other
critical points $p_2, \ldots, p_r$, at the values $s = s_2, \ldots, s_r$, each one of which corresponds to another nondegenerate critical point of index $1$ 
of $K_A$. 

To conclude, observe that we can apply the standard Morse-theoretic arguments to see how these critical points correspond to a decomposition
of $A$ into a union of cells. 

In the first case (Figure \ref{fig:good}), for $s$ very negative, $A \cap \{K_A \leq s\}$ is a union of two disks. The transition between the sublevel $K_A \leq s_1 - \epsilon$
and $K_A \leq s_1 + \epsilon$ corresponds to attaching a two-cell which connects these two disks, resulting in another (topological) disk.  Crossing
the next critical point, another two-cell is added, which changes the topology again. Each of the remaining critical points add further handles. 
However, since $A$ is an annulus, and in particular has genus $0$, we must have $r = 2$. Hence there are precisely two critical points.

In the second case (Figure \ref{fig:bad}), again we have that for $s$ very negative, $A \cap \{K_A \leq s\}$ is a union of two disks. Now, the transition between the sublevel $K_A \leq s_1 - \epsilon$
and $K_A \leq s_1 + \epsilon$ corresponds to attaching a two-cell which connects one of the disks
with itself, resulting in a topological annulus. Crossing
the next critical point, another two-cell is added, which connects the annulus with the other disk. Again we deduce that $r=2$. Using similar arguments we deduce that $r=2$ in the last
case (Figure \ref{fig:bad-1}).

Note that since both these critical points are nondegenerate, the set of points $p\in V$ near either $p_1$ or $p_2$ constitute a regular curve.

We may of course do this for any sweepout of $\HH^2 \times \RR$ by vertical planes.  This shows that in any direction, there are precisely two critical
points and hence $V$ is a regular curve which generates $H_1(A,\z)$. \end{proof}
%Since each $H_s$ where tangency occurs is a support plane for $\calO$, we conclude
%also that the projected domain $\calO$ is convex. 
%For the final statement, note that if $A^+$ is the `upper' component of $A \setminus V$, then $A^+$ contains no points where the tangent plane
%is vertical, and hence $A^+$ must be the graph of a function $f^+$ defined on $\HH^2 \setminus \calO$. The corresponding statement is
%true for the bottom half as well. 

%%%%%%%%%%%% FLUXES %%%%%%%%%%%%%%%%

\section{Fluxes of minimal annuli} \label{subsec:flux}
Let $A$ be a minimal surface lying in an ambient space which has continuous families of isometries, and 
$\gamma$ any closed curve in $A$.   For any Killing field $Z$ on the ambient space, the flux of $A$ across $\gamma$, 
$\mbox{Flux}\,(A,\gamma, Z)$, depends only on the homology class of $\gamma$ in $A$. This flux is defined 
by integrating $\langle Z, \nu \rangle$ around $\gamma$, where $\nu$ is the unit normal to $\gamma$ in $A$.
We now compute these flux integrals for minimal annuli in $\h^2 \times \RR$ with horizontal ends, where 
$\gamma$ is the generating loop for the homology $H^1(A,\z)$. These invariants play an important role later
in this paper. 

Fix $A \in \fA$, $\Gamma= (\gamma^+,\gamma^-) = \del A$.  As proved in \S \ref{sec:graphs} (Proposition 
\ref{prop:graph}
and Remark \ref{re:extension}), each end of $A$ is 
a vertical graph over some region $\{z: R \leq |z| \leq  1\}$, with graph functions $u^\pm$, so we use the graphical representations
\[
X^\pm(r,\theta)=(r \cos \theta, r \sin \theta, u^\pm(r,\theta)), \quad R \leq r \leq 1,\ \ \theta \in \sph^1. 
\]

Consider the restriction of $X^\pm$ to $\beta^\pm:=X^\pm \cap \{r = \mathrm{const.}\}$ and the orthonormal frame 
\[
E_1=(\sqrt{F},0,0) ,\quad E_2=(0,\sqrt{F},0) ,\quad E_3=(0,0,1),  
\]
where $F=\frac14 (1-r^2)^2$.  Evaluating all functions at this fixed value of $r$, 
\[
 \beta^\pm_\theta(\theta)=X_\theta^\pm (r,\theta)= - \frac{r \, \sin \theta }{\sqrt{F}}\, E_1+ \frac{r \,  \cos \theta }{\sqrt{F}}\, E_2 + u^\pm_\theta\, E_3,
\]
hence the unit tangent to $\theta \mapsto \beta^\pm$ is
\[
T^\pm(\theta)= \frac{\beta^\pm_\theta}{ \| \beta_{\theta}^{\pm} \|_g }, \quad \left\| \beta^\pm_\theta
\right\|_g= \frac{ \sqrt{r^2+ F \, (u^\pm_\theta)^2}}{\sqrt{F}}.
\]
Similarly, the normal to this curve in $A$ equals 
\[
\nu^\pm(\theta) = \pm Q \left( \left(\frac{r\cos\theta}{\sqrt{F}}+\sqrt{F} u^\pm_\theta u^\pm_{x_2}\right) E_1+ 
\left(\frac{r\sin\theta}{\sqrt{F}}-\sqrt{F} u^\pm_\theta u^\pm_{x_1}\right) E_2 + r u^\pm_r E_3\right),
\]
where 
$
Q = \left(\|\beta^\pm_\theta\|_g \sqrt{1+F \|\nabla u^\pm \|^2}\right)^{-1}.
$

We now compute the fluxes with respect to $E_3$ and the horizontal Killing fields generated by rotations and hyperbolic dilations. 

The first is the simplest: 
\begin{multline*}
\text{ Flux} (A, \beta^\pm,E_3)= \int_{\beta^\pm} \langle \nu^\pm, E_3 \rangle \, d \sigma= \\
\pm  \int_0^{2 \pi} \frac{ r u^\pm_r}{\sqrt{1+F \|\nabla u^\pm \|^2}}\, d \theta= 
\pm \int_0^{2 \pi} \frac{r u^\pm_r}{\sqrt{1+\frac14 (1-r^2)^2\left((u^\pm_r)^2+ r^{-2} (u^\pm_\theta)^2\right) }}\, d \theta.
\end{multline*}
This is constant in $r$, and the limit as $r \nearrow 1$ equals 
\[
\text{ Flux} (A,\gamma^\pm,E_3)= \pm \int_0^{2 \pi} u^\pm_r(1,\theta) \, d\theta .
\]

Next, the Killing field generated by rotations around the vertical axis $\{0\} \times \RR$ is
\[
Z=-\frac{r \sin\theta}{\sqrt{F}}\, E_1+ \frac{r\cos\theta}{\sqrt{F}}\, E_2, 
\]
and we have
\[
\text{ Flux} (A,\beta^\pm,Z)= \mp  \int_0^{2 \pi}\frac{r\, u^\pm_\theta\, u^\pm_r}{\sqrt{1+F \|\nabla u^\pm  \|^2}}\, d \theta.
\]
Letting $r \nearrow 1$ as before gives that
\begin{equation}
\text{ Flux} (A,\gamma^\pm,Z)=\mp \int_0^{2 \pi} u^\pm_r(1,\theta) \; u^\pm_\theta(1,\theta) \, d \theta.
\end{equation}

Finally, consider the Killing fields 
\[
H_a= \frac{r^2 \cos (a-2 \theta)-\cos (a)}{2\sqrt{F}}\,  E_1-
\frac{r^2\sin (a-2 \theta)+\sin (a)}{2\sqrt{F}}\,  E_2,
\]
$a \in [0,2\pi)$, corresponding to the horizontal dilations along the geodesic joining $\text{e}^{{\rm i} a} $ and $-1$.  We calculate
\begin{multline*}
\text{ Flux} (A,\beta^\pm,H_a)= \int_{\beta^\pm}  \langle \nu^\pm, H_a \rangle d \sigma\\
\ =\pm \int_0^{2 \pi} \frac{1}{2\sqrt{1+F \|\nabla u^\pm \|^2}}  \left(
\left( \frac{4r}{r^2-1}+\frac{(r^2-1)(u^\pm_\theta)^2}{r}\right) \cos(a-\theta) \right.\\
+(r^2+1)u^\pm_ru^\pm_\theta\sin(a-\theta)\bigg) \, d \theta .
\end{multline*}

Unlike the previous cases we cannot take limits directly since the first term appears to diverge. However, 
\[
\frac{1}{2\sqrt{1+F \|\nabla u^\pm
    \|^2}}\left(\frac{4r}{r^2-1}+\frac{(r^2-1)(u^\pm_\theta)^2}{r}\right)=
\frac{1}{r-1}+\frac{1}{2}+\calO (1-r) .
\]
When we multiply by $\cos (\theta -a )$ and integrate in $\theta$, the first two terms on the right vanish, while the third
vanishes in the limit as $r \nearrow 1$. Therefore only the final term remains and we obtain that
\[
\text{ Flux} (A,\gamma^\pm,H_a)= 
%\lim_{r \to 1} \pm  (r^2+1) \int_0^{2 \pi}
%\frac{u^\pm_ru^\pm_\theta\sin(a-\theta)}{2\sqrt{1+F \|\nabla u^\pm \|^2}} \, d\theta \\
\pm \int_0^{2 \pi} u^\pm_r(1,\theta) \; u^\pm_\theta(1,\theta) \sin (\theta-a) \, d \theta.
\]

These computations prove the following
\begin{lemma}  Let $A$ be a complete properly embedded minimal annulus with horizontal ends, parametrized as above. Then
\begin{eqnarray}
& \int_0^{2 \pi} u^+_r(1,\theta) d \theta + \int_0^{2 \pi}
  u^-_r(1,\theta) d \theta = 0, \\ & \int_0^{2 \pi} u^+_r(1,\theta) \;
  u^+_\theta(1,\theta) \, d \theta + \int_0^{2 \pi} u^-_r(1,\theta) \;
  u^-_\theta(1,\theta) \, d \theta = 0, 
\end{eqnarray}
and for every $a \in [0,2\pi)$,
\begin{equation} 
\int_0^{2 \pi}   u^+_r(1,\theta) u^+_\theta(1,\theta) \sin (\theta-a) \, d \theta +
  \int_0^{2 \pi} u^-_r(1,\theta) u^-_\theta(1,\theta) \sin (\theta-a)
\, d \theta = 0,
\end{equation}
\end{lemma}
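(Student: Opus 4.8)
The plan is to reduce all three identities to the single statement that, for any ambient Killing field $Z$, the fluxes of $A$ across $\beta^+$ and across $\beta^-$ coincide. Recall the first--variation/balancing identity: if $A$ is a minimal surface and $Z$ is a Killing field on the ambient manifold, then the tangential part $Z^\top$ of $Z$ along $A$ is divergence--free, $\operatorname{div}_A Z^\top = 0$; equivalently, the $1$--form $\iota_{Z^\top}\,dA$ is closed on $A$. Its periods are exactly the flux integrals $\operatorname{Flux}(A,\gamma,Z)=\int_\gamma\langle Z,\nu\rangle\,d\sigma$ (with $\nu$ the unit conormal to $\gamma$ in $A$), so these depend only on $[\gamma]\in H_1(A;\z)$. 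In particular, within a single end the flux across $\beta^\pm=X^\pm\cap\{r=\mathrm{const.}\}$ does not depend on $r$ --- as was already observed during the computations above --- so the limits as $r\nearrow 1$ used to define $\operatorname{Flux}(A,\gamma^\pm,Z)$ exist and equal $\operatorname{Flux}(A,\beta^\pm,Z)$.

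Next I would compare the two ends. By the graphical representation of \S\ref{sec:graphs} (Proposition~\ref{prop:graph}), for $r$ close to $1$ the circles $\beta^+$ and $\beta^-$ cobound a compact subsurface $A_0\subset A$ --- the portion of $A$ lying between them, which is compact precisely because each end is graphical near $|z|=1$. Applying the divergence theorem to $Z^\top$ on the compact minimal surface $A_0$ gives $\int_{\partial A_0}\langle Z,n\rangle\,d\sigma=0$, where $n$ is the outward unit conormal of $A_0$. Since the conormals $\nu^\pm$ fixed above are precisely the outward conormals of $A_0$ along $\beta^\pm$ --- this is the content of the sign $\pm$ in the formula for $\nu^\pm$ --- this says $\operatorname{Flux}(A,\beta^+,Z)=\operatorname{Flux}(A,\beta^-,Z)$, hence $\operatorname{Flux}(A,\gamma^+,Z)=\operatorname{Flux}(A,\gamma^-,Z)$ for every Killing field $Z$. (Equivalently: $\beta^+$ and $\beta^-$, so oriented, represent the same generator of $H_1(A;\z)\cong\z$.)

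It then remains only to insert the three Killing fields already used: the vertical translation $E_3$, the rotation $Z$ about $\{o\}\times\R$, and, for each $a\in[0,2\pi)$, the hyperbolic dilation $H_a$ along the geodesic joining $e^{\mathrm{i}a}$ and $-1$. Substituting the explicit expressions computed in this section for $\operatorname{Flux}(A,\gamma^\pm,E_3)$, $\operatorname{Flux}(A,\gamma^\pm,Z)$ and $\operatorname{Flux}(A,\gamma^\pm,H_a)$, the equality $\operatorname{Flux}(A,\gamma^+,\cdot)=\operatorname{Flux}(A,\gamma^-,\cdot)$ becomes, respectively, the three displayed identities of the lemma.

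The only points requiring care are bookkeeping rather than substance: matching the orientation conventions so that the homological identity appears with the $+$ signs of the statement, and justifying the passage to the limit $r\nearrow 1$ --- in particular the apparently divergent term in the flux with respect to $H_a$, which is handled by the expansion $\tfrac{1}{2\sqrt{1+F\|\nabla u^\pm\|^2}}\bigl(\tfrac{4r}{r^2-1}+\tfrac{(r^2-1)(u^\pm_\theta)^2}{r}\bigr)=\tfrac{1}{r-1}+\tfrac12+\calO(1-r)$ together with the $\calC^{2,\alpha}$ regularity of $u^\pm$ up to $|z|=1$ (Remark~\ref{re:extension}). The genuine computational work, the evaluation of the flux integrals, has already been carried out above, so little remains beyond collecting the pieces.
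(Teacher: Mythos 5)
Your argument is exactly the one the paper intends: the flux form $\iota_{Z^\top}\,dA$ is closed on the minimal annulus, so the fluxes across the homologous circles $\beta^+$ and $\beta^-$ agree (equivalently, the outward fluxes over the boundary of the compact piece between them sum to zero), and substituting the explicit limits computed for $E_3$, $Z$ and $H_a$ yields the three identities; the paper simply states "these computations prove the following" with the homology invariance recorded at the start of \S\ref{subsec:flux}. The only wrinkle is the one you already flag: with both conormals taken outward the divergence theorem gives that the two fluxes \emph{sum} to zero rather than coincide, and it is the $\pm$ built into the paper's definition of $\nu^\pm$ that converts this into the displayed sums.
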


\section{Nonexistence} \label{sec:nonexistence}
We present here two separate results which limit the types of pairs of curves which can arise as boundaries of minimal annuli.
The first proof is based on a standard barrier argument and the second on Alexandrov reflection principle. 
Among other things, we are going to prove in this section that there are no properly embedded minimal surfaces with two consecutive horizontal annular ends when these ends are more than $\pi$ apart. Observe that if there exists a horizontal slab of height greater than $\pi$ separating the boundaries, then the result can be easily proved by using the maximum principle and the family of catenoids described in \S \ref{sec:2}. However, the general case is more involved. 

In order to prove this, we need  to show that two tall rectangles of the same height with the vertical segments
in common are not area-minimizing\footnote{A noncompact surface is called area minimizing if any compact domain minimizes the area among all the surfaces with the same boundary.} (see Appendix \ref{sec:tall}.)

We would like to remind 
that we are denoting the finite boundary of a surface $\Sigma$ as $\partiali \Sigma$.
\begin{theorem} \label{th:nonexistence}
Consider two curves $\alpha^+$ and $\alpha^-$ in $\mathcal{C}^{2 , \alpha}(\sph^1)$
satisfying that $\alpha^+(\theta)- \alpha^-(\theta)>\pi$, for all $\theta \in \sph^1$, and $D^+$ and $D^-$ the minimal disks so that 
$\partial_{\infty} D^\pm=\alpha^\pm$, respectively. We label $\Omega(\alpha^+,\alpha^-)$ the domain in $\h^2 \times \R$ bounded by $D^+$ and $D^-$. Then
there is not a connected, properly embedded, minimal surface $\Sigma$ with boundary (possibly empty) satisfying:
\begin{enumerate}[i)]
\item $\Sigma$ intersects both connected components of $(\h^2 \times \R) -\Omega(\alpha^+,\alpha^-)$.
\item $(\partial_{\infty} \Sigma \cup \partiali \Sigma)\cap \overline{\Omega(\alpha^+,\alpha^-)}=\varnothing.$
\end{enumerate}
\end{theorem}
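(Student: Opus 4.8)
The plan is to argue by contradiction, combining a sweepout by tall rectangles with the maximum principle, very much in the spirit of the proof of Proposition~\ref{prop:graph}. Suppose such a $\Sigma$ exists. Since $\alpha^+ - \alpha^- > \pi$ everywhere, after slightly shrinking we may find constants $a < b$ with $b - a > \pi$ and a (possibly small) arc $\sigma \subset \sph^1$ such that the tall rectangle $R(\sigma,a,b)$ has its horizontal boundary curves $\sigma \times \{a,b\}$ sitting strictly between $\alpha^-$ and $\alpha^+$ at infinity; more precisely I want a family of tall rectangles, translated hyperbolically along a geodesic and also allowed to vary $\sigma$ and the heights, which foliates a region of $\overline{\h^2}\times\R$ whose asymptotic boundary lies entirely inside the open slab $\{\alpha^-(\theta) < t < \alpha^+(\theta)\}$ (away from a compact set) but which, when pushed far enough, sweeps across all of $\Omega(\alpha^+,\alpha^-)$ and in fact reaches both components of the complement. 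Hypothesis~ii) guarantees that no leaf of this foliation ever touches $\partiali\Sigma$ or $\partial_\infty\Sigma$, so $\Sigma$ is never obstructed at its boundary; hypothesis~i) guarantees $\Sigma$ genuinely crosses from one side of the foliation to the other.

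The heart of the argument is then the standard maximum principle sweepout: start with a leaf of the foliation disjoint from $\Sigma$ (which exists because the leaves eventually exit every compact region while $\partial_\infty$ of the leaves stays away from $\partial_\infty\Sigma$), and slide the parameter until a first point of contact. At that contact parameter, interior touching is ruled out by the interior maximum principle for minimal surfaces, and boundary touching is ruled out by hypothesis~ii) together with the boundary maximum principle — but because the leaves themselves are noncompact minimal surfaces this requires knowing the leaves do not ``pinch'' against $\Sigma$ at infinity, which is exactly where the geometry of tall rectangles (they stay at definite vertical height $b-a>\pi$, and their horizontal slices are equidistants that can be pushed to infinity) is used, along with the convergence of $R(\sigma,a,b)$ to a parabolic generalized catenoid as $b-a\searrow\pi$ and $\sigma\nearrow\sph^1$ recalled on page~\pageref{tallrectangles}. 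Running the sweepout from the other side as well, we conclude $\Sigma$ must lie on one fixed side of every leaf, contradicting i).

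The one subtlety that the parenthetical remark in the statement flags — that if the separating slab has height $>\pi$ everywhere then catenoid barriers suffice, but the general case is harder — enters because a single tall rectangle of fixed finite height only ``fits'' under $\alpha^+$ and over $\alpha^-$ on an arc where the gap is close to $\pi$; on other arcs the gap may be huge and the rectangle's horizontal segments would poke out. So the actual foliation must be assembled from tall rectangles of varying height and varying arc, glued along their common vertical segments, and here I need the auxiliary fact quoted just before the theorem: \emph{two tall rectangles of the same height sharing their vertical segments are not area-minimizing} (proved in Appendix~\ref{sec:tall}). This is what allows the bigraph pieces to be arranged into a genuine sweepout rather than a configuration with an unavoidable crossing, and more concretely it lets one take a limit/exchange argument to produce, for the given $\alpha^\pm$, an honest foliation of $\Omega(\alpha^+,\alpha^-)$ (or at least of a subregion reaching both complementary components) by minimal bigraphs with asymptotic boundary in the open region between $\alpha^-$ and $\alpha^+$.

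\textbf{Main obstacle.} I expect the genuinely hard step to be the construction of the sweeping family: showing that for \emph{arbitrary} $\calC^{2,\alpha}$ curves $\alpha^\pm$ with gap everywhere $>\pi$ one can build a continuous one-parameter family of minimal surfaces, each disjoint at infinity from $\partial_\infty\Sigma$ and with finite boundary avoided by $\partiali\Sigma$, that collectively separates the two components of $(\h^2\times\R)\setminus\overline{\Omega(\alpha^+,\alpha^-)}$. The maximum-principle bookkeeping once such a family is in hand is routine (it is the same kind of argument as in Proposition~\ref{prop:graph}); the non-area-minimizing lemma from Appendix~\ref{sec:tall} is the technical input that makes the construction go through, and verifying that the glued family really is a foliation (no leaves cross each other, the leaves depend continuously on the parameter, and they exhaust the relevant region) is where most of the care is needed.
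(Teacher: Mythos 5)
Your plan has a genuine gap, and it sits exactly where you located the difficulty: the sweeping family you need is not constructed, and in fact cannot be built the way you suggest. A tall rectangle $R(\sigma,a,b)$ has asymptotic boundary containing the two horizontal arcs $\sigma\times\{a,b\}$ with $b-a>\pi$, so to keep its ideal boundary inside the open region between $\alpha^-$ and $\alpha^+$ you need $\inf_\sigma\alpha^+-\sup_\sigma\alpha^->\pi$; for large arcs this is precisely the ``horizontal slab of height $>\pi$'' hypothesis that the theorem is meant to dispense with, while for small arcs the rectangle stays in a thin lens adjacent to $\sigma$ and never reaches across $\Omega(\alpha^+,\alpha^-)$. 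Gluing two rectangles along their common vertical segments does not produce a minimal (or even smooth) leaf, and Lemma~\ref{lem:areamin} gives you no gluing mechanism whatsoever: it is a \emph{non}-minimality statement, not an existence tool. Worse, even if you had a foliation of $\Omega$ by minimal leaves whose ideal boundaries lie strictly between $\alpha^-$ and $\alpha^+$ — and such a family does exist, namely the Nelli--Rosenberg graphs spanned by curves interpolating between $\alpha^-$ and $\alpha^+$, which are pairwise disjoint by the comparison principle and sweep $\overline{\Omega}$ — no contradiction follows: hypothesis~i) forces $\Sigma$ to cross \emph{every} such leaf from the outset, so there is no parameter at which $\Sigma$ is disjoint from the leaf and hence no first-contact tangency for the maximum principle to exploit. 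A minimal surface can perfectly well cross a minimal foliation transversally; this is why your final step ``$\Sigma$ must lie on one fixed side of every leaf'' does not follow, and why the threshold $\pi$ must enter quantitatively rather than through boundary disjointness alone.

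The paper's proof takes a different route. One closes up the pair $(\alpha^+,\alpha^-)$ into Jordan curves $\Gamma_b$ in $\partial_\infty(\h^2\times\R)$ using two vertical segments, spans each $\Gamma_b$ by a Coskunuzer area-minimizing disk $\mathcal{T}_b$, and uses the hypothetical surface $\Sigma$ as a barrier to guarantee that the limit $\mathcal{T}$ as $b\to\pi$ is \emph{not} simply $D^+\cup D^-$ but a nontrivial area-minimizing disk with ideal boundary $\alpha^+\cup\alpha^-\cup s_\pi$. Tall rectangles enter only as local barriers on either side of $s_\pi$, to show that $\mathcal{T}$ meets the boundary cylinder along $s_\pi$ at a definite angle. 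One then blows up by hyperbolic dilations along the geodesic toward $s_\pi$; the limit is nonempty, area-minimizing, foliated by equidistant curves in horizontal slices, and by the classification of dilation-invariant minimal surfaces it must be two tall rectangles of height $\alpha^+(-1)-\alpha^-(-1)$ sharing their vertical segments. Being a limit of area-minimizing surfaces it would be area-minimizing, which is exactly what Lemma~\ref{lem:areamin} forbids — that is the true role of the appendix lemma, as the terminal contradiction of a compactness/blow-up argument, not as an ingredient for assembling barriers.
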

\begin{proof}
We proceed by contradiction, assume that there exists a connected surface $\Sigma$ verifying the hypotheses
of the theorem.

Given an element $b \in [0, \pi]$, we label $\alpha^\pm_b := \alpha^\pm|_{[e^{-{\rm i} b},e^{{\rm i} b}]}$
the corresponding arcs in $\alpha^+$ and $\alpha^-$, respectively,  between $e^{-{\rm i} b}$ and $e^{{\rm i} b}$.
Similarly, we define $s_x:= \{ e^{{\rm i} x} \} \times [\alpha^-(e^{{\rm i} x}), \alpha^+(e^{{\rm i} x})].$
Then we consider in $\partial_\infty (\h^2 \times \R)$ the Jordan curve  $\Gamma_b:=\alpha_b^+ \cup \alpha_b^- \cup s_b \cup s_{-b}$.
According to Coskunuzer's results \cite[Theorem 2.13]{cos-2}, we know that there exists a complete, area-minimizing  disk $\mathcal{T}_b$ which
spans $\Gamma_b$.

Using the surface $\Sigma$ as a barrier, we can prove that there exists a limit of the family
$\mathcal{T}_b$, as $b \to \pi$, which is different from $D^+ \cup D^-$. Let $\mathcal{T}$ denote the limit of this family that is a properly embedded,
area-minimizing disk whose ideal boundary consists of $\alpha^+\cup \alpha^- \cup s_\pi.$

As $\alpha^+(\theta)-\alpha^-(\theta)>\pi$, we can place two tall rectangles, $R$ and $R'$, 
placed at both sides of $s_\pi$ and satisfying $R\cap \mathcal{T}=R'\cap \mathcal{T}=\varnothing.$ 
If $R$ and $R'$ are
close enough, then we can move the rectangles toward $s_\pi$ until the boundaries of the three
surfaces; $\mathcal{T}$, $R$ and $R'$ touch along a common vertical segment.  This implies that the 
angle in which $\mathcal{T}$ meets $\partial_\infty (\h^2 \times \R)$ is bigger than the angle in which $R$
and $R'$ meets $\partial_\infty (\h^2 \times \R)$. In particular, $\mathcal{T}$ would not extend smoothly
to $s_\pi$. 

We consider $T_n:\h^2 \times \R \to \h^2 \times \R $ the horizontal dilation along the geodesic 
$\beta$ connecting $(-1,0)$ and $(1,0)$ by ratio $n \in \n$. Let us denote $\mathcal{E}_n:= T_n(\mathcal{T})$.
As $\{\mathcal{E}_n\}_{n \in \n}$ is a sequence of area-minimizing surfaces, then there is a sub-sequential
limit $\mathcal{E}_0$, which is not empty because the angle in which $\mathcal{T}$ meets $s_\pi$ is not zero.
Moreover, from the method in which we have obtained $\mathcal{E}_0$ we know that:
\begin{itemize}
\item $\mathcal{E}_0$ is area-minimizing.
\item $\mathcal{E}_0$ is foliated by equidistant curves in horizontal slices.
\item $\partial_\infty(\mathcal{E}_0)$ consists of two horizontal cycles at height
$\alpha^+(-1)$ and $\alpha^-(-1)$, respectively, and two vertical segments: 
$s_\pi$ and $\{1\} \times [\alpha^-(-1),\alpha^+(-1)].$
\end{itemize}
Hence, using the characterization of minimal surfaces invariant under hyperbolic
dilations (see \cite{h,st1}), we deduce that $\mathcal{E}_0$ consists 
of two tall rectangles of height $\alpha^+(-1)-\alpha^-(-1)$ symmetric with respect
the totally geodesic plane $\beta\times \R.$

But this contradicts Lemma \ref{lem:areamin} in Appendix \ref{sec:tall}, because $\mathcal{E}_0$ would be area-minimizing.
This contradiction proves that it is impossible to construct a connected, properly embedded,
minimal surfaces satisfying items {\em i)} and {\em ii)}. 
\end{proof}
As a consequence, we have the following corollary:
\begin{corollary} \label{co:non}
Let $\gamma^+$ and $\gamma^-$ be two curves in $\mathcal{C}^{2,\alpha}(\sph^1)$
satisfying $\gamma^+(\theta)-\gamma^-(\theta) >\pi$, $\forall \theta \in \sph^1$. Then 
there is not any properly embedded minimal annulus $A$ in $\h^2 \times \R$ with
$\partial A=\gamma^+ \cup \gamma^-$.
\end{corollary}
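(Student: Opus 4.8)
The plan is to reduce the statement to Theorem~\ref{th:nonexistence} by interposing a pair of auxiliary curves strictly between $\gamma^-$ and $\gamma^+$. Since $\gamma^\pm$ are continuous and $\sph^1$ is compact, the hypothesis $\gamma^+-\gamma^->\pi$ yields a constant $c>0$ with $\gamma^+(\theta)-\gamma^-(\theta)\ge \pi+c$ for all $\theta\in\sph^1$. I would fix any $\epsilon\in(0,c/2)$ and set $\alpha^+:=\gamma^+-\epsilon$, $\alpha^-:=\gamma^-+\epsilon$. These lie in $\calC^{2,\alpha}(\sph^1)$ and satisfy, for every $\theta$,
\[
\gamma^-(\theta)<\alpha^-(\theta)<\alpha^+(\theta)<\gamma^+(\theta),\qquad \alpha^+(\theta)-\alpha^-(\theta)>\pi .
\]
Let $D^\pm$ be the minimal graphs with $\partial_\infty D^\pm=\alpha^\pm$ provided by \cite{n-r}; they are disjoint vertical graphs with $D^+$ above $D^-$, bounding the region $\Omega:=\Omega(\alpha^+,\alpha^-)$ whose complement in $\h^2\times\R$ has exactly two connected components, one above $D^+$ and one below $D^-$.

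Next I would check that $\Sigma:=A$ satisfies the two hypotheses of Theorem~\ref{th:nonexistence} for this choice of $\alpha^\pm$. Since $A$ is a complete annulus it has no finite boundary, so $\partiali A=\varnothing$; and $\partial_\infty A=\gamma^+\cup\gamma^-$ does not meet $\overline{\Omega(\alpha^+,\alpha^-)}$, because the ideal part of that closed region is the closed band lying between the graphs of $\alpha^-$ and $\alpha^+$, which $\gamma^\pm$ avoid by the strict inequalities displayed above. This is hypothesis~ii). For hypothesis~i), observe that $\gamma^+\subset\partial_\infty A\subset\overline{A}$, so there is a sequence $p_j\in A$ converging in $\overline{\h^2}\times\R$ to a point $(e^{\ir\theta_0},\gamma^+(\theta_0))$. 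Writing $D^+$ as the graph of a function $v^+$ with $v^+\to\alpha^+$ at infinity, the $t$--coordinate of $p_j$ tends to $\gamma^+(\theta_0)>\alpha^+(\theta_0)=\lim_j v^+(z(p_j))$, so $p_j$ lies above $D^+$ for all large $j$; symmetrically, the points of $A$ approaching $\gamma^-$ eventually lie below $D^-$. Hence $A$ meets both components of $(\h^2\times\R)-\Omega(\alpha^+,\alpha^-)$.

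With both hypotheses verified, Theorem~\ref{th:nonexistence} applies to $\Sigma=A$ and yields a contradiction, which proves the corollary. There is no genuinely hard step: the whole weight of the argument is carried by Theorem~\ref{th:nonexistence}, and the only point deserving a line of justification is the escape argument in hypothesis~i) --- that points of $A$ shadowing $\gamma^\pm$ near infinity actually enter the correct complementary component of $\Omega$ --- which is immediate once one records that $D^\pm$ are smooth vertical graphs whose ideal boundary curves $\alpha^\pm$ are strictly separated from $\gamma^\pm$. Everything else is bookkeeping with the compactification.
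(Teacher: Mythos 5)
Your proposal is correct and follows essentially the same route as the paper, which likewise applies Theorem~\ref{th:nonexistence} to the shifted curves $\alpha^{\pm}:=\gamma^{\pm}\mp\varepsilon$ for small $\varepsilon>0$. Your explicit verification of hypotheses i) and ii) simply spells out what the paper leaves implicit.
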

\begin{proof}
We apply the previous theorem to the curves  $\alpha^{+}:= \gamma^{+}- \varepsilon$ and
$\alpha^{-}:= \gamma^{-}+ \varepsilon$ for a
small enough $\varepsilon>0$.
\end{proof}

Finally, as a consequence of the proof of Theorem \ref{th:nonexistence} 
we have
\begin{corollary} 
Let $\gamma^+$ and $\gamma^-$ be two curves as in the previous corollary, and consider 
the vertical segment $s_\theta:= \{\theta\} \times [\gamma^-(\theta), \gamma^+(\theta)]$. Then 
there is not any properly embedded minimal surface $S$ in $\h^2 \times \R$ with
$\partial S=\gamma^+ \cup \gamma^- \cup s_\theta$.
\end{corollary}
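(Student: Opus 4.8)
The statement concerns a properly embedded minimal surface $S$ whose boundary is $\gamma^+ \cup \gamma^- \cup s_\theta$, with $\gamma^+(\theta)-\gamma^-(\theta)>\pi$ everywhere. The natural strategy is to reduce this to Theorem~\ref{th:nonexistence} by producing, from $S$, a configuration that contradicts that theorem. First I would pick $\varepsilon>0$ small enough that $\alpha^+:=\gamma^+-\varepsilon$ and $\alpha^-:=\gamma^-+\varepsilon$ still satisfy $\alpha^+ - \alpha^- > \pi$ everywhere, and consider the domain $\Omega(\alpha^+,\alpha^-)$ bounded by the Nelli--Rosenberg graphs $D^\pm$. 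Since $\partial_\infty S = \gamma^+\cup\gamma^-\cup s_\theta$ lies strictly outside the open region between the $\alpha^\pm$ graphs near infinity (the two horizontal pieces are at heights $\gamma^\pm$, which are $\varepsilon$ away from $\alpha^\pm$, and $s_\theta$ is a single vertical segment over one boundary point), hypothesis~ii) of Theorem~\ref{th:nonexistence} is satisfied once we also know that the finite boundary, the interior of the vertical segment $s_\theta$, can be kept off $\overline{\Omega(\alpha^+,\alpha^-)}$. Here one must be slightly careful: $s_\theta$ touches $\overline{\Omega}$ at infinity along $\{\theta\}\times[\alpha^-(\theta),\alpha^+(\theta)]$; but since $s_\theta \subset \partial_\infty(\h^2\times\R)$, it contributes to $\partial_\infty \Sigma$, not to $\partiali\Sigma$, and $\partiali S = \varnothing$, so hypothesis ii) as literally stated may need the observation that $\partial_\infty S$ is allowed to meet the boundary of $\Omega$ only in the vertical part of infinity away from the two horizontal circles — which is exactly the situation of the proof of Theorem~\ref{th:nonexistence}, where $\partial_\infty\mathcal{T}$ itself contains the vertical segment $s_\pi$.

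So rather than invoking the statement of Theorem~\ref{th:nonexistence} verbatim, I would invoke its \emph{proof}. The key point is that the proof of Theorem~\ref{th:nonexistence} produces, from any surface separating the two components of $(\h^2\times\R)\setminus\Omega$ and disjoint (at finite and infinite boundary) from $\overline\Omega$ outside the vertical segment, an area-minimizing disk $\mathcal{T}$ with $\partial_\infty\mathcal{T} = \alpha^+\cup\alpha^-\cup s_\pi$, and then derives a contradiction via the blow-down to two tall rectangles and Lemma~\ref{lem:areamin}. To run the same machinery here, I must check hypothesis~i): that $S$ intersects \emph{both} components of $(\h^2\times\R)\setminus\Omega(\alpha^+,\alpha^-)$. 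This follows because $\partial S$ contains $\gamma^+$, which lies in the upper component's closure at infinity, and $\gamma^-$, which lies in the lower component's closure; since $S$ is connected and properly embedded with exactly this asymptotic boundary, $S$ must have points arbitrarily close to each of $\gamma^\pm$, hence points in each of the two components. Then the family $\mathcal{T}_b$ of area-minimizing disks spanning $\Gamma_b$ from Coskunuzer's theorem, constrained to one side of $S$ (using $S$ as a barrier exactly as in the proof of Theorem~\ref{th:nonexistence}), converges to a limit $\mathcal{T}\ne D^+\cup D^-$, and the rest of the argument — the tall-rectangle angle obstruction, the horizontal-dilation blow-down, the identification with two symmetric tall rectangles, and the contradiction with Lemma~\ref{lem:areamin} — goes through unchanged.

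\textbf{Main obstacle.} The one genuinely new issue compared with Corollary~\ref{co:non} is the presence of the finite vertical segment $s_\theta$ in $\partial S$ — in Corollary~\ref{co:non} the surface was a closed annulus with no finite boundary. But in fact $s_\theta$ sits at infinity, not in the interior of $\h^2\times\R$, so $\partiali S = \varnothing$ still, and $s_\theta$ merely enlarges $\partial_\infty S$ by a vertical segment lying over the single boundary point $e^{i\theta}$. The subtle point to verify is therefore purely about the barrier step: one needs that $S$, which now has asymptotic boundary containing $s_\theta$, still does not obstruct the construction of $\mathcal{T}$ with $\partial_\infty\mathcal{T} = \alpha^+\cup\alpha^-\cup s_\pi$ — i.e., that $S$ and the approximating disks $\mathcal{T}_b$ can be kept disjoint. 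Choosing $\varepsilon$ small makes $\gamma^\pm$ separated from $\alpha^\pm$, and choosing the geodesic $\beta$ for the blow-down to end at $e^{i\theta}$ (rather than at $-1$) aligns $s_\pi$ with $s_\theta$, so that $S$ and $\mathcal{T}_b$ are on opposite sides throughout. Once this bookkeeping is set up correctly, no new analytic input beyond what Theorem~\ref{th:nonexistence} and Lemma~\ref{lem:areamin} already provide is required; the proof is a direct adaptation.
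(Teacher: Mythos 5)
Your proposal follows essentially the same route as the paper, which gives no separate argument and simply declares the corollary a consequence of the proof of Theorem \ref{th:nonexistence}: shrink to $\alpha^\pm=\gamma^\pm\mp\varepsilon$, let $S$ itself serve as the barrier $\Sigma$ (hypothesis i) holds as you check, and the segment $s_\theta$, lying over a single ideal point, remains disjoint from every $\Gamma_b$ with $b<\pi$), after which the limit disk $\mathcal{T}$, the tall-rectangle angle comparison, the horizontal-dilation blow-down and Lemma \ref{lem:areamin} are reused unchanged. Only your phrase that $S$ and the disks $\mathcal{T}_b$ are ``on opposite sides'' is imprecise---the barrier is effective precisely because $S$ crosses the slab $\Omega(\alpha^+,\alpha^-)$ that the $\mathcal{T}_b$ would otherwise sweep out---but this wording does not affect the substance of the argument.
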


\begin{remark}
Notice that in the process of proving Theorem \ref{th:nonexistence} we have obtained that
the limit of the family of disks $\{ T_b , \; \; b \in (0,\pi) \},$ as $b \to \pi$, is the union of the
two disks $D^+$, $D^-$ and the vertical segment $s_\pi$.
\end{remark}

\begin{remark}
We would like to point out that Theorem \ref{th:nonexistence} is more general than the corollaries that
we are going to use in this paper. We are not imposing any restriction neither about the genus nor
 the number of ends. For instance it shows that there are no Costa-Hoffman-Meeks type surfaces
 (like the ones constructed by Morabito \cite{morabito} and their possible perturbations) so that the vertical distance between 
 two consecutive ends is more than $\pi$.
\end{remark}

For the next result we impose a monotonicity condition on $\Gamma =
(\gamma^+, \gamma^-)$, which we normalize by centering around $\theta
= 0, \pi$.  Thus suppose that $\gamma^+$ is monotone decreasing
on $[0, \pi]$ and monotone increasing on $[\pi, 2\pi]$, while
$\gamma^-$ is monotone increasing on $[0,\pi]$ and monotone
decreasing on $[\pi, 2\pi]$. In other words, the two curves are tilted
away from each other. Notice that we allow non-strict
  monotonicity in each interval. 
 \begin{figure}[htpb]
 \begin{center}
 \includegraphics[height=.3\textheight]{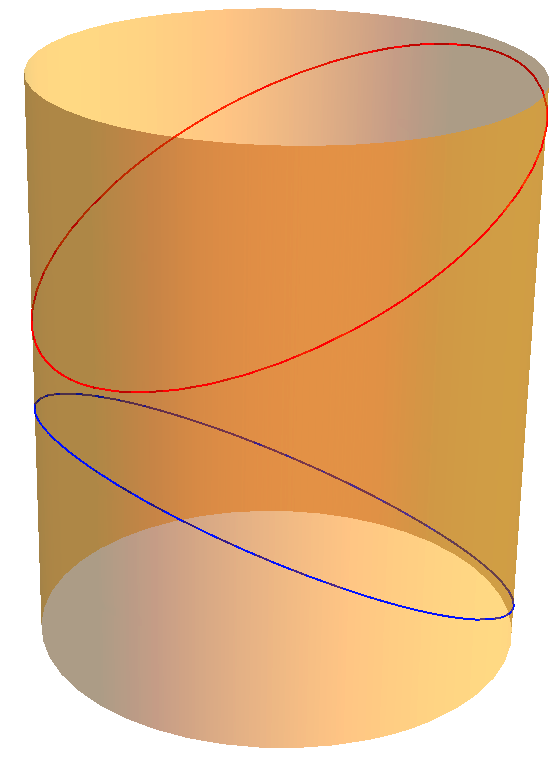}
 \end{center}
 \caption{ An example of curve $\Gamma$ in Proposition \ref{cl:alexandrov}.} 
 \label{fig:curves-1}
 \end{figure}
 
\begin{proposition}\label{cl:alexandrov}
Under the conditions above, there is no $A \in \fA$ with $\del_\infty A = \Gamma$ unless
$\gamma^\pm$ are constant (in which case $A$ is a catenoid). 
\end{proposition}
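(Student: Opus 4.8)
The plan is to use Alexandrov reflection with respect to vertical hyperplanes, exploiting the tilt hypothesis to get the reflection process started. Concretely, fix the geodesic $\eta$ through the origin joining the two endpoints $\pm 1 \in \partial_\infty \HH^2$, and let $P_\lambda$ be the foliation of $\HH^2 \times \RR$ by the vertical totally geodesic planes $Q_\lambda \times \RR$, where $Q_\lambda$ is the equidistant curve to $\eta$ at signed distance $\lambda$, measured along the geodesic $\beta$ through $-1, 1$. For $\lambda \gg 0$ the plane $P_\lambda$ lies entirely "outside" $\overline{A}$ (here one uses the graphical structure near infinity from Proposition \ref{prop:graph} together with properness to see that $\overline{A}$ is contained in a fixed compact neighborhood of the slab it spans, away from $P_\lambda$). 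First I would decrease $\lambda$ until the reflected piece $A_\lambda^+ := \mathrm{refl}_{P_\lambda}(A \cap \{K \ge \lambda\})$ first touches $A \cap \{K \le \lambda\}$; the monotonicity/tilt hypothesis on $\gamma^\pm$ is exactly what guarantees that at this first contact the asymptotic boundaries are in the correct position, i.e. the reflected boundary curve lies on the appropriate side of $\gamma^\pm$ for all $\theta$, so the comparison at infinity does not obstruct the standard argument. One must check the two ways contact can occur — an interior tangency or a boundary-at-infinity tangency — and in both cases the maximum principle (interior, and the boundary maximum principle of Schoen–Simon type at $\partial_\infty$, using that the ends are $\mathcal{C}^{2,\alpha}$ graphs) forces $A$ to be symmetric under reflection across $P_{\lambda_0}$.

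Next I would run the same argument with $\beta$ replaced by an arbitrary geodesic through the origin, obtaining that $\overline{A}$ is invariant under reflection across a vertical plane in every "direction'' — more precisely, that for each such foliation there is a symmetry plane. The set of symmetry planes of a fixed surface, together with the fact (Proposition \ref{prop:V}) that $V$ is a single regular curve whose Gauss image is the horizontal equator, then forces all these symmetry planes to pass through a common vertical axis $\{p\} \times \RR$: if two reflection planes met transversally off such an axis one could compose the reflections to get a screw motion or a translation preserving $\overline{A}$, contradicting that $\overline{A}$ is compact in the $\HH^2$ factor together with the discreteness forced by properness. Hence $\overline{A}$ is invariant under the full rotation group about $\{p\}\times\RR$, so $A$ is a surface of revolution, hence a piece of a catenoid $C_{h,z_0}$ by the Nelli–Sa Earp–Toubiana characterization recalled in \S\ref{sec:2}; in particular $\gamma^\pm$ are constants.

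The main obstacle I anticipate is the behavior of Alexandrov reflection at the asymptotic boundary. The plane $P_\lambda$ is noncompact and meets $\partial_\infty(\HH^2 \times \RR)$ in a vertical segment, and the reflected surface $A_\lambda^+$ has asymptotic boundary the reflection of an arc of $\gamma^+$ (or $\gamma^-$). I would need: (a) a boundary maximum principle valid up to $\partial_\infty \HH^2$ in the slab, which is available because, after the conformal change making the product metric asymptotically nice, minimal graphs with $\mathcal{C}^{2,\alpha}$ asymptotic data satisfy a Hopf lemma at infinity — this is implicit in \cite{chr} and in the graph estimates of Proposition \ref{prop:graph}; and (b) the verification that the tilt hypothesis indeed keeps the reflected boundary curve weakly on the correct side throughout $\lambda \in [\lambda_0,\infty)$, which is a purely one-dimensional monotonicity computation but must be done carefully at the endpoints $\theta = 0, \pi$ where the geodesic $\beta$ meets $\partial_\infty \HH^2$ and where the reflection fixes the boundary pointwise. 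Once these two points are secured, the rest is the classical Alexandrov scheme together with the structural results already proved in the paper.
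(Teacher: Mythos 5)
Your first paragraph is essentially the paper's argument: moving vertical totally geodesic planes along the geodesic through $(\pm 1,0)$ and using the tilt hypothesis to keep the reflected asymptotic boundary on the correct side. But two points in your setup are wrong as written. First, the planes must sit over the geodesics \emph{orthogonal} to $\beta$; a vertical plane over an equidistant curve to $\eta$ is not totally geodesic, so reflection across it is not an isometry and the reflected surface is not minimal. Second, no plane $P_\lambda$ ``lies entirely outside $\overline{A}$'': the ends of $A$ are graphs over a full deleted neighborhood of $\partial_\infty\HH^2$, so every such plane meets $A$ near infinity. The correct starting point (as in the paper) is that for $s\ll 0$ the piece $A_s'$ consists of two small graphical caps, and one compares their reflections with $A_s''$ using the maximum principle applied to vertical translates.

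The genuine gap is your second paragraph. You cannot ``run the same argument with $\beta$ replaced by an arbitrary geodesic through the origin'': the monotonicity hypothesis is centered at $\theta=0,\pi$ and only guarantees that the reflected boundary curves $(\gamma_s^*)^\pm$ satisfy $(\gamma_s^*)^-\leq\gamma^-<\gamma^+\leq(\gamma_s^*)^+$ for planes orthogonal to that one distinguished geodesic. For any other direction the boundary comparison at infinity fails and the Alexandrov scheme cannot even begin, so the claimed rotational symmetry (and the subsequent discussion of common axes and screw motions) is unsupported. It is also unnecessary: once the first interior contact gives $A_{s_0}^*=A_{s_0}''$ for some $s_0$, the induced reflection of $\sph^1$ swaps $\theta=0$ and $\theta=\pi$, so the equality $(\gamma_{s_0}^*)^+=\gamma^+$ forces $\gamma^+(0)=\gamma^+(\pi)$, i.e. $\max\gamma^+=\min\gamma^+$, and likewise for $\gamma^-$; hence $\gamma^\pm$ are constant and $A$ is a catenoid by the Nelli--Sa Earp--Toubiana uniqueness theorem. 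You need to replace your second paragraph by this observation (or something equivalent) for the proof to close.
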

\begin{proof}
Let $\eta(s)$ denote the geodesic in $\h^2$ which connects $(1,0)$
(where $\theta = 0$) to $(-1,0)$, with $\eta(0) = (0,0)$
and $\eta \to (1,0)$ as $s \to -\infty$.  For each
$s$ denote by $\lambda_s$ the geodesic orthogonal to $\eta$ and
meeting it at $\eta(s)$. The vertical plane $P_s = \lambda_s \times
\RR$ separates $\h^2 \times \RR$ into two components, $\calU_s$ and
$\calV_s$, and we assume that $(1,0,0) \in \overline{\calU_s}$.

Write $A_s' = A \cap \overline{\calU_s}$, $A_s'' = A \cap
\overline{\calV_s}$, and denote by $A_s^*$ the reflection of $A_s'$
into $\overline{\calV_s}$.  By
  Proposition~\ref{prop:graph}, each end of $A$ is a vertical graph,
$\HH^2 \setminus D(o,R) \ni z \mapsto (z,
u^{\pm}(z))$. Then for $s \ll 0$, $A_s'$ consists of
  two connected components, each of them a vertical graph. Since
$u^\pm(1,\theta) = \gamma^\pm(\theta)$, the monotonicity hypotheses
imply that the boundary curves $(\gamma_s^*)^\pm$ of
$A_s^*$ satisfy
\[
(\gamma_s^*)^-(\theta)\leq \gamma^-(\theta) <
  \gamma^+(\theta) \leq (\gamma_s^*)^+(\theta)
\]
for all $\theta$ with $e^{\ir \theta} \times \R \subset
\overline{\calV_s}$. In addition, $\del A_s^* \cap P_s = A \cap P_s$.
By using the maximum principle with vertical
  translations of each connected component of $A_s^*$, we deduce that
  when $s$ is very negative, $A_s^*$ does not make contact with
$A_s''$ except at the boundary.  We then let $s$ increase until the
first point of interior contact, which shows that $A_s^* = A_s''$ for
some $s$. Therefore $\gamma^\pm$ are constant and  $A$
  is a rotationally invariant catenoid by~\cite[Theorem 2.1]{n-s-t}.
\end{proof}
\begin{remark} \label{re:alex}
The results of this section are still true if the annulus $A$ is
Alexandrov-embedded with embedded ends.
\end{remark}

\section{The manifold of minimal annuli} \label{sec:manifold}
In this section we prove the basic structural result about the space of minimal annuli. 
\begin{theorem}
The space $\fA'$ of properly Alexandrov-embedded minimal annuli with embedded ends and $\calC^{2,\alpha}$ boundary curves 
is a Banach submanifold in the space of complete properly immersed surfaces of class $\calC^{2,\alpha}$ in $\HH^2 \times \RR$.
\label{manifoldthm}
\end{theorem}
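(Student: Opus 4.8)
The plan is to realize $\fA'$ as the zero set of a smooth Fredholm section of a Banach bundle and to verify that this section is a submersion, so that the implicit function theorem furnishes the Banach manifold structure. First I would set up the parametrization: given $A_0 \in \fA'$ with unit normal $\nu$, every nearby properly Alexandrov-embedded surface can be written as a normal graph $A_w = \{ \exp_p(w(p)\nu(p)) : p \in A_0 \}$ for $w$ in a small ball of some weighted H\"older space $\calC^{2,\alpha}_\delta(A_0)$; the weight is needed because $A_0$ is noncompact with two ends, and by Proposition \ref{prop:graph} and Remark \ref{re:extension} each end is a vertical graph of a $\calC^{2,\alpha}$ function up to $\del_\infty \HH^2$, so the natural function space splits as a finite-dimensional piece carrying the asymptotic boundary data plus a decaying remainder. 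Concretely I would use the coordinate $(r,\theta)$ on each end and declare $w$ admissible if $w$ extends $\calC^{2,\alpha}$ up to $r=1$; the mean curvature operator $w \mapsto H(A_w)$ is then a smooth (quasilinear elliptic) map between the corresponding Banach spaces, vanishing at $w=0$.

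The key analytic input is that the linearization of $H$ at $w=0$ is the Jacobi operator $L_{A_0}$, and that $L_{A_0}$, acting on the chosen $\calC^{2,\alpha}$ spaces with the stated boundary behavior, is Fredholm. This is where one invokes the geometry of the ends: near infinity $L_{A_0}$ is, to leading order, the catenoidal Jacobi operator $L_C$ of \eqref{eq:jacob}, whose indicial roots and mapping properties are explicit (the decaying Jacobi fields are spanned by $\phi(r)\cos\theta$, $\phi(r)\sin\theta$ by Proposition \ref{th:jacobin}), so standard edge/b-calculus parametrix constructions — exactly the "observation in \cite{KM}" alluded to before the theorem statement — give that $\overline{A_0}$ is a $\calC^{2,\alpha}$ surface with boundary and that $L_{A_0} : \calC^{2,\alpha} \to \calC^{0,\alpha}$ is Fredholm once one prescribes the two boundary traces $\gamma^\pm$ as data. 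Thus I would instead consider the map
\[
\calN : \calC^{2,\alpha}(A_0) \supset U \longrightarrow \calC^{0,\alpha}(A_0), \qquad \calN(w) = H(A_w),
\]
whose differential $D\calN|_0 = L_{A_0}$ is surjective from the full $\calC^{2,\alpha}$ space (the cokernel obstruction is absorbed by the freedom in the boundary curves, which live in the infinite-dimensional target of $\Pi$), so that $\calN^{-1}(0)$ is a Banach submanifold near $A_0$. Patching these local charts over all $A_0 \in \fA'$, and checking that the transition maps (changes of reference surface, hence changes of normal graph representation) are smooth, exhibits $\fA'$ as a Banach submanifold of the space of complete properly immersed $\calC^{2,\alpha}$ surfaces.

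The main obstacle will be the functional-analytic bookkeeping at infinity: choosing the function spaces so that (i) the mean curvature map is genuinely smooth between them, (ii) the Jacobi operator is Fredholm and its cokernel is correctly identified, and (iii) the Alexandrov-embeddedness and embedded-ends conditions are open — so that they cut out an open subset rather than imposing further constraints. Point (iii) is relatively soft: embeddedness of the ends is preserved under small $\calC^{2,\alpha}$ perturbations by Proposition \ref{prop:graph}, and Alexandrov-embeddedness (existence of an immersed handlebody filling) is likewise stable. The genuinely delicate part is matching the a priori regularity up to $\del_\infty\HH^2$ — i.e., proving the claim in Remark \ref{re:extension} that a solution of the minimal surface equation with $\calC^{2,\alpha}$ asymptotic graph data is itself $\calC^{2,\alpha}$ up to the boundary — which is exactly the point where the model operator $L_C$ and its indicial analysis enter, and I would carry this out by rewriting the minimal surface equation on the end in the $(r,\theta)$ chart, observing it becomes a uniformly degenerate (0-elliptic) equation, and applying the corresponding Schauder theory.
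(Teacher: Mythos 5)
Your overall skeleton (graph parametrization, linearize, implicit function theorem) is the same as the paper's, but the crux of the theorem is missing. The nondegenerate case ($\fJ^0(A)=\{0\}$) is indeed routine; the whole point of the paper's proof is to produce charts \emph{at degenerate annuli} — and the catenoids themselves are degenerate, so this is not a marginal case. You dispose of it with the parenthetical claim that $L_{A_0}$ is surjective on the full $\calC^{2,\alpha}(\overline{A})$ space because "the cokernel obstruction is absorbed by the freedom in the boundary curves." That is precisely what has to be proved, and it is not a soft Fredholm fact: with Dirichlet conditions $L:\calC^{2,\alpha}_D(A)\to\calC^{0,\alpha}(A)$ has index $0$ and cokernel $\fJ^0(A)$, and Proposition \ref{bvalues} shows that the boundary values attainable by Jacobi fields form a subspace of \emph{finite codimension} $\dim\fJ^0(A)$ in $\calC^{2,\alpha}(\sph^1)^2$, so boundary freedom does not automatically kill the cokernel. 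The paper's mechanism is to adjoin the specific finite-dimensional space $W$ of extensions of the normal-derivative traces $(\psi_1^+,\psi_1^-)$ of $\psi\in\fJ^0(A)$, and the proof that $L$ on $W\oplus\calC^{2,\alpha}_D(A)$ is surjective (Proposition \ref{surj}) rests on a Green's identity plus unique continuation: a decaying Jacobi field $\gamma$ orthogonal to the enlarged range satisfies $\int_{\sph^1}(\gamma_1^+\phi_1^++\gamma_1^-\phi_1^-)=0$ for all $\phi\in\fJ^0(A)$, so taking $\phi=\gamma$ forces $\gamma_1^\pm=0$ and hence $\gamma=0$. You also need, for the implicit function theorem, a splitting of the (infinite-dimensional) kernel $\fJ(A)$; the paper builds this explicitly via $\fJ^0(A)^\perp\oplus W\oplus\calX_0$. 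None of this appears in your proposal, so the argument at degenerate $A$ is incomplete.

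A secondary but real issue is your functional-analytic setup and the attribution to \cite{KM}. The observation in \cite{KM} is the opposite of an edge/$b$-calculus parametrix construction: for \emph{vertical} graphs the minimal surface equation in half-space coordinates, \eqref{ndjo}, carries an overall factor of $x^2$ and is therefore \emph{nondegenerate}, so ordinary boundary Schauder theory gives $\calC^{2,\alpha}$ regularity up to $\del_\infty\HH^2$; the paper exploits this by replacing normal graphs with graphs along a vector field equal to $\del_t$ near infinity, so that the nonlinear operator factors as $(1-r^2)N$ with $N$ nondegenerate and the linearization $L$ is a standard elliptic operator on the compact surface with boundary — explicitly avoiding the uniformly degenerate theory of \cite{Mazzeo-edge} (and using \cite{MP} to relate $L$ to the geometric Jacobi operator). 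If you insist on genuine normal graphs with weighted H\"older spaces you are committing to the $0$-calculus route, which can probably be made to work but is heavier, and your model operator is then wrong: for a general $A_0\in\fA'$ the ends are not asymptotic to a catenoid, so the indicial analysis should be done for the graphical operator near $r=1$, not for $L_C$, and Proposition \ref{th:jacobin} (decaying Jacobi fields of the catenoid) is not the relevant input at this stage.
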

\begin{proof}
Fix any $A \in \fA'$.  We first note that $A$ is $\calC^{2,\alpha}$ up to the boundary. Indeed, following \cite{KM},
by the results of \S 3, there exists a neighborhood of infinity in $A$ which is the graph of a function $w: \HH^2 
\setminus D_{\HH^2} (o,R) \to \RR$ for some $R \gg 1$, and this function extends (at least) continuously to $\del \overline{\HH^2}$. 
Near any point of the asymptotic boundary we may as well use the 
upper half-space representation of $\HH^2$, and in such coordinates $(x,y)$ with $x \geq 0$, the minimal
surface equation is
\begin{equation}
w_{xx}( 1 + x^2 w_y^2) -2x^2 w_x w_y w_{xy} + w_{yy}(1 + x^2 w_x^2) - xw_x (w_x^2 + w_y^2) = 0.
\label{ndjo}
\end{equation}
The remarkable and fortuitous fact is that although one might expect factors which are powers of $x$ coming from the
corresponding factors in the hyperbolic metric in these coordinates, there is an overall factor of $x^2$ in this
equation, so it becomes nondegenerate. In any case, from this expression it is standard that if the boundary at infinity is
a $\calC^{2,\alpha}$ graph, $\gamma = \gamma(y)$, then $w(x,y)$ is $\calC^{2,\alpha}$ up to $x=0$. 

The standard method is to parametrize surfaces $\calC^1$ near to $A$ as normal graphs over $A$, i.e., as 
$\{ \exp_p( u(p) \nu(p) ):  p \in A \}$, where $\nu$ is the unit normal vector field to $A$. It is more useful here, however,
to alter this, replacing $\nu$ by a vector field $n$ for which $n = \nu$ in $A \cap (D_{\HH^2} (o,R) \times \RR)$ and 
$n = \del_t$ in $A \cap ((\HH^2 \setminus D_{\HH^2} (o,R')) \times \RR)$ for some $R' > R$.  Thus, if $u$ is any small $\calC^{2,\alpha}$ 
function, write
\[
A_u = \{ \exp_p ( u(p) n(p) ): p \in A\}.
\]
The surface $A_u$ is minimal if $\mathcal N(u) = 0$, where $\mathcal N$ is some degenerate second order quasilinear elliptic differential
operator. Note that $\calN$ is not the operator \eqref{ndjo}, but at least locally near $x=0$ of the form $x^2$ times that operator.
In the ball model, let $\calN = (1-r^2) N$, so $N$ is a nondegenerate quasilinear elliptic operator.
The linearization of $\calN$ at $u=0$ is the Jacobi operator $\calL$ relative to the vector field $n$.   For {\it normal}
graphs (i.e., using $\nu$ instead of $n$), this Jacobi operator is
\[
\widehat\calL \phi = (\Delta_A + |S|^2 + \mbox{Ric}(\nu,\nu)) \phi,
\]
where $S$ is the shape operator (or second fundamental form) of $A$. With this slight change of parametrization, it is 
shown in \cite{MP}[appendix] that 
\[
\calL = \wh{\calL} J, \quad J = n \cdot \nu.
\]
Since $\nu = (-x^2 w_x, -x^2 w_y, 1)/ \sqrt{ 1 + x^2 (w_x^2 + w_y^2)}$,  $J = 1/\sqrt{1 + x^2 |\nabla w|^2}$,
so $J$ is globally close to $1$, and moreover $J = 1$ and $J_x = 0$ at $x=0$. 

We make one further reduction, noting that in the ball model, $\calN$ is $(1-r^2)$ times a nondegenerate operator,
so its linearization can be divided by the same vanishing factor to obtain the linear modified Jacobi operator $L = (1-r^2)^{-2} 
\wh{\calL} J$, which is simply the linearization of $N$, and this operator is now a standard nondegenerate elliptic operator.  
The net effect is that we can use  standard elliptic theory (rather than the uniformly degenerate elliptic theory from 
\cite{Mazzeo-edge} needed to study $\calL$). 

For a given $A\in \fA'$, let us define
\[
\fJ(A) = \{\psi \in L^\infty(A): \widehat{\mathcal{L}}_A \psi = 0\}, \qquad \fJ^0(A)= \fJ(A) \cap L^2(A).
\]

The main results about $N$ and $L$ are as follows.
\begin{proposition} Let $A \in \fA'$ and suppose that $\del A = \Gamma$ consists of a pair $\gamma^\pm$ of 
$\calC^{2,\alpha}$ horizontal curves.  Then
\begin{enumerate}[i)]
\item The graph function $u$ (relative to the parametrization using the vector field $n$) lies in $\calC^{2,\alpha}(\overline{A})$;
\item If $\phi$ is a solution to $L\phi = 0$ with boundary values $\phi_0^\pm \in \calC^{2,\alpha}(\sph^1)$, 
then $\phi \in \calC^{2,\alpha}(\overline{A})$;
\item The operator $L: \calC^{2,\alpha}_D(A) \longrightarrow \calC^{0,\alpha}(A)$ is Fredholm of index $0$, where
$\calC^{2,\alpha}_D$ is the space of $\calC^{2,\alpha}$ functions which vanish at $\del A$. Its kernel $\{\phi:  L\phi = 0\}$
is identified with $\fJ^0(A)$ via $L \phi = 0$ if and only if $\phi = J \varphi$, $\wh \calL \varphi = 0$. (For simplicity,
we often refer to this nullspace as $\fJ^0(A)$, recalling this identification when appropriate.) This same finite dimensional 
space is a complement for its range; 
\item If $\gamma^\pm \in \calC^\infty$ and if $\phi \in \fJ^0(A)$, i.e., $\phi_0^\pm = 0$, then $\phi \in \calC^{\infty}(\overline{A})$. 
\end{enumerate}
\label{Lmapprop}
\end{proposition}
The only point that requires comment is the third; the fact that it is Fredholm is of course standard, and its
index vanishes since it is deformable amongst elliptic Fredholm operators to a self-adjoint operator.

We say that $A$ is nondegenerate if $\fJ^0(A) = \{0\}$.  Define a continuous extension operator
$e: \calC^{2,\alpha}(\sph^1)^2 \to \calC^{2,\alpha}(A)$, and now consider the map
\begin{equation} \label{eq:ya}
\calC^{2,\alpha}(\sph^1)^2 \times \calC^{2,\alpha}_D(A) \ni ((\phi_0^+, \phi_0^-) , w) \longrightarrow
N( e( \phi_0^+, \phi_0^-) + w) \in \calC^{0,\alpha}(A). 
\end{equation}

\begin{proposition}
If $A \in \fA'$ is nondegenerate, then there exists a neighborhood $\calU$ of $0$ in $\calC^{2,\alpha}(\sph^1)^2$ and
a smooth map $G: \calU \to \calC^{2,\alpha}_D(A)$ such that $N( e(\phi_0^+, \phi_0^-) + G (\phi_0^+, \phi_0^-)) = 0$,
and all solutions $u$ to $N(u) = 0$ sufficiently close to $0$ are of this form. 
\end{proposition}
We reduce this to the implicit function theorem as follows. 
By hypothesis, the linearization of the map given by \eqref{eq:ya}, $L$, is surjective, and there is a bijective correspondence between the nullspace of
$L$ and pairs $\phi_0^\pm$. This last statement is a restatement of the fact that the linear Poisson problem is
well-posed: there exists a unique homogeneous solution of $Lw = 0$ with $w = \phi_0^\pm$ on $\gamma^\pm$. 

\medskip

To prove that $\fA'$ is a Banach manifold even around degenerate annuli, we must characterize those pairs 
$(\phi^+_0, \phi^-_0)$ which occur as leading coefficients of elements of $\fJ(A)$, which denotes the space of Jacobi fields of $A$.  In the following, for
$\phi \in \fJ(A)$, we write $\phi_1$ for its normal derivative at the boundary (computed with respect to
the fixed chart $z$). 
\begin{proposition} \label{bvalues}
Let $(\phi_0^+, \phi_0^-)$ be a pair of functions in $ (\mathcal{C}^{2,\alpha}(\sph^1) )^2$. Then, there exists a Jacobi field 
$\phi \in \fJ(A)$ satisfying $\phi |_{\partial A^\pm }=\phi_0^\pm$ if and only if 
$$ 
\int_{\sph^1} (\phi^+_0 \psi^+_1 + \phi^-_0 \psi^-_1) = 0 
$$
for every $\psi \in \fJ^0(A).$
\end{proposition}
\begin{proof}  First note that if $\wh \phi, \wh \psi \in \fJ^0(A)$, then
\begin{equation*}
\begin{aligned}
0 & =\int_A (\wh \calL\wh \phi) \wh \psi - \wh \phi (\wh \calL\wh \psi) = \int_{\sph^1} (\wh \phi^+_0 \wh \psi^+_1 - 
\wh \phi^+_1 \wh \psi^+_0)  +  (\wh \phi^-_0 \wh \psi^-_1 - \wh \phi^-_1 \wh \psi^-_0) \\ 
& = \int_{\sph^1} (\wh \phi^+_0 \wh \psi^+_1 + \wh \phi^-_0 \wh \psi^-_1).
\end{aligned}
\end{equation*}
(The integrals at the  two boundary components appear with the same sign because we are using the outward pointing
normal derivative at each of these.)   We may transfer this to an identify involving functions $\phi, \psi$ in the nullspace
of $L$ by setting $\phi  = J \wh \phi$, $\psi = J \wh \psi$ and also multiplying the area form of $A$ by $J^{-1}$; 
note also that $\phi_0^\pm = \wh \phi^\pm_0$,  $\psi_0^\pm = \wh \psi_0^\pm$ because $J = 1$ and its normal derivative 
vanishes at $\del A$.  Thus we also have
\[
0 = \int_{\sph^1} (\phi^+_0 \psi^+_1 + \phi^-_0 \psi^-_1).
\]
In the following we perform the same integration by parts a few more times; each time we invoke the self-adjointness
for $\wh \calL$ with respect to the geometric area form and then conjugate to obtain the analogous formula for 
the nondegenerate operator $L$ with respect to a new area form.  However, for simplicity, we do not spell this out carefully again.

This necessary condition is also sufficient. Indeed, fix any $\phi_0^\pm$ satisfying this orthogonality condition, 
and set $u = e(\phi_0^+, \phi_0^-)$.  Then $L u = f \in \calC^{0,\alpha}$. By part iii) of Proposition~\ref{Lmapprop}, 
there exists $\psi \in \fJ^0(A)$ and $v \in \calC^{2,\alpha}_D(A)$ such that $Lv = f + \psi$.  Thus writing $\phi = v-u$, 
then $ L \phi = \psi$. We now show that this is impossible unless $\psi = 0$.  Indeed, since $v$ vanishes at $\del A$, 
$u^\pm = \phi_0^\pm$. Now we compute that
\begin{equation} \label{eq:comp}
\int_A |\psi|^2 = \int_A (L\phi)\psi - \phi (L\psi) =  \int_{\del_+ A} \phi_0^+ \psi_1^+ + \int_{\del_- A} \phi_0^- \psi_1^- = 0,
\end{equation}
hence $\psi = 0$.  This completes the proof. \end{proof}

This result proves that the set of pairs $(\phi^+_0, \phi^-_0)$ which can occur as leading coefficients of Jacobi fields $\phi \in \fJ(A)$ 
has finite codimension in $\calC^{2,\alpha}(\sph^1)^2$, and that a good choice of complementary subspace for it is the space
\[
W = \{ e(\phi_1^+, \phi_1^-):  \phi \in \fJ^0(A)\}
\]
of normal derivatives of all elements of $\fJ^0(A)$. 

\begin{proposition} \label{surj}  The map 
\begin{equation}
L: W \oplus \calC^{2,\alpha}_D(A) \longrightarrow \calC^{0,\alpha}(A)
\label{bestsurj}
\end{equation}
is surjective, with nullspace $\fJ^0(A)$.   
\end{proposition}
\begin{proof}
We have already noted that the range of $L$ on $\calC^{2,\alpha}_D$ is a finite codimensional space in 
$\calC^{0,\alpha}$ complementary to $\fJ^0(A)$.  Suppose then that $\gamma \in \fJ^0(A)$ and
\[
\int_A \gamma  (L( e(\phi_1^+,\phi_1^-) + u ) = 0\ \ \ \mbox{for every}\ \phi \in \fJ^0(A)\  \mbox{and}\ u \in \calC^{2,\alpha}_D(A).
\]
Taking $\phi = 0$ and integrating by parts simply confirms that $L\gamma = 0$.  Next, using that $\gamma$ vanishes at the boundary, 
let $u = 0$ and integrate by parts again to obtain
\[
0 = \int_A \gamma (L e(\phi_1^+,\phi_1^-)) - (L\gamma) e(\phi_1^+,\phi_1^-) = \int_{\sph^1} (\gamma_1^+ \phi_1^+ + \gamma_1^- \phi_1^-).
\]
Letting $\phi = \gamma$ shows that $\gamma_1^\pm = 0$, and hence that $\gamma = 0$. 

To finish the proof, note that if $\psi = e(\phi_1^+, \phi_1^-) + u \in \fJ(A)$, then $\psi_0^\pm = \phi_1^\pm$ for
some $\phi \in \fJ^0(A)$, which we showed above is impossible unless $\phi = 0$.  This proves that 
the null space of \eqref{bestsurj} equals $\fJ^0(A)$.  
\end{proof}

We may now complete the proof of Theorem~\ref{manifoldthm}.  The case when $A$ is nondegenerate has already been handled, 
so suppose that $\fJ^0(A) \neq \{0\}$.  Choose subspaces $\fJ^0(A)^\perp \subset \fJ(A)$ and $\calX_0 \subset \calC^{2,\alpha}_D(A)$,
each complementary to $\fJ^0(A)$ in the respective larger ambient spaces. Immediately from Proposition \ref{surj}, 
\[
L:  \fJ^0(A)^\perp  \oplus W \oplus \calX_0 \longrightarrow \calC^{0,\alpha}(A)
\]
is surjective, with nullspace $\fJ^0(A)^\perp$.  In addition,
\[
N:  \fJ^0(A)^\perp \oplus W \oplus \calX_0 \longrightarrow \calC^{0,\alpha}(A)
\]
is well-defined and smooth.  The implicit function theorem implies, as before, the existence of a map 
\[
G: \fJ^0(A)^\perp \longrightarrow W \oplus \calX_0
\]
and a neighbourhood $\calW$ of $0$ in $\fJ^0(A)^\perp$ such that
\[
\calW \ni \phi \mapsto N( \phi + G(\phi) ) \equiv 0,
\]
and all solutions of $N$ near to $0$ are of this form.  

Once again, this is a chart for $\fA'$ near $A$, which proves that $\fA'$ is a Banach submanifold even
around degenerate points. 
\end{proof}

\begin{remark} \label{re:leading}  Since it will be important later, we recall that we have already
given explicit expressions for the decaying Jacobi fields $\fJ^0(A)$ associated to the rotationally 
invariant catenoid $A_0 = C_h$, see Proposition 2.2. We calculate from these that the space of 
normal derivatives of elements of $\fJ^0(A_0)$ is spanned by 
$( \sin \theta, \sin \theta)$ and $(\cos \theta, \cos \theta)$.  
\end{remark}

\section{The extended boundary parametrization}

Let $A$ be a proper, Alexandrov-embedded, minimal annulus with embedded ends such that $\Pi(A)=(\gamma^+,\gamma^-)$ consists of two 
$\calC^{2, \alpha}$ graphs over $\sph^1$. The bottom  boundary curve $\gamma-$ bounds a unique minimal disk $D^-$; this is 
the vertical graph of a function $v^-$. Let $u^-$ denote the function parametrizing the bottom end of $A$. We shall 
consider the space of  minimal annuli $\fA^* \subset \fA'$ which satisfy 
\begin{equation}\label{eq:open}
u_r^-(1,\theta)-v_r^-(1,\theta)<0, \quad \forall \theta \in \sph^1. 
\end{equation} 
Clearly $\fA^*$ is an open subset of $\fA'$, and hence its tangent space $T_A \fA^*$ at any point equals $\fJ(A)$. In addition,
it is trivial that $\fA \subset \fA^*$. Consider the map
\[
\Pi: \fA^* \to \calC^{2,\alpha}(\sph^1)^2,
\]
which takes any $A \in \fA^*$ to its pair of boundary curves $(\gamma^+ ,\gamma^-) \in \calC^{2,\alpha}(\sph^1)^2$.

The perhaps naive hope is that this map can be used to parametrize $\fA^*$ by some subset of $\calC^{2,\alpha}(\sph^1)^2$.  To 
understand whether this is feasible, the first step is to compute its index. 
\begin{theorem}
The map $\Pi: \fA^* \to \calC^{2,\alpha}(\sph^1)\times  \calC^{2,\alpha}(\sph^1)$ is Fredholm of index zero.
\end{theorem}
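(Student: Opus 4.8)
The plan is to compute the index of $\Pi$ by relating its linearization at a point $A \in \fA^*$ to the modified Jacobi operator $L$ studied in Section~\ref{sec:manifold}, and then to exploit the fact that the Fredholm index is locally constant on the connected Banach manifold $\fA^*$, evaluating it at the explicit catenoid $A_0 = C_h$. First I would identify the tangent space $T_A \fA^* = \fJ(A)$ (as noted just before the statement), and observe that the differential $d\Pi_A : \fJ(A) \to \calC^{2,\alpha}(\sph^1)^2$ is exactly the map sending a Jacobi field $\phi$ to its pair of asymptotic boundary values $(\phi_0^+, \phi_0^-)$. By Proposition~\ref{bvalues}, the image of this map is precisely the closed subspace of pairs $(\phi_0^+,\phi_0^-)$ orthogonal to $\{(\psi_1^+, \psi_1^-) : \psi \in \fJ^0(A)\}$, which has codimension equal to $\dim W = \dim \fJ^0(A)$ (the normal-derivative map $\fJ^0(A) \to W$ being injective, as shown in the proof of Proposition~\ref{surj}). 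The kernel of $d\Pi_A$ consists of Jacobi fields with vanishing boundary data, i.e.\ exactly $\fJ^0(A)$ (after the identification $\phi = J\varphi$). Hence $d\Pi_A$ is Fredholm with
\[
\operatorname{ind}(d\Pi_A) = \dim \ker d\Pi_A - \operatorname{codim}\, \operatorname{range}\, d\Pi_A = \dim \fJ^0(A) - \dim W = 0.
\]

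The remaining point is to justify the passage from the pointwise computation to a genuine Fredholm statement for the nonlinear map $\Pi$. For this I would use the local chart for $\fA^*$ around $A$ constructed in the proof of Theorem~\ref{manifoldthm}: near $A$, the manifold $\fA^*$ is parametrized by a neighborhood $\calW$ of $0$ in $\fJ^0(A)^\perp$ via $\phi \mapsto N(\phi + G(\phi)) = 0$, and in these coordinates $\Pi$ becomes the map $\phi \mapsto$ (boundary values of $\phi + G(\phi)$). Since $G$ is smooth and $G(0)=0$, the differential of $\Pi$ in these coordinates at the center point is $d\Pi_A$ composed with the inclusion $\fJ^0(A)^\perp \hookrightarrow \fJ(A)$; a short bookkeeping check (using that $dG_0$ maps into $W\oplus\calX_0$ and the precise splitting in Proposition~\ref{surj}) shows the resulting operator is still Fredholm of index $0$. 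Alternatively, and perhaps more cleanly, one can note that $\Pi$ factors, locally, through the linear isomorphism identifying $T_A\fA^*$ with $\fJ(A)$ together with the Fredholm map $\fJ(A) \to \calC^{2,\alpha}(\sph^1)^2$ taking boundary values, so $\Pi$ is Fredholm of the same index as the latter.

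Finally, because $\fA^*$ is a connected Banach manifold (it is an open subset of $\fA'$, which contains the connected family $\calM$ of catenoids described in Remark~\ref{re:catenoids} and, more importantly, one only needs the index to be continuous, hence constant on each component containing a catenoid), and since the Fredholm index is a locally constant function, it suffices to verify index zero at a single convenient annulus. Taking $A_0 = C_h$, by Proposition~\ref{th:jacobin} we have $\dim \fJ^0(C_h) = 2$, and by Remark~\ref{re:leading} the space $W$ of normal derivatives is spanned by $(\sin\theta,\sin\theta)$ and $(\cos\theta,\cos\theta)$, also of dimension $2$; thus $\operatorname{ind}(d\Pi_{C_h}) = 2 - 2 = 0$, confirming the general value.

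The main obstacle is the bookkeeping in the second paragraph: one must check carefully that working in the implicit-function-theorem chart does not alter the Fredholm index, i.e.\ that the finite-dimensional complement $W$ appearing in Proposition~\ref{surj} is correctly accounted for when one restricts to the slice $\fJ^0(A)^\perp$ rather than all of $\fJ(A)$. This is really a matter of tracking the splitting $\fJ(A) = \fJ^0(A) \oplus \fJ^0(A)^\perp$ and the splitting $\calC^{2,\alpha}(\sph^1)^2 = \operatorname{range}(d\Pi_A) \oplus W$ consistently; once this is done, the equality of dimensions $\dim\fJ^0(A) = \dim W$ — which is the genuine content, supplied by Propositions~\ref{bvalues} and~\ref{surj} — immediately yields index zero, and the catenoid computation serves as a reassuring consistency check.
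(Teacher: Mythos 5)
Your proposal is correct and follows essentially the same route as the paper: the differential of $\Pi$ at $A$ is the boundary-value map $\fJ(A)\ni\phi\mapsto(\phi_0^+,\phi_0^-)$, whose kernel is $\fJ^0(A)$ and whose image, by Proposition~\ref{bvalues} together with the injectivity of the normal-derivative map established in the proof of Proposition~\ref{surj}, has codimension $\dim\fJ^0(A)$, giving index zero. The chart bookkeeping and the catenoid consistency check in your last two paragraphs are harmless but not needed, since the pointwise computation already holds at every $A\in\fA^*$.
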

\begin{proof}
The assertion is that the linear map $D\Pi|_A$ is Fredholm of index $0$ for every $A$.  However, $D\Pi|_A (\phi)  = \phi_0$, the leading 
coefficient of the Jacobi field $\phi$ at $\del A$, so we must show that $\fJ(A) \ni \phi \mapsto \phi_0 \in \calC^{2,\alpha}(\sph^1)^2$ is 
Fredholm of index $0$. This follows immediately from Proposition \ref{bestsurj}. 
\end{proof}

We have already seen that $D\Pi$ is not invertible at the catenoid $C_h$. It has a two-dimensional nullspace there, and the implicit 
function theorem shows that the range of $\Pi$ is contained locally near $\Pi(C_h)$ around a codimension $2$ submanifold.  
We prove later that this image has nontrivial interior, but by Proposition~\ref{cl:alexandrov}, $\Pi(C_h)$ is not an interior point
of this image.  In fact, we do not have a precise characterization of $\Pi( \fA^*)$. 

It is useful to define a slightly different boundary correspondence via the extended boundary map
\begin{equation}
\begin{gathered}
\widetilde{\Pi}: \fA^* \times \RR \times \CC \longrightarrow \calC^{2,\alpha}(\sph^1)^2 \times \RR \times \CC, \\
\widetilde{\Pi}( A, a, \eta) = ( \Pi_-(A),  \Pi_+(A) + a + \Re \left( \eta e^{i\theta} \right), G(A) ).
\end{gathered}
\end{equation}
Here $\Pi_\pm(A) = \gamma\pm$ and the components $(G_0, G_1, G_2)$ of $G$ are defined as follows.  
The bottom boundary curve $\gamma-$ bounds a unique minimal disk $D^-$; this is the vertical graph of a
function $v^-$. Letting $u^-$ denote the function parametrizing the bottom end of $A$, we write
\begin{equation*}
\begin{aligned}
f_0(A) & = \mbox{Flux}(A, \del_- A, E_3) = \int_{\sph^1} u_r^-(1,\theta)\, d\theta, \\
f_1(A) + \imag f_2(A) & = \int_{\sph^1} e^{i\theta} (u_r^-(1,\theta) - v_r^-(1,\theta))\, d \theta,
\end{aligned}
\end{equation*}
and in terms of these, define
\[
G_0(A) = f_0(A) - f_0(A)^{-1}, \ \  G_1(A) + \imag \; G_2(A) = (f_1(A) + \imag f_2(A))/f_0(A). 
\]
\begin{definition}
We refer to $\mathbf{C}(A) := G_1(A) + \imag \; G_2(A)$ as the {\bf center} of $A \in \fA^*$. 
\end{definition}
Note that 
\begin{equation}{\bf C}(\widetilde R_\zeta (A))=R_\zeta ({\bf C}(A)), \label{eq:rota} \end{equation}
where $R_\zeta: \RR \times \CC \to \RR \times \CC$ is the rotation $(t, z) \mapsto (t, {\rm e}^{{\rm i} \zeta} z)$.

Since the flux of the disk $D^-$ is zero, we can also write 
\begin{equation}\label{eq:f0}
f_0(A) = \int_{\sph^1} \left(u_r^-(1,\theta)-v_r^-(1, \theta)\right)\, d\theta, 
\end{equation}
and  certainly $u_r^-(1,\theta)-v_r^-(1, \theta) <0$ for all $\theta \in \sph^1$. We thus have that 
$|{\bf C}(A)| <1$ for all $A \in \fA^*$, i.e., $\mathbf{C}(A) \in \h^2$. 
Now define 
\begin{equation}
\widetilde{\fA^*}: = \widetilde{\Pi}^{-1} \left( \calC^{2,\alpha}(\sph^1)^2 \times \RR \times \mathbb D \right), 
\label{fatstar}
\end{equation}
where $\D$ denotes the unit disk in $\CC$.
\begin{remark} \label{re:cta} 
Evaluating $f_1 + i f_2$ on the $3$-dimensional family of catenoids $\calM$ (see Remark \ref{re:catenoids}), then 
$v_r^- \equiv 0$, and hence  
$$
f_1(C_{h,z_0}) + if_2(C_{h,z_0})  = \int_{\sph^1} e^{i\theta} u_r^-(1,\theta) \, d \theta.
$$
It is then straightforward to check that $z_0= G_1(C_{h,z_0}) + \imag \; G_2(C_{h,z_0}).$
\end{remark}

This remark shows that the center of the neck of the catenoid $C_{h,z_0}$ in the obvious geometric sense 
equals $\mathbf{C}(C_{h,z_0})$.  We show in Section \ref{sec:compact} that $\mathbf{C}(A)$ behaves 
like this center more generally in the following sense. If $A_n$ is a sequence of annuli for which 
the sequence of vertical fluxes is bounded and $\mathbf{C}(A_n)$ diverges in $\h^2$, then $A_n$ 
converges to two disjoint minimal disks, and  the necks of these annuli disappear at infinity.

The motivation for introducing this enhanced boundary map $\widetilde \Pi$ is that the catenoids are no longer degenerate points.
\begin{theorem}
The extended boundary correspondence $\widetilde{\Pi}$ is a proper Fredholm map of index $0.$
It is locally invertible near any one of the catenoids $C_h$. 
\label{pfi0}
\end{theorem}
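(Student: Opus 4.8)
The plan is to establish the three assertions—Fredholm, index $0$, proper—in turn, and then to read off local invertibility at the catenoids from an injectivity computation for $D\widetilde\Pi$. The Fredholm property and vanishing of the index are a formal consequence of the corresponding statement for $\Pi$ proved just above (equivalently, for the leading-coefficient map $\fJ(A)\ni\phi\mapsto(\phi_0^-,\phi_0^+)$). Indeed,
\[
D\widetilde\Pi|_{(A,a,\eta)}(\phi,\dot a,\dot\eta)=\bigl(\phi_0^-,\ \phi_0^+ +\dot a+\Re(\dot\eta e^{i\theta}),\ DG_0(\phi),\ DG_1(\phi)+\imag\,DG_2(\phi)\bigr),
\]
and adjoining to the second slot the injective rank-three map $(\dot a,\dot\eta)\mapsto\dot a+\Re(\dot\eta e^{i\theta})$ raises the index by $3$, while then recording in the target the three scalar functionals $DG_0,DG_1,DG_2$ lowers it by $3$; hence $D\widetilde\Pi$ is Fredholm of index $0$ at every point. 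Smoothness of $\widetilde\Pi$ is clear, $\Pi$ being smooth and $f_0,f_1,f_2$ smooth functionals on $\fA^*$.

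Properness is the substantial point, and it is precisely the content of Section \ref{sec:compact}, which I would simply invoke; this is the main obstacle of the argument. The mechanism is the following: if $\widetilde\Pi(A_n,a_n,\eta_n)$ stays in a compact set, then $\gamma_n^-=\Pi_-(A_n)$ converges, and since $G_0(A_n)=f_0(A_n)-f_0(A_n)^{-1}$ converges while $s\mapsto s-s^{-1}$ is a diffeomorphism of $(-\infty,0)$ onto $\RR$ (recall $f_0<0$ by \eqref{eq:open}, \eqref{eq:f0}), the vertical fluxes $f_0(A_n)$ are confined to a compact subset of $(-\infty,0)$, bounded away from $0$ and from $-\infty$; convergence of the centers $\mathbf{C}(A_n)=G_1(A_n)+\imag G_2(A_n)$ then keeps them in a compact subset of $\h^2$, because by the dichotomy of Section \ref{sec:compact} the alternative $\mathbf{C}(A_n)\to\partial_\infty\h^2$ would force $A_n$ to degenerate into two disjoint minimal disks with the neck escaping to infinity, which is incompatible with the flux staying bounded away from $0$. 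With flux and neck position controlled, the compactness results of Section \ref{sec:compact} produce a subsequence $A_n\to A_\infty\in\fA^*$; then $\gamma_n^+=\Pi_+(A_n)$ converges, and projecting $\gamma_n^+ +a_n+\Re(\eta_n e^{i\theta})$ onto the constant and first-harmonic subspaces of $\calC^{2,\alpha}(\sph^1)$ shows $a_n\to a_\infty$ and $\eta_n\to\eta_\infty$, giving properness.

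For local invertibility at $C_h$, since $D\widetilde\Pi$ is Fredholm of index $0$ it suffices that $D\widetilde\Pi|_{(C_h,0,0)}$ be injective. Let $(\phi,\dot a,\dot\eta)$ lie in its kernel; thus $\phi\in\fJ(C_h)$ with $\phi_0^-=0$, $\phi_0^+=-\dot a-\Re(\dot\eta e^{i\theta})$, and $DG(\phi)=0$. By Proposition \ref{bvalues} applied to $C_h$, together with Remark \ref{re:leading} (the normal derivatives of $\fJ^0(C_h)$ span $(\cos\theta,\cos\theta)$ and $(\sin\theta,\sin\theta)$), one gets $\int_{\sph^1}(\phi_0^++\phi_0^-)\cos\theta=\int_{\sph^1}(\phi_0^++\phi_0^-)\sin\theta=0$, hence, since $\phi_0^-=0$, the first harmonics of $\phi_0^+$ vanish; so $\dot\eta=0$ and $\phi_0^+\equiv-\dot a$. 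Decomposing $\phi$ into $\theta$-modes on $C_h$ as in Proposition \ref{th:jacobin}: the modes $n\ge2$ vanish at both ends and so vanish identically (for $n\ge 2$, $0$ lies strictly below the lowest Dirichlet eigenvalue of $L_n$, by the Sturm comparison in that proof); the mode $n=1$ vanishes at both ends, so lies in $\fJ^0(C_h)$, i.e.\ equals $p\,\varphi_1+q\,\varphi_2$ for some $p,q\in\RR$; and the mode $n=0$ part is the unique solution $\alpha_0^{\dot a}$ of $L_0\alpha_0=0$ vanishing at the bottom end with $\alpha_0=-\dot a$ at the top (unique because the solutions $r'/r$ and the $h$-variation field are nonvanishing at both ends and independent). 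So the kernel of the first two components of $D\widetilde\Pi|_{(C_h,0,0)}$ is the three-dimensional family $\{(\alpha_0^{\dot a}+p\varphi_1+q\varphi_2,\dot a,0):(\dot a,p,q)\in\RR^3\}$.

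It remains to impose $DG=0$ on this family. Since $\phi_0^-=0$, the minimal disk $D^-$ does not vary to first order, so $\delta v^-=0$, while $\delta u^-=\phi$ near the bottom end (which is parametrized by $\partial_t$); hence $\delta u_r^-(1,\theta)-\delta v_r^-(1,\theta)=\partial_r\phi|_{r=1}$. Using $\partial_r\varphi_1|_{r=1}=-2\cos\theta$, $\partial_r\varphi_2|_{r=1}=-2\sin\theta$ and that $\alpha_0^{\dot a}$ contributes only to the zeroth harmonic,
\[
D(f_1+\imag f_2)|_{C_h}(\phi)=\int_{\sph^1}e^{i\theta}\,\partial_r\phi|_{r=1}\,d\theta=-2\pi(p+\imag q).
\]
As $f_1+\imag f_2$ vanishes at $C_h$ and $f_0(C_h)<0$, this gives $D(G_1+\imag G_2)|_{C_h}(\phi)=-2\pi(p+\imag q)/f_0(C_h)$, so $DG_1=DG_2=0$ forces $p=q=0$. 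Finally $Df_0|_{C_h}(\phi)=\int_{\sph^1}\partial_r\phi|_{r=1}\,d\theta=2\pi\,\partial_r\alpha_0^{\dot a}|_{r=1}$, a nonzero multiple of $\dot a$ (were $\partial_r\alpha_0^{\dot a}|_{r=1}$ to vanish at the bottom end, $\alpha_0^{\dot a}$ would vanish there to second order, hence identically by uniqueness for $L_0\alpha_0=0$, forcing $\dot a=0$), and $DG_0=(1+f_0(C_h)^{-2})Df_0$ with $1+f_0(C_h)^{-2}\neq0$. Therefore $\dot a=0$ and $\phi=0$: the kernel is trivial, $D\widetilde\Pi|_{(C_h,0,0)}$ is an isomorphism, and the inverse function theorem yields a local inverse of $\widetilde\Pi$ near $C_h$.
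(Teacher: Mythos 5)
Your proposal is correct and follows essentially the same route as the paper: the index is computed by the same formal count (equal finite-dimensional extensions of domain and range), properness is deferred to the compactness section exactly as the paper does, and the invertibility of $D\widetilde{\Pi}|_{C_h}$ is established by the same two-stage kernel computation (first harmonics killed via Proposition \ref{bvalues} and Remark \ref{re:leading}, then $DG=0$ eliminating the remaining three-dimensional family). The only real variation is that you rule out $Df_0=0$ on the $n=0$ mode by ODE uniqueness for $L_0$ rather than by the paper's maximum-principle argument for the sign of $\phi_1^+$, which is an equally valid (and arguably cleaner) alternative.
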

Theorem \ref{pfi0} will be proved in a series of steps. In the remainder of this section we verify that $\widetilde{\Pi}$ is Fredholm 
of index $0$ and check that its differential at $C_h$ is invertible.  The properness assertion is more difficult and its proof occupies 
the next section. 

The main result of this paper is an essentially global existence theorem which is proved using degree theory. This relies on the fact 
that, under mild hypotheses, a proper Fredholm map between Banach manifolds has a $\mathbb Z$-valued degree. The importance of Theorem~\ref{pfi0} is 
that it implies that this degree equals $1$. This will be discussed carefully below. 
\begin{proposition}
The map $\widetilde{\Pi}$ is Fredholm of index $0$.
\end{proposition}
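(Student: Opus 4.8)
The plan is to reduce everything to the theorem just established in this section, that $\Pi\colon\fA^*\to\calC^{2,\alpha}(\sph^1)^2$ is Fredholm of index $0$ --- equivalently, that for every $A\in\fA^*$ the differential $D\Pi|_A\colon\fJ(A)\to\calC^{2,\alpha}(\sph^1)^2$, $\phi\mapsto\phi_0=(\phi_0^-,\phi_0^+)$, is Fredholm of index $0$. Since being a Fredholm map of index $0$ is a pointwise condition on the differential, it suffices to check this for $D\widetilde{\Pi}$ at an arbitrary point $(A,a,\eta)\in\fA^*\times\RR\times\CC$. First I would record the differential: since $\fA^*$ is open in the Banach manifold $\fA'$, the tangent space of the domain at $(A,a,\eta)$ is $\fJ(A)\times\RR\times\CC$, and differentiating the three components of $\widetilde{\Pi}$ yields
\begin{equation*}
D\widetilde{\Pi}|_{(A,a,\eta)}(\phi,\dot a,\dot\eta)=\left(\phi_0^-,\ \phi_0^+ + \dot a + \Re(\dot\eta\, e^{i\theta}),\ DG(A)\phi\right).
\end{equation*}

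Next I would compare this with the direct sum $S:=D\Pi|_A\oplus\id_{\RR\times\CC}$, which sends $(\phi,\dot a,\dot\eta)$ to $(\phi_0^-,\phi_0^+,\dot a,\dot\eta)$. Because $D\Pi|_A$ is Fredholm of index $0$ and $\id_{\RR\times\CC}$ is an isomorphism, additivity of the Fredholm index under direct sums shows that $S$ is Fredholm of index $0$. The difference $D\widetilde{\Pi}|_{(A,a,\eta)}-S$ sends $(\phi,\dot a,\dot\eta)$ to $\left(0,\ \dot a+\Re(\dot\eta\, e^{i\theta}),\ DG(A)\phi-(\dot a,\dot\eta)\right)$; its second entry always lies in the three-dimensional subspace $\mathrm{span}\{1,\cos\theta,\sin\theta\}$ of $\calC^{2,\alpha}(\sph^1)$ and its last entry lies in $\RR\times\CC$, so this difference has range of dimension at most six and is in particular compact. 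Stability of the Fredholm index under compact perturbations then gives that $D\widetilde{\Pi}|_{(A,a,\eta)}=S+\bigl(D\widetilde{\Pi}|_{(A,a,\eta)}-S\bigr)$ is Fredholm of index $0$, which is precisely the claim.

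The only thing needing real verification is that $DG(A)\colon\fJ(A)\to\RR\times\CC$ is bounded; its range is automatically at most three-dimensional, since $G$ takes values in $\RR\times\CC$. This rests on the regularity theory of Section~\ref{sec:manifold}. Using the parametrization of annuli near $A$ by the vector field $n$, which equals $\del_t$ near infinity, the bottom end of the deformed annulus $A_\phi$ is, near infinity, the vertical graph of $u^-+\phi$; hence the first variation of $u_r^-(1,\theta)$ along $\phi$ is the boundary normal derivative $\phi_1^-$, while the first variation of $v_r^-(1,\theta)$ is controlled by the linearized Dirichlet problem for the bottom minimal disk with boundary value $\phi_0^-$. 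By Proposition~\ref{Lmapprop}, elements of $\fJ(A)$ belong to $\calC^{2,\alpha}(\overline{A})$ with norm controlled by their boundary data, so $\phi\mapsto\phi_0^\pm$ and $\phi\mapsto\phi_1^\pm$ are bounded; combined with boundedness of the disk solution operator and with the fact that $f_0\neq 0$ on $\fA^*$ (see~\eqref{eq:f0}), which makes $G_0,G_1,G_2$ smooth functions of $(f_0,f_1,f_2)$, this gives the required boundedness of $DG(A)$. I do not expect a serious obstacle here: the analytic substance is entirely contained in the already-proven Fredholmness of $\Pi$, and the role of the extra factors $\RR\times\CC$ in the domain and of the functional $G$ in the target is only to modify the operator by a finite-rank amount, which cannot alter the index.
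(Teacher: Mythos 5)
Your proposal is correct and is essentially the paper's argument: the paper disposes of this in one sentence by noting that the domain and range of $\widetilde{\Pi}$ are finite-dimensional extensions of the same dimension of those of $\Pi$, so the index is unchanged, and your decomposition $D\widetilde{\Pi}=\bigl(D\Pi|_A\oplus\id_{\RR\times\CC}\bigr)+(\text{finite-rank perturbation})$ is precisely that observation written out with the compact-perturbation justification and the boundedness of $DG(A)$ made explicit.
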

\begin{proof}
This map is Fredholm because its domain and range are finite dimensional extensions of those for $\Pi$; since 
these extensions have the same dimension, the index remains $0$. 
\end{proof}

\begin{proposition}
Let $C$ be any catenoid.  Then $D\widetilde{\Pi}|_{C}$ is invertible.
\end{proposition}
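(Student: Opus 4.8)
Since $\widetilde\Pi$ is Fredholm of index $0$, it suffices to prove that $D\widetilde\Pi|_{C}$ is \emph{injective} at a catenoid $C=C_h$; the role of the extra parameters $(a,\eta)$ and of the map $G$ is precisely to cancel the degeneracy of $D\Pi$ at the catenoids, whose nullspace is $\fJ^0(C_h)=\mathrm{span}\{\varphi_1,\varphi_2\}$ by Proposition~\ref{th:jacobin}. Identify $T_C\fA^*$ with $\fJ(C)$, and recall that near infinity the $n$-parametrization is a vertical translation, so a Jacobi field $\phi$ induces the first-order variations $\delta u^{\pm}=\phi$ of the graph functions of the ends; in particular, on the locus $\{\phi_0^-=0\}$ the curve $\gamma^-$, hence the minimal disk $D^-$ and its graph $v^-$, are fixed to first order, and $\delta u_r^-(1,\cdot)=\phi_1^-$ there. (Here $\phi_0^\pm=\phi|_{\del_\pm C}$ and $\phi_1^\pm$ is the normal derivative with respect to the chart $z$.) As $\widetilde\Pi$ is affine in $(a,\eta)$ and its $G$-component ignores $(a,\eta)$, the equation $D\widetilde\Pi|_C(\phi,\dot a,\dot\eta)=0$ splits into: \textbf{(a)} $\phi_0^-=0$; \textbf{(b)} $\phi_0^++\dot a+\Re(\dot\eta\, e^{i\theta})=0$; \textbf{(c)} $DG|_C(\phi)=0$. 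For $C=C_{h,0}$ one has $v^-\equiv -h$ and, by rotational symmetry (cf.\ Remark~\ref{re:cta}), $f_1(C)+i f_2(C)=0$, while $f_0(C)=\int_{\sph^1}\bigl(u_r^-(1,\theta)-v_r^-(1,\theta)\bigr)\,d\theta<0$ by \eqref{eq:f0} and \eqref{eq:open}. The chain and quotient rules then give, on $\{\phi_0^-=0\}$,
\[
DG_0|_C(\phi)=\bigl(1+f_0(C)^{-2}\bigr)\!\int_{\sph^1}\!\phi_1^-(\theta)\,d\theta,\qquad D(G_1+iG_2)|_C(\phi)=\frac{1}{f_0(C)}\!\int_{\sph^1}\! e^{i\theta}\phi_1^-(\theta)\,d\theta,
\]
so (c) is equivalent to the vanishing of the $0$, $\cos\theta$ and $\sin\theta$ Fourier coefficients of $\phi_1^-$.

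Next I would use Proposition~\ref{th:jacobin} together with (a)--(b) to determine $\phi$. Expand $\phi=\sum_{n\ge0}\bigl(\alpha_n(t)\cos n\theta+\beta_n(t)\sin n\theta\bigr)$; each coefficient solves $L_n(\cdot)=0$, and since $L_n$ equals $(1-r^2)^2/(4\kappa^2r^2)$ times $\del_t^2-\kappa^2n^2+\tfrac{1}{2}(r^{-2}+r^2)$, with this factor strictly positive on $(-h,h)$, the coefficients solve the ODE $\del_t^2-\kappa^2n^2+\tfrac{1}{2}(r^{-2}+r^2)=0$. Because $r(t)$ stays in a compact subinterval of $(0,1]$, this is a \emph{regular} second-order linear ODE on all of $[-h,h]$; in particular a solution vanishing together with its derivative at $t=-h$ vanishes identically. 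Now (a) gives $\alpha_n(-h)=\beta_n(-h)=0$ for every $n$, while (b) gives $\phi_0^+=-\dot a-\Re(\dot\eta\, e^{i\theta})$, hence $\alpha_n(h)=\beta_n(h)=0$ for $n\ge2$; the Sturm--Picone argument from the proof of Proposition~\ref{th:jacobin} then forces $\alpha_n\equiv\beta_n\equiv0$ for $n\ge2$. For $n=1$, any solution of $L_1u=0$ with $u(-h)=0$ is a multiple of $\phi_\star(r):=r^{-1}-r$, which also vanishes at $t=h$ (as $\phi_\star(1)=0$); so the $n=1$ part of $\phi$ equals $c_1\varphi_1+c_2\varphi_2\in\fJ^0(C_h)$, in particular $\alpha_1(h)=\beta_1(h)=0$, and comparing with (b) forces $\dot\eta=0$. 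We are left with $\phi=\alpha_0(t)+c_1\varphi_1+c_2\varphi_2$, where $L_0\alpha_0=0$, $\alpha_0(-h)=0$ and $\alpha_0(h)=-\dot a$.

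Finally I would substitute this into (c). Writing the lower end of $C$ as the radial graph $t=u^-(r)$ and differentiating in $r$ at $r=1$ gives $\phi_1^-(\theta)=\alpha_0'(-h)\,t'(1)+c_1\phi_\star'(1)\cos\theta+c_2\phi_\star'(1)\sin\theta$, with $t'(1)=1/r'(-h)=-1/\kappa\neq0$ and $\phi_\star'(1)=-2\neq0$. Hence (c) forces $c_1=c_2=0$ and $\alpha_0'(-h)=0$; combined with $\alpha_0(-h)=0$ and the regularity of $L_0\alpha_0=0$, this yields $\alpha_0\equiv0$, and then $\dot a=-\alpha_0(h)=0$. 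Therefore $(\phi,\dot a,\dot\eta)=0$, so $D\widetilde\Pi|_C$ is injective, and — being Fredholm of index $0$ — invertible. The one point that must be handled with care is the regular behaviour at $t=\pm h$ of the reduced ODE $\del_t^2-\kappa^2n^2+\tfrac{1}{2}(r^{-2}+r^2)=0$, equivalently the control up to the ideal boundary of the traces $\phi_0^\pm,\phi_1^\pm$; this is exactly the content of the computation around \eqref{eq:jacob} and Proposition~\ref{th:jacobin}, together with the fact noted in Section~\ref{sec:manifold} that $J\equiv1$ to second order at the ideal boundary, so that replacing $\wh\calL$-Jacobi fields by $L$-Jacobi fields leaves the boundary data unchanged.
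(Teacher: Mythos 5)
Your proof is correct, and it reaches the paper's conclusion by the same overall skeleton (index zero reduces the claim to injectivity; the kernel equations split into the two boundary conditions and $DG|_C=0$; the derivatives of $G_0$ and $G_1+\imag G_2$ are computed as the mode-$0$ and mode-$1$ coefficients of the boundary trace $\phi_1^-$, which is equivalent to the paper's use of $\phi_1^+$ by flux balance). Where you genuinely diverge is in how the first two equations are exploited and how the residual kernel is killed. The paper invokes the structural results of Section~\ref{sec:manifold}: Proposition~\ref{bvalues} together with Remark~\ref{re:leading} forces the $\cos\theta,\sin\theta$ part of $\phi_0^+$ to vanish (so $\mu=0$), identifies the kernel of the first two components as the height-variation field plus $\fJ^0(C_h)$, and then shows $DG_0\neq 0$ on the height-variation field by a maximum-principle/positivity argument for $\phi_1^+$, while $D(G_1+\imag G_2)$ is checked on $\fJ^0(C_h)$ exactly as you do. You instead run a self-contained separation-of-variables analysis on $C_h$: Sturm--Picone kills all modes $n\geq 2$, the uniqueness (up to scale) of $\phi_\star=r^{-1}-r$ among $L_1$-solutions vanishing at $t=-h$ forces the mode-$1$ part into $\fJ^0(C_h)$ and hence $\dot\eta=0$, and the mode-$0$ part is eliminated by Cauchy uniqueness for the regular reduced ODE once $DG_0=0$ gives $\alpha_0'(-h)=0$ — thereby replacing both Proposition~\ref{bvalues} and the maximum-principle step by explicit ODE facts already available from Proposition~\ref{th:jacobin}. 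What each approach buys: yours is more elementary and makes the non-degeneracy quantitative (the traces $t'(1)=-1/\kappa$ and $\phi_\star'(1)=-2$ are computed explicitly), and it sidesteps the sign discussion needed for the Hopf-type argument; the paper's is shorter at this point because it reuses machinery (admissible boundary data, Remark~\ref{re:leading}) that is needed elsewhere and is not tied to the explicit catenoid ODE. Your closing caveat about boundary regularity and the factor $J$ is exactly the right point to flag, and it is handled at the same level of rigor as in the paper, where $J=1$ with vanishing normal derivative at $\partial A$ guarantees that the traces $\phi_0^\pm,\phi_1^\pm$ are unchanged under the identification of $\widehat{\calL}$- and $L$-Jacobi fields.
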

\begin{proof}
We may as well suppose that $C = C_h$ is centered. Since the index of this differential vanishes, it suffices to show that its nullspace is 
trivial.  Suppose then that $D\widetilde{\Pi}|_{C} (\phi, \alpha, \mu ) = (0,0)$ for some $(\phi, \alpha, \mu) \in \fJ(A) \times \RR 
\times \CC$. This corresponds to the set of equations 
\[
\phi_0^- = 0, \qquad \phi_0^+ + \alpha + \Re(\mu e^{i\theta}) = 0, \quad \mbox{and}\qquad  DG_C(\phi) = 0.
\]

The first equation states that the Jacobi field $\phi$ vanishes at $\del_-A$, while the second condition shows that its restriction
to $\del_+ A$ lies in the span of $\{1, \cos \theta, \sin \theta\}$.  On the other hand, by our knowledge of the elements of $\fJ^0(C_h)$, 
$(\phi_0^+, \phi_0^-) = (a_0^+, a_0^-) +  a_1 (\cos \theta, \cos \theta) + a_2 (\sin \theta, \sin \theta)$. Comparing these two 
expressions gives $a_1 = a_2 = 0$, and hence $\phi$ is the Jacobi field corresponding to the variation $\epsilon \mapsto C_{h+\epsilon}$
(where we assume that the bottom boundary of $C_{h+\epsilon}$ remains fixed while the height of the top circle varies).  Combining this with
the two-dimensional nullspace $\fJ^0(C)$ of the map $\phi \mapsto \phi_0$,  we see that the nullspace of the differential of the first two
components of $D \widetilde{\Pi}|_C$ is three-dimensional.

Now examine the equation $DG|_C (\phi) = 0$. First consider the Jacobi field $\phi$ with constant boundary values 
$\phi_0^- = 0$, $\phi_0^+ = a_0^+$. We compute that
\[
DG_0|_C (\phi) = (1 + f_0(C)^{-2} )  Df_0|_C (\phi) = c \int_{\sph^1}  \phi^+_1(\theta) \, d\theta,
\]
with $c \neq 0$. We claim that this expression is nonzero. Indeed, $\phi_1^+(\theta) = \del_r \phi(1,\theta)$ at $\del_+ A$,
and by the maximum principle, this normal derivative is nonnegative. Thus this whole expression vanishes if and only if 
$\del_r \phi \equiv 0$ at this top boundary. Hence $DG_0|_C(\phi) = 0$ implies $a_0 = 0$. 

Next compute that
\[
D G_j|_C =  \frac{ f_0(C) Df_j|_C - f_j(C) D f_0|_C}{ f_0(C)^2}, \quad j = 1, 2. 
\]
Since $f_j(C) = 0$, it suffices to check that the two-by-two matrix which is the restriction of the Jacobian of $(f_1, f_2)$ to
$\fJ^0(C)$ is nonzero. However, this is clear from Remark \ref{re:cta} and the formul\ae
\[
D(f_1 + \imag \; f_2)|_C ( \phi) = \int_{\sph^1}  e^{i\theta} \, \phi_1^-(\theta)\, d\theta, 
\]
since $\phi_1^- = c_1 \cos \theta + c_2 \sin \theta$ if $\phi \in \fJ^0(C)$. This proves that $D \widetilde{\Pi}|_C$ is invertible. 
\end{proof}

\begin{proposition} \label{prop:uno}
For any $h \in (0,\pi/2)$, $\widetilde{\Pi}^{-1} \left( \widetilde \Pi \left( C_h,0,0,0\right)\right)=\{ (C_h,0,0,0)\}$.
\end{proposition}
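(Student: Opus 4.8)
The plan is to show that any $(A,a,\eta)\in\widetilde\Pi^{-1}\big(\widetilde\Pi(C_h,0,0,0)\big)$ must in fact be $(C_h,0,0,0)$, by first pinning down the boundary curves and then invoking the rigidity results of \S\ref{sec:nonexistence}. Write $\widetilde\Pi(C_h,0,0,0)=\big(\gamma_h^-,\gamma_h^+,G(C_h)\big)$, where $\gamma_h^\pm\equiv\pm h$ are constant, $G_0(C_h)=f_0(C_h)-f_0(C_h)^{-1}$, and $\mathbf C(C_h)=G_1(C_h)+\imag G_2(C_h)=0$ by Remark \ref{re:cta}. So for the given $(A,a,\eta)$ we have $\Pi_-(A)=\gamma^-\equiv -h$, $\Pi_+(A)+a+\Re(\eta e^{i\theta})=\gamma^+_h\equiv h$, hence $\gamma^+(\theta)=h-a-\Re(\eta e^{i\theta})$, and moreover $G_0(A)=G_0(C_h)$, $\mathbf C(A)=0$.

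First I would exploit $\mathbf C(A)=0$. By definition $\mathbf C(A)=(f_1(A)+\imag f_2(A))/f_0(A)$ with $f_1(A)+\imag f_2(A)=\int_{\sph^1}e^{i\theta}(u_r^-(1,\theta)-v_r^-(1,\theta))\,d\theta$. Since $\gamma^-\equiv -h$ is a constant graph, the unique minimal disk $D^-$ it bounds is the horizontal slice $\h^2\times\{-h\}$, so $v^-\equiv -h$ and $v_r^-\equiv 0$; thus $f_1(A)+\imag f_2(A)=\int_{\sph^1}e^{i\theta}u_r^-(1,\theta)\,d\theta=0$. Next I would use $G_0(A)=G_0(C_h)$: the map $t\mapsto t-t^{-1}$ is injective on $(0,\infty)$, and $f_0(A)=\int_{\sph^1}(u_r^-(1,\theta)-v_r^-(1,\theta))\,d\theta<0$ while $f_0(C_h)<0$ likewise, so $t\mapsto t-t^{-1}$ is injective on $(-\infty,0)$ too; hence $f_0(A)=f_0(C_h)$. (Alternatively one can argue directly that the vertical flux of the catenoid $C_h$ across its waist is determined by $h$.) Neither of these two facts alone forces $\gamma^+$ to be constant, so the main work lies elsewhere.

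The key step is to constrain $\gamma^+(\theta)=h-a-\Re(\eta e^{i\theta})$ to be constant, i.e.\ $\eta=0$ and $a=0$. Here is where I would bring in the nonexistence results. The top curve $\gamma^+$ is an affine-trigonometric graph; I claim that after a hyperbolic isometry $T$ of $\h^2\times\R$ (a horizontal translation composed with a vertical translation — such an isometry sends horizontal graphs of the form $c+\Re(\eta e^{i\theta})$ to constant graphs, since $\Re(\eta e^{i\theta})$ on $\partial_\infty\h^2$ is exactly the boundary trace of the Jacobi field generated by a horizontal dilation, cf.\ Proposition \ref{th:jacobin}, and more precisely such curves are the asymptotic boundaries of horizontal slices under these isometries). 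Thus $T(A)$ is an annulus with \emph{both} boundary curves constant: $\del(T(A))$ is a pair of round circles $\sph^1\times\{h'\}$ and $\sph^1\times\{-h\}$ at distinct heights. By the uniqueness statement of Nelli--Sa Earp--Toubiana quoted in \S\ref{sec:2} (also recorded in Remark \ref{re:alex} for the Alexandrov-embedded case), $T(A)$ must be a rotationally invariant catenoid $C_{h'',z_0}$; since its boundary circles are concentric about $\{o\}\times\R$ we get $z_0=o$, and since the two boundary heights are $-h$ and $h'$ the catenoid $C_{h'',z_0}=C_{(h+h')/2}$ after vertical centering, forcing $h'=h$ (because the waist flux, equivalently $G_0$, matches $C_h$, and this pins the half-height to $h$). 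Then $a=0$, $\eta=0$, and $T(A)=C_h$; moreover $T$ must be the identity since $T$ was a horizontal translation chosen to straighten $\gamma^+$ and $\gamma^+$ was already constant-after-the-relation-with-$a=\eta=0$, or more simply the waist center $\mathbf C(A)=0$ already shows no horizontal translation is needed. Hence $(A,a,\eta)=(C_h,0,0)$, i.e.\ $(C_h,0,0,0)$ in the notation of the statement.

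The main obstacle, and the point requiring the most care, is the claim that the only minimal annuli with one constant boundary circle and the other boundary curve of the special form $c+\Re(\eta e^{i\theta})$ are images of centered catenoids under vertical translation — in other words, ruling out ``tilted'' competitors. The honest route is: (i) observe that $\mathbf C(A)=0$ and $f_1(A)+\imag f_2(A)=0$ are exactly the conditions that prevent a horizontal-translation degeneracy, so \emph{no} horizontal isometry is needed and $\gamma^+$ must already be symmetric enough; (ii) if $\eta\neq0$, apply Proposition \ref{cl:alexandrov} (the Alexandrov-reflection nonexistence result, valid for Alexandrov-embedded $A$ by Remark \ref{re:alex}) — note $\gamma^+(\theta)=h-a-\Re(\eta e^{i\theta})$ and $\gamma^-\equiv -h$ are, after the rotation normalizing $\eta$ to be real positive, monotone in the required ``tilted away from each other'' sense on $[0,\pi]$ and $[\pi,2\pi]$, provided $a$ and $h$ are such that $\gamma^+>\gamma^-$ throughout (which holds since $A\in\fA^*\subset\fA'$ has $\gamma^+>\gamma^-$); Proposition \ref{cl:alexandrov} then says no such $A$ exists unless $\gamma^\pm$ are constant, i.e.\ $\eta=0$; (iii) with $\eta=0$, $\gamma^+\equiv h-a$ is constant, so $A$ is a catenoid by Nelli--Sa Earp--Toubiana, centered because $\mathbf C(A)=0$, and of half-height $h$ because $G_0(A)=G_0(C_h)$ pins the flux; hence $a=0$ and $A=C_h$. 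I would present the argument in this order, with step (ii)'s verification of the monotonicity hypothesis of Proposition \ref{cl:alexandrov} as the one computation to do carefully.
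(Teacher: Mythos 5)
Your committed argument (the ``honest route'' (i)--(iii)) is correct and is essentially the paper's own proof: one applies Proposition~\ref{cl:alexandrov} (valid in the Alexandrov-embedded setting by Remark~\ref{re:alex}) to the pair consisting of the constant curve $-h$ and the curve $h-a-\Re(\eta e^{i\theta})$, which after a rotation satisfies the (non-strict) monotonicity hypothesis, forcing $\eta=0$; then Nelli--Sa Earp--Toubiana, the injectivity of $h\mapsto G_0(C_h)$, and $(G_1(A),G_2(A))=(0,0)$ pin down $a=0$ and $A=C_h$. One caution: the preliminary claim that a horizontal isometry $T$ of $\h^2\times\R$ turns the graph $c+\Re(\eta e^{i\theta})$ into a constant graph is false --- such isometries preserve horizontal slices and act on $\partial_\infty\h^2\times\R$ only by reparametrizing $\sph^1$, so they cannot flatten a non-constant height function; the relation between $\Re(\eta e^{i\theta})$ and the dilation Jacobi fields is purely infinitesimal. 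Since you discard that detour in favor of the reflection argument, the proof stands as written.
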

\begin{proof}  Observe that $ \widetilde \Pi \left( C_h,0,0,0\right)= ( -h,h, G_0(C_h),0,0).$

If $(A,x_0, x_1+{\rm i} \,x_2) \in \widetilde{\Pi}^{-1} \left( -h,h, G_0(C_h),0,0\right)$, then by definition, 
$\Pi_- (A)= \sph^1 \times \{-h\}$ and $\Pi_+( A)= \{ (\theta, h -x_0 -x_1 \cos \theta+x_2 \sin \theta) \; : \; \theta \in \sph^1\}. $ Using Proposition \ref{cl:alexandrov} and facts that $h \mapsto G_0(C_h)$ is bijective and $\left(G_1(A),G_2(A) \right)=(0,0)$,
we deduce that $x_0=x_1=x_2=0$ and $A = C_h$.
\end{proof}

\section{Compactness} \label{sec:compact}
Our goal in this section is to prove that the map $\widetilde{\Pi}: \widetilde{\fA^*} \to \calC^{2, \alpha}(\sph^1)^2 \times \RR \times \D$ is proper.  In other words, we 
show that if $\widetilde{\Pi}(A_n, x^{(n)})  = (\gamma_n^-,\gamma_n^+, z^{(n)})$ converges in 
$\calC^{2, \alpha}(\sph^1)^2 \times \RR \times \D$, then some subsequence
of $(A_n, x^{(n)})$ converges in $\widetilde{\fA^*}$.   

First, we study  the modes of divergence of sequences of minimal annuli in $\fA$.
This will allow us
to obtain the properness of $\widetilde \Pi$, and the properness of $\Pi$ when we restrict it to certain submanifolds and open regions of $\fA$.

\subsection{Diverging sequences in ${\fA}$}
Suppose that $\{A_n\}$ is any sequence in $\fA$, whose sequence of boundary curves $\{\Pi(A_n)\}$ converges to $\Gamma=(\gamma^+,\gamma^-) \in \fC.$  By Proposition~\ref{prop:graph}, for each $n$ there exists a solid cylinder $D_{\HH^2}(q_n,R_n) \times \R$ 
such that $A_n\setminus (D_{\HH^2}(q_n,R_n)\times \R) = E_n^\pm$ is the union of two vertical graphs $\h^2 \setminus D_{\HH^2}(q_n,R_n) \ni z \to (z, u_n^\pm(z))$, each
one embedded. Up to a subsequence (which we assume without further comment), there are three possible behaviors: 
\begin{description} \label{cases}
\item[Case I]  Both the centers $q_n$ and radii $R_n$ can be chosen independent of $n$, hence $E_n^\pm$ are graphs over a fixed annulus 
$\h^2 \setminus D_{\HH^2}(q,R)$; 
\item[Case II]  The radii $R_n$ are independent of $n$, but the centers $q_n$ diverge;
\item[Case III]  The sequence of radii $R_n$ diverges. Notice that we can re-arrange the 
sequence of solid cylinders so that $q_n=o$, for all $n$.
\end{description}

Our analysis relies on the following two results: 
\begin{theorem}[White \cite{white12}]   \label{th:Z}
Let $(\Omega,g)$ be a Riemannian $3$-manifold and $M_{n} \subset \Omega$ a sequence of properly embedded minimal 
surfaces with boundary such that 
\[
\limsup_{n \to \infty}\,  {\operatorname{length}}\{\partial M_{n}\cap K\}< \infty,
\]
for any relatively compact subset $K$ of $\Omega$. Define the area blowup set 
\[
Z := \{p \in \Omega: \lim \sup \, \mbox{\rm area}(M_n \cap B(p, r )) = \infty \quad \mbox{\rm for every $ r > 0$} \},
\]
and suppose that $Z$ lies in a closed region $N \subset \Omega$ with smooth connected mean-convex boundary $\partial N$,
i.e., $g\big(H_{\partial N},\xi\big)\ge 0$ on $\del N$, where $H_{\partial N}$ is the mean curvature vector 
and $\xi$ is the inward-pointing unit normal to $\partial N$. 
Then $Z$ is a closed set and if $Z \cap \del N \neq \varnothing$, then $Z \supset \del N$. 
\end{theorem}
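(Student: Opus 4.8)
The plan is to reproduce White's argument, which is the ``area blow-up set'' analogue of the strong maximum principle for minimal surfaces against a mean-convex barrier. There are two ingredients: the soft fact that $Z$ is closed, and the real content, a propagation statement along $\partial N$ obtained from an integral barrier estimate.

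\emph{Closedness of $Z$.} This is immediate from the definition: if $p\notin Z$, choose $r>0$ with $\limsup_n\mathrm{area}(M_n\cap B(p,r))<\infty$; then for every $p'$ with $\dist(p,p')<r/2$ one has $B(p',r/2)\subset B(p,r)$, so $p'\notin Z$. Hence $\Omega\setminus Z$ is open.

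\emph{The barrier estimate.} Recall that for a minimal surface $M^2\subset\Omega$ and any smooth $f$ on $\Omega$ one has, pointwise, $\Delta_M f=\sum_{i=1}^2\mathrm{Hess}\,f(e_i,e_i)\ge \lambda_1(\mathrm{Hess}\,f)+\lambda_2(\mathrm{Hess}\,f)$, the sum of the two smallest eigenvalues of the ambient Hessian, because the mean-curvature term vanishes. Fix $q\in\partial N$ with $q\notin Z$. Near $q$ I build a function $f\ge 0$ on a small half-ball $W\subset\overline{N}$ with $f=0$ exactly on $W\cap\partial N$, with no interior critical points, and with $\lambda_1(\mathrm{Hess}\,f)+\lambda_2(\mathrm{Hess}\,f)\ge c>0$ on $W$. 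Such an $f$ is taken of the form $\phi(d)$, where $d$ is the signed distance to $\partial N$ (positive in $N$) and $\phi$ is convex and increasing rapidly enough: at $\partial N$ the eigenvalues of $\mathrm{Hess}\,d$ are $0$ and the two principal curvatures of $\partial N$, whose sum is $\ge0$ by mean-convexity, and adding $\phi''(d)\,\nabla d\otimes\nabla d$ makes the partial trace strictly positive; the (possibly non-strict) mean-convexity is first upgraded by shrinking $N$ slightly to a region with strictly mean-convex smooth boundary, which is harmless since the conclusion is monotone in $N$ and the degeneracy of the boundary mean curvature is lower order. Given such an $f$, put $A(t)=\mathrm{area}(M\cap\{f<t\})$; integrating $\Delta_M f\ge c$ over $M\cap\{f<t\}$, applying the divergence theorem and the coarea formula, gives a differential inequality $cA(t)\le B+C\,A'(t)$ with $C=C(\sup_W|\nabla f|)$ and $B$ bounding $\int_{\partial M\cap\{f<t\}}|\nabla_M f|$; the latter is controlled uniformly in $n$ by the hypothesis on $\mathrm{length}(\partial M_n\cap W)$ together with the (almost-)monotonicity formula, which prevents area from concentrating against $\partial M_n$. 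Integrating the inequality yields a bound on $A(t_0)$ uniform in $n$ for some small $t_0>0$, i.e. $W\cap\{f<t_0\}$ is disjoint from $Z$; in particular a whole relative neighbourhood of $q$ in $\partial N$ misses $Z$.

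\emph{Propagation and conclusion.} Suppose now that $Z\cap\partial N\ne\varnothing$ but $Z\not\supseteq\partial N$. Since $\partial N$ is connected and $Z\cap\partial N$ is closed, there is a point $p_0\in Z\cap\partial N$ lying in the closure of $\partial N\setminus Z$. Applying the barrier estimate at points of $\partial N\setminus Z$ near $p_0$, and noting that the radius of the neighbourhood it produces is uniform in terms of the local geometry of $\partial N$ and the boundary-length bound (by compactness of a small closed ball around $p_0$), one obtains a fixed relative neighbourhood of $p_0$ in $\partial N$ disjoint from $Z$, contradicting $p_0\in Z$. Hence either $Z\cap\partial N=\varnothing$ or $Z\supseteq\partial N$. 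The main obstacle is the construction of the barrier $f$ with strictly positive partial-trace Hessian out of only the mean-convexity of $\partial N$, together with making the resulting area bound genuinely uniform in $n$ — this is exactly where the uniform boundary-length hypothesis and the monotonicity formula are essential.
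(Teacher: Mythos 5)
You should first note that the paper offers no proof of this statement at all: it is quoted directly from White \cite{white12} and used as a black box, so the only thing to assess is your reconstruction on its own terms -- and there it has a genuine gap. The crux of the theorem is the propagation step, and your version of it does not close. In the ``barrier estimate'' you integrate $\Delta_M f\ge c$ over $M_n\cap W\cap\{f<t\}$ and invoke the divergence theorem, but the boundary of that region has three pieces: the level set $\{f=t\}$, the free boundary $\partial M_n$, and the \emph{lateral} boundary $M_n\cap\partial W$. Your inequality $cA(t)\le B+CA'(t)$ accounts only for the first two. The lateral flux is controlled only if the area of $M_n$ near $\partial W$ is controlled, and in the situation where you need the estimate -- at points $q\in\partial N\setminus Z$ arbitrarily close to $p_0\in Z\cap\partial N$ -- the set $Z$ may meet the collar $\{f<t\}$ near $\partial W$, so neither $B$ nor the lateral term is uniform in $n$ or in $q$. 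Consequently the claimed ``uniform radius'' of the $Z$-free neighbourhood around $q$ is exactly what is not established; and without uniformity your barrier estimate proves nothing beyond the trivial closedness fact (if $q\notin Z$, a metric ball about $q$ already misses $Z$ by definition), so the contradiction at $p_0$ evaporates. The argument is in effect circular: the uniform estimate presupposes that area does not blow up in the very region where blow-up is the issue.

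Two further points. The reduction from weak to strict mean convexity by ``shrinking $N$'' is not legitimate: the hypothesis is $Z\subset N$, and a shrunk region $N'\subset N$ need not contain $Z$; the conclusion concerns $\partial N$, not $\partial N'$; and such a strictly mean-convex perturbation on the $N$-side need not even exist (e.g.\ when $\partial N$ is minimal, which is precisely the case used in this paper, where the barriers are catenoids, vertical planes and tall rectangles). More fundamentally, the missing idea is White's maximum principle for the blow-up set: $Z$ behaves like a minimal variety without boundary in the sense that a $C^2$ function $f$ cannot attain a local maximum on $Z$ at a point where the sum of the two smallest eigenvalues of $\operatorname{Hess}f$ is positive; this is proved by an integral argument arranged so that all boundary contributions lie in a region where the areas are a priori bounded (thanks to the local-max structure), which is exactly the feature your half-ball set-up lacks. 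The boundary statement $Z\cap\partial N\neq\varnothing\Rightarrow Z\supset\partial N$ then follows by a Hopf-lemma-type construction: at a point $p_0\in Z\cap\partial N$ in the closure of $\partial N\setminus Z$ one builds, from the distance to $\partial N$ plus a tangential quadratic correction, a function whose restriction to $Z$ has a local maximum at a point where the strict eigenvalue condition holds, contradicting the maximum principle. Your outline does not contain this mechanism, and the differential-inequality estimate as you set it up cannot substitute for it.
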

\begin{theorem}[White \cite{white13}] \label{th:ZZ} Let $\Omega$ be an open subset in a Riemannian $3$-manifold
and $g_n$ a sequence of smooth Riemannian metrics on $\Omega$ which converge smoothly to a metric $g$.
Suppose that $M_n \subset \Omega$ is a sequence of properly embedded surfaces such that $M_n$ is minimal with respect
to $g_n$, and that the area and the genus of $M_n$ are bounded independently of $n$. Then after passing to a subsequence,
$M_n$ converges to a smooth, properly embedded $g$-minimal surface $M'$. For each connected component $\Sigma$ of $M'$, either
\begin{enumerate}
\item the convergence to $\Sigma$ is smooth with multiplicity one,  or
\item the convergence is smooth (with some multiplicity greater than $1$) away from a discrete set $S.$
\end{enumerate}
In the second case, if $\Sigma$ is two-sided, then it must be stable.
\end{theorem}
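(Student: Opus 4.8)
Theorem~\ref{th:ZZ} is a result of White, and the plan here is simply to quote it from \cite{white13}. For the reader's convenience we indicate the shape of the argument, since the same mechanism will be used several times in the remainder of this section.

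The first step would be to pass to a subsequential varifold limit. Using the uniform area bound, and the fact that the $M_n$ are minimal for metrics $g_n$ converging smoothly to $g$, the standard compactness theorem for integral varifolds produces a subsequence along which the $M_n$ converge, as varifolds, to a stationary integral varifold $V$ for the metric $g$; set $M' = \mathrm{supp}\, V$. The whole point is then to upgrade this weak convergence to smooth, multiplicity-counted convergence on $M'$.

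The principal, and most delicate, step is to control the set where the second fundamental forms of the $M_n$ blow up. One invokes here the Schoen-type curvature estimate for embedded stable minimal surfaces, the Colding--Minicozzi one-sided curvature estimate, and the associated lamination and compactness theory. Together these show that near any point of $M'$ the local picture is of one of two types: either, for large $n$, the $M_n$ consist of a bounded number of nearly parallel graphs over a disk in $M'$, converging smoothly with some multiplicity $k \geq 1$ to a single limit leaf; or a catenoid-type neck degenerates. The genus bound is exactly what prevents the second behaviour from accumulating: only finitely many such necks can form over any compact region, so the exceptional set $S$ where smooth convergence fails is discrete. Away from $S$ the convergence is locally graphical and smooth, and $M'$ is therefore a smooth, properly embedded, $g$-minimal surface, as asserted.

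For the last statement, suppose $\Sigma$ is a two-sided component of $M'$ along which the convergence has multiplicity $k>1$. Then over a neighbourhood of $\Sigma$ the surface $M_n$ consists, for large $n$, of $k$ almost parallel sheets; the normalized difference of two adjacent sheets converges to a positive solution of the Jacobi equation on $\Sigma$, and the existence of a positive Jacobi field forces $\Sigma$ to be stable. The genuinely hard part of the whole argument is the one carried out in detail in \cite{white13}, namely confining the non-smooth locus to a discrete set, equivalently ruling out the accumulation of necks when the genus is bounded; we take this as a black box.
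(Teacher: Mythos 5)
The paper gives no proof of this statement: it is quoted verbatim as a theorem of White \cite{white13}, which is exactly what you do, so your approach matches the paper's. Your accompanying sketch (varifold limit from the area bound, curvature estimates plus the genus bound confining non-smooth convergence to a discrete set, and the positive Jacobi field argument giving stability of two-sided components with multiplicity greater than one) is a fair summary of the standard argument and introduces no errors, with the genuinely hard step correctly attributed to \cite{white13}.
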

We may now proceed. 
\begin{theorem} \label{th:annulus} In Case I, the sequence $A_n$ converge smoothly to $A \in {\fA}$, and $\Pi(A)=\Gamma \in \fC$. 
\end{theorem}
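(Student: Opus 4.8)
The plan is to combine the graphical control of the ends provided by the Case~I hypothesis with White's compactness theorems applied to the compact cores of the $A_n$. First I would establish a uniform slab bound. Since $\Pi(A_n)=(\gamma_n^+,\gamma_n^-)$ converges in $\calC^{2,\alpha}(\sph^1)^2$, all the asymptotic boundary curves lie at heights in a fixed interval $[-C_0,C_0]$. Because each horizontal slice $\h^2\times\{c\}$ is totally geodesic, the height function $t$ restricts to a harmonic function on the minimal surface $A_n$; using that $A_n$ is complete and properly embedded and, by Case~I, coincides outside $D_{\h^2}(q,R)\times\R$ with the graphs of $u_n^\pm$, whose boundary values converge to $\gamma_n^\pm$, the set $\{t\ge C_0+1\}\cap A_n$ is compact, so a positive interior maximum of $t$ there would violate the interior maximum principle against a horizontal slice. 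Hence $A_n\subset\h^2\times[-C_0,C_0]$ for all $n$.

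Next I would pass to the ends. On the fixed closed annulus $\{R\le|z|\le1\}$ the functions $u_n^\pm$ solve the minimal surface equation, which is uniformly elliptic in the interior and, as recalled in \S\ref{sec:manifold} via \eqref{ndjo}, nondegenerate up to $\partial_\infty\h^2$. The uniform $\calC^0$ bound from the slab estimate together with the uniform $\calC^{2,\alpha}$ bound on the Dirichlet data at $|z|=1$ yields, by standard interior and boundary Schauder theory, a uniform $\calC^{2,\alpha}$ bound on $u_n^\pm$. After passing to a subsequence, $u_n^\pm\to u^\pm$ in $\calC^2$ up to the boundary, with $u^\pm$ a minimal graph and $u^\pm|_{|z|=1}=\gamma^\pm$; in particular, for a fixed $R_1\in(R,1)$ the circles $A_n\cap(\{|z|=R_1\}\times\R)$ are graphs of uniformly bounded length.

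Now consider $M_n:=A_n\cap(\overline{D_{\h^2}(o,R_1)}\times\R)$: a compact, genus-zero minimal surface lying in the fixed compact region $\overline{D_{\h^2}(o,R_1)}\times[-C_0,C_0]$, with boundary of uniformly bounded length. By Theorem~\ref{th:Z}, the area-blowup set $Z$ of $\{M_n\}$ is empty: near $\{|z|=R_1\}\times\R$ the $M_n$ are uniformly graphical, hence of bounded area there, so $Z$ lies in the interior of the mean-convex region $\overline{D_{\h^2}(o,R_1-\delta)}\times\R$, and a nonempty $Z$ would have to spread along the foliation by equidistant cylinders, a contradiction. With uniform area and genus bounds, Theorem~\ref{th:ZZ} gives that a subsequence of $M_n$ converges to a smooth properly embedded minimal surface $M'$, with multiplicity-one smooth convergence on each component except possibly off a discrete set, where a two-sided component must be stable.

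Finally I would identify the limit. Near $\{|z|=R_1\}$ the convergence is multiplicity one, so the component $\Sigma_0$ of $M'$ meeting this cylinder does so in exactly two circles and glues $\calC^2$-smoothly to the limiting ends $\{(z,u^\pm(z)):R\le|z|<1\}$; the resulting surface is complete, minimal, properly embedded (it lies in a slab and is graphical near infinity) and has asymptotic boundary $\Gamma$. The main obstacle is to rule out further components or higher multiplicity: I expect to do this using that each $A_n$ is connected, that $\h^2\times\R$ contains no closed minimal surfaces (again since $t$ would be nonconstant harmonic there), the stability alternative of Theorem~\ref{th:ZZ}, and an Euler characteristic count along the multiplicity-one convergence, which together force $M'=\Sigma_0$ with multiplicity one. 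Then the limit $A$ is a single embedded annulus, so $A\in\fA$ and $\Pi(A)=\Gamma$. This exclusion of lost topology and spurious multiplicity — not the elliptic estimates — is the delicate point.
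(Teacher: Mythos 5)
Your proposal is correct and follows essentially the same route as the paper's proof: Schauder estimates give $\calC^{2,\alpha}$ convergence of the graphical ends on the fixed exterior annulus, the truncated cores then have uniformly bounded boundary length, the area blow-up set is shown to be empty by a first-contact argument against Theorem~\ref{th:Z}, and Theorem~\ref{th:ZZ} yields smooth convergence. The only differences are minor: the paper performs the sweepout with the catenoids $C_h$ rather than with the mean-convex solid cylinders you use (both are legitimate barriers for Theorem~\ref{th:Z}, yours needing the slab bound to make $Z$ compact, which you correctly supply), and the paper leaves implicit the final multiplicity-one identification of the limit that you rightly flag and sketch.
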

\begin{proof}
Using classical elliptic estimates and the Arzel\`{a}-Ascoli theorem, some subsequence of the $u_n^\pm$ converge smoothly to functions 
$u^\pm$ on $\h^2 \setminus D_{\HH^2}(q,R)$, the graphs of which are minimal.  This means that the truncated minimal surfaces
$M_n:=A_n\cap (D_{\HH^2}(q,R) \times \R)$ are annuli with smoothly converging  boundaries $\del M_n = \widehat{\gamma}_n^\pm$,
so in particular, the lengths of $\del M_n$ are uniformly bounded. 

The blowup set $Z$ of the sequence $A_n$ lies in the interior of the fixed solid cylinder. For $h$ close to $\pi$, the catenoids 
$C_h$ do not intersect 
$D_{\HH^2}(q,R)  \times \R$, hence if the blowup set $Z$ for this sequence were nonempty, then by decreasing $h$ there would exist a 
point of first contact of some $C_h$ with $Z$, which contradicts Theorem~\ref{th:Z}. Thus $Z = \varnothing$. Theorem~\ref{th:ZZ} then 
implies that the $A_n$ converge smoothly everywhere. 
\end{proof}
\begin{remark} \label{re:contact-I}
By an easy modification of this proof, Theorem \ref{th:annulus} remains valid even if the limit curves $(\gamma^-, \gamma^+)$ satisfy
$\gamma^-(\theta) \leq \gamma^+(\theta)$ for all $\theta$ but $\gamma^- \not\equiv \gamma^+$. 
\end{remark}
%\marginpar{Shouldn't we remove this remark?} 

Case II is more complicated.  By~\cite[Theorem 4]{n-r}, the limiting boundary curves $\gamma^\pm$ each span unique properly embedded minimal disks 
$Y^\pm$ which are vertical graphs of functions $v^\pm$ over all of $\h^2$. Notice that, by compactness, up to a subsequence we can assume that $\{q_n\}$ converges to a point $q_\infty \in \partial_\infty \h^2$. Without lost of generality (up to applying suitable rotations around $o$) we can assume that 
$q_\infty=(1,0)$ and that $q_n$ lies on the real line, for all $n \in \n$.   In the following, we choose a sequence of horizontal dilations $T_n$ 
such that $T_n(q_n)$ is the origin $o \in \h^2$.  We also denote by $T_n$ the usual extension of these dilations to isometries of $\h^2 \times \RR$.

\begin{theorem} \label{th:catenoid} In Case II, $A_n$ converges smoothly on compact sets of $\h^2 \times \RR$ to the union of the minimal 
disks $Y^\pm$. The closures $\overline{A_n}$ converge as subsets of $\overline{\h^2}\times \R$ to the union of the vertical line 
segment $\{q_\infty\}\times[t^-,t^+]$ and the closures of $Y^\pm$. Here $(q_\infty, t^\pm) = ( (1,0), t^\pm)$ are points in $\gamma^\pm$,
hence $t^\pm = \gamma^\pm(0)$, and necessarily in this case, $(\gamma^\pm)'(0) = 0$. 

Moreover, choosing $T_n$ as above, the sequence $\overline{T_n(A_n)}$ converges to a vertical catenoid $C_h$ smoothly in the interior and in
 $\calC^{2,\alpha}$ on compact sets of $(\overline{\h^2} \setminus \{(-1,0)\}) \times \RR$.  Since $h < \pi$, this convergence to
a catenoid implies that $t^+-t^-<\pi$. 
\end{theorem}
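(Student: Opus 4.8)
The strategy is to combine a uniform height bound with two compactness analyses --- a \emph{macroscopic} one for $\{A_n\}$ itself, producing the limit $Y^+\cup Y^-$, and a \emph{microscopic} one for the rescaled surfaces $\{T_n(A_n)\}$, producing the catenoid --- and then to read off the statements about $t^\pm$, the vertical segment, and $(\gamma^\pm)'(0)$.

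First I would establish a uniform slab bound: since $\Pi(A_n)\to\Gamma$ there is $c_0>0$ with $\gamma_n^\pm(\sph^1)\subset[-c_0,c_0]$ for all large $n$, and I claim $A_n\subset\h^2\times[-c_0,c_0]$. Indeed, the height function $t$ is harmonic on any minimal surface of $\h^2\times\R$ (its ambient Hessian vanishes), and $A_n$ is bounded in $t$ because its two ends are bounded graphs while the middle piece $M_n:=A_n\cap(D_{\HH^2}(q_n,R)\times\R)$ is compact; were $A_n$ to meet $\{t\ge c\}$ for some $c>c_0$, then, $A_n$ being properly embedded with asymptotic boundary at heights $\le c_0$, the set $A_n\cap\{t\ge c\}$ would be compact and $t|_{A_n}$ would attain an interior maximum --- impossible. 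The same bound holds for $T_n(A_n)$ since the $T_n$ act trivially on the $\R$-factor. In particular $M_n$, its boundary circles $\widehat\gamma_n^\pm:=\partial M_n$, and their images under $T_n$ lie in a fixed slab.

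For the microscopic limit, set $\widehat A_n:=T_n(A_n)$: its middle piece lies in the \emph{fixed} cylinder $D_{\HH^2}(o,R)\times\R$, it is a pair of vertical graphs outside, and --- using that $q_\infty=(1,0)$ is the repelling fixed point of the geodesic translation carrying $q_n$ to $o$ while $(-1,0)$ is the attracting one --- a direct computation shows $T_n(\gamma_n^\pm)\to t^\pm:=\gamma^\pm(q_\infty)$ in $\calC^{2,\alpha}$ on compact subsets of $\sph^1\setminus\{(-1,0)\}$, whereas near $(-1,0)$ the curves $T_n(\gamma_n^\pm)$ spread vertically over the full range of $\gamma^\pm$ (hence that point must be excluded from the convergence statement). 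Thus $\{\widehat A_n\}$ is of the type handled in Theorem~\ref{th:annulus}, only with limiting boundary data a pair of horizontal circles $\sph^1\times\{t^\pm\}$ and boundary-curve convergence localized on $\sph^1\setminus\{(-1,0)\}$. Running that argument --- the slab bound and interior gradient estimates for minimal graphs control area away from the cylinder and from $\partial_\infty\h^2$; a sweep-out by suitably translated vertical catenoids $C_{h,z_0}$ together with Theorem~\ref{th:Z} forces the area-blow-up set to be empty; Theorem~\ref{th:ZZ} then applies since the genus is $0$ --- and adding boundary Schauder estimates for the nondegenerate equation \eqref{ndjo} near $\partial_\infty\h^2\setminus\{(-1,0)\}$ (as in the proof of Theorem~\ref{manifoldthm}), one gets a subsequential limit $\widehat A$ with $\widehat A_n\to\widehat A$ smoothly on compacta of $\h^2\times\R$ and in $\calC^{2,\alpha}$ on compacta of $(\overline{\h^2}\setminus\{(-1,0)\})\times\R$. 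The middle piece of $\widehat A$ does not degenerate --- its area is bounded below, its boundary being circles of definite length at the two definitely separated heights $t^\pm$, and a collapse would force the segment $\{o\}\times[t^-,t^+]$ into $\widehat A$, contradicting the discreteness of the bad set in Theorem~\ref{th:ZZ} --- so $\widehat A$ is a connected minimal annulus, with multiplicity one (else $\widehat A_n$ would have positive genus), whose asymptotic boundary is exactly $\sph^1\times\{t^+\}\cup\sph^1\times\{t^-\}$ (the apparent feature at $(-1,0)$ is removed using the graphical extension of Proposition~\ref{prop:graph} and the continuity of $\partial_\infty\widehat A$). By the uniqueness theorem of Nelli--Sa~Earp--Toubiana (property~(1) of \S\ref{subsec:2}, \cite{n-s-t}), $\widehat A$ is a vertical catenoid, $\widehat A=C_{h,z_0}$ translated vertically to heights $t^\pm$, with $2h=t^+-t^-$ and $h\in(0,\pi/2)$; after normalizing the $T_n$ by a bounded isometry one may take $z_0=o$. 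Hence $t^+-t^-=2h<\pi$.

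For the macroscopic limit, fix a compact $K\subset\h^2\times\R$; since $q_n\to q_\infty\in\partial_\infty\h^2$ we have $K\cap(D_{\HH^2}(q_n,R)\times\R)=\varnothing$ for large $n$, so $A_n\cap K$ consists of pieces of the graphs $u_n^\pm$. The slab bound gives uniform $\calC^0$ control of $u_n^\pm$; interior estimates away from the receding inner circle, and the semi-infinite tall-rectangle barriers of Proposition~\ref{prop:graph} up to $\partial_\infty\h^2\setminus\{q_\infty\}$, give $u_n^\pm\to v^\pm$ in $\calC^2_{\mathrm{loc}}(\h^2)$ along a subsequence, where $v^\pm$ is a bounded minimal graph over $\h^2$ with $v^\pm=\gamma^\pm$ on $\partial_\infty\h^2\setminus\{q_\infty\}$; a single boundary point being removable, Nelli--Rosenberg uniqueness \cite{n-r} identifies $v^\pm$ with the graph of $Y^\pm$, so $A_n\to Y^+\cup Y^-$ smoothly on compacta of $\h^2\times\R$. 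For the closure statement, $M_n\subset D_{\HH^2}(q_n,R)\times[-c_0,c_0]$ collapses onto $\{q_\infty\}\times I$ for some closed interval $I$; tracking $u_n^\pm$ from the inner circle out to $\partial_\infty\h^2$, using $u_n^\pm\to v^\pm$ and the boundary barriers, forces $\widehat\gamma_n^\pm\to\{q_\infty\}\times\{t^\pm\}$ --- in agreement with the boundary heights of the rescaled catenoid --- so $I=[t^-,t^+]$ by connectedness of $A_n$ and $\overline{A_n}\to\overline{Y^+}\cup\overline{Y^-}\cup(\{q_\infty\}\times[t^-,t^+])$ in $\overline{\h^2}\times\R$. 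Thus $t^\pm=\gamma^\pm(0)$, and $(\gamma^\pm)'(0)=0$ is then a necessary consequence of this configuration, which I would extract from the flux identities of \S\ref{subsec:flux} along a loop encircling the neck together with the fact that the limiting catenoid has \emph{horizontal} asymptotic circles. The main obstacle, I expect, is precisely the compactness/regularity package of the microscopic step --- emptiness of the area-blow-up set, the White limit, the $\calC^{2,\alpha}$-upgrade up to $\partial_\infty\h^2\setminus\{(-1,0)\}$, and non-degeneracy of the neck --- together with the verification that $(\gamma^\pm)'(q_\infty)=0$.
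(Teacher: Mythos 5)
Your overall strategy --- rescale by $T_n$, identify the limit as a rotational catenoid via Nelli--Sa Earp--Toubiana, and deduce $(\gamma^\pm)'(0)=0$ from flux invariance --- is the same as the paper's, and the macroscopic part (convergence to $Y^+\cup Y^-$, collapse of the neck onto $\{q_\infty\}\times[t^-,t^+]$) is handled adequately. But there are two genuine gaps, located at exactly the two hard points. The first is the identification of $\del_\infty\wh{A}$ with the two circles $\sph^1\times\{t^\pm\}$. You dismiss ``the apparent feature at $(-1,0)$'' by invoking Proposition~\ref{prop:graph} and ``continuity of $\del_\infty\wh{A}$'', but this does not work: as you yourself note, the dilated boundary curves $T_n(\gamma_n^\pm)$ sweep through the full range of $\gamma^\pm$ near $(-1,0)$, so a priori the limit's asymptotic boundary may contain vertical segments $\{(-1,0)\}\times J^\pm$ --- and this is not a phantom worry, since in Case III the analogous limit (a parabolic generalized catenoid, Theorem~\ref{th:daniel}) really does contain a vertical segment in its asymptotic boundary. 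Proposition~\ref{prop:graph} applies to each $\wh{A}_n$ near its own infinity and gives no control on the limit at $(-1,0)$. The paper needs two separate arguments here: the semi-infinite tall rectangles $R(c_s,t^+,\infty)$, which foliate $\h^2\times(t^+,\infty)$ and force $J^+\subset(-\infty,t^+]$ (similarly for $J^-$ from below), and then an Alexandrov reflection through the vertical planes $P_s$ as in Proposition~\ref{cl:alexandrov}, whose first-contact dichotomy --- either the limit is simply connected (absurd for an annulus) or symmetric about some $P_s$ --- eliminates the residual segment $\beta\subset\{(-1,0)\}\times[t^-,t^+]$. Without some such barrier-plus-reflection argument the appeal to the uniqueness theorem of \cite{n-s-t} is not yet licensed.

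The second gap is the assertion $(\gamma^\pm)'(0)=0$, which you propose to ``extract from the flux identities \ldots together with the fact that the limiting catenoid has horizontal asymptotic circles'' and flag as an obstacle. The implication is not immediate, because the fluxes are integrals over the whole circle while all the interesting behavior of $u_n^\pm$ concentrates near $(1,0)$, precisely where the convergence $u_n^\pm\to v^\pm$ fails. The paper's argument needs: (i) Claim~\ref{cl:fluxes}, that the vertical and rotational fluxes of $A_n$ converge to $2\pi/\kappa$ and $0$; (ii) a uniform lower bound $(u_n)_r\ge -a$ obtained by comparison with the minimal disks spanned by $\gamma_n^+$; (iii) Claim~\ref{cl:gamma}, that the integral of $(u_n)_r$ concentrates, with mass at least $2\pi/\kappa-\zeta$, on shrinking arcs $\varUpsilon_n$ around $(1,0)$; and (iv) the combination of these with the hypothesis $u_\theta(1,0)\neq 0$ to contradict the vanishing of the rotational flux. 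None of (ii)--(iv) appears in your sketch, so this part of the statement is not proved.
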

\begin{remark}
In particular, if there is no pair of points $(q_\infty, t^\pm) \in Y^\pm$, one above the other, where the tangents are horizontal, then Case II cannot occur. 
\end{remark}
\begin{proof}
First note that, similarly to Case I, the ends $E_n^\pm$ converge as minimal graphs, smoothly in the interior and in $\calC^{2,\alpha}$ on compact sets of 
$(\overline{\h^2}\setminus\{q_\infty\}) \times \RR$, to the minimal disks $Y^\pm$.    On the other hand, the dilated boundary curves
$\sigma_n^\pm = T_n(\gamma_n^\pm)$ converge in $\calC^{2,\alpha}$ to the constant maps $\sigma^\pm(\theta) = t^\pm$ 
away from $\theta = \pi$, i.e., away from the point $(-1,0)$. Hence $\overline{T_n(A_n)}$ converges in $\calC^{2,\alpha}$ away
from $\{(-1,0)\} \times \RR$. 

We deduce from this that $T_n(A_n)$ converges to an embedded minimal annulus $\Sigma$. At first we only know 
that $\del \Sigma$ consists of the two circles $\sigma^+ \sqcup \sigma^-$ and two (possibly overlapping) line segments 
$\{(-1,0)\} \times J^\pm$.  We claim that in fact $\del \Sigma$ consists only of the two circles, so that by \cite[Theorem 2.1]{n-s-t}, 
$\Sigma$ equals a rotationally invariant catenoid $C_h$.  To prove this, write $J^\pm = [a^\pm, b^\pm]$, and suppose that $b^+ > t^+$. 
Given $s \in (-1,1)$ let $\gamma_s$ denote the geodesic orthogonal to real axis 
and passing through $(s,0)$. Let $c_s$ be the arc in $\sph^1$ determined by the ends
points of $\gamma_s$ and containing the point $(1,0)$. Let $\Omega_s$ be the region
in $\h^2$ determined by the geodesic $\gamma_s$ and the arc $c_s$. We know
that there exists a minimal graph over $\Omega_s$ (we called it $R(c_s,t^+, \infty)$ in page \pageref{tallrectangles}) whose Dirichlet boundary
values are the constant $t^+$ along $c_s$ and $+\infty$ along $\gamma_s$.
It is important
to notice that this family $\{R(c_s,t^+, \infty) \; : \; s \in (-1,1)\}$ foliates the region 
$\h^2 \times (t^+, +\infty)$. Since $R(c_s,t^+, \infty)$ 
is disjoint from $\Sigma$, for $s$ sufficiently close
to $1$, then we have that $\Sigma \cap R(c_s,t^+, \infty)=\varnothing$, for all $s \in (-1,1) $. 
In particular 
$b^+=t^+$. Similarly, we can prove that $a^-=t^-$.

At this stage, $\Sigma$ is a minimal surface and its boundary at infinity consists of  
$\sigma^+ \sqcup \sigma^- \sqcup \beta$,   where $\beta  \subseteq \{(-1,0) \} \times [t^-,t^+]$. 
We want to prove that $\beta=\varnothing$. 
Now consider the  geodesic $\gamma_s$ described above, $-1<s<1$. 
We denote by $P_s$ the vertical plane $\gamma_s \times \R$. Let $\calU_s$ and $\calV_s$ be the two connected components 
of $(\h^2\times \R) \setminus P_s$, with $(1,0,0) \in \overline{\calU_s}.$ We write $\Sigma'_s:= \Sigma \cap \overline{\calU_s}$,  
$\Sigma''_s:= \Sigma \cap \overline{\calV_s}$ and $\Sigma^*_s$ the reflection of $\Sigma'_s$ with respect to $P_s$. Reasoning as 
in Proposition \ref{cl:alexandrov}, we deduce, when $s$ is 
very close to 1,  $\Sigma_s^*$ does not intersect $\Sigma_s'$ except at the boundary.  We claim that this is always the case.

If  $\Sigma_s^*$ does not intersect $\Sigma_s'$ except at the boundary, for all $s$, then $\Sigma$ 
would be simply connected, which is absurd. Then, there is a  first point of interior contact, so that $\Sigma_s^* = \Sigma_s''$ for some $s$. By the maximum principle, $\Sigma$ is symmetric with respect to
$P_s$. In particular $\beta=\varnothing.$

These arguments show that the boundary at infinity of the limit of $T_n(A_n)$ is the pair of parallel circles $\sigma^+ \sqcup 
\sigma^-$, and hence $T_n(A_n)$ converges to a catenoid $C_h$ with axis $\{o\} \times \RR$. 

The limit of the translated catenoids $T_n^{-1}(C_h)$ contains the entire line segment $\{(1,0)\} \times [t^-, t^+]$, hence the same
must be true for the limit of the $A_n$. 

It remains to show that the tangent lines to (the undilated curves) $\gamma^\pm$ at $(q_\infty,t^\pm)$ are horizontal, i.e., that 
$(\gamma^\pm)'(0) = 0$.  This relies on a flux calculation. We recall from \S \ref{subsec:flux} that if $1/\kappa$ is the normal 
derivative of the graph function $C_h$ at $r=1$ and $Z$ is any horizontal Killing field, then 
\begin{equation}
 \flux(C_h,\eta,E_3) = \frac{2 \pi}{\kappa}, \quad \flux(C_h,\eta,Z)  = 0. 
\label{catenoidfluxes} 
\end{equation}

\begin{claim} \label{cl:fluxes} Parametrizing the (undilated) ends $E_n^\pm$ by graph functions $u_n^\pm$, then as $n \to \infty$, 
\begin{eqnarray} 
& \int_0^{2 \pi}(u_n^+)_r(1, \theta) \, d \theta  \to 2 \pi/\kappa, \label{eq:vertical-1} \\
& \int_0^{2 \pi}(u_n^+)_r(1, \theta) (u_n^+)_\theta(1, \theta) \, d \theta  \to 0. \label{eq:horizontal-1}
\end{eqnarray}
\end{claim}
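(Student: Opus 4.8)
The plan is to read the two integrals as the vertical and rotational fluxes of $A_n$ and to evaluate them, via homological invariance, on a loop that after rescaling sits near the waist of the limiting catenoid. By the computations of \S\ref{subsec:flux} one has
$\int_0^{2\pi}(u_n^+)_r(1,\theta)\,d\theta=\flux(A_n,\gamma_n^+,E_3)$ and
$\int_0^{2\pi}(u_n^+)_r(1,\theta)(u_n^+)_\theta(1,\theta)\,d\theta=-\flux(A_n,\gamma_n^+,Z)$,
where $Z$ generates the rotations about $\{o\}\times\R$. Since the flux of an ambient Killing field depends only on the homology class of the curve, each of these equals the corresponding flux across any loop generating $H_1(A_n,\z)$. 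By Theorem~\ref{th:catenoid}, $T_n(A_n)$ converges in $\calC^{2,\alpha}$ to $C_h$ on a fixed compact neighbourhood of the waist $\omega$ of $C_h$, so one can choose generating loops $\omega_n\subset T_n(A_n)$ in that neighbourhood with $\omega_n\to\omega$, and set $\beta_n:=T_n^{-1}(\omega_n)$, a generating loop of $A_n$ lying in the neck. Because $T_n$ is an isometry, $\flux(A_n,\gamma_n^+,V)=\flux(T_n(A_n),\omega_n,(T_n)_\ast V)$ for every ambient Killing field $V$. For $V=E_3$ this already finishes the first assertion: $(T_n)_\ast E_3=E_3$ since the horizontal dilations act trivially on the $\R$-factor, and the $\calC^1$-convergence $T_n(A_n)\to C_h$ near $\omega$ gives $\flux(T_n(A_n),\omega_n,E_3)\to\flux(C_h,\omega,E_3)=\flux(C_h,\eta,E_3)=2\pi/\kappa$ by \eqref{catenoidfluxes}, which is \eqref{eq:vertical-1}.

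For $V=Z$ the same scheme requires more care, because $(T_n)_\ast Z$ is the rotation about $\{T_n(o)\}\times\R$ and its centre $T_n(o)$ escapes to $\partial_\infty\h^2$, so these fields degenerate rather than converging. The way I would handle this is to split $Z=Z^{q_n}+(Z-Z^{q_n})$, where $Z^{q_n}$ is the rotation about the axis $\{q_n\}\times\R$ of the model catenoid $C_{h,q_n}$. Then $(T_n)_\ast Z^{q_n}=Z$ is exactly the rotation about the axis of $C_h$, along which $\langle Z,\nu_{C_h}\rangle\equiv 0$ by the rotational symmetry of the catenoid (equivalently $\flux(C_h,\cdot,Z)=0$ from \eqref{catenoidfluxes}), so $\flux(A_n,\gamma_n^+,Z^{q_n})=\flux(T_n(A_n),\omega_n,Z)\to 0$ with no degeneration at all. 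For the remainder $Z-Z^{q_n}$, which is again a horizontal Killing field, one decomposes it in the basis of $\mathfrak{sl}_2$ adapted to the geodesic carrying the $T_n$'s: it is, up to exponentially small corrections, a multiple of the parabolic field $\mathfrak p$ fixing the limit point $q_\infty=(1,0)$ of the necks, the multiplier growing like $e^{d_n}$ with $d_n=\dist_{\h^2}(o,q_n)$, plus a uniformly bounded term. The field $\mathfrak p$ is uniformly small on the escaping neck — of size comparable to $e^{-d_n}$ there — and $\flux(C_{h,q_n},\beta_n',\mathfrak p)=0$ because $C_{h,q_n}$ is a catenoid; combining this with $\|A_n-C_{h,q_n}\|_{\calC^1}\to 0$ on the neck yields $\flux(A_n,\gamma_n^+,\mathfrak p)=o(e^{-d_n})$, so the $e^{d_n}$-amplified contribution of this direction is $o(1)$. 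Collecting the pieces gives $\flux(A_n,\gamma_n^+,Z)\to 0$, i.e.\ \eqref{eq:horizontal-1}.

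The main obstacle is exactly this last step: keeping quantitative control of the flux against the degenerating family $(T_n)_\ast Z$. The crucial points that make it go through are that near its neck $A_n$ is uniformly close to the genuine catenoid $C_{h,q_n}$, whose flux against \emph{every} horizontal Killing field vanishes, and that after rescaling the only direction that is amplified is the parabolic field fixing $q_\infty$, which is precisely the field that is exponentially small on the region where the neck lives, so the amplification and the decay balance. One must still treat the complementary horizontal directions with some care — for instance by also evaluating the flux on a loop in the ends of $T_n(A_n)$, which in Case~II are graphs over a fixed annulus centred at $o$ and converge up to $\partial_\infty\h^2$ away from the point $(-1,0)$ — but the structural input is the vanishing of the horizontal fluxes of the catenoid model together with homological invariance.
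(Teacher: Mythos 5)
Your treatment of \eqref{eq:vertical-1} is correct and is essentially the paper's own argument: homological invariance and isometry invariance reduce the vertical flux to $\flux(T_n(A_n),\omega_n,E_3)$, and since $(T_n)_*E_3=E_3$ the smooth convergence $T_n(A_n)\to C_h$ near the waist gives the limit $2\pi/\kappa$ from \eqref{catenoidfluxes}.

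For \eqref{eq:horizontal-1} there is a genuine gap, and it sits exactly in the step you flag as delicate. Let $\mathfrak{p}_+,\mathfrak{p}_-$ be the parabolic fields fixing $q_\infty$ and $-q_\infty$, normalized so that $Z=\mathfrak{p}_+-\mathfrak{p}_-$ and $Z^{q_n}={\rm e}^{d_n}\mathfrak{p}_+-{\rm e}^{-d_n}\mathfrak{p}_-$. Then
\[
Z-Z^{q_n}=-({\rm e}^{d_n}-1)\,\mathfrak{p}_+ +({\rm e}^{-d_n}-1)\,\mathfrak{p}_- = -{\rm e}^{d_n}\mathfrak{p}_+ \;+\; Z \;+\; {\rm e}^{-d_n}\mathfrak{p}_-,
\]
so the ``uniformly bounded term'' in your expansion is, up to $O({\rm e}^{-d_n})$, the field $Z$ itself; collecting your pieces returns the tautology $\flux(A_n,\beta_n,Z)=\flux(A_n,\beta_n,Z)+o(1)$. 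Solving instead for $Z$ gives $Z={\rm e}^{-d_n}Z^{q_n}-(1-{\rm e}^{-2d_n})\mathfrak{p}_-$, which isolates the real difficulty in $\flux(A_n,\beta_n,\mathfrak{p}_-)$ with an $O(1)$ coefficient. But it is $\mathfrak{p}_+$, not $\mathfrak{p}_-$, that is small near $q_\infty$: on the escaping neck $\|\mathfrak{p}_-\|\sim{\rm e}^{d_n}$, so the comparison with the model catenoid $C_{h,q_n}$ (whose flux vanishes) only yields $|\flux(A_n,\beta_n,\mathfrak{p}_-)|\le C\,{\rm e}^{d_n}\epsilon_n$ with $\epsilon_n$ the $\calC^1$-distance from $T_n(A_n)$ to $C_h$ near the waist, a quantity that tends to $0$ with no rate --- the product is indeterminate. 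Your closing suggestion of evaluating on a loop in the ends of $T_n(A_n)$ meets the same obstruction: there one again gets $\flux(T_n(A_n),\lambda_n,\mathfrak{p}_-)\to 0$ without a rate, whereas $o({\rm e}^{-d_n})$ is needed. For comparison, the paper's proof is the same one-line invariance argument applied directly to the degenerating fields $(T_n)_*Z$, so your instinct that this degeneration must be confronted goes beyond what is written there; but the decomposition you propose does not close it, because the only directions your estimates control are $Z^{q_n}$ and $\mathfrak{p}_+$, and $Z$ has a unit-size component transverse to their span.
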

\medskip
Indeed, since $T_n(A_n) \to C_h$ smoothly on compact sets, there is a connected component $\lambda_n$ of $T_n(A_n) \cap 
\{t=0\}$ which generates the homology $H_1(T_n(A_n))$. Using the smooth convergence of $T_n(A_n)$ and the fact that fluxes are
independent of the representative of homology class and invariant under isometries, then for each $\eps > 0$, 
\[
\left| \flux(T_n(A_n),\lambda_n,E_3) - \frac{2 \pi}{\kappa}\right| <\epsilon , \quad  \left|\flux(T_n(A_n),\lambda_n,T_n(Z)) \right| <\epsilon,
\]
for $n$ sufficiently large. The claim follows. 

\medskip

Now focus just on the top curve, and for simplicity, drop the $+$ superscript. As noted earlier, $\gamma_n$ bounds 
a unique minimal disk $Y_n$ which is the vertical graph of a function $v_n(r,\theta) \in \calC^{2,\alpha}(\overline{\HH^2})$, 
cf.\ \cite[Proposition 3.1]{KM}.  As $n \to \infty$, $Y_n$ converges to the minimal disk $Y$ with graph function $u$.
The limit $u$ of the functions $u_n$ is attained in $\calC^{2,\alpha}$ on $\overline{\h^2} \setminus \{ (1,0) \}$. 
Using $Y_n$ as a barrier, we have 
\begin{equation} \label{eq:primi}
(v_n)_r(1,\theta) \leq (u_n)_r(1,\theta)\quad \forall\, \theta,
\end{equation}
and since the $v_n$ converge in $\calC^{2,\alpha}(\overline{\HH^2})$ to $u$, then we obtain that
\begin{equation}
\label{eq:secon} 
(u_n)_r(1, \theta) \geq -a > -\infty,
\end{equation}
uniformly in $\theta$ and for all $n$.   Moreover, since the pullbacks $T_n^* u_n = u_n \circ T_n$ converge along with their
derivatives when $\theta \neq \pm \pi$, the radial derivatives of these functions are strictly positive for $|\theta| \leq \pi - \epsilon$
and $n$ large. Since $T_n$ is conformal, the radial derivatives of the original functions $u_n$ are positive on $T_n^{-1}( \sph^1 \setminus I)$
where $I$ is any small interval around $(-1,0)$.   

\begin{claim}\label{cl:gamma} 
For each $\zeta > 0$ there exists a sequence of decreasing open arcs $\varUpsilon_n \subset \sph^1$ converging to $(1,0)$ such that 
\[
\int_{\Upsilon_n} (u_n)_r \geq \frac{2\pi }{\kappa} - \zeta. 
\]
\end{claim}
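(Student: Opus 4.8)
The plan is to read off the estimate from three facts already established: the vertical-flux convergence $\int_{0}^{2\pi}(u_n)_r(1,\theta)\,d\theta\to 2\pi/\kappa$ of Claim~\ref{cl:fluxes} (equation~\eqref{eq:vertical-1}); the $\calC^{2,\alpha}$ convergence $u_n\to u$ on compact subsets of $\overline{\h^2}\setminus\{(1,0)\}$, where $u$ is the graph function of the minimal disk $Y=Y^+$ spanned by $\gamma=\gamma^+$, so that $u\in\calC^{2,\alpha}(\overline{\h^2})$ (it is also the $\calC^{2,\alpha}(\overline{\h^2})$ limit of the disk functions $v_n$); and the vanishing of the vertical flux of $Y$. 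For this last point: since $Y$ is a topological disk, each loop $\{r=\rho\}$ bounds a compact region in $Y$, so its vertical flux vanishes, i.e. $\int_{0}^{2\pi}u_r(1,\theta)\,d\theta=\flux(Y,\gamma,E_3)=0$ by the computation of \S\ref{subsec:flux}. Put $M:=\sup_{\overline{\h^2}}|u_r|<\infty$, and for $\delta>0$ let $\Upsilon_\delta\subset\sph^1$ be the open arc about $(1,0)$ of points at angular distance $<\delta$ from $(1,0)$, with $J_\delta:=\sph^1\setminus\Upsilon_\delta$ the complementary closed arc, which is a compact subset of $\overline{\h^2}\setminus\{(1,0)\}$.

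Now fix $\zeta>0$ and choose $\delta>0$ with $2M\delta<\zeta/2$. From $\int_{0}^{2\pi}u_r(1,\theta)\,d\theta=0$ and $|u_r|\le M$ we get $\bigl|\int_{J_\delta}u_r(1,\theta)\,d\theta\bigr|=\bigl|\int_{\Upsilon_\delta}u_r(1,\theta)\,d\theta\bigr|\le 2M\delta<\zeta/2$. Since $u_n\to u$ in $\calC^{2,\alpha}$ on the compact set $J_\delta$, the radial boundary derivatives $(u_n)_r(1,\cdot)$ converge uniformly on $J_\delta$ to $u_r(1,\cdot)$, hence $\int_{J_\delta}(u_n)_r(1,\theta)\,d\theta\to\int_{J_\delta}u_r(1,\theta)\,d\theta$, so $\bigl|\int_{J_\delta}(u_n)_r(1,\theta)\,d\theta\bigr|<\zeta/2$ for $n$ large. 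Combining this with \eqref{eq:vertical-1}, which gives $\int_{0}^{2\pi}(u_n)_r(1,\theta)\,d\theta>2\pi/\kappa-\zeta/2$ for $n$ large, and using $\int_{\Upsilon_\delta}=\int_{0}^{2\pi}-\int_{J_\delta}$, we conclude $\int_{\Upsilon_\delta}(u_n)_r(1,\theta)\,d\theta>2\pi/\kappa-\zeta$ for all large $n$. This already proves the estimate, with the \emph{fixed} arc $\Upsilon_\delta$ about $(1,0)$.

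To upgrade this to a genuinely decreasing family of arcs collapsing to $(1,0)$, I would run the previous paragraph with $\zeta$ replaced by $\min(\zeta,1/k)$ for each $k\in\NN$, producing radii $\delta_k\downarrow 0$ and indices $N_1<N_2<\cdots$ with $\int_{\Upsilon_{\delta_k}}(u_n)_r(1,\theta)\,d\theta>2\pi/\kappa-\zeta$ for all $n\ge N_k$, and then set $\Upsilon_n:=\Upsilon_{\delta_k}$ for $N_k\le n<N_{k+1}$ (and $\Upsilon_n:=\Upsilon_{\delta_1}$ for the finitely many $n<N_1$). These are nested open arcs about $(1,0)$ with $\diam\Upsilon_n\to 0$ and $\int_{\Upsilon_n}(u_n)_r\ge 2\pi/\kappa-\zeta$ for $n\ge N_1$, which is the assertion (discarding finitely many indices being harmless, since we are already passing to subsequences throughout Case~II). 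I do not expect a real obstacle here: all the substantial analysis --- the flux identity of Claim~\ref{cl:fluxes} and the graphical convergence of the ends away from $(1,0)$ --- is already in hand, and the only points needing a little care are this diagonal extraction of the shrinking nested family and the (routine) fact that $\calC^{2,\alpha}$ convergence up to $\partial_\infty\h^2$ on $J_\delta$ genuinely lets one pass $\int_{J_\delta}(u_n)_r(1,\cdot)$ to the limit. An alternative, more geometric route would take $\Upsilon_n:=T_n^{-1}(\Upsilon_{\pi-\epsilon_n})$ with $\epsilon_n\downarrow 0$ slowly, which collapses to $(1,0)$ automatically because $T_n^{-1}$ contracts toward $(1,0)$, the bound then coming from $u_n\circ T_n\to$ (the top graph function of $C_h$, which has radial boundary derivative $1/\kappa$); but this needs the isometry-invariance of partial-flux integrals and is less self-contained.
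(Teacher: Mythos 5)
Your argument is correct and is essentially the paper's proof: both rest on the same three facts --- $\int_{\sph^1}u_r=0$ because $Y$ is a simply connected graph, the uniform convergence of $(u_n)_r$ to $u_r$ away from $(1,0)$, and the flux limit \eqref{eq:vertical-1} --- combined via the decomposition $\int_{\sph^1}=\int_{\Upsilon}+\int_{\sph^1\setminus\Upsilon}$; you phrase it directly while the paper argues by contradiction, and your diagonal extraction of shrinking arcs matches the paper's closing remark, so the proofs coincide in substance.
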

\medskip

To prove this, note that if this were to fail, then for some $\zeta > 0$ and any neighborhood $\varUpsilon$ of $(1,0)$ in $\sph^1$, some subsequence, 
still labeled $u_n$, would satisfy 
\[
\int_\varUpsilon(u_n)_r <  \frac{2\pi }{\kappa} - \zeta. 
\]

Now, $Y$ is simply connected, hence $\int_{\sph^1} u_r= 0$, so given any $\zeta' > 0$, there exists a neighborhood 
$\varUpsilon$ of $(1,0)$ in $\sph^1$ such that 
\[
\left| \int_{\sph^1\setminus\varUpsilon} u_r \right| < \zeta'. 
\]
But $(u_n)_r \to u_r$ uniformly on $\sph^1 \setminus\varUpsilon$, so 
\[
\left| \int_{\sph^1 \setminus \varUpsilon} (u_n)_r \right|  <  \zeta', \ \ \ n \gg 1,  
\]
which implies that 
\[
\int_{\sph^1} (u_n)_r =\int_{\sph^1 \setminus \varUpsilon} (u_n)_r+ \int_{\varUpsilon} (u_n)_r< \zeta' + 2\pi/\kappa - \zeta
\]
for arbitrarily large $n$. But we can choose $\zeta' < \zeta$, which then contradicts \eqref{eq:vertical-1}. Hence the decreasing sequence of
intervals $\varUpsilon_n$ with the stated properties exists. 

Finally, since the integral of $u_r$ over the entire circle vanishes, its integral over the complement $\calU$ of any sufficiently small neighborhood of 
$(1,0)$ can be made arbitrarily small.  Since $(u_n)_r  \to u_r$ on $\calU$, we may assume that the intersection of all the $\Upsilon_n$
equals the single point $(1,0)$. This finishes the proof of the claim.

\medskip

We now show that $u_\theta(1,0) =0$. If this were not the case, then it is either positive or negative, and to be definite we assume that it is positive. 
Take an arc $\sigma \subset \sph^1$ centered at $(1,0)$ and positive constants $c_1 < c_2$ such that 
$$
0 < c_1  < u_\theta |_{\sigma} < c_2;
$$
since $(u_n)_\theta \to u_\theta$ on $\sigma$, then for $n$ large, 
\begin{equation}\label{eq:beta}
0 < c_1  < (u_n)_\theta |_{\sigma} < c_2.
\end{equation}

Now consider the horizontal flux
\[
\flux(A_n,\lambda_n,Z)= \int_{\sph^1} (u_n)_r(1,\theta) (u_n)_\theta(1,\theta) \, d \theta,
\]
where $Z$ is the horizontal Killing field $Z$ induced by rotations. Fixing $\zeta > 0$, \eqref{eq:horizontal-1} implies that 
$\left| \flux(A_n,\lambda_n,Z) \right| < \zeta$ when $n$ is large.  In addition, 
\[
\int_{\sph^1}u_r(1,\theta) u_\theta(1,\theta) \, d \theta=0,
\]
so there is an arc $\beta$ of length less than $\zeta$, with $(1,0) \in \beta \subset \sigma$, and satisfying
\begin{equation}
\left| \int_{\sph^1 \setminus \beta} u_r(1,\theta) u_\theta(1,\theta) \, d \theta \right| < \zeta.
\label{eq:joa-1}
\end{equation}

Since $(u_n)_r (u_n)_\theta \to u_r u_\theta$ on $\sph^1 \setminus \beta$, we see from \eqref{eq:joa-1} that for $n$ large, 
\begin{equation} \label{eq:roabastos} 
\left| \int_{\sph^1 \setminus \beta} (u_n)_r(1,\theta) (u_n)_\theta(1,\theta) \,d \theta\right| < \zeta. 
\end{equation}
We may as well assume that $\Upsilon_n \subset \beta$. Then, recalling that $(u_n)_r > 0$ on $\Upsilon_n$, and using \eqref{eq:secon}, \eqref{eq:beta} 
and \eqref{eq:roabastos}, we obtain 
\begin{multline*}
\zeta > \flux(A_n,\lambda_n,Z) =\int_{\Upsilon_n}  (u_n)_r (u_n)_\theta+ \int_{\beta \setminus \Upsilon_n} (u_n)_r (u_n)_\theta+ 
\int_{\sph^1 \setminus \beta} (u_n)_r (u_n)_\theta  \\
\hfill 
\geq   c_1 \int_{\Upsilon_n} (u_n)_r- c_2\, a\,   |\beta \setminus \Upsilon_n|  -\zeta, \hfill 
\end{multline*}
where $a$ is given by \eqref{eq:secon}.

Finally, by Claim~\ref{cl:gamma} and the fact that $|\beta| < \zeta$, we see finally that 
$$
\zeta > \flux(A_n,\lambda_n,Z)\geq  c_2 (2\pi/\kappa - \zeta) + c_3 \zeta,
$$
which is a contradiction when $\zeta$ is small.  This shows that $u_\theta(0) = 0$ and completes the proof of the theorem. 
\end{proof}

Of course, Case II is not vacuous since for example $A_n = T_n(C_h)$ is a sequence which diverges in this manner. The family 
of catenoids also provides an example in Case III, as $h \to \pi.$

\medskip

Before proceeding to Case III we consider the limit set $\Lambda_\infty:= \lim(\overline{A_n}) \cap (\del_\infty\h^2 \times \R)$ of the sequence $A_n$. Certainly $\Gamma^\pm \subset \Lambda_\infty$,
and we set $\ell_\infty = \overline{ \Lambda_\infty \setminus (\Gamma^+ \cup \Gamma^-)}$. 
We have shown that in Case I, $\ell_\infty = \varnothing$ and in Case II, $\ell_\infty$
is a single vertical segment of length less than $\pi$.  We now study what can happen in the remaining case.

First, we state the following lemma.
\begin{lemma} \label{lem:case3}
Let $A_n$ be a sequence satisfying Case III. Consider $p_0 \in \partial_\infty \h^2$ and $t^\pm$ such that
  $(p_0,t^\pm) \in \Gamma^\pm$. 
  
\begin{enumerate}[i)] 
\item If $t^+-t^- > \pi$, then  $(\{p_0\}\times (t^-,t^+) ) \cap \ell_\infty=\varnothing.$
\item If $(p_0,t_0)\in  \ell_\infty$ with $t^-<t_0<t^+$, then $(\{p_0\}\times (t^-,t^+) ) \subset \ell_\infty$.
\end{enumerate}
\end{lemma}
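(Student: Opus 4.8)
The plan is to prove both assertions by barrier arguments localized near $p_0$, using the tall rectangles recalled on page~\pageref{tallrectangles}, the vertical catenoids $C(q,\tau)$, the maximum principle, and (for ii)) the Dragging Lemma~\ref{lem:drag}; throughout we use that $\Pi(A_n)\to\Gamma$ in $\calC^{2,\alpha}$ forces the $\gamma_n^\pm$ to converge uniformly and the $A_n$ to lie in a fixed slab $\h^2\times[-M,M]$ (by comparison with catenoids), and that near infinity each $E_n^\pm$ is trapped between close-together semi-infinite tall rectangles, so that over any short arc the end $E_n^+$ (resp.\ $E_n^-$) sits at heights close to $\gamma_n^+$ (resp.\ $\gamma_n^-$).

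For i) I argue by contradiction. Suppose $(p_0,t_0)\in\ell_\infty$ with $t^-<t_0<t^+$ and $t^+-t^->\pi$, so there are $x_n=(z_n,\tau_n)\in A_n$ with $z_n\to p_0\in\partial_\infty\h^2$ and $\tau_n\to t_0$. Pick heights $t^-<a<t_0<b<t^+$ with $b-a>\pi$, and a closed arc $\sigma\subset\sph^1$ with $p_0$ in its interior, short enough that $\gamma^+>b$ and $\gamma^-<a$ on a slightly larger arc $\hat\sigma\supset\sigma$; hence $\gamma_n^+>b$, $\gamma_n^-<a$ on $\hat\sigma$ for $n\gg1$. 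Consider the tall rectangle $\mathcal R=R(\sigma,a,b)$ (a properly embedded minimal disk with asymptotic boundary $(\sigma\times\{a,b\})\cup(\partial\sigma\times[a,b])$) together with one of the foliations of $\h^2\times(a,b)$ by tall rectangles of height $b-a$ having $\mathcal R$ as a leaf; the nearby leaves have arcs contained in $\hat\sigma$, so their asymptotic boundaries lie in $\partial_\infty\h^2\times[a,b]$ over $\hat\sigma$ and are disjoint from $\partial_\infty A_n=\gamma_n^+\cup\gamma_n^-$ there. Sweeping the foliation toward $\mathcal R$ from a leaf disjoint from $A_n$ and invoking the interior maximum principle — exactly the sweep-out bookkeeping of the proof of Proposition~\ref{prop:graph} — gives $A_n\cap\mathcal R=\varnothing$ for $n\gg1$. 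But $\mathcal R$ separates $\h^2\times\R$ into two components, and since $t_0\in(a,b)$ the point $(p_0,t_0)$, hence $x_n$ for large $n$, lies "between the two sheets" of $\mathcal R$, while $E_n^+\subset A_n$ sits over $z$ near $p_0$ at heights $\approx\gamma_n^+>b$ and so lies in the other component; this contradicts the connectedness of $A_n$.

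For ii), fix $t_1\in(t^-,t^+)$; by symmetry assume $t_1\in(t_0,t^+)$. Choose a short arc $\sigma\ni p_0$ so that, for $n\gg1$, $E_n^+$ sits at heights $>t_1$ over the annular sector above $\sigma$, and pick $w_n\in E_n^+$ over some $\zeta_n\to p_0$ with $|\zeta_n|\to1$, so $w_n\to(p_0,t^+)$. Set $E:=A_n\setminus\mathrm{int}(E_n^-)$, which is a properly embedded annular end with $\partiali E=\partial E_n^-$ a compact loop at heights $\approx\gamma_n^-$ and graphical asymptotic boundary $\gamma_n^+$, and with $w_n\in E$. Slide a family of small truncated vertical catenoids $C(q^s,\tau^s)$, $s\in[0,1]$, with $C(q^0,\tau^0)$ through $w_n$ and with $(q^s,\tau^s)$ descending a vertical path near $p_0$ so that $\tau^s$ decreases continuously from $\approx t^+$ through $t_1$ down to $\approx t_0$; choosing the catenoids small and keeping them near $p_0$ above height $\approx t^-$ ensures $C(q^s,\tau^s)\cap\partiali E=\varnothing$ and $\partial C(q^s,\tau^s)\cap E=\varnothing$ (or else a failure of the latter already exhibits a point of $A_n$ near $(p_0,\tau^s)$, which is all we want). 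The Dragging Lemma then yields a continuous path $s\mapsto p_s\in E\cap C(q^s,\tau^s)\subset A_n$; at the $s$ with $\tau^s=t_1$ the point $p_s$ lies on the small catenoid centred near $(p_0,t_1)$, hence within $o(1)$ (as $n\to\infty$ with the catenoid sizes $\to0$) of $(p_0,t_1)$. Thus $(p_0,t_1)\in\Lambda_\infty$, and since $t^-<t_1<t^+$ it is not in $\Gamma^\pm$, so $(p_0,t_1)\in\ell_\infty$. The symmetric argument with $E_n^-$ handles $t_1\in(t^-,t_0)$, and together with the hypothesis $(p_0,t_0)\in\ell_\infty$ this gives $\{p_0\}\times(t^-,t^+)\subseteq\ell_\infty$.

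The main obstacle in both parts is the same: certifying that the barriers (the tall rectangle in i), the sliding catenoids in ii)) really are disjoint from $A_n$ away from the intended contact, given that in Case III the neck of $A_n$ may be spread out arbitrarily close to $\partial_\infty\h^2$. This is precisely the type of sweep-out/maximum-principle argument already carried out in Proposition~\ref{prop:graph}, and the present lemma is in effect a localized version of it near $p_0$.
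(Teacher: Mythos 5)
Your proof of part i) is essentially the paper's: a tall rectangle of height $b-a>\pi$ concentrated near $\{p_0\}\times[a,b]$, shown disjoint from $A_n$ by a sweep-out and the maximum principle, then a separation/connectedness argument. That part is fine.

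Part ii) is where you diverge from the paper, and your route has a genuine gap. The step ``choosing the catenoids small and keeping them near $p_0$ above height $\approx t^-$ ensures $C(q^s,\tau^s)\cap\partiali E=\varnothing$ and $\partial C(q^s,\tau^s)\cap E=\varnothing$'' cannot be carried out in Case III. There the graphical decomposition of $A_n$ only holds outside $D_{\HH^2}(o,R_n)$ with $R_n\to\infty$, so over any region at fixed Euclidean distance from $p_0$ the set $E=A_n\setminus\mathrm{int}(E_n^-)$ --- including its compact boundary $\partiali E=\partial E_n^-$, which lies over $\partial D_{\HH^2}(o,R_n)$ at uncontrolled heights --- can sit anywhere in the slab between roughly $t^-$ and $t^+$; the trapping by semi-infinite tall rectangles only confines $A_n$ to that slab, it does not keep the top end near height $t^+$ over a fixed annular sector. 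Your fallback remark does not repair this: if the hypotheses of Lemma~\ref{lem:drag} first fail at some parameter $s$ with $\tau^{s}>t_1$, you obtain a point of $A_n$ near $(p_0,\tau^{s})$, not near the target $(p_0,t_1)$, and nothing in your argument lets you restart the dragging and descend the remaining distance. A structural symptom of the problem is that your argument nowhere uses the hypothesis $(p_0,t_0)\in\ell_\infty$; were it valid, it would prove $\{p_0\}\times(t^-,t^+)\subset\ell_\infty$ for \emph{every} $p_0\in\partial_\infty\HH^2$, which is false (when $A_n$ converges to a parabolic generalized catenoid, $\ell_\infty$ is a single vertical segment over one boundary point).

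The paper instead proves ii) by contradiction: if some $(p_0,\tilde t)$ with $\tilde t\in(t^-,t^+)$ is \emph{not} in $\ell_\infty$, then an entire ball $B((p_0,\tilde t),\rho)$ is eventually avoided by the $A_n$; since part i) forces $t_0-\tilde t<\pi$ (else the hypothesis of ii) is vacuous), one can take the tall rectangle $R(\sigma,t_0-\pi,t_0+\epsilon)$ over a short arc $\sigma\ni p_0$, whose horizontal slice at height $\tilde t$ lies inside the avoided ball, and use the truncated piece $R(\sigma,t_0-\pi,t_0+\epsilon)\cap(\HH^2\times[\tilde t,t_0+\epsilon])$ as a barrier to exclude $(p_0,t_0)$ from $\ell_\infty$ --- contradicting the hypothesis. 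Note how the assumption $(p_0,t_0)\in\ell_\infty$ and the threshold $\pi$ both enter essentially; any correct proof must use them, and yours does not.
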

\begin{proof}
First we prove assertion {\em i)}. Since $t^+-t^- > \pi$, then we can consider a tall rectangle $\mathcal{R}$, whose asymptotic boundary is 
arbitrarily close to the vertical segment $\{p_0\}\times (t^-+\epsilon,t^+-\epsilon)$, with $\epsilon >0$ sufficiently small so that 
$(t^++\epsilon)-(t^+-\epsilon)>\pi$. 
Then we can use $\mathcal{R}$ as barrier to prevent that $\{p_0\}\times (t^-,t^+)$ were in the limit set
of our annuli.

Next, we verify assertion {\em ii)}.  Suppose  that  there exists $(p_0,\tilde{t}) \in (\{p_0\}\times (t^-,t^+) )\setminus  \ell_\infty$. 
Then, there exists $\rho>0$ such that $B((p_0,\tilde{t}),\rho) \cap \ell_\infty=\varnothing$. 
We can assume that $\tilde{t}<t_0$ (the case $\tilde{t}>t_0$ is similar). Hence we can find an arc $\sigma$, 
$p_0 \in \sigma \subset (\sph^1 \cap D(p_0, \rho))$
and a sufficiently small $\epsilon>0$ so that the tall rectangle $R(\sigma, t_0-\pi, t_0+\epsilon)$
verifies $(R(\sigma ,t_0-\pi, t_0+\epsilon) \cap (\h^2 \times \{\tilde{t}\})) \subset B((p_0,\tilde{t}),\rho)$.  Then we could use
the piece of the tall rectangle $R(\sigma,t_0-\pi, t_0+\epsilon )\cap  (\h^2 \times [\tilde{t},t_0+\epsilon])$ as a barrier to prove that
$(p_0,t_0) \not \in \ell_\infty$  which contradicts our hypothesis. \end{proof}

We can finally proceed with the analysis of sequences of surfaces satisfying the conditions of Case III.

 \begin{theorem} \label{th:daniel} Let $A_n$ be a sequence satisfying Case III. Then $\ell_\infty$ is a (non-empty) union of 
vertical segments in $\partial_\infty\h^2\times\R$ of length at most $\pi$ joining $\Gamma^+$ and $\Gamma^-$ .
For each one of these vertical segments $\{p_\infty\}\times(t^-,t^+) \subset 
\ell_\infty$, there exists a sequence of points $(p_n,t_n) \in A_n$ where the unit normal to $A_n$ is horizontal,
with $(p_n,t_n)$ converging to a point in $\{p_\infty\} \times (t^-, t^+)$.  

Moreover, there exists at least one segment in $\ell_\infty$ such that if $T_n$ is a sequence of horizontal isometries mapping $(p_n,t_n)$ to $(o, t_n)$, then
$T_n(A_n)$ converges smoothly on compact sets of $\h^2\times \R$ to a parabolic generalized catenoid. In this particular case, $t^+-t^-=\pi$.
 \end{theorem}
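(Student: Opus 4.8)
The plan is to locate the degenerating necks of the $A_n$, blow them up by horizontal isometries, and recognize the resulting limit as a parabolic generalized catenoid via the Daniel--Hauswirth rigidity for horocycle-foliated minimal surfaces. Two preliminary facts are used throughout: the maximum principle applied with horizontal slices confines every $A_n$ to a fixed slab $\h^2\times[m,M]$ (so the heights of escaping points stay bounded), and a routine barrier argument using $\gamma_n^\pm\to\gamma^\pm$ in $\calC^0$ gives $\Lambda_\infty\subset\{(p,t):\gamma^-(p)\le t\le\gamma^+(p)\}$, hence $\ell_\infty$ lies in this region too. Combined with Lemma~\ref{lem:case3} — part (ii) forces the vertical slice of $\ell_\infty$ over any point to be empty or the whole segment $\{p_0\}\times[\gamma^-(p_0),\gamma^+(p_0)]$, and part (i) forbids that segment when $\gamma^+(p_0)-\gamma^-(p_0)>\pi$ — this already shows that $\ell_\infty$, once known to be nonempty, is a union of vertical segments joining $\Gamma^+$ and $\Gamma^-$, each of length at most $\pi$.

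First I would show that, after passing to a subsequence, there are points $(p_n,t_n)\in A_n$ where the tangent plane is vertical and $\dist_{\h^2}(o,p_n)\to\infty$. The curve $V_n$ of vertical-tangent points supplied by Proposition~\ref{prop:V} lies inside $D_{\HH^2}(o,R_n)\times\R$; if no such escaping sequence existed, all the $V_n$ would sit in a fixed cylinder $D_{\HH^2}(o,\rho_0)\times\R$, and then the ``no vertical tangent plane $\Rightarrow$ vertical bigraph'' mechanism from the proof of Proposition~\ref{prop:graph} (assertion i) from ii)) would make every $A_n$ a vertical bigraph outside a \emph{fixed} cylinder for all large $n$ — i.e.\ we would be in Case~I, contrary to the hypothesis of Case~III. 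Passing to a subsequence, $p_n\to p_\infty\in\partial_\infty\h^2$ and $t_n\to t_0$. Next I would blow up: let $T_n$ be the hyperbolic translation along the geodesic through $p_n$ and $o$ with $T_n(p_n)=o$, extended trivially to the $\R$-factor, and put $\widehat A_n:=T_n(A_n)$. Then $\widehat A_n$ is a minimal annulus in the fixed slab with a vertical-tangent point at $(o,t_n)\to(o,t_0)$, and because the repelling fixed point of $T_n$ tends to $p_\infty$ one gets $T_n(\gamma_n^\pm)\to c^\pm:=\gamma^\pm(p_\infty)$ in $\calC^{2,\alpha}$ locally on $\partial_\infty\h^2\setminus\{-p_\infty\}$. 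Exactly as in the proof of Theorem~\ref{th:annulus}, the catenoids and tall rectangles are barriers whose existence is now uniform in $n$, so the area-blowup set of $\{\widehat A_n\}$ is empty; Theorems~\ref{th:Z}--\ref{th:ZZ} (genus zero) then give smooth subsequential convergence to a minimal surface $\Sigma$, and I take $\Sigma_0$ to be the component of $\Sigma$ through $(o,t_0)$. Being nonflat (it has tangent planes arbitrarily close to vertical near $(o,t_0)$), $\Sigma_0$ is neither of the flat disks $\h^2\times\{c^\pm\}$; and if $\ell_\infty$ were empty, then $(p_\infty,t_0)\in\Gamma^+\cup\Gamma^-$, so $t_0\in\{c^+,c^-\}$, and $\Sigma_0$ would touch $\partial(\h^2\times[c^-,c^+])$ tangentially from inside at an interior point with vertical tangent plane, forcing an interior point of $\Sigma_0$ of height $>c^+$ (or $<c^-$); pulling back by $T_n^{-1}$ this produces points of $A_n$ projecting to $p_\infty$ with heights outside $[\gamma^-(p_\infty),\gamma^+(p_\infty)]$, contradicting the inclusion for $\Lambda_\infty$. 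Hence $t_0\in(c^-,c^+)$, so $\ell_\infty\ni(p_\infty,t_0)$ is nonempty and contains the full segment $\{p_\infty\}\times[c^-,c^+]$.

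The heart of the matter — and the step I expect to be the main obstacle — is to identify $\Sigma_0$. Since $\Sigma_0$ is connected, contains the two full asymptotic circles $\sph^1\times\{c^\pm\}$ (from $T_n(\gamma_n^\pm)\to c^\pm$), and has a neck running off toward $-p_\infty$, its asymptotic boundary must be $\sph^1\times\{c^+\}\sqcup\sph^1\times\{c^-\}\sqcup(\{-p_\infty\}\times[c^-,c^+])$. Crucially $\Sigma_0$ cannot be a rotationally invariant catenoid $C_h$: if $\widehat A_n\to C_h$, then transferring the bounded graphical radius of $C_h$ back through $T_n^{-1}$ would exhibit $A_n$ as a vertical bigraph outside a bounded-radius cylinder centered at $p_n$, which is Case~II and hence excluded. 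Running the Alexandrov-reflection argument across vertical planes, exactly as in the proof of Theorem~\ref{th:catenoid}, and using this exclusion, one shows that the horizontal slices $\Sigma_0\cap(\h^2\times\{t\})$ are pieces of horocycles based at $-p_\infty$ — the point being that the diverging neck-scale ($R_n\to\infty$) makes these slices curves whose intrinsic radius of curvature blows up while they keep passing near the fixed point $o$, so they degenerate precisely onto horocycles rather than onto finite circles (equidistant curves being excluded because they cannot produce full asymptotic circles, finite circles by the exclusion of $C_h$). The Daniel--Hauswirth characterization of horocycle-foliated minimal surfaces (\S\ref{subsec:2}, property (2), and \cite{d,h,chr}) then identifies $\Sigma_0$ as a piece of, hence all of, a parabolic generalized catenoid $\calD_{-p_\infty}$; its asymptotic circles differ in height by exactly $\pi$, so $c^+-c^-=\gamma^+(p_\infty)-\gamma^-(p_\infty)=\pi$. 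This proves assertion~(3), and the convergence statement. Finally, for assertion~(2): given any segment $\{p_*\}\times(t_*^-,t_*^+)\subset\ell_\infty$, the definition of $\ell_\infty$ together with Lemma~\ref{lem:case3}(ii) provides points of $A_n$ projecting to $p_*$ with heights tending into the open segment; near such points $A_n$ cannot be graphical (a graph asymptotic to $\gamma_n^\pm$ cannot realize an interior asymptotic height strictly between $\gamma^-(p_*)$ and $\gamma^+(p_*)$), so performing the blow-up of the previous paragraph at those points and invoking the slice analysis again produces vertical-tangent points of $A_n$ converging into the segment, as required.
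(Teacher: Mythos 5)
Your overall route is the paper's: escaping vertical-tangent points (via Proposition \ref{prop:V} and the mechanism of Proposition \ref{prop:graph}), blow-up by horizontal translations, White's theorems \ref{th:Z}--\ref{th:ZZ} for convergence, identification of the limit, then the statement for each segment of $\ell_\infty$. The genuine gap is exactly at the step you flag as the heart of the matter. As written, your identification of $\Sigma_0$ is not a proof: horizontal slices of the limit surface are not a priori curves of constant geodesic curvature, so ``excluding equidistant curves and finite circles'' and concluding horocycles classifies objects you have not shown belong to that three-element list, and the heuristic that the slices' ``intrinsic radius of curvature blows up while they keep passing near $o$'' concerns the approximating annuli, not the limit. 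Invoking Alexandrov reflection ``exactly as in the proof of Theorem \ref{th:catenoid}'' does not fill this in, because there the sweep is used to kill the vertical segment and reduce to the Nelli--Sa Earp--Toubiana uniqueness; here one needs the opposite. The paper's Claim \ref{cl:daniel} runs a different sweep: for every ideal point $q\neq p_\infty$ it moves vertical planes over geodesics emanating from $q$ toward the singular ideal point, concluding that $\Sigma_\infty$ is symmetric with respect to every vertical plane whose asymptotic boundary contains the vertical line there; invariance under all these reflections (the parabolic subgroup fixing that ideal point) is what forces each horizontal slice to be a horocycle, and only then does the Daniel--Hauswirth rigidity give $\mathcal D$ and $t^+-t^-=\pi$. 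Your auxiliary exclusion of $C_h$ is also shaky: smooth convergence on compact sets says nothing about where the vertical-tangent points of $T_n(A_n)$ sit inside the unbounded horocylinder, so it does not transfer to a bounded graphical radius for $A_n$ centered at $p_n$, i.e.\ it does not place you in Case II. The paper avoids needing this by establishing, via Claim \ref{cl:boundary} and Lemma \ref{lem:case3}(ii), that $\partial_\infty\Sigma_\infty$ contains the open vertical segment, which rules out a catenoid limit outright.

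There is a second gap in your last step. The parenthetical ``a graph asymptotic to $\gamma_n^\pm$ cannot realize an interior asymptotic height'' conflates the asymptotic values of each individual $A_n$ with limit points of the sequence: for fixed $n$, a graphical piece of $A_n$ over a lens around $p_*$ can pass through interior points at intermediate heights arbitrarily close to the ideal arc, so nothing prevents $A_n$ from being graphical near the points you selected; and you cannot ``perform the blow-up of the previous paragraph at those points'', since that blow-up was anchored at vertical-tangent points, which is precisely what you are trying to produce. The paper argues by contradiction: if no vertical-tangent points converge into the segment, there is a lens $D$ with $V_n\cap(D\times\R)=\varnothing$, hence $A_n\cap(D\times\R)$ is graphical; translating marked points $(q_n,s_n)$ converging into the segment to the origin, the translated graphical pieces converge to the two horizontal disks $\h^2\times\{s^\pm\}$ together with the vertical segment at the antipodal ideal point, while the translated marked points converge to an interior point at an intermediate height that lies on none of these --- a contradiction. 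Some version of this limit identification is required; without it the assertion ``for each vertical segment there are vertical-tangent points converging into it'' remains unproved.
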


\begin{proof}

By hypothesis, we can assume there exists, for any $n\in\n$, a point $(p_n,t_n)$ in $A_n\cap(\partial D_{\h^2}(o,R_n)\times\R)$ with horizontal
normal vector such that, after passing to a subsequence,   $\{(p_n,t_n)\}_{n\in\n}$ diverges to a point in $\{p_\infty\}\times\R$,
for some $p_\infty\in\partial_\infty\h^2$.  Consider $(p_\infty,t^\pm)=(\{p_\infty\}\times\R) \cap\Gamma^\pm$.
Let $T_n$ be a sequence of horizontal isometries mapping $(p_n,t_n)$ to $(o, t_n)$ and denote $\Sigma_n=T_n(A_n)$. We observe that the geodesics passing through $p_n$ and $o$ converge to the geodesic passing through $o$ with end point
$p_\infty$. Thus, the disks $\mathcal{H}_n=T_n(D_{\h^2}(o,R_n))$ converge to the horodisk $\mathcal{H}_\infty$ at $-p_\infty$
passing through the origin, since $R_n\to\infty$.  We can consider a larger horodisk $\widetilde{\mathcal{H}}_\infty$ at $-p_\infty$ containing
$\mathcal{H}_\infty$. It is clear that $\widetilde{\mathcal{H}}_\infty$ also contains any $\mathcal{H}_n$, for $n$ big enough.

We call $\sigma^\pm=\partial_\infty\h^2\times\{t^\pm\}$. Let $u_n^\pm$ be the smooth function defined on $\Lambda:=\h^2 \setminus\widetilde{\mathcal{H}}_\infty$ whose graph 
represents the ends of $\Sigma_n$ around $\sigma^\pm$.  By the Arzel\`{a}-Ascoli theorem, a subsequence of $\{u_n^\pm\}_{n
\in \n}$ converges uniformly on compact subsets of $\Lambda$ to a minimal graph $u_\infty^\pm$, with $u_\infty^\pm=t^\pm$ 
on $\partial_\infty\h^2\setminus\{-p_\infty\}$.

Now set $M_n:=\Sigma_n\cap(\widetilde{\mathcal{H}}_\infty\times\R)$; each $M_n$ is an annulus bounded by two curves 
$\varrho_n^\pm$ in $\partial\widetilde{\mathcal{H}}_\infty\times\R$. By possibly enlarging $\widetilde{\mathcal{H}}_\infty$, we can 
assume that $\{\varrho_n^\pm\}_{n\in \n}$ converges uniformly on compact sets to the graph of $u_\infty^\pm \mid_{\partial\widetilde{\mathcal{H}}_\infty}$.  
Hence it is easy to see that the boundary measures of the annuli $M_n$ are uniformly bounded on compact sets.
Thus, by Theorem~\ref{th:Z}, the area blowup set $Z$ of the minimal annuli $M_n$ (or $\Sigma_n$), which lies in the solid 
truncated horocylinder, obeys the same maximum principles that hold for properly embedded minimal surfaces without boundary. Assume $Z$ is not empty.
We consider the foliation of $\h^2 \times \R$ by vertical geodesics planes orthogonal to the geodesic 
joining $p_\infty$ and $-p_\infty.$
By Theorem \ref{th:Z} we get that one of these  vertical planes  is included in  
the area blowup set~$Z$, which is absurd.

Since in $\Lambda\times\R$ the convergence of the annuli $\overline \Sigma_n$ is 
smooth with multiplicity one, then we can apply Theorem
\ref{th:ZZ} to deduce that we have the same convergence inside of the solid horocylinder. Therefore, the minimal annuli $\Sigma_n$
converge to a complete, embedded minimal surface $\Sigma_\infty$ with asymptotic boundary $\sigma^+ \cup \sigma^-$ and 
possibly some points in $\{-p_\infty\}\times \R$. 
Furthermore, reasoning as in the proof of Theorem \ref{th:catenoid}, it is not hard
to see that $\Sigma_\infty \subset \h^2 \times [t^-,t^+].$ 

We are going to prove that $\Sigma_\infty$ coincides with an isometric copy of
$\mathcal{D}$. Firstly, let us prove that their asymptotic boundaries have the same behavior.
\begin{claim}\label{cl:boundary}
The sequence $\{(p_n,t_n)\}_{n \in \n}$ cannot converge to a point in $\Gamma^+\cup\Gamma^-$. In other words, if we denote $\overline t=\lim t_n$,
then $t^- < \overline t<t^+.$ Moreover, $\partial_\infty\Sigma_\infty=\sigma^+ \cup \sigma^-\cup
(\{-p_\infty\}\times (t^-,t^+) )$.
\end{claim}
We proceed by contradiction. Assume that $\lim t_n =t^+$ (the other case is similar). 
Then $T_n(p_n,t_n)$ converges to $(o,t^+) \in \Sigma_\infty.$ By the maximum principle, 
we have that  $\h^2 \times \{t^+\} \subset \Sigma_\infty.$ But the normal vector
to $\Sigma_\infty$ at $(o,t^+)$ is horizontal, which is absurd. This contradiction proves the first assertion in the claim. 
The second part of this claim is a direct consequence of item \emph{ii)} in Lemma \ref{lem:case3}.

Up to a vertical translation, we can assume $t^-=-t^+$.
\begin{claim}\label{cl:daniel}
$\Sigma_\infty=\mathcal{D}$, up to an isometry and $t^+=\pi/2$.
\end{claim}

%Recall that from statement ii) in Lemma \ref{lem:case3} we have $t^+\leq \pi/2$.

Fix any point $q \in \partial_{\infty}\h^2 \setminus \{ p_\infty \}$ and let $z$ be a point in one of the arcs in
$\sph^1$ between $q$ and $p_\infty$. Let $P_z$ be the vertical plane over the geodesic in $\h^2$ with endpoints
at $q$ and $z$.  We now reflect $\Sigma_\infty$ with respect to this family of vertical planes, letting $z$ move
from $q$ toward $p_\infty$. This straightforward application of the Alexandrov reflection principle shows that
$\Sigma_\infty$ is symmetric with respect to $P_{p_\infty}$.  Since $q$ is arbitrary, we see that $\Sigma_\infty$
is symmetric with respect to any vertical plane with $\{p_\infty\} \times \R$ as one of its asymptotic boundaries.
Using a well-known theorem in hyperbolic geometry, we see that each slice $\h^2 \times \mbox{const.}$
is a horocycle. Hence $\Sigma_\infty$ is foliated by horocycles, and therefore $\Sigma_\infty=\mathcal{D}$.

\vskip 0.3cm

To finish the proof of Theorem~\ref{th:daniel}, it remains to prove that, for any vertical 
segment in $\ell_\infty$, there exists $\{p_n'\}_{n\in\n}$ converging to a point in this vertical segment,
where $p_n'\in A_n$ is a point with horizontal normal vector.  Up to a rotation we can assume that the
vertical segment is contained in $\{(1,0)\} \times \R$. Suppose on the contrary that there 
not exists such a sequence. Then we can find a geodesic $\eta$ perpendicular to the curve
$\{z \in \h^2, \Im(z)= 0\}$ such that $V_n \cap (D \times \R)= \varnothing$, for all $n \in \n$,
where $D$  is the lens-shaped region between 
  $\eta$  and the arc of $\sph^1$ delimited by $\eta$ that contains $\{(1,0)\}$ and
  $V_n \subset A_n$  is the set of all points where the tangent plane is vertical. 
Notice that   $A_n \cap (D \times \R)$ are graphs over the domain $D$. 
  
  Consider
  $\{(q_n,s_n)\} \in A_n \cap (D \times \R)$ a sequence converging to a 
  point in $\{(1,0)\} \times (s^-,s^+)$ where $s^\pm=\Gamma^\pm \cap (\{(1,0)\} \times \R)$.
  Let $S_n$ be a sequence of horizontal isometries mapping $(q_n,s_n)$ to $(o, s_n)$ and denote $\widetilde{A}_n=S_n(A_n)$. 
  We observe that the geodesics passing through $q_n$ and $o$ converge to the geodesic passing through $o$ with end point
$(-1,0)$.  Hence the limit of $\widetilde{A}_n$, $\widetilde{A}$, consists of the union of two disks $\h^2 \times \{s^\pm\}$ and the
vertical segment $\{(-1,0)\} \times [s^-,s^+]$. But the sequence $\{S_n(q_n,s_n)\}$ converges to
a point in $\{o\} \times (s^-,s^+)$ which is not contained in $\widetilde{A}$. \end{proof}

\begin{corollary} \label{co:cuca}
Let $\{A_n\}_{n \in \n}$ be a sequence of annuli in Case III. Then the sequence of vertical fluxes 
satisfies
$$\lim_{n \to \infty} f_0(A_n) =-\infty.$$
\end{corollary}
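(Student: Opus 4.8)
The plan is to deduce this from Theorem~\ref{th:daniel} together with the flux computations of \S\ref{subsec:flux}. Passing to a subsequence and rotating, Theorem~\ref{th:daniel} provides a vertical segment $\{p_\infty\}\times(t^-,t^+)\subset\ell_\infty$ with $t^+-t^-=\pi$, points $(p_n,t_n)\in A_n$ at which $T_{(p_n,t_n)}A_n$ is vertical, with $p_n\in\partial D_{\h^2}(o,R_n)$, $R_n\to\infty$, and $(p_n,t_n)\to(p_\infty,\bar t)$, $t^-<\bar t<t^+$, together with horizontal isometries $T_n$ with $T_n(p_n,t_n)=(o,t_n)$ such that $\Sigma_n:=T_n(A_n)$ converges, smoothly on compact subsets of $\h^2\times\R$, to a parabolic generalized catenoid $\mathcal D$. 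Recall from \S\ref{subsec:2} that $\mathcal D$ is foliated by horocycles $H_t=\mathcal D\cap(\h^2\times\{t\})$ based at a single point $q\in\partial_\infty\h^2$, is invariant under the one-parameter parabolic group fixing $q$, and that its tangent plane is vertical exactly along one of these horocycles, say $H_{\bar t}$ (the ``waist''). By Proposition~\ref{prop:V}, the set $V_n$ of points of $A_n$ with vertical tangent plane is a regular closed curve generating $H_1(A_n,\z)$, and $(p_n,t_n)\in V_n$; since fluxes depend only on the homology class, $f_0(A_n)=\flux(A_n,\partial_-A_n,E_3)=\pm\flux(A_n,V_n,E_3)$, and because $f_0<0$ on $\fA^*$ it suffices to prove $\bigl|\flux(A_n,V_n,E_3)\bigr|\to\infty$. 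As $T_n$ is an isometry preserving $E_3$, this flux equals $\int_{T_n(V_n)}\langle E_3,\widehat\nu\rangle\,ds$, where $\widehat\nu$ is the unit conormal of $V_n$ in $A_n$ and $T_n(V_n)$ is the vertical set of $\Sigma_n$.

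The heart of the matter is that the flux of $E_3$ across the waist $H_{\bar t}$ of $\mathcal D$ is infinite. This is a direct computation: in the upper half-plane model, placing $q$ at infinity, the parabolic symmetry forces $\mathcal D=\{(x,c(t),t):t\in(t^-,t^+),\ x\in\R\}$ for a smooth positive even function $c$ with a unique maximum at $\bar t$, and one checks that the integrand $\langle E_3,\widehat\nu\rangle$ restricted to $H_{\bar t}=\{t=\bar t\}$ is a positive constant; since $H_{\bar t}$ is a complete horocycle of infinite length, $\int_{H_{\bar t}}\langle E_3,\widehat\nu\rangle\,ds=+\infty$. Because the vertical set of $\Sigma_n$ converges on compact sets to $H_{\bar t}$ (as $\Sigma_n\to\mathcal D$ in $C^1$ and the vertical set of $\mathcal D$ is exactly $H_{\bar t}$), for every compact $K\subset\h^2\times\R$ we get
\[
\int_{T_n(V_n)\cap K}\langle E_3,\widehat\nu\rangle\,ds\ \longrightarrow\ \int_{H_{\bar t}\cap K}\langle E_3,\widehat\nu\rangle\,ds\qquad(n\to\infty),
\]
and the right-hand side can be made as large as we wish by enlarging $K$.

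It remains to control the part of $T_n(V_n)$ escaping every compact subset of $\h^2\times\R$. Since the heights are uniformly bounded (the $A_n$ lie in a fixed horizontal slab because $\Pi(A_n)\to\Gamma$ and by the maximum principle against the totally geodesic slices $\h^2\times\{t\}$) and the ends of $\Sigma_n$ are vertical graphs over the complement of the horoball $\mathcal H_n=T_n(D_{\h^2}(o,R_n))$ at $q$, the loop $T_n(V_n)$ lies inside $\mathcal H_n\times[-C,C]$ and its escaping arc is pushed toward $q$. The key point is that, with a consistent orientation of $A_n$, the density $\langle E_3,\widehat\nu\rangle=\cos\phi$ along $V_n$ (with $\phi$ the slope of $V_n$ in its vertical tangent plane) does not change sign: the horizontal projection of $V_n$ is tangent at each point to the geodesic cut out by the tangent plane of $A_n$, and since by Proposition~\ref{prop:V} the Gauss map $\nu|_{V_n}$ is a diffeomorphism onto the horizontal equator, this projected curve has monotonically turning tangent, so $V_n$ is nowhere vertical and $\cos\phi$ keeps the sign it has along $H_{\bar t}$. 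Granting this, $\bigl|\flux(A_n,V_n,E_3)\bigr|\ge\bigl|\int_{T_n(V_n)\cap K}\langle E_3,\widehat\nu\rangle\,ds\bigr|$, which by the previous paragraph (choosing $K$ first, then $n$ large) is arbitrarily large; hence $|f_0(A_n)|\to\infty$, and since $f_0<0$ we conclude $f_0(A_n)\to-\infty$.

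The step I expect to be the main obstacle is exactly this sign (equivalently, no-verticality) statement for the generating loop $V_n$, i.e.\ ruling out cancellation in $\int_{T_n(V_n)}\langle E_3,\widehat\nu\rangle\,ds$ from the escaping arc near $q$. An alternative that tries to sidestep it is comparison with the vertical catenoids $C_h$: their vertical flux is $2\pi/\kappa(h)$ with $\kappa(h)\searrow0$ as $h\nearrow\pi/2$, while $T_h(C_h)\to\mathcal D$, so one would transfer this blow-up to $\Sigma_n$ via homological invariance of the flux after matching generating loops through the common limit $\mathcal D$; but making this precise runs into the fact that the neck of $C_h$ recedes to infinity as $h\to\pi/2$, which is the same difficulty in another guise.
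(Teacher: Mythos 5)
Your strategy (transfer the infinite ``waist flux'' of the limiting parabolic generalized catenoid to $V_n$ via homology invariance and a compact exhaustion) founders exactly at the step you flag: the claim that $\langle E_3,\widehat{\nu}\rangle$ keeps a constant sign along all of $V_n$, in particular along the arc of $T_n(V_n)$ that escapes toward $q$. The justification you offer does not give this. Proposition~\ref{prop:V} says that the unit normal $\nu$ restricted to $V_n$ sweeps out the horizontal equator diffeomorphically; that controls the \emph{horizontal direction of the normal}, not the slope of the curve $V_n$ itself. The tangent to $V_n$ lies in the (vertical) tangent plane of $A_n$ and may a priori become vertical, at which point the conormal becomes horizontal, $\langle E_3,\widehat{\nu}\rangle$ vanishes, and its sign can flip; ``monotone turning of the projected tangent'' is not a consequence of Proposition~\ref{prop:V} and in any case would not rule this out. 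Since the limit $\mathcal{D}$ is simply connected (each slice is a complete horocycle), $V_n$ has no closed limit curve: all of the divergence must be extracted through the exhaustion, so the cancellation on the escaping arc is precisely what must be excluded, and the inequality $|\flux(A_n,V_n,E_3)|\geq |\int_{T_n(V_n)\cap K}\langle E_3,\widehat{\nu}\rangle\,ds|$ is left unproved. (The local statements are fine: the vertical set of $\Sigma_n$ does converge to the waist horocycle, and the flux density there is $\pm1$; it is the global sign control that is missing.)

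The paper sidesteps $V_n$ entirely and gets the needed sign for free at the ideal boundary. By \eqref{eq:f0}, $f_0(A_n)=f_0(\Sigma_n)=\int_{\sph^1}\bigl((u_n)_r^-(1,\theta)-(v_n)_r^-(1,\theta)\bigr)\,d\theta$, and the integrand is pointwise negative (this is \eqref{eq:open}; for embedded annuli it follows from using the minimal disk below as a barrier). With the notation of Theorem~\ref{th:daniel}, the bottom ends of $\Sigma_n=T_n(A_n)$ converge in $\calC^{2,\alpha}$ on compact subsets of $\overline{\h^2}\setminus\{-p_\infty\}$ to the limit graph $u^-$, whose radial derivative integral diverges to $-\infty$ near $-p_\infty$; hence for every $N$ there is an arc $\beta(\varepsilon_N)$ about $-p_\infty$ with $\int_{\sph^1\setminus\beta(\varepsilon_N)}\bigl((u_n)_r^--(v_n)_r^-\bigr)\,d\theta<-N$ for large $n$, and the pointwise negativity lets one simply discard the contribution of $\beta(\varepsilon_N)$ with no cancellation issue. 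If you want to salvage your route, you would have to prove the no-verticality/sign statement for $V_n$ near infinity, which is essentially a new (and unproved) structural result; the boundary-flux computation is the intended, and much shorter, argument.
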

\begin{proof}
Let $T_n$, $\Sigma_n$, $p_{\infty}$, $u_n^-$  and $u^-$ be as  
described at the beginning of the proof of Theorem \ref{th:daniel}. By \eqref{eq:f0} we know that
$$ f_0(\Sigma_n) = \int_{\sph^1} \left((u_n)_r^-(1,\theta)-(v_n)_r^-(1, \theta)\right)\, d\theta \;,  $$
where $v_n^-$ denotes the smooth function defined on $\h^2$ whose graph represents the unique minimal
disk bounded by $\Pi^-(\Sigma_n)$. We also denote by $v^-$ the constant function obtain as the limit of $v_n^-$. 
As $\lim_{z \to -p_\infty}u_r^-(z)=-\infty$ and $v_r^-\equiv 0$, for any $\varepsilon>0$, we obtain
$$\int_{\sph^1\setminus \beta(\varepsilon)} \left(u_r^-(1,\theta)-v_r^-(1, \theta)\right)\, d\theta =-\infty\;,  $$
where $\beta(\varepsilon)$ is an arc in $\sph^1$ centered at $-p_\infty$ of length $\varepsilon$. So
given $N \in \n$, there exists $\varepsilon_N>0$ and $n_0$ such that 
$$\int_{\sph^1\setminus \beta(\varepsilon_N)} \left((u_n)_r^-(1,\theta)-(v_n)_r^-(1, \theta)\right)\, d\theta <-N\;,  $$
for all $n \geq n_0$. Then
\begin{multline*}
f_0(A_n)=f_0(\Sigma_n)=\int_{\sph^1\setminus \beta(\varepsilon_N)} \left((u_n)_r^-(1,\theta)-(v_n)_r^-(1, \theta)\right)\, d\theta \\
+
\underbrace{\int_{\beta(\varepsilon_N)} \left((u_n)_r^-(1,\theta)-(v_n)_r^-(1, \theta)\right)\, d\theta }_{<0}<-N\;.  
\end{multline*}
Then we have $\displaystyle \lim_{n \to \infty} f_0(A_n) =-\infty.$
\end{proof}
\begin{remark} \label{re:cuca}
Notice that the proof of the previous corollary works if the sequence $\{A_n\}$
is contained in $\fA^*$. Actually we only use that the ends are embedded graphs
and that the annuli satisfy \eqref{eq:open}.
\end{remark}
\vskip 3mm

We would like to point out that Theorem \ref{th:daniel} has a useful application 
for rotationally invariant annuli.
\begin{theorem} \label{th:last}
Let $m \in \n$, $m\geq 2$ and consider a sequence of minimal annuli
$\{A_n\}_{in\in \n}$ in $\fA_m$. Assume that the sequence of boundary curves
$\{\Gamma_n:=\Pi(A_n)\}_{n \in \n}$ (which is a sequence of curves in $\fC_m$) 
satisfies that 
$\{\Gamma_n\}_{n \in \n} \to \Gamma_0 \in \fC_m$ in the $\calC^{2,\alpha}$
topology. Then, up to a subsequence, $\{A_n\}_{n \in \n}$ converges (smoothly
on compact sets) to a properly embedded minimal annulus 
$A_0 \in \fA_m$ such that $\Pi(A_0)=\Gamma_0.$
\end{theorem}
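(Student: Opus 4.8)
The argument will run through the compactness trichotomy established in this section. Since $\Gamma_n = \Pi(A_n) \to \Gamma_0$ in $\calC^{2,\alpha}$, after passing to a subsequence we may assume that $\{A_n\}$ falls into exactly one of Cases~I, II or III above. The plan is to exclude Cases~II and III and then quote Theorem~\ref{th:annulus}. Two features of the symmetric class enter: the boundary curves lie in $\fC_m \subset \fC^\pi$, so that $\sup_\theta\bigl(\gamma_0^+(\theta)-\gamma_0^-(\theta)\bigr) < \pi$; and each $A_n$, hence its closure $\overline{A_n}$ in $\overline{\h^2}\times\R$, is invariant under the group generated by $R_m$, which extends to an isometry of the compactification acting on $\partial_\infty\h^2 = \sph^1$ as rotation by $2\pi/m$.

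First I would rule out Case~III, and this requires only $\Gamma_0 \in \fC^\pi$, no symmetry. By Theorem~\ref{th:daniel}, in Case~III the limit set $\ell_\infty$ contains at least one vertical segment $\{p_\infty\}\times(t^-,t^+)$ joining $\Gamma^+$ to $\Gamma^-$ along which a horizontal renormalization $T_n(A_n)$ converges to a parabolic generalized catenoid, and for that segment $t^+-t^- = \pi$. Since the segment joins $\Gamma^+$ to $\Gamma^-$, its endpoints are $(p_\infty, \gamma_0^+(p_\infty))$ and $(p_\infty, \gamma_0^-(p_\infty))$, so $\gamma_0^+(p_\infty)-\gamma_0^-(p_\infty) = \pi$, contradicting $\Gamma_0 \in \fC^\pi$.

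Next I would exclude Case~II, and here the $R_m$-invariance does the work. By Theorem~\ref{th:catenoid}, in Case~II the closures $\overline{A_n}$ converge in $\overline{\h^2}\times\R$ to $\overline{Y^+}\cup\overline{Y^-}\cup(\{q_\infty\}\times[t^-,t^+])$, where $Y^\pm$ are the minimal disks bounded by $\gamma_0^\pm$, $q_\infty\in\partial_\infty\h^2$ and $t^- < t^+$; thus $\ell_\infty$ consists of the single vertical segment $\{q_\infty\}\times[t^-,t^+]$ based at $q_\infty$. On the other hand, since every $\overline{A_n}$ is $R_m$-invariant, the limit $\lim\overline{A_n}$ is $R_m$-invariant, and since the curves $\Gamma^\pm$ (the graphs of the $R_m$-invariant functions $\gamma_0^\pm$) are $R_m$-invariant, so is $\ell_\infty$. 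But for $m \geq 2$ the rotation $R_m$ acts freely on $\sph^1$ and therefore fixes no point $q_\infty$, so a single vertical segment $\{q_\infty\}\times[t^-,t^+]$ cannot be $R_m$-invariant. This contradiction rules out Case~II.

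Hence the subsequence satisfies Case~I, and Theorem~\ref{th:annulus} gives that it converges smoothly to a properly embedded minimal annulus $A_0 \in \fA$ with $\Pi(A_0) = \Gamma_0$. Since each $A_n$ is $R_m$-invariant and the convergence is smooth on compact subsets of $\h^2\times\R$, the limit $A_0$ is $R_m$-invariant, so $A_0 \in \fA_m$, as claimed. The substantive work is already contained in Theorems~\ref{th:catenoid} and~\ref{th:daniel}, which pin down the structure of the possible limits in Cases~II and III; once that structure is in hand the exclusions are immediate, and the only point that deserves a line of care is that $\ell_\infty$ inherits the $R_m$-invariance of the $\overline{A_n}$, which holds because $R_m$ is an isometry of the compactification $\overline{\h^2}\times\R$.
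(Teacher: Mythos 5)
Your proof is correct and follows essentially the same route as the paper: invoke the Case I/II/III trichotomy, use Theorem \ref{th:daniel} and $\Gamma_0\in\fC^\pi$ to exclude Case III, use the $R_m$-symmetry to exclude Case II, and conclude via Theorem \ref{th:annulus}. The only (harmless) difference is that the paper excludes Case II by noting the symmetry allows the graphing cylinders to be centered at the origin from the outset, whereas you let Case II occur and contradict the $R_m$-invariance of the limit set; both arguments are valid.
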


\begin{proof}
Since the annulus $A_n$ is in $\fA_m$, we deduce the existence of a radius $R_n>0$ such that 
$$
A_n \setminus \D_{\h^2}(R_n) \times \R
$$
is the union of two vertical graphs. Theorem \ref{th:daniel} says us that the sequence
$\{R_n \}_{n \in \n}$ is bounded; otherwise the limit curve $\Gamma_0$ cannot belong
to $\fC_m$, because there must be points whose vertical distance is precisely $\pi.$

We can now reason as in the proof of Theorem \ref{th:annulus} to deduce the existence of the limit  
annulus $A_0$. As $A_n$ is $\mathcal{R}_m$-invariant, for all $n \in \n$, then the limit is
also $\mathcal{R}_m$-invariant.
\end{proof}
\begin{corollary} \label{co:m-proper} Given $m \in \n$, $m \geq 2$, then the projection $\Pi: \fA_m \to \fC_m$
is proper.
\end{corollary}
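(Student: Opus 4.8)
The plan is to obtain properness as an essentially formal consequence of the compactness theorem~\ref{th:last}, once the statement is put in sequential form. Since $\fA_m$ and $\fC_m$ are metrizable, it suffices to prove: for every compact $K\subset\fC_m$ and every sequence $\{A_n\}\subset\Pi^{-1}(K)$, some subsequence converges in $\Pi^{-1}(K)$. First I would pass to a subsequence along which $\Gamma_n:=\Pi(A_n)$ converges in $\calC^{2,\alpha}(\sph^1)^2$ to some $\Gamma_0$; since $\fC_m$ is closed in $\calC^{2,\alpha}(\sph^1)^2$ and $K$ is compact, $\Gamma_0\in K\subset\fC_m\subset\fC^\pi$, so in particular $\sup_\theta(\gamma_0^+(\theta)-\gamma_0^-(\theta))<\pi$.

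Next I would invoke Theorem~\ref{th:last} directly: after a further subsequence, $A_n$ converges smoothly on compact subsets of $\h^2\times\R$ to a properly embedded, $\mathcal R_m$-invariant minimal annulus $A_0\in\fA_m$ with $\Pi(A_0)=\Gamma_0\in K$, so $A_0\in\Pi^{-1}(K)$. Here the hypothesis $\Gamma_0\in\fC^\pi$ is exactly what rules out the divergent Case III in the trichotomy of Section~\ref{sec:compact} (by Theorem~\ref{th:daniel}, $R_n\to\infty$ would force a pair of asymptotic boundary points at vertical distance $\pi$), while $\mathcal R_m$-invariance with $m\ge 2$ forces the exhausting solid cylinders to be centered on the axis $\{o\}\times\R$, so Case II does not occur either; thus only Case I arises and the limit is a genuine element of $\fA_m$, as recorded in Theorem~\ref{th:last}.

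The only point that requires a little care is upgrading ``smooth convergence on compact sets'' to convergence in the Banach-manifold topology of $\fA_m$, i.e.\ showing that for $n$ large $A_n$ is the normal graph $(A_0)_{w_n}$ over $A_0$, in the sense of \S\ref{sec:manifold}, with $w_n\to 0$ in $\calC^{2,\alpha}(\overline{A_0})$. On any fixed compact subset of $A_0$ this is immediate. Near the ideal boundary, Theorem~\ref{th:last} (via Theorem~\ref{th:annulus} and Proposition~\ref{prop:graph}) yields a radius $R$ independent of $n$ for which the ends of $A_n$ are vertical graphs $z\mapsto(z,u_n^\pm(z))$ over $\{R\le|z|<1\}$ with $u_n^\pm(1,\cdot)=\gamma_n^\pm$; writing the minimal surface equation in the nondegenerate form of \S\ref{sec:manifold} (equation~\eqref{ndjo} after removing the overall factor $x^2$) and applying interior and boundary Schauder estimates to $u_n^\pm-u_0^\pm$, whose boundary data $\gamma_n^\pm-\gamma_0^\pm$ tend to $0$ in $\calC^{2,\alpha}(\sph^1)$, gives $u_n^\pm\to u_0^\pm$ in $\calC^{2,\alpha}$ up to $|z|=1$. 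Gluing the two regions produces $w_n\to 0$ in $\calC^{2,\alpha}(\overline{A_0})$, so $A_n\to A_0$ in $\fA_m$; hence $\Pi^{-1}(K)$ is sequentially compact, therefore compact, and $\Pi|_{\fA_m}$ is proper.
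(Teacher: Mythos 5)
Your proof is correct and follows essentially the paper's route: the corollary is stated as an immediate consequence of Theorem \ref{th:last} (the paper gives no separate argument), and your extra remarks --- ruling out Case II by $\mathcal{R}_m$-invariance and Case III by Theorem \ref{th:daniel}, and upgrading smooth convergence on compact sets to $\calC^{2,\alpha}$ convergence up to the ideal boundary via the nondegenerate form of the graph equation --- only fill in details the paper leaves implicit in the proofs of Theorems \ref{th:annulus} and \ref{th:last}. (One harmless slip: $\fC_m$ is not closed in $\calC^{2,\alpha}(\sph^1)^2$, since its defining conditions are open, but $\Gamma_0\in K$ already follows from compactness of $K$, which is all you actually use.)
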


We finish this section with an observation similar to Remark \ref{re:contact-I}.

\begin{remark} \label{re:contact-II}
If the limit curve $\Gamma_0\equiv(\gamma_0^+, \gamma_0^-)$ in Theorem \ref{th:last} satisfies 
$\gamma_0^+(\theta) \geq \gamma_0^-(\theta)$, for all $\theta \in \sph^1$, but $\gamma_0^+ \neq \gamma_0^-$, then the 
statement of the theorem remains true.
\end{remark}

\subsection{ The properness of the map $\widetilde \Pi$}

We conclude this section with the proof of the properness of $\widetilde{\Pi} : \widetilde{\fA^*}  \longrightarrow 
\mathcal{C}^{2,\alpha}(\sph^1)^2 \times \RR \times \D$, where, as introduced earlier,
$$
\widetilde{\fA^*} = \widetilde{\Pi}^{-1} (\mathcal{C}^{2,\alpha}(\sph^1)^2  \times \RR \times \D).
$$ 

As we did in the previous subsection, given $\{A_n\}_{n \in \n}$ a sequence in $\fA^*$, the ends of $A_n$
can be written as $(z,u_n^\pm(z))$ where $u_n^\pm: \h^2\setminus D_{\h^2}(q_n,R_n) \rightarrow \R$ is a 
$\mathcal{C}^{\infty}$ function that extends $\mathcal{C}^{2,\alpha}$ up to the boundary.

\begin{lemma} \label{lem:primo}
Let $\{A_n\}_{n \in \n}$ be a sequence in $\fA^*$ such that:
\begin{itemize}
\item $\{f_0(A_n)\} \to h_0<0,$
\item $\{\Pi_-(A_n)\} \to \gamma_0^-$ in $\mathcal{C}^{2, \alpha}(\sph^1),$ 
\item $\overline{\h^2}\setminus D_{\h^2}(q_n,R_n)$ converge to $\overline{\h^2} \setminus \{q_\infty\}$,  $q_\infty \in \partial_\infty \h^2$.
\item $\{u_n^-\}$ converge to the function $u_0^-: \overline{\h^2} \to \R$, smoothly on compact sets of $\h^2$ and 
$\mathcal{C}^{2, \alpha}$ on compact sets of $\overline{\h^2} \setminus \{q_\infty\},$ where the graph of $u_0^-$
is the (only) minimal disk bounded by $\gamma_0^-$.
\end{itemize}
Then the sequence of centers $\mathbf{C}(A_n)$ diverges in $\h^2$, i.e., $|\mathbf{C}(A_n)| \to 1$.
\end{lemma}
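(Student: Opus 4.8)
The plan is to read the conclusion off directly from the definition $\mathbf{C}(A_n)=(f_1(A_n)+\imag f_2(A_n))/f_0(A_n)$ of the center, by showing that the common integrand
\[
\psi_n(\theta):=u_{n,r}^-(1,\theta)-v_{n,r}^-(1,\theta)
\]
of $f_0$ and $f_1+\imag f_2$ is forced to carry its entire $L^1$-mass into an arbitrarily short arc around $q_\infty$. Once that is known, $\int_{\sph^1}e^{i\theta}\psi_n\,d\theta\approx q_\infty\int_{\sph^1}\psi_n\,d\theta$, so $\mathbf{C}(A_n)\to q_\infty\in\partial_\infty\h^2$ and hence $|\mathbf{C}(A_n)|\to 1$.

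First I would pin down the competitor disks. For each $n$, $\gamma_n^-:=\Pi_-(A_n)$ bounds a unique minimal disk, the vertical graph of $v_n^-$, by \cite{n-r}, and $v_n^-\in\calC^{2,\alpha}(\overline{\h^2})$ (cf.\ the proof of Theorem~\ref{manifoldthm} and \cite{KM}). Since $\gamma_n^-\to\gamma_0^-$ in $\calC^{2,\alpha}(\sph^1)$, a standard compactness argument — uniqueness of the disk, the maximum-principle $\calC^0$ bound, and the uniform up-to-the-boundary Schauder estimates for the minimal surface equation \eqref{ndjo} — gives $v_n^-\to u_0^-$ in $\calC^{2,\alpha}(\overline{\h^2})$; in particular $v_{n,r}^-(1,\cdot)\to u_{0,r}^-(1,\cdot)$ uniformly on all of $\sph^1$. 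By hypothesis $u_{n,r}^-(1,\cdot)\to u_{0,r}^-(1,\cdot)$ uniformly on every compact subset of $\sph^1\setminus\{q_\infty\}$. Hence $\psi_n\to 0$ uniformly on $\sph^1\setminus\beta$ for every open arc $\beta\ni q_\infty$, while $\int_{\sph^1}\psi_n\,d\theta=f_0(A_n)\to h_0<0$. Moreover $\psi_n<0$ everywhere since $A_n\in\fA^*$ satisfies \eqref{eq:open}, so $\int_{\sph^1}|\psi_n|\,d\theta=-f_0(A_n)$ is bounded in $n$.

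Next comes the elementary estimate. Given $\eps>0$, I choose an open arc $\beta\ni q_\infty$ short enough that $|e^{i\theta}-q_\infty|<\eps$ for $\theta\in\beta$. For $n$ large, $\int_{\sph^1\setminus\beta}|\psi_n|\,d\theta<\eps$, so $\bigl|\int_\beta\psi_n\,d\theta-f_0(A_n)\bigr|<\eps$ and $\bigl|\int_{\sph^1\setminus\beta}e^{i\theta}\psi_n\,d\theta\bigr|<\eps$; also $\bigl|\int_\beta e^{i\theta}\psi_n\,d\theta-q_\infty\int_\beta\psi_n\,d\theta\bigr|\le\eps\int_\beta|\psi_n|\,d\theta\le\eps\,(|h_0|+1)$. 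Adding these,
\[
\bigl|(f_1(A_n)+\imag f_2(A_n))-q_\infty\,f_0(A_n)\bigr|\le\eps\,(|h_0|+3)
\]
for all large $n$. As $\eps>0$ is arbitrary and $f_0(A_n)\to h_0\neq 0$, dividing gives $\mathbf{C}(A_n)\to q_\infty$, and $|q_\infty|=1$ because $q_\infty\in\partial_\infty\h^2$, so $|\mathbf{C}(A_n)|\to 1$.

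The main point requiring care is the uniform up-to-the-boundary convergence $v_n^-\to u_0^-$ — this is what makes $\psi_n$ small away from $q_\infty$ in $\calC^1$ up to $\sph^1$, not merely in the interior — together with the (elementary but essential) observation that a one-signed family with non-vanishing total integral whose pointwise limit is zero off a single point must concentrate all its mass into every neighbourhood of that point. Everything else is the bookkeeping displayed above.
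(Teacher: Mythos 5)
Your proof is correct and follows essentially the same route as the paper: the sign condition \eqref{eq:open} plus the convergence hypotheses force the common integrand $u_{n,r}^--v_{n,r}^-$ to concentrate its mass at $q_\infty$, whence $\mathbf{C}(A_n)\to q_\infty$ and $|\mathbf{C}(A_n)|\to 1$ (the paper phrases this as a sandwich of $G_1,G_2$ after rotating so that $q_\infty=e^{\imag\theta_0}$, while you estimate $|f_1+\imag f_2-q_\infty f_0|$ directly, which is the same computation). Your explicit justification that $v_n^-\to u_0^-$ in $\calC^{2,\alpha}(\overline{\h^2})$ is a point the paper's proof leaves implicit ("from our hypotheses"), and it is a welcome addition.
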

\begin{proof} Let us write $q_\infty:=e^{\imag \theta_0}$.
Rotating, we can assume that $0<\theta_0 <\pi/2.$

From our hypotheses, we know that $|(u_n^-)_r-(v_n^-)_r| < 1/n$, in an arc $L_n$ of 
$\sph^1 \setminus \{e^{\imag \theta_0}\}$, with $\cup_n L_n=\sph^1 \setminus \{e^{\imag \theta_0}\}.$
Label $L_n'=\sph^1-L_n$. If $n$ is large enough, then there are angles 
$0<\theta_1^n <\theta_0<\theta_2^n<\pi/2$, such that $L_n'=\{e^{\imag \theta} : \theta_1^n <\theta<\theta_2^n\}.$
Hence
\begin{multline*}
G_1(A_n)= \frac{\int_{\sph^1} \cos \theta ((u_n^-)_r-(v_n^-)_r) d\theta}{\int_{\sph^1} ((u_n^-)_r-(v_n^-)_r) d\theta} = \\
 \frac{\int_{L_n'} \cos \theta ((u_n^-)_r-(v_n^-)_r) d\theta+\int_{L_n} \cos \theta ((u_n^-)_r-(v_n^-)_r) d\theta}{\int_{L_n'} ((u_n^-)_r-(v_n^-)_r) d\theta+\int_{L_n} ((u_n^-)_r-(v_n^-)_r) d\theta}
\end{multline*}
and so
\begin{multline*}
\frac{\cos (\theta_1^n) \int_{L_n'} ((u_n^-)_r-(v_n^-)_r) d\theta+\int_{L_n} \cos \theta ((u_n^-)_r-(v_n^-)_r) d\theta}{\int_{L_n'} ((u_n^-)_r-(v_n^-)_r) d\theta+\int_{L_n} ((u_n^-)_r-(v_n^-)_r) d\theta}  \geq 
G_1(A_n) \geq \\
\frac{\cos (\theta_2^n) \int_{L_n'} ((u_n^-)_r-(v_n^-)_r) d\theta+\int_{L_n} \cos \theta ((u_n^-)_r-(v_n^-)_r) d\theta}{\int_{L_n'} ((u_n^-)_r-(v_n^-)_r) d\theta+\int_{L_n} ((u_n^-)_r-(v_n^-)_r) d\theta}.
\end{multline*}
Since the integrals over $L_n$ converge to $0$ and 
$$
\left\{\int_{L_n'} ((u_n^-)_r-(v_n^-)_r) d\theta \right\} \to h_0,
$$
we deduce that $\{G_1(A_n)\} \to \cos \theta_0.$ Similarly, $\{G_2(A_n)\} \to \sin \theta_0.$ 
\end{proof}

\begin{lemma} \label{lem:secondo}
Let $A_n \in \fA^*$ be a sequence such that $\Pi(A_n)$ converges to a pair of curves $\Gamma_0=(\gamma_0^+,
\gamma_0^-) \in \mathcal{C}^{2,\alpha}(\sph^1)^2$, $\{G_0(A_n)\}$ is bounded in $\R$, and finally
that the curves $V_n= \{ p \in A_n : \, \langle \nu_n(p), E_3\rangle =0\}$ remain in
a compact region $K$ of $\h^2 \times \R.$ Then, up to a subsequence, $A_n$ converges to
a minimal annulus $A_0 \in \fA^*$ with $\Pi(A_0)=\Gamma_0$.  The convergence is smooth 
on the interior and $\calC^{2,\alpha}$ up to the boundary. 
\end{lemma}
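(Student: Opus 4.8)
The plan is to use the three hypotheses to force the sequence $\{A_n\}$ into Case~I of the trichotomy of diverging sequences established at the beginning of this section, and then to quote the compactness already proved there together with a boundary maximum principle to stay inside $\fA^*$. I would begin with the quantitative content of the hypothesis on $G_0$. For every $A\in\fA^*$ one has $f_0(A)<0$, by \eqref{eq:open} and \eqref{eq:f0}, and $G_0(A)=f_0(A)-f_0(A)^{-1}$; the function $x\mapsto x-x^{-1}$ is a strictly increasing bijection from $(-\infty,0)$ onto $\RR$, so $\{G_0(A_n)\}$ bounded is equivalent to the existence of constants $0<m\le M$ with $-M\le f_0(A_n)\le -m$ for all $n$. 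Pass to a subsequence realizing one of Cases~I, II, III.

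Next I would eliminate Cases~II and III. Case~III is impossible because, by Corollary~\ref{co:cuca} (which applies to sequences in $\fA^*$ by Remark~\ref{re:cuca}), a Case~III sequence satisfies $f_0(A_n)\to-\infty$, contradicting the lower bound $f_0(A_n)\ge -M$. Case~III is also excluded by the hypothesis on $V_n$, but the flux bound is quicker. Case~II is ruled out precisely by the hypothesis that $V_n$ stays in the fixed compact set $K$: in Case~II, Theorem~\ref{th:catenoid} provides a divergent sequence of horizontal isometries $T_n$ carrying $q_n$ to the origin (here $q_n\to q_\infty\in\del_\infty\h^2$) such that $T_n(A_n)$ converges smoothly on compact subsets of $\h^2\times\RR$ to a vertical catenoid $C_h$. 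Along the waist circle of $C_h$ the component $\langle\nu,E_3\rangle$ of the unit normal changes sign, so by smooth convergence, for $n$ large the surface $T_n(A_n)$ has a point with horizontal normal inside a fixed ball $B\subset\h^2\times\RR$; equivalently $V_n$ meets $T_n^{-1}(B)$, a hyperbolic ball of fixed radius whose center $q_n$ tends to $\del_\infty\h^2$. Hence $T_n^{-1}(B)$ eventually leaves $K$, so $V_n\not\subset K$, a contradiction.

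Thus the subsequence is in Case~I, and I would obtain the limit exactly as in the proof of Theorem~\ref{th:annulus}: the end graph functions converge by interior elliptic estimates (and in $\calC^{2,\alpha}$ up to $\del_\infty\h^2$ by boundary Schauder estimates applied to \eqref{ndjo}), the truncated surfaces $A_n\cap(D_{\h^2}(q,R)\times\RR)$ have uniformly bounded boundary length, the catenoids $C_h$ with $h$ near $\pi$ act as barriers forcing the area blow-up set to be empty, and Theorems~\ref{th:Z} and~\ref{th:ZZ} then yield smooth convergence in the interior and $\calC^{2,\alpha}$ convergence up to the boundary to a properly Alexandrov-embedded minimal annulus $A_0\in\fA'$ with embedded ends and $\Pi(A_0)=\Gamma_0$; the bound $f_0(A_0)\le -m<0$ precludes a multiplicity-two collapse of the neck (which would force $f_0(A_n)\to 0$), so $A_0$ is a genuine annulus. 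It then remains to verify $A_0\in\fA^*$. Since $\gamma_n^-\to\gamma_0^-$ in $\calC^{2,\alpha}$, the unique minimal disks they bound converge in $\calC^{2,\alpha}(\overline{\h^2})$ to the minimal disk $D_0^-$ bounded by $\gamma_0^-$ (by uniqueness, \cite{n-r}, and elliptic estimates), so $v_n^-\to v_0^-$; combined with $u_n^-\to u_0^-$ from Case~I this passes \eqref{eq:open} to the limit as $u^-_{0,r}(1,\theta)-v^-_{0,r}(1,\theta)\le 0$ for all $\theta$. If equality held at some $\theta_0$, the bottom end of $A_0$ and the disk $D_0^-$ would be two minimal surfaces sharing the asymptotic curve $\gamma_0^-$, lying to one side of each other near the common boundary point $(\er^{\imag\theta_0},\gamma_0^-(\theta_0))$ and tangent there; the boundary maximum principle would force them to coincide near that point, hence $A_0=D_0^-$ by analyticity — absurd, since $A_0$ is an annulus and $D_0^-$ a disk. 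Therefore \eqref{eq:open} is strict, $A_0\in\fA^*$, and the proof is complete.

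The step I expect to be the main obstacle is the exclusion of Case~II: the flux bound alone does not suffice there (for instance $A_n=T_n(C_h)$ is a Case~II sequence with constant, hence bounded, $G_0$), and one must extract from the description of the rescaled limit catenoid in Theorem~\ref{th:catenoid} that the vertical-tangent locus $V_n$, which lives near the catenoid's waist, necessarily escapes to the ideal boundary — this is exactly the role of the hypothesis $V_n\subset K$. The remaining delicate point is the one-sidedness needed to run the boundary maximum principle in the last step, which is why one records the pointwise inequality $u^-_{0,r}-v^-_{0,r}\le 0$ on all of $\sph^1$ before specializing to $\theta_0$.
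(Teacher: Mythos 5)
Your framing and several steps are sound: the equivalence of ``$G_0(A_n)$ bounded'' with $-M\le f_0(A_n)\le -m<0$, the exclusion of Case III via Corollary \ref{co:cuca} and Remark \ref{re:cuca}, and the exclusion of Case II from $V_n\subset K$ (in fact more directly: $V_n\neq\varnothing$ by Proposition \ref{prop:V} and $V_n\subset D_{\HH^2}(q_n,R_n)\times\RR$, which eventually misses $K$, so Theorem \ref{th:catenoid} is not even needed there). Your closing Hopf-lemma argument for the strictness of \eqref{eq:open} is a reasonable supplement to the paper, granted the one-sidedness $u_0^-\ge v_0^-$, which needs the standard sliding-barrier justification with the disk $D_0^-$.

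The genuine gap is the interior compactness step, which you settle by quoting the proof of Theorem \ref{th:annulus}. That proof, and the two results of White it rests on (Theorems \ref{th:Z} and \ref{th:ZZ}), are stated for \emph{properly embedded} annuli in $\fA$ with limit curves in $\fC$; the surfaces here are only Alexandrov-embedded elements of $\fA^*$, and $\Gamma_0$ is an arbitrary pair in $\calC^{2,\alpha}(\sph^1)^2$, so Theorem \ref{th:ZZ} in particular cannot be invoked as stated. More importantly, even granting those tools, the possibility that $A_n\cap K$ degenerates --- curvature blow-up with the curves $V_n$ shrinking to a point, which the hypothesis $V_n\subset K$ does \emph{not} forbid --- is dismissed only by the parenthetical claim that a ``multiplicity-two collapse of the neck would force $f_0(A_n)\to 0$''. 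That claim is precisely what must be proved, and it is where the paper's proof spends its effort: if $|S_n|$ blows up at $p_n\in K\cap A_n$, one rescales and shows the limit is a catenoid in $\R^3$, using Proposition \ref{prop:V} and Mo--Osserman \cite{mo-osserman} to get total curvature $-4\pi$ and Alexandrov-embeddedness to exclude Enneper's surface; the isoperimetric inequality then shows there is at most one blow-up point; finally $V_n$ collapses with length tending to $0$, while $|f_0(A_n)|=|\flux(A_n,V_n,E_3)|\le \operatorname{length}(V_n)$ because $V_n$ generates $H_1(A_n,\z)$, contradicting $f_0(A_n)\le -m<0$. Without this analysis your argument neither excludes Enneper-type or multi-point curvature blow-up (where the flux heuristic has no force) nor actually links a pinching neck to the vanishing of the vertical flux, so the smooth convergence of $A_n\cap K$ to a genuine annulus --- the heart of the lemma --- is not established.
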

\begin{proof}

We know that each $A_n \setminus (A_n \cap K)$ 
is a union of two graphs. Using classical elliptic estimates and the Arzel\`{a}-Ascoli theorem, 
these two sequences of graphs converge smoothly to minimal graphs
for which the boundaries at infinity are $\gamma_0^\pm$. 

Note that $\gamma_0^+\neq \gamma_0^-$. Indeed, if this were the case, the sequence of minimal annuli $A_n$
would converge to the minimal disk $D_0$ spanned by $\gamma_0^+= \gamma_0^-$. This would force the vertical 
flux $f_0(A_n)$ to converge to $0$, and hence $G_0(A_n) \to \infty$, contrary to assumption. 

Next, we claim that $A_n \cap K$ must converge smoothly to a regular annulus with boundary inside $K$. 
To prove this, we use again that the vertical flux $f_0(A_n)$ is bounded away from $0$. The key point is that
it is impossible for $A_n \cap K$ to `pinch'.  Suppose that this were to occur. Then there would exist points $p_n \in 
K \cap A_n$ at which the shape operator $S_n$ of $A_n$ satisfies
$$
\lambda_n:= |S_n(p_n)|=\max\{ |S_n(p)| \; : \; p \in K \cap A_n\} \longrightarrow +\infty.
$$
Then the rescaled surfaces $\frac{1}{\lambda_n} (A_n-p_n)$ would converge to a complete minimal surface  $A_\infty$ in $\R^3$
which passes through the origin and with $|S_\infty(0)|=1.$ From Proposition \ref{prop:V} we have that
the Gauss map $\nu_\infty : A_\infty \to\sph^2$ takes each value in the equator at most once. Moreover, $A_\infty$ has the topology of either a disk or an annulus. 
Using a result by Mo and Osserman \cite{mo-osserman},  $A_\infty$ has finite total curvature $-4 \pi$. Then $A_\infty$ is either a catenoid or a copy of
Enneper's surface. However, Enneper's surface is not Alexandrov-embedded, so it must be a catenoid. Thus at each point where $|S_n|$ blows up,  a 
catenoidal neck is forming. This cannot happen at more than one point, since if this were to occur at two distinct points, there would be an enclosed annular 
region for which both boundary curves are very short, and this violates the isoperimetric inequality.  Denoting this point of curvature blowup by $p_0$, then
away from $p_0$, $A_n$ converges to the union of two disks $D_0^+$ and $D_0^-$, each of which is a graph over $\h^2$.  Hence the sequence
of curves $V_n$ must converge to $p_0$, and they must have bounded length. However, the length of $V_n$
equals the vertical flux $\mbox{Flux}(A_n,V_n,E_3)$, and this convergence would force this flux to tend to $0$, which we have assumed is
not the case. 

We have now proved that $A_n$ must converge to an Alexandrov-embedded minimal annulus with embedded ends.  It is clear from the convergence outside $K$ that 
$\Pi(A_0)=\Gamma_0$.
\end{proof}

\begin{lemma} \label{lem:tertio}
Let $\{A_n\}$ be a sequence in $\widetilde{\fA^*}$ satisfying that:
\begin{itemize}
\item $\Pi(A_n)$ converges to a pair of curves $\Gamma_0=(\gamma_0^+,\gamma_0^-) \in 
\mathcal{C}^{2,\alpha}(\sph^1)^2.$
\item $\{G_0(A_n)\}$ is bounded in $\R.$ 
\item The curves $V_n$ have bounded length, but they diverge in $\h^2 \times \R.$
\end{itemize}
Then  there exists a vertical line $E$ contained in $\partial(\h^2\times \R)$ such that $\{A_n\}$ converges, up to a subsequence,
smoothly on $\overline{\h^2 \times \R} \setminus E$, to $D_0^+ \cup D_0^-$, where $D_0^+$ and 
$D_0^-$ are the minimal disks spanned by $\gamma_0^+$ and $\gamma_0^-$, respectively.
In particular, the sequence of centers $\mathbf{C}(A_n)$ diverge in $\h^2.$
\end{lemma}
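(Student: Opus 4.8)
The plan is to pin down which of the three modes of divergence (Cases~I, II, III of the previous subsection) the sequence $\{A_n\}$ must fall into, to eliminate two of them using the hypotheses, and then to read off the conclusion from the analysis of the surviving case together with Lemma~\ref{lem:primo}. First I would pass to a subsequence realizing one of Cases~I, II, III; this is legitimate since the decomposition of the ends is governed by Proposition~\ref{prop:graph}, which applies to every $A_n\in\fA^*$. Case~I is impossible: there the ends of $A_n$ are graphs over one fixed annulus $\h^2\setminus D_{\h^2}(q,R)$, so the vertical set $V_n$ lies in the compact piece $A_n\cap(D_{\h^2}(q,R)\times\R)$; since the boundary curves $\gamma_n^\pm$ are uniformly bounded, the totally geodesic horizontal slices $\h^2\times\{c\}$ act as barriers and confine every $A_n$ to a fixed horizontal slab, so this compact piece --- and with it $V_n$ --- stays in a fixed compact subset of $\h^2\times\R$, contradicting the hypothesis that $V_n$ diverges. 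Case~III is impossible as well: by Corollary~\ref{co:cuca} together with Remark~\ref{re:cuca} one would have $f_0(A_n)\to-\infty$, whereas $G_0(A_n)=f_0(A_n)-f_0(A_n)^{-1}$ is bounded and $f_0(A_n)<0$, which forces $f_0(A_n)$ to remain in a compact subset of $(-\infty,0)$. Thus, after one more subsequence, we are in Case~II: the radii $R_n\equiv R$ may be taken constant and the centers $q_n$ converge to a point $q_\infty\in\partial_\infty\h^2$; the vertical line in the statement is then $E=\{q_\infty\}\times\R\subset\partial(\h^2\times\R)$.

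Next I would run the Case~II convergence argument. Each end of $A_n$ is the graph of a function $u_n^\pm$ over $\overline{\h^2}\setminus D_{\h^2}(q_n,R)$, and as $q_n\to q_\infty$ these domains eventually contain every fixed compact subset of $\h^2$ and of $\overline{\h^2}\setminus\{q_\infty\}$. The uniform slab bound gives a uniform $C^0$ bound on the $u_n^\pm$, so interior elliptic estimates for the minimal surface equation, together with the boundary regularity established in \S\ref{sec:manifold} (using $\gamma_n^\pm\to\gamma_0^\pm$ in $\calC^{2,\alpha}$), provide uniform estimates; by the Arzel\`{a}-Ascoli theorem a subsequence of $\{u_n^\pm\}$ converges, smoothly on compact subsets of $\h^2$ and in $\calC^{2,\alpha}$ on compact subsets of $\overline{\h^2}\setminus\{q_\infty\}$, to a bounded minimal graph $u_0^\pm$ over $\h^2$ whose boundary values equal $\gamma_0^\pm$ on $\sph^1\setminus\{q_\infty\}$. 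By the uniqueness theorem of Nelli and Rosenberg~\cite{n-r} (the single point $q_\infty$ being removable), $u_0^\pm$ is the graph function of the minimal disk $D_0^\pm$ bounded by $\gamma_0^\pm$. Finally, the neck $A_n\cap(D_{\h^2}(q_n,R)\times\R)$ lies in the fixed slab and over the escaping disks $D_{\h^2}(q_n,R)$, hence leaves every compact subset of $\h^2\times\R$. Combining these facts, $A_n\to D_0^+\cup D_0^-$, smoothly in the interior and in $\calC^{2,\alpha}$ up to the boundary, on $\overline{\h^2\times\R}\setminus E$.

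It then remains to deduce the divergence of the centers. Since $f_0(A_n)$ stays in a compact subset of $(-\infty,0)$, a further subsequence satisfies $f_0(A_n)\to h_0<0$; and the remaining hypotheses of Lemma~\ref{lem:primo} --- namely $\Pi_-(A_n)\to\gamma_0^-$ in $\calC^{2,\alpha}$, $\overline{\h^2}\setminus D_{\h^2}(q_n,R)\to\overline{\h^2}\setminus\{q_\infty\}$, and $u_n^-\to u_0^-$ in the sense just obtained --- are exactly what the two preceding paragraphs provide. Lemma~\ref{lem:primo} then yields $|\mathbf{C}(A_n)|\to1$; and since every subsequence of $\{A_n\}$ admits a further subsequence for which this whole argument applies, $\mathbf{C}(A_n)$ diverges in $\h^2$.

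The step I expect to be the genuine obstacle is the first one. The modes of divergence, and in particular the dichotomy between Cases~II and~III and the flux estimate of Corollary~\ref{co:cuca}, were set up for sequences in $\fA$, whereas here the $A_n$ lie only in $\fA^*$: they are merely properly Alexandrov-embedded and their boundary curves need not satisfy $\gamma^+>\gamma^-$. One must verify that the ingredients actually used --- the graphical description of the ends (Proposition~\ref{prop:graph}), the maximum principle for the height function, the tall-rectangle barriers, and the flux computation behind Corollary~\ref{co:cuca} --- rely only on the embeddedness of the ends and on the hypothesis~\eqref{eq:open}, which is precisely the content of Remark~\ref{re:cuca}. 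A secondary point, handled by the uniform slab bound and properness, is that the escaping neck contributes no additional vertical segment to the limit away from $q_\infty$.
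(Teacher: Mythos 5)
Your argument is correct, but it reaches the Case~II picture by a different route than the paper. The paper's proof is much shorter: it uses the hypothesis that the curves $V_n$ have bounded length yet diverge to conclude in one step that their limit set lies on a single vertical line $E=\{q_\infty\}\times\R$, invokes Proposition~\ref{prop:V} to write $A_n\setminus V_n$ as the union of two graphs, repeats the convergence argument of Theorem~\ref{th:catenoid} for those graphs away from $q_\infty$, and finishes with Lemma~\ref{lem:primo} --- exactly as you do in your last two paragraphs. You instead never use the length bound on $V_n$: you run the Case~I/II/III trichotomy of the subsection on diverging sequences, eliminating Case~I by the divergence of $V_n$ (via the slab confinement coming from the maximum principle) and Case~III by the boundedness of $G_0(A_n)$ through Corollary~\ref{co:cuca} and Remark~\ref{re:cuca}. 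This buys a formally stronger statement (the length hypothesis on $V_n$ is not needed), but at the cost of importing the full Case~III machinery (Theorem~\ref{th:daniel}, on which Corollary~\ref{co:cuca} rests) and of the extension issue you flag yourself, namely that the analysis of that subsection is phrased for sequences in $\fA$ whose boundary curves converge to an element of $\fC$; Remark~\ref{re:cuca} is precisely the paper's sanction for the $\fA^*$ part of that extension, and the paper's own proof sidesteps the matter entirely because it never invokes Corollary~\ref{co:cuca}. Your identification of the limit graphs with the Nelli--Rosenberg disks $D_0^\pm$ (removability of the single boundary point $q_\infty$) and your application of Lemma~\ref{lem:primo}, including the subsequence argument for the divergence of the centers, are at the same level of rigor as the paper's own treatment, so I see no genuine gap.
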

\begin{proof} Since the sequence $\{V_n\}$ diverges but their lengths remain bounded, their limit set is contained in a vertical line 
$ E:=\{q_\infty\} \times \R$ for some $q_\infty \in \sph^1.$ We know  by Proposition \ref{prop:V} that $A_n\setminus V_n$ is 
the union of two graphs $A_n^+$ and $A_n^-$. Reasoning as in Theorem \ref{th:catenoid}, the ends $A_n^\pm$ converge as 
minimal graphs, smoothly in the interior and in $\calC^{2,\alpha}$ on compact sets of 
$(\overline{\h^2}\setminus\{q_\infty\}) \times \RR$, to the minimal disks $D_0^\pm$.    
A direct application of Lemma \ref{lem:primo} gives that $|\mathbf{C}(A_n)| \to 1.$
\end{proof}

\begin{lemma} \label{lem:quarto}
Let $\{A_n\}$ be a sequence in $\fA^*$ satisfying that:
\begin{itemize}
\item $\Pi(A_n)$ converges to a pair of curves $\Gamma_0=(\gamma_0^+,\gamma_0^-) \in 
\mathcal{C}^{2,\alpha}(\sph^1)^2.$
\item The curves $V_n= \{ p \in A_n \; : \; \langle \nu_n(p), E_3\rangle =0\}$ escape from any compact region of $\h^2 \times \R$.
\end{itemize}
Then, $\displaystyle \lim_{n \to \infty}G_0(A_n) = -\infty.$
\end{lemma}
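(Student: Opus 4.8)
The plan is to show that, after passing to a subsequence, $\{A_n\}$ falls into Case III of \S\ref{sec:compact}, and then to quote Corollary~\ref{co:cuca}. First, by \eqref{eq:f0} and \eqref{eq:open} one has $f_0(A)<0$ for every $A\in\fA^*$, and since $G_0(A)=f_0(A)-f_0(A)^{-1}$ is, as a function of $f_0(A)$, a strictly increasing bijection from $(-\infty,0)$ onto $\R$, the conclusion $G_0(A_n)\to-\infty$ is equivalent to $f_0(A_n)\to-\infty$. It therefore suffices to prove the latter.

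Next I would pass to a subsequence realizing the trichotomy Case I / Case II / Case III of \S\ref{sec:compact}; that classification records only the behaviour of the graphical radii $R_n$ and the centers $q_n$ supplied by Proposition~\ref{prop:graph}, so it still applies here even though $\Gamma_0$ need not lie in $\fC$ (the relevant results extend to this slightly larger generality by the remarks following them). Case I cannot occur: in that situation $A_n$ converges smoothly (Theorem~\ref{th:annulus} and Remark~\ref{re:contact-I}, or, when $\gamma_0^+\equiv\gamma_0^-$, with multiplicity two via Theorem~\ref{th:ZZ}), so the non-graphical core of $A_n$ stays inside a fixed compact set and the $V_n$ remain in a compact region of $\h^2\times\R$, contradicting the hypothesis. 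Case II cannot occur either: Theorem~\ref{th:catenoid} gives horizontal isometries $T_n$ with $T_n(A_n)\to C_h$ smoothly on compact sets, so $V_n=T_n^{-1}(V_{T_n(A_n)})$ with $V_{T_n(A_n)}$ converging to the compact waist of $C_h$; in particular $\mathrm{length}(V_n)=\mathrm{length}(V_{T_n(A_n)})$ stays bounded, which is the regime of Lemma~\ref{lem:tertio} and not the one at hand. (The translated catenoids $C_{h,z_0}$ show that ``$V_n$ escapes every compact region'' alone does not exclude this; one genuinely uses that we are in the complementary case, where $\mathrm{length}(V_n)$ is unbounded.) Hence the subsequence satisfies Case III: $R_n\to\infty$, and after re-centering we may assume $q_n=o$.

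Finally, in Case III Theorem~\ref{th:daniel} furnishes a vertical segment $\{p_\infty\}\times(t^-,t^+)\subset\ell_\infty$, points $(p_n,t_n)\in V_n$ converging into it, and horizontal isometries $T_n$ with $T_n(p_n,t_n)=(o,t_n)$ such that $\Sigma_n:=T_n(A_n)$ converges smoothly on compact sets of $\h^2\times\R$ to a parabolic generalized catenoid, with $t^+-t^-=\pi$. This is precisely the configuration treated in Corollary~\ref{co:cuca}, which by Remark~\ref{re:cuca} is valid for sequences in $\fA^*$ (only embeddedness of the ends and \eqref{eq:open} enter its proof). Hence $f_0(A_n)=f_0(\Sigma_n)\to-\infty$: as in that corollary, the lower-end graph functions of $\Sigma_n$ converge to that of the parabolic generalized catenoid, whose radial derivative tends to $-\infty$ at the point of $\del_\infty\h^2$ carrying the vertical segment of $\calD$, while the disk functions $v_n^-$ flatten to a constant, so \eqref{eq:f0} forces the vertical flux to diverge. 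Therefore $G_0(A_n)\to-\infty$.

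The delicate step is the exclusion of Case II. It is where one really needs the input that $V_n$ is not of the bounded-length, diverging type handled in Lemma~\ref{lem:tertio}, and it rests on the precise content of Theorem~\ref{th:catenoid} — that a bounded-radius, divergent-center sequence rescales to an honest catenoid $C_h$, $h<\pi/2$, rather than to a parabolic generalized catenoid. Once Case III is established, the remainder is a direct appeal to the flux-concentration computation already carried out in Corollary~\ref{co:cuca}.
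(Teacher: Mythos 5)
Your proof follows the same route as the paper's: the paper's own argument is exactly the combination you end with --- the hypothesis produces points of $V_n$ tending to a point of $\partial_\infty\h^2\times\R$, and then Corollary \ref{co:cuca} together with Remark \ref{re:cuca} gives $f_0(A_n)\to-\infty$, hence $G_0(A_n)\to-\infty$. What you add is the explicit reduction ($G_0(A_n)\to-\infty$ if and only if $f_0(A_n)\to-\infty$, since $f_0<0$ by \eqref{eq:open} and \eqref{eq:f0}) and the Case I/II/III bookkeeping, which the paper's three-line proof suppresses. Your parenthetical about Case II is a genuine observation, not a cosmetic one: as you note, the translated catenoids $C_{h,z_n}$ with $|z_n|\to1$ have constant boundary curves, curves $V_n$ (the necks) leaving every compact set, and constant $G_0$, so the hypotheses as literally written do not force the conclusion; some extra input (unbounded length of $V_n$, equivalently exclusion of the regime of Lemma \ref{lem:tertio}) is needed, exactly as you use it to rule out Case II via Theorem \ref{th:catenoid}. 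The paper's proof does not address this point at all; it implicitly places the sequence in the Case III setting of Theorem \ref{th:daniel} before quoting Corollary \ref{co:cuca}. In the only place the lemma is invoked, Theorem \ref{teo:properness}, the overlap is harmless because the centers are bounded there, so the bounded-length, diverging-$V_n$ alternative is already disposed of by Lemma \ref{lem:tertio}; your reading of the hypothesis as the complement of that alternative is therefore the intended one, and with it your argument is complete and, if anything, more careful than the paper's.
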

\begin{proof}
Recall that $\mbox{Flux}(A_n,V_n,E_3)=\int_{V_n} \langle E_3,\eta_{V_n}\rangle \, d\sigma$.
From the hypotheses of this lemma, we have that there are points $p_n \in V_n$,
so that the sequence $\{p_n\} \to p_\infty \in \partial_\infty \h^2 \times \R$. 

Then, using 
Corollary \ref{co:cuca} and Remark \ref{re:cuca} we have that $\displaystyle \lim_{n \to \infty} G_0(A_n) = -\infty.$
\end{proof}

We now prove the main result of this subsection.
\begin{theorem} \label{teo:properness}
Consider a sequence of elements $(A_n,x_n,y_n,z_n)\subset \widetilde{\fA^*}$ such that 
$$
(\widetilde{\Gamma}_n, \lambda_n,c_n)=\widetilde{\Pi} (A_n,x_n,y_n,z_n) \longrightarrow (\widetilde{\Gamma}_0,\lambda_0,c_0) \in
\fC \times \R \times \D.
$$ 
Then some subsequence of the $(A_n,x_n,y_n,z_n)$ converges to $(A_0,x_0,y_0,z_0) \in \widetilde{\fA}$ and 
$\widetilde{\Pi} (A_0,x_0,y_0,z_0) = (\widetilde{\Gamma}_0,\lambda_0,c_0)$.
\end{theorem}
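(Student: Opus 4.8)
The hypotheses are tailored so that the invariants $\lambda_n=G_0(A_n)$ and $c_n=\mathbf{C}(A_n)$ rule out the three ``interior'' ways a sequence of annuli in $\fA^*$ can degenerate, after which the only remaining freedom is the three‑parameter ``gauge'' translation of the top curve; bounding that freedom is the crux. Write $\eta_n=y_n+i\,z_n\in\C$ and $\widetilde{\gamma}_n^+:=\gamma_n^++x_n+\Re(\eta_n e^{i\theta})$, so the convergence hypothesis reads $\gamma_n^-\to\gamma_0^-$ and $\widetilde{\gamma}_n^+\to\widetilde{\gamma}_0^+$ in $\calC^{2,\alpha}(\sph^1)$, $G_0(A_n)\to\lambda_0\in\R$, and $\mathbf{C}(A_n)\to c_0\in\D$. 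First I would exploit the vertical flux: since $f_0(A_n)<0$ by \eqref{eq:open} and \eqref{eq:f0}, and $t\mapsto t-t^{-1}$ is an increasing bijection of $(-\infty,0)$ onto $\R$, the relation $G_0(A_n)=f_0(A_n)-f_0(A_n)^{-1}\to\lambda_0$ forces $f_0(A_n)$ to converge to the unique $h_0\in(-\infty,0)$ with $h_0-h_0^{-1}=\lambda_0$; thus $\{f_0(A_n)\}$ is bounded and bounded away from $0$.

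Next I would exclude the interior degenerations and obtain convergence of the ``core'' and of the bottom end. Because $f_0(A_n)$ stays away from $0$, the second fundamental forms of the $A_n$ are uniformly bounded on compact subsets of $\h^2\times\R$: otherwise rescaling at a blow‑up point and running the Mo--Osserman argument of Lemma~\ref{lem:secondo} (using Proposition~\ref{prop:V}) would produce a forming catenoidal neck, hence convergence of $A_n$ away from a point to two disjoint disks, hence $f_0(A_n)\to0$. Writing the ends of $A_n$ as graphs over $\h^2\setminus D_{\h^2}(q_n,R_n)$ as in Proposition~\ref{prop:graph}, the bottom‑end computation of Lemma~\ref{lem:primo} shows that if $q_n\to q_\infty\in\partial_\infty\h^2$ then $|\mathbf{C}(A_n)|\to1$, contradicting $c_n\to c_0\in\D$; and Corollary~\ref{co:cuca} with Remark~\ref{re:cuca} shows that if $R_n\to\infty$ then $f_0(A_n)\to-\infty$, again impossible. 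Hence after a subsequence $q_n\equiv q$ and $R_n\equiv R$ are fixed; classical elliptic estimates then give that $u_n^-$ converges (it equals $\gamma_n^-\to\gamma_0^-$ at infinity and lies, by the maximum principle, above the converging minimal disk $v_n^-$), and White's Theorems~\ref{th:Z} and \ref{th:ZZ} give that $A_n\cap(D_{\h^2}(o,R)\times\R)$ converges smoothly; in particular the curve $V_n$ of Proposition~\ref{prop:V} stays in a fixed compact subset of $D_{\h^2}(q,R)\times\R$ once we know $\gamma_n^+$ is bounded, which is the next point.

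The heart of the proof is to bound the top curve $\gamma_n^+$, equivalently the parameters $x_n,\eta_n$; I expect this to be the main obstacle, since $\lambda_n$ and $c_n$ depend only on the bottom end and impose no direct constraint on $\gamma_n^+$. Here I would use the nonexistence results. Corollary~\ref{co:non} with Remark~\ref{re:alex}, applied to $(\gamma_n^+,\gamma_n^-)$ and — after the reflection $t\mapsto-t$ — to $(-\gamma_n^-,-\gamma_n^+)$, shows that the gap $\gamma_n^+-\gamma_n^-$ is neither $>\pi$ everywhere nor $<-\pi$ everywhere, so there are points $\theta_n,\theta_n'$ at which $\gamma_n^+$ is, respectively, bounded above and bounded below; substituting into $\gamma_n^+=\widetilde{\gamma}_n^+-x_n-\Re(\eta_n e^{i\theta})$ this constrains $(x_n,\eta_n)$. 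If nevertheless $|x_n|+|\eta_n|\to\infty$ along a subsequence, then the oscillation of $\gamma_n^+$ tends to $\infty$ and $\gamma_n^+-\gamma_n^-$ exceeds $\pi$ locally uniformly on a full subarc of $\sph^1$; using the smooth convergence of the core and of the bottom end, I would slide a tall rectangle $R(\sigma,a,b)$ with $b-a>\pi$ (from Section~\ref{sec:graphs}) over a subarc $\sigma$ of that arc, positioned above the core and above $\gamma_n^-|_\sigma$ but below $\gamma_n^+|_\sigma$ so that its asymptotic boundary misses $\partial_\infty A_n$, and then — sweeping by the associated foliation of a slab by tall rectangles and by truncated catenoids, and invoking the maximum principle and the Dragging Lemma~\ref{lem:drag} as in the proofs of Theorem~\ref{th:nonexistence} and Proposition~\ref{cl:alexandrov} — contradict the connectedness of $A_n$. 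Consequently $\{x_n\}$ and $\{\eta_n\}$ are bounded, and a further subsequence gives $\gamma_n^+\to\gamma_0^+$ in $\calC^{2,\alpha}(\sph^1)$; set $\Gamma_0:=(\gamma_0^-,\gamma_0^+)$.

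It remains to assemble the conclusion. Since $\Pi(A_n)\to\Gamma_0$, $\{G_0(A_n)\}$ is bounded, and $V_n$ lies in a fixed compact set, Lemma~\ref{lem:secondo} yields $A_0\in\fA^*$ with $\Pi(A_0)=\Gamma_0$ and $A_n\to A_0$ smoothly in the interior and in $\calC^{2,\alpha}$ up to the boundary. From $\widetilde{\gamma}_n^+\to\widetilde{\gamma}_0^+$ and $\gamma_n^+\to\gamma_0^+$ we get $x_n+\Re(\eta_n e^{i\theta})\to\widetilde{\gamma}_0^+-\gamma_0^+$ in $\calC^{2,\alpha}$, and since $1,\cos\theta,\sin\theta$ are linearly independent this gives $x_n\to x_0$ and $\eta_n=y_n+i\,z_n\to\eta_0=y_0+i\,z_0$ with $x_0+\Re(\eta_0 e^{i\theta})=\widetilde{\gamma}_0^+-\gamma_0^+$. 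The $\calC^{2,\alpha}$‑convergence up to the boundary makes $f_0,f_1,f_2$ and hence $G=(G_0,G_1,G_2)$ continuous, so $G_0(A_0)=\lambda_0$ and $\mathbf{C}(A_0)=c_0\in\D$; in particular $(A_0,x_0,y_0,z_0)\in\widetilde{\fA^*}$ and $\widetilde{\Pi}(A_0,x_0,y_0,z_0)=(\widetilde{\Gamma}_0,\lambda_0,c_0)$, which is the required conclusion.
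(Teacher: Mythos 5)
Your opening and closing steps match the paper: the convergence of $\lambda_n=G_0(A_n)$ pins $f_0(A_n)\to h_0<0$, the convergence of $c_n$ keeps the center in $\D$, divergence of the neck or of its position is excluded via Corollary~\ref{co:cuca}/Remark~\ref{re:cuca} and Lemma~\ref{lem:primo} (this is exactly the role of Lemmas~\ref{lem:tertio} and \ref{lem:quarto}), and once $\Pi(A_n)$ converges and the curves $V_n$ stay compact, Lemma~\ref{lem:secondo} plus linear independence of $1,\cos\theta,\sin\theta$ finishes the argument. The genuine gap is in what you yourself identify as the crux: bounding $(x_n,y_n,z_n)$. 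Your mechanism --- Corollary~\ref{co:non} gives two-point bounds on $\gamma_n^+$, and then an unbounded tilt would make $\gamma_n^+-\gamma_n^-$ exceed $\pi$ on a subarc, which you propose to contradict by sliding a tall rectangle over that subarc and ``contradicting connectedness'' --- does not work. There is no local version of the nonexistence theorem: Theorem~\ref{th:nonexistence} and Corollary~\ref{co:non} require the gap to exceed $\pi$ at \emph{every} $\theta$ (their proof uses global objects: the area-minimizing disks spanning $\Gamma_b$, hyperbolic dilations, and the Appendix computation), and minimal annuli whose boundary gap exceeds $\pi$ on a proper subarc do exist (this is precisely what Theorem~\ref{th:principal} and the tilted symmetric examples of \S 9.1 produce). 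A single tall rectangle whose asymptotic boundary misses $\partial_\infty A_n$ gives no first-contact contradiction with a connected annulus that simply passes around it through the region where the gap is small, so connectedness is not violated and the proposed sweep has no valid starting position ``disjoint from $A_n$''.

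The paper closes this gap differently: if $|(x_n,y_n,z_n)|\to\infty$, the top curve degenerates (in $\partial_\infty(\h^2\times\R)$) to a vertical halfline, a vertical line, or two vertical lines. In the first two cases the annuli converge to the bottom disk $D_0^-$ together with part of a vertical line, and Lemma~\ref{lem:primo} forces $|\mathbf{C}(A_n)|\to 1$, contradicting $c_n\to c_0\in\D$; in the two-line case the limit top end has finite total curvature by \cite{HMR}, and the explicit asymptotics of \cite{MMR} in Fermi coordinates show its vertical flux vanishes, contradicting $f_0(A_n)\to h_0<0$. Note that both extended invariants $\mathbf{C}$ and $G_0$ are needed precisely here, not only for the neck analysis; your proposal never uses them against an unbounded tilt, and the two-vertical-lines scenario has no counterpart in your argument at all. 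A secondary problem is circularity: you invoke ``smooth convergence of the core and of the bottom end'' to run the sweep, but you concede a few lines earlier that compactness of the core (bounded boundary lengths for White's theorems, and $V_n$ in a fixed compact set) is only available \emph{after} $\gamma_n^+$ is bounded; the paper avoids this by bounding the top curve first (Claim~\ref{also}) and only then applying Lemma~\ref{lem:secondo}.
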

\begin{proof}
Since the sequence 
\begin{eqnarray*} 
(\lambda_n,c_n) &=& (G_0(A_n),G_1(A_n)+ \imag \;G_2(A_n) = \\
&=& \left(f_0(A_n)-f_0(A_n)^{-1},\frac{f_1(A_n)}{f_0(A_n)}+\imag \; \frac{f_2(A_n)}{f_0(A_n)}\right)
\end{eqnarray*}
converges, we deduce that the sequence of vertical fluxes $\{f_0(A_n)\}_{n \in \n}$ converges to a negative
constant $h_0$.  As a consequence $\{(f_1(A_n),f_2(A_n))\}_{n \in \n}$ converges to a point $p_0 \in \R^2$.
On the other hand, by assumption, 
$$
\widetilde{\Gamma}_n=(\gamma_n^-,\gamma_n^+ + x_n+y_n \cos \theta +z_n\sin \theta) \longrightarrow 
\widetilde{\Gamma}_0=(\widetilde{\gamma}_0^-,\widetilde{\gamma}_0^+),
$$
where $\gamma_n^\pm=\Pi_\pm (A_n)$.    The first, easy, consequence is that $\gamma_n^- \to {\gamma}_0^-=\widetilde{\gamma}_0^-$.  
\begin{claim} \label{also}
The sequence $ \{\gamma_n^+\}$ also converges.
\end{claim}

To prove this, define $\omega_n:=\sup \gamma_n^-$ and $\alpha_n:=\inf {\gamma}_n^+$.  First observe that if the parameters 
$(x_n, y_n, z_n)$ are so large that $\alpha_n - \omega_n> \pi$, then we can use Corollary \ref{co:non} to get a contradiction. However, this does not yet bound the individual components of this parameter set.

To do this, we show first that $\gamma_n^+$ cannot be too `tilted', i.e., that $|(x_n,y_n, z_n)| \leq C$ for some $C$ which depends 
on $ \widetilde \gamma_0^+$. 

If this is not the case, then $\gamma_n^+$ becomes increasingly tilted and converges as $n \to \infty$ to one of these configurations in $\partial _\infty(\h^2 \times \R)$: 
\begin{enumerate}[(i)]
\item A vertical halfline, $S$.
\item A vertical line, $E$.
\item Two vertical lines, $L_1$ and $L_2$.
\end{enumerate}
In Cases (i) and (ii), the limit of $A_n$  consists of the minimal 
disk $D_0^-$ spanned by $\gamma_0^-$ and a subset of a vertical line $E$ in $\partial_\infty (\h^2 \times \RR)$. We then apply
Lemma \ref{lem:primo} to deduce $\{\mathbf{C}(A_n)\}$ diverges in $\h^2$, contrary to hypothesis.

In Case (iii), using tall rectangles with increasing height as barriers, we can prove
that the ideal boundary in $\partial_\infty (\h^2 \times \R)$ of the limit top end consists of $L_1$, $L_2$
 and the geodesics in $\h^2\times \{\pm \infty\}$ joining the lines $L_1$ and $L_2$. Then, the vertical flux
 of such an end should be zero. Indeed, using Theorem 3 in \cite{HMR} we deduce that the limit top end
 has finite total curvature. Hence, from Proposition 2.2 in \cite{MMR}, using Fermi coordinates $(\rho,s,t)$
 of the vertical 
 geodesic plane determined by $L_1$ and $L_2$, we can parametrize that end
in this way
 $$X(r,\theta)=(r \cos(\theta), s(r,\theta), r \sin(\theta)) \; , \quad r \in [0, \infty[ \; , \theta \in [0, 2 \pi] \; ,$$
 where
 $$ s(r, \theta)= A(\theta) r^{-\frac{1}{2}} {\rm e}^{-r}+ \mathcal{O} (r^{-\frac{3}{2}} {\rm e}^{-r}) $$
with $A(\theta)$ a bounded function. 
Recall that in these coordinates the metric of $\h^2\times \R$ is $\cosh(s)^2 d\rho^2+ds^2+dt^2$.
Therefore, the vertical flux of the curve $r=r_0$, for a big enough $r_0$, is given by
$$ \int_0^{2 \pi} \left(\frac{\sin(\theta) }{\sqrt{1+\sinh^2(s(r_0,\theta)) \cos^2(\theta)+s_r^2(r_0,\theta)}}+\mathcal{O}(r_0^{-1}) \right)d\theta \; .$$
Taking limit as $r_0 \to \infty$, we conclude that the vertical flux is zero, which is contrary to our assumptions.

This establishes that $\gamma_n^+$ also converges, and that $(x_n, y_n, z_n) \to (x_0, y_0, z_0)$, so
$\gamma_n^+ \to {\gamma}_0^+=\widetilde{\gamma}_0^+ -x_0 -y_0 \cos \theta +z_0 \sin \theta$, proving Claim \ref{also}.

We need to prove finally that the sequence of annuli $\{A_n\}_{n \in \n} \subset \fA^*$ converges smoothly to 
an annulus $A_0 \in \fA^*$ and  $\Pi(A_0)=(\gamma^-_0,\gamma^+_0)$.  By the Lemmas \ref{lem:tertio} and \ref{lem:quarto}, 
the curves $V_n= \{ p \in A_n \; : \; \langle \nu_n(p), E_3\rangle =0\}$ remain in a compact region of
$\h^2\times \R$, because the vertical fluxes and the centers are bounded. So, we can
apply Lemma \ref{lem:secondo} to deduce the existence of the $A_0 \in \fA^*$.

\end{proof}

\section{The asymptotic Plateau problem for minimal annuli} 
We now assemble the results above to prove various types of local and global existence theorems.   
Our goal, of course, is to determine as much information as possible about the space of minimally fillable 
curves in $\fC$.  We start with a few qualitative remarks.  First, it is apparent that the
projection $\Pi: \fA \to \fC$ has some sort of fold around the catenoid family.  Indeed, $\Pi^{-1}(\sph^1 \times \{\pm h\})$
is noncompact for every $0 < h < \pi/2$. In addition, we have exhibited a specific infinite dimensional
family of curves converging to a pair of circles which are not minimally fillable, while on the other hand, because
of the existence of nondegenerate minimal annuli arbitrarily near the catenoid, any one of these pairs
of parallel circles is the limit of pairs of curves which are in the interior of the image of $\Pi$.  Thus 
a precise characterization of this image may not be possible.   We present two
separate existence results which are nonperturbative and give the existence of infinite-dimensional
families of minimal annuli far away from the catenoid.  The key question not answered here is
whether there is indeed a failure of compactness, or equivalently, if $\Pi$ is proper away from the 
catenoid family.   We have reason to suspect that there are many other regions where properness
may fail, but have so far been unsuccessful in demonstrating this. 

The most general existence result that we can prove is the following theorem, which summarizes all the information that we have 
about the map $\widetilde \Pi.$

\begin{theorem} \label{th:principal}
The map $\widetilde \Pi : \widetilde{\fA^*}\longrightarrow \mathcal{C}^{2,\alpha}(\sph^1)^2 \times \RR \times \D$ is a proper Fredholm map 
of index $0$ and degree $1$. In particular, given any $\gamma^\pm \in \mathcal{C}^{2,\alpha}(\sph^1)^2$, there exist constants $a_0$, $a_1$, $a_2$ so that the pair $(\gamma^+ + a_0+ a_1\cos \theta +
a_2 \sin \theta, \gamma^-)$  bounds a proper, Alexandrov-embedded, minimal annulus with embedded ends. 
\end{theorem}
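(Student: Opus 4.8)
The plan is to deduce Theorem~\ref{th:principal} by combining the three main structural facts established in the previous sections: that $\widetilde\Pi$ is Fredholm of index $0$ (Section~7), that it is proper on $\widetilde{\fA^*}$ (Theorem~\ref{teo:properness}), and that it is locally invertible at each catenoid with $\widetilde\Pi^{-1}(\widetilde\Pi(C_h,0,0,0)) = \{(C_h,0,0,0)\}$ (Propositions in Section~7, especially Proposition~\ref{prop:uno}). The existence of a $\mathbb Z$-valued degree for a proper Fredholm map of index $0$ between (connected, separable) Banach manifolds is the Smale--Quinn mod-2 degree refined to an integer degree in the orientable case, or more directly the Elworthy--Tromba degree; I would cite this and note that $\widetilde{\fA^*}$ is connected (or work on the connected component containing the catenoids, which suffices since that component already surjects onto what we need).

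First I would recall that the degree of a proper Fredholm map of index $0$ is computed at any regular value $y$ as the signed count $\deg(\widetilde\Pi) = \sum_{x\in\widetilde\Pi^{-1}(y)} \mathrm{sign}\det(D\widetilde\Pi|_x)$, and that this is independent of the regular value. The key computation is then at the value $y_h := \widetilde\Pi(C_h,0,0,0) = (-h, h, G_0(C_h), 0, 0)$. By Proposition~\ref{prop:uno}, the fiber $\widetilde\Pi^{-1}(y_h)$ consists of the single point $(C_h,0,0,0)$, and by the local invertibility statement $D\widetilde\Pi|_{(C_h,0,0,0)}$ is an isomorphism, so $y_h$ is a regular value and the fiber is a single point. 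Hence $\deg(\widetilde\Pi) = \pm 1$; after fixing orientations consistently (or simply because a degree computed from a single regular point in the fiber is $\pm1$ and we only care about nonvanishing) we get $|\deg(\widetilde\Pi)| = 1$, in particular $\deg(\widetilde\Pi)\neq 0$.

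Next I would invoke the standard consequence of a nonzero degree: a proper Fredholm map of index $0$ and nonzero degree is surjective onto its target (every point has nonempty preimage, since otherwise it would be a regular value with empty fiber and degree $0$). Applying this to an arbitrary target element of the form $(\gamma^-, \gamma^+, \lambda, c)$ with $\lambda\in\RR$ and $c\in\D$ — for instance taking $\lambda = G_0(C_{h_0})$ for a suitable $h_0$ and $c=0$, or any convenient value — we conclude there exists $(A,a,\eta)\in\widetilde{\fA^*}$ with $\widetilde\Pi(A,a,\eta) = (\gamma^-, \gamma^+, \lambda, c)$. Unwinding the definition of $\widetilde\Pi$, this means $\Pi_-(A) = \gamma^-$ and $\Pi_+(A) + a + \Re(\eta e^{i\theta}) = \gamma^+$, i.e. $\Pi_+(A) = \gamma^+ - a - \Re(\eta e^{i\theta})$. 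Writing $\eta = \eta_1 + i\eta_2$ and relabelling $a_0 = -a$, $a_1 = -\eta_1$, $a_2 = \eta_2$, we obtain that $(\gamma^+ + a_0 + a_1\cos\theta + a_2\sin\theta, \gamma^-)$ bounds the proper Alexandrov-embedded minimal annulus $A$ with embedded ends, which is exactly the claimed statement.

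The main obstacle, and the point requiring the most care, is the legitimacy of the degree theory itself: one must check that $\widetilde{\fA^*}$ (or the relevant component) is a connected $C^2$ Banach manifold modelled on a separable Banach space admitting $C^\infty$ bump functions — Hölder spaces $\calC^{2,\alpha}(\sph^1)$ do not satisfy this literally, so the cleanest route is to note that all the maps factor through, or are approximated by, a Hilbert-space or Sobolev-space setup, or to appeal to the Elworthy--Tromba degree which is developed precisely for $C^k$ Fredholm maps between Banach manifolds with the mild additional hypothesis that handles Hölder spaces. I would remark that the properness (Theorem~\ref{teo:properness}) is what guarantees the fibers over regular values are finite, and that Sard--Smale gives density of regular values, so the degree is well-defined; the orientation bookkeeping needed to upgrade a mod-$2$ count to an integer is harmless here since we only need $\deg\neq 0$, for which the mod-$2$ degree (equal to $1$ by the single-point fiber at $y_h$) already suffices. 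With surjectivity in hand the existence statement follows immediately as above.
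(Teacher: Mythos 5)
Your proposal is correct and follows essentially the same route as the paper: the authors state that the theorem is a direct consequence of Theorem~\ref{pfi0} (Fredholm of index $0$ and proper, via Theorem~\ref{teo:properness}) together with Proposition~\ref{prop:uno} (single-point fiber with invertible differential at the catenoid), which gives degree $1$ and hence surjectivity exactly as you argue. Your additional remarks on the foundations of the Banach-manifold degree and on connectedness address technicalities the paper leaves implicit (it appeals to \cite{White-degree} for the degree formalism), but they do not change the substance of the argument.
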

The proof of this theorem is a direct consequence of Theorem \ref{pfi0}, Proposition \ref{prop:uno}
and Theorem \ref{teo:properness}.

Theorem \ref{th:principal} asserts that we can prescribe the bottom curve of an Alexandrov-embedded minimal annulus with embedded ends,
as well as the top curve up to a translation and tilt, and in addition the ``center of the neck'' and certain fluxes. 

\subsection{Solutions with symmetry} 
Let $G$ be any finite group of isometries of $\HH^2 \times \RR$ which leaves invariant some fixed catenoid $A_0$. 
Assume in addition that no element of $\fJ^0(A_0)$ is left invariant by $G$.    There are two main examples: the group $\calR_k$,
$k \geq 2$, generated by the rotation by angle $2\pi/k$ around the vertical axis which is the line of symmetry of the catenoid, 
and the group $\ZZ_2 \times \ZZ_2$ generated by reflection across a vertical plane bisecting the catenoid and rotation by $\pi$ 
around the line orthogonal to that plane which intersects the midpoint of the catenoid's neck. Examples of curves with the first 
type of symmetry are obvious. For the second, a key example is a pair of parallel ellipses. 

\begin{theorem} \label{th:symmetric}
Let $\Gamma = \gamma^\pm$ be any $G$-invariant pair of curves such that 
\begin{equation}
\sup_\theta |\gamma^+(\theta) - \gamma^-(\theta)| < \pi.
\label{gap}
\end{equation}
Then $\Gamma$ is minimally fillable.
\end{theorem}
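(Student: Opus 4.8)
The plan is to run the degree-theoretic argument of Theorem~\ref{th:principal}, but directly for the \emph{unextended} boundary map $\Pi$ restricted to $G$-invariant data: the hypothesis that $\fJ^0(A_0)$ contains no $G$-invariant element is precisely what removes the two-dimensional degeneracy of $D\Pi$ at the catenoids, so no auxiliary finite-dimensional extension is needed. First I would introduce the symmetric moduli spaces $\fA^G\subset\fA$, the properly embedded minimal annuli setwise invariant under $G$, and $\fC^G\subset\fC^\pi$, the $G$-invariant configurations $\gamma^+\cup\gamma^-$ with $0<\sup_\theta(\gamma^+-\gamma^-)<\pi$. Since the $G$-action on $\HH^2\times\RR$ induces a $G$-action on all the analytic data of Section~\ref{sec:manifold} --- the modified Jacobi operator $L_A$, the spaces $\fJ^0(A)$, and the operators of Propositions~\ref{Lmapprop}--\ref{surj} are $G$-equivariant --- the implicit-function-theorem charts constructed there restrict to charts for $\fA^G$, whose tangent space at $A$ is $\fJ(A)^G$. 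The restricted map $\Pi^G:=\Pi|_{\fA^G}\colon\fA^G\to\fC^G$ then has differential $\phi\mapsto\phi_0$ on $\fJ(A)^G$, and since the deformation to a self-adjoint operator used in Section~\ref{sec:manifold} can be chosen $G$-equivariantly, $\Pi^G$ is Fredholm of index $0$.

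The main step is that $\Pi^G$ is \emph{proper}. Given $A_n\in\fA^G$ with $\Pi(A_n)\to\Gamma_0\in\fC^G$, I would invoke the trichotomy of Section~\ref{sec:compact} and exclude Cases~II and~III. Case~III is ruled out exactly as in Theorem~\ref{th:last}: by Theorem~\ref{th:daniel} the limit set would contain a vertical segment of length precisely $\pi$ joining $\Gamma^+$ and $\Gamma^-$, contradicting $\sup_\theta(\gamma_0^+-\gamma_0^-)<\pi$. Case~II is ruled out by the symmetry. In that case, by Theorem~\ref{th:catenoid} the necks escape to a single ideal point, so the curves $V_n\subset A_n$ of vertical tangency --- which by Proposition~\ref{prop:V} are connected and generate $H_1(A_n,\z)$, and which satisfy $g(V_n)=V_n$ for all $g\in G$ --- concentrate, for $n$ large, in an arbitrarily small neighbourhood of one vertical segment $\{q_\infty\}\times[t^-,t^+]$ with $q_\infty\in\partial_\infty\HH^2$. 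But $G$ can have no common fixed point on $\partial_\infty\HH^2$: since $G$ preserves the axis and the waist of $A_0$, a common fixed point $q_\infty$ at infinity would force $G$ into the order-$\le 4$ group generated by reflection across the vertical plane over the geodesic $o\,q_\infty$ and by $(p,t)\mapsto(p,-t)$, all of whose elements fix the element of $\fJ^0(A_0)$ generated by horizontal dilation along that geodesic --- contradicting the hypothesis. Choosing $g\in G$ with $g(q_\infty)\ne q_\infty$, the set $g(V_n)$ concentrates near $\{g(q_\infty)\}\times\RR$, disjoint from a neighbourhood of $\{q_\infty\}\times\RR$ for large $n$, contradicting $g(V_n)=V_n$. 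Hence only Case~I can occur, and by Theorem~\ref{th:annulus} a subsequence of $A_n$ converges smoothly to $A_0\in\fA$ with $\Pi(A_0)=\Gamma_0$; being a limit of $G$-invariant annuli, $A_0\in\fA^G$, so $\Pi^G$ is proper.

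To conclude I would compute the degree. The set $\fC^G$ is convex --- the $\calC^{2,\alpha}$ regularity, the $G$-invariance, and the pointwise constraint $0<\gamma^+-\gamma^-<\pi$ all persist under affine combinations --- hence connected, so the proper index-$0$ Fredholm map $\Pi^G$ has a well-defined $\z_2$-degree, constant on $\fC^G$. I would evaluate it at $\Gamma_h$, the pair of round circles bounding the invariant catenoid $A_0=C_h$. The fibre $(\Pi^G)^{-1}(\Gamma_h)$ equals $\{C_h\}$: by Proposition~\ref{cl:alexandrov} (applicable via Remark~\ref{re:alex}, its monotonicity hypothesis being vacuous for constant curves) any annulus with this asymptotic boundary is a catenoid $C_{h,z_0}$, and $G$-invariance forces its axis through the unique point of $\HH^2$ fixed by all of $G$, namely $o$, so $z_0=o$. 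Moreover $C_h$ is a regular point, since $\ker D\Pi^G|_{C_h}=\fJ^0(C_h)\cap\fJ(C_h)^G$ and, by Proposition~\ref{th:jacobin}, $\fJ^0(C_h)=\langle\phi(r)\cos\theta,\ \phi(r)\sin\theta\rangle$ has no nonzero $G$-invariant element by hypothesis; as the index is $0$, $D\Pi^G|_{C_h}$ is an isomorphism. Therefore $\deg_2\Pi^G=1$, so $\Pi^G$ is surjective; in particular every $G$-invariant pair $\Gamma=\gamma^\pm$ with $\sup_\theta|\gamma^+-\gamma^-|<\pi$ is the asymptotic boundary of a $G$-invariant properly embedded minimal annulus, i.e.\ is minimally fillable.

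The step I expect to be the main obstacle is the properness of $\Pi^G$, and within it the exclusion of Case~II: one has to combine the localization of the collapsing neck at a single ideal point (Theorem~\ref{th:catenoid}) with the fact that $V_n$ is a single connected $G$-invariant curve (Proposition~\ref{prop:V}) and with the absence of a common $G$-fixed point at infinity. The $G$-equivariance of the linear theory of Section~\ref{sec:manifold}, the convexity of $\fC^G$, and the description of the catenoid fibre are comparatively routine.
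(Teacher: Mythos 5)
Your proposal is correct and follows essentially the same route as the paper: restrict to the $G$-invariant category where the catenoid is nondegenerate, show that the restricted boundary map is a proper Fredholm map of index $0$ using the compactness trichotomy (Case III excluded by the gap condition \eqref{gap}, Case II by $G$-invariance), and conclude by degree theory since the fibre over the pair of parallel circles is the single nondegenerate catenoid, so the degree is $\pm 1$. The paper leaves the $G$-equivariant properness argument largely implicit (it only remarks that escape of the neck to infinity is ruled out by $G$-invariance), whereas you spell out the exclusion of Case II via the absence of a common $G$-fixed point on $\partial_\infty \HH^2$ and the $G$-invariance of the vertical-tangency curves $V_n$; this is a welcome elaboration of the same argument rather than a different method.
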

\begin{proof}  We shall work in the setting of $G$-invariant objects, mappings, etc.  In this context,
the catenoid $A_0$ is nondegenerate and the local deformation theorem is an immediate consequence
of the implicit function theorem.  Denoting by $\fA_G$ and $\fC_G$ the Banach manifolds of
$G$-invariant minimal annuli and boundary curves, we have that $\Pi_G: \fA_G \to \fC_G$ 
is Fredholm of index $0$, just as in the non-$G$-invariant setting.  Furthermore, by the
compactness arguments of the last section, this mapping is proper over the space of elements
of $\fC_G$ satisfying \eqref{gap}.   We do not need the full set of arguments developed in the last section. Indeed, 
for $G$-invariant sequences of minimal annuli $A_i$, it is necessary to rule out that the neck shrinks or 
expands, but not that it remains of bounded size and escapes to infinity, since that is ruled out by $G$-invariance.
On the other hand, we do require non-$G$-equivariant techniques to rule out 
the possibility that the necksize increases without bound. 

We have shown that $\Pi_G$ is a proper Fredholm map of index $0$. A fairly general argument which can be
found in the paper \cite{White-degree} implies that this map has a $\ZZ$-valued degree, defined by the formula
\[
\mathrm{deg}(\Pi_G) = \sum_{ A \in \Pi_G^{-1}(\Gamma)}  (-1)^{\mbox{\tiny index}_G(A)},
\]
where $\Gamma$ is any regular value of $\Pi_G$.  The key fact which allows this argument to be used
essentially verbatim in the present setting is that the Jacobi operator has discrete spectrum in a neighborhood
of $0$ so that eigenvalue crossings can be counted just as in the compact case. By the Sard-Smale theorem, a generic
element of $\fC_G$ is regular, and since we have shown that there exists a neighborhood in
$\fA_G$ around the catenoid which projects diffeomorphically to a neighborhood
in $\fC_G$, there exists a regular value $\Gamma$ for which $\Pi_G^{-1}(\Gamma)$ is
nonempty.  Recall that $\mbox{\rm index}_G(A)$ is the number of negative eigenvalues, of
the (negative of the) Jacobi operator acting on $G$-invariant functions.

It remains therefore to prove that the degree of $\Pi_G$ is nonzero. However, the pair of
parallel circles $\Gamma_0$ separated by $h < \pi$ bounds only the catenoid, so the sum
above has only one term, which means that $\deg (\Pi_G)$ is equal to either $1$ or $-1$. 
In any case it is nonzero.   This proves the $G$-invariant existence result.
\end{proof}

We single out one particularly interesting family of solutions. Let $G = \ZZ_2 \times \ZZ_2$,
acting as described above, and let $\Gamma_s$ denote a family of parallel ellipses,
the upper one a translate by some fixed amount $h < \pi$ of the lowe.  The parameter
$s$ measures the tilt, and varies between $s=0$ (where $\Gamma_0$ is just the pair of
parallel horizontal circles) to the extreme limit where these ellipses become more and more
vertical. 

\subsection{Solutions with admissible boundary} 
Our other existence result allows us to consider more general curves and annuli. 

\subsubsection{Properness over the space of admissible curves}
The compactness results that we got in Section \ref{sec:compact} motivate the following
\begin{definition}
Let $\Gamma=(\gamma^+, \gamma^-)$ be a curve in $\fC^\pi$.
We say that $\Gamma$ is {\bf admissible}  if $\displaystyle \frac{d}{d \theta} \left( \gamma^+(\theta),  
\gamma^-(\theta) \right) \neq (0,0), $ for all $\theta \in [0,2 \pi)$, and set $\Omega:= \{ \Gamma \in \Cb^\pi \; : \; 
\Gamma \mbox{ is admissible} \}$. This is an {\bf open} subset of $\Cb$.  We also write $\mathcal{W}:= \Pi^{-1} (\Omega).$
\end{definition}

\begin{figure}[htbp]
    \begin{center}
        \includegraphics[width=.35\textwidth]{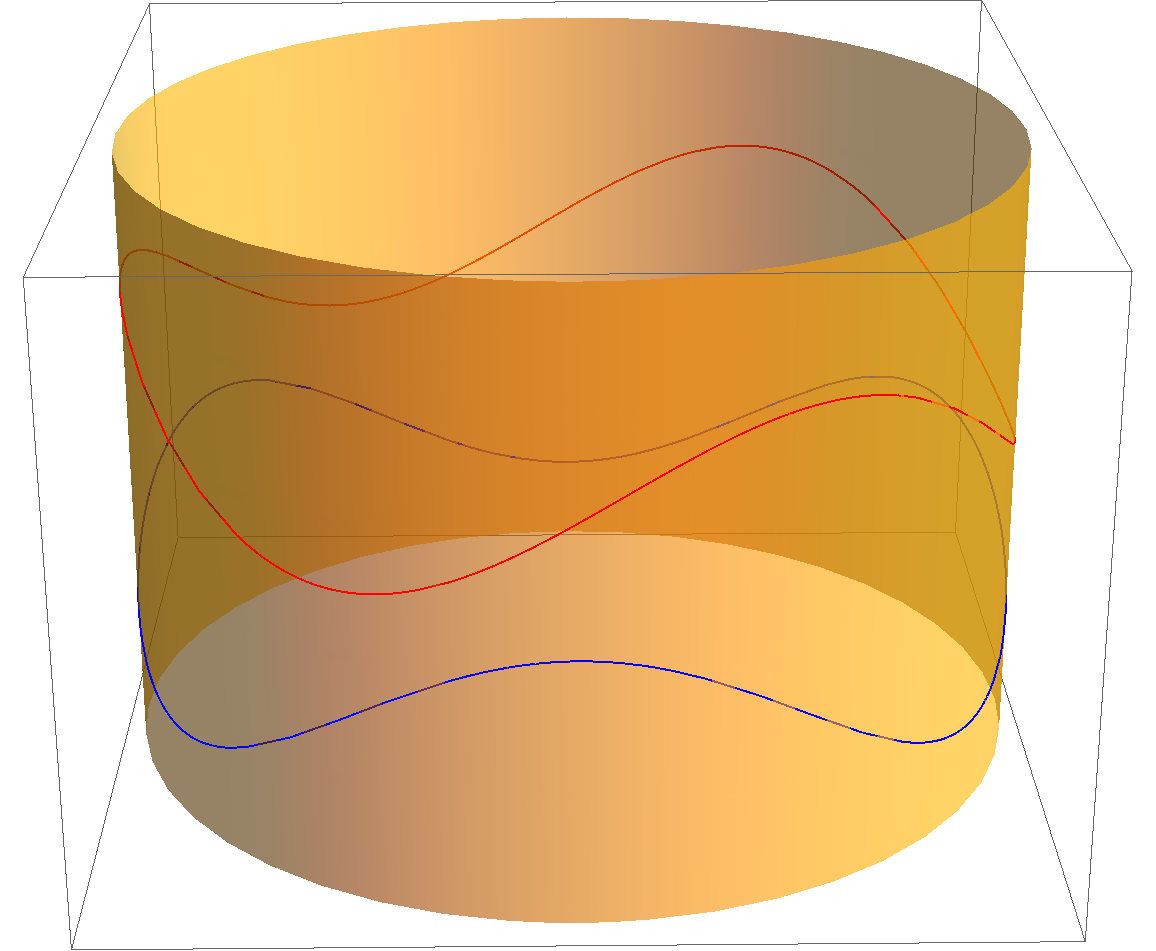}  
        \includegraphics[width=.35\textwidth]{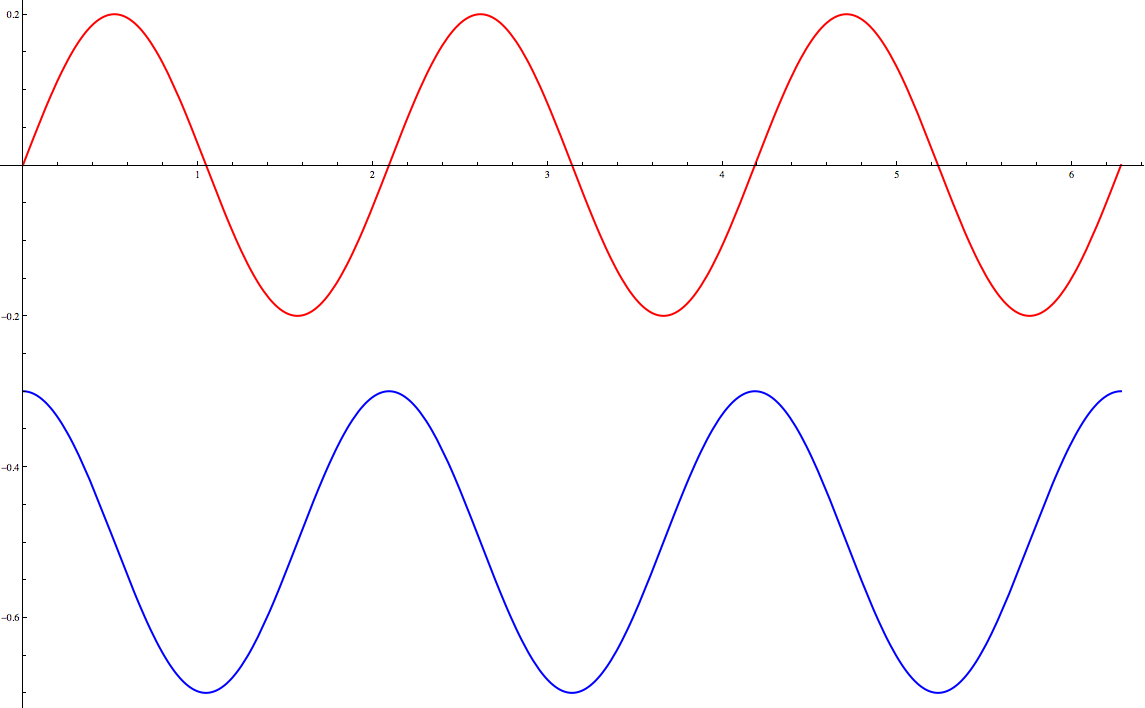}
     \end{center}
   \caption{An admissible curve at $\partial_\infty \h^2 \times \R.$} \label{fig:two}
\end{figure} 

Taking the previous definition into account,  we obtain the following compactness results: 
\begin{theorem}[Compactness] \label{th:compact2}
Let $A_n$ be a sequence in $\Ab$ such that $\Pi(A_n)$ converges to $ \Gamma_0 \in \Omega.$ 
Then, up to a subsequence, $A_n$ converges to  a minimal annulus $A_0\in \Ab^\pi.$
\end{theorem}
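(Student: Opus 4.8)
The plan is to run the trichotomy of Cases I--III of \S\ref{sec:compact} on a subsequence of $\{A_n\}$ and to exclude the two degenerate alternatives using the two defining features of an admissible curve: the strict gap $\sup_\theta(\gamma_0^+-\gamma_0^-)<\pi$, which $\Gamma_0$ inherits from membership in $\Cb^\pi$, and the non-vanishing of $\frac{d}{d\theta}(\gamma_0^+,\gamma_0^-)$. What remains after these exclusions is precisely the case of smooth subconvergence to a minimal annulus.

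First I would apply Proposition~\ref{prop:graph} to each $A_n$, writing $A_n\setminus(D_{\h^2}(q_n,R_n)\times\R)$ as a union of two embedded vertical graphs; after passing to a subsequence we are in one of Cases I, II, III, according to whether $(q_n,R_n)$ stays bounded, $q_n$ diverges while $R_n$ stays bounded, or $R_n$ diverges. Next I would rule out Case III: by Theorem~\ref{th:daniel}, in that case $\ell_\infty$ contains a vertical segment $\{p_\infty\}\times[t^-,t^+]$ joining the two limit curves with $t^+-t^-=\pi$; since $t^\pm=\gamma_0^\pm(p_\infty)$ this gives $\gamma_0^+(p_\infty)-\gamma_0^-(p_\infty)=\pi$, contradicting $\Gamma_0\in\Cb^\pi$. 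Then I would rule out Case II: by Theorem~\ref{th:catenoid}, after the rotation carrying the limiting center $q_\infty$ to $(1,0)$, Case II forces $(\gamma_0^+)'(0)=(\gamma_0^-)'(0)=0$, so $\frac{d}{d\theta}(\gamma_0^+,\gamma_0^-)$ vanishes at the angle corresponding to $q_\infty$ --- contradicting the admissibility of $\Gamma_0$.

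Hence the subsequence must be in Case I, and Theorem~\ref{th:annulus} then yields smooth, multiplicity-one convergence of $A_n$ to a properly embedded minimal annulus $A_0\in\fA$ with $\Pi(A_0)=\Gamma_0$; since $\Gamma_0\in\Cb^\pi$ this is the same as $A_0\in\Ab^\pi$. In this proof essentially all of the work has already been carried out in Theorems~\ref{th:annulus}, \ref{th:catenoid} and~\ref{th:daniel}, so there is no real obstacle beyond bookkeeping; the one point that deserves care is the matching, in the excluded Case II, between the asymptotic direction $q_\infty$ along which the necks of the $A_n$ run to infinity and the angle at which $\frac{d}{d\theta}(\gamma_0^+,\gamma_0^-)$ would have to vanish, and that matching is exactly what Theorem~\ref{th:catenoid} provides.
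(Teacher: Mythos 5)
Your proposal is correct and follows exactly the paper's own argument: the proof there is precisely the trichotomy of Cases I--III, with Case III excluded via Theorem~\ref{th:daniel} (a vertical segment of length $\pi$ would violate $\Gamma_0\in\Cb^\pi$), Case II excluded via Theorem~\ref{th:catenoid} (simultaneous vanishing of $(\gamma_0^\pm)'$ at the angle of $q_\infty$ would violate admissibility), and Theorem~\ref{th:annulus} applied in Case I. Your write-up merely spells out the bookkeeping that the paper leaves implicit.
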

\begin{proof} Since $\Gamma_0 \in \Omega$, Theorem \ref{th:catenoid} and Theorem \ref{th:daniel} 
imply that $A_n$ is in Case I in Section 8.1, so Theorem \ref{th:annulus} concludes the proof.
\end{proof}
This has an immediate consequence.
\begin{corollary} \label{co:proper}
The map $\Pi |_{\mathcal{W}} : \mathcal{W} \rightarrow \Omega$ is {\bf proper}.
\end{corollary}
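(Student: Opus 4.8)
The plan is to derive Corollary~\ref{co:proper} directly from Theorem~\ref{th:compact2}, which is the substantive compactness statement; the corollary is merely its reformulation in the language of proper maps between the open sets $\mathcal{W} = \Pi^{-1}(\Omega)$ and $\Omega$. Recall that a continuous map is proper if the preimage of every compact set is compact, and that for maps between metrizable spaces this is equivalent to the sequential statement: every sequence in the domain whose image converges must itself have a convergent subsequence (with limit in the domain).

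First I would fix a compact set $\mathcal{K} \subset \Omega$ and a sequence $A_n \in \Pi|_{\mathcal{W}}^{-1}(\mathcal{K})$; it suffices to extract a subsequence of $\{A_n\}$ converging in $\mathcal{W}$. Since $\Pi(A_n) \in \mathcal{K}$ and $\mathcal{K}$ is compact, after passing to a subsequence we may assume $\Pi(A_n) \to \Gamma_0$ for some $\Gamma_0 \in \mathcal{K} \subset \Omega$; in particular $\Gamma_0$ is admissible, so $\frac{d}{d\theta}(\gamma_0^+(\theta), \gamma_0^-(\theta)) \neq (0,0)$ for all $\theta$, and $\sup_\theta(\gamma_0^+(\theta) - \gamma_0^-(\theta)) < \pi$. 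By Theorem~\ref{th:compact2}, a further subsequence of $\{A_n\}$ converges to a minimal annulus $A_0 \in \fA^\pi$ with $\Pi(A_0) = \Gamma_0$. Since $\Gamma_0 \in \Omega$, we have $A_0 \in \Pi^{-1}(\Omega) = \mathcal{W}$. The convergence $A_n \to A_0$ is the smooth-up-to-boundary ($\calC^{2,\alpha}$) convergence provided by Theorem~\ref{th:annulus} (invoked inside the proof of Theorem~\ref{th:compact2}), which is precisely convergence in the Banach manifold topology on $\fA'$, hence in $\mathcal{W}$.

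To complete the argument one checks that this sequential conclusion yields compactness of $\Pi|_{\mathcal{W}}^{-1}(\mathcal{K})$: the set is closed in $\mathcal{W}$ because $\Pi$ is continuous and $\mathcal{K}$ is closed in $\Omega$, and the subsequential convergence just established shows it is sequentially compact, hence compact in the metrizable Banach manifold $\mathcal{W}$. Therefore $\Pi|_{\mathcal{W}}$ is proper.

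There is essentially no obstacle here, as the corollary is a bookkeeping consequence of Theorem~\ref{th:compact2}; the only point requiring a word of care is that one must verify the limit $\Gamma_0$ lands back inside the \emph{open} set $\Omega$ rather than merely in its closure — but this is automatic since $\mathcal{K}$ was chosen compact \emph{inside} $\Omega$. (Had one instead tried to prove properness of $\Pi$ over all of $\Cb^\pi$, the limit could escape admissibility, and Cases II and III of Section~8.1 would genuinely occur; this is exactly why the restriction to $\mathcal{W}$ and $\Omega$ is needed.)
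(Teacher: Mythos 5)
Your proof is correct and follows exactly the route the paper intends: the paper states the corollary as an immediate consequence of Theorem~\ref{th:compact2}, and your argument simply spells out the standard reduction of properness to sequential compactness, including the one point worth noting (that the limit curve $\Gamma_0$ stays in the open set $\Omega$ because the compact set $\mathcal{K}$ lies inside $\Omega$). Nothing further is needed.
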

\vskip .5cm

If $\Gamma$ is admissible, then 
$$ 
\alpha_\Gamma(\theta)= \frac{d}{d \theta} \left( \gamma^+(\theta),   \gamma^-(\theta) \right): 
\sph^1 \to \CC \setminus \{0\}
$$
is a smooth loop. and hence homotopic in $\CC \setminus \{0\}$ to a standard $n$-cycle, 
$\alpha_n(\theta)= {\rm e}^{ \ir n \theta}$.  We say that $\Gamma$ is {\bf n-admissible}.
Given $n$-admissible curves $\Gamma_0$ and $\Gamma_1$, there exists a smooth 
isotopy $\Gamma_t$ between these amongst $n$-admissible curves. 

\begin{remark}
$\Omega$ has a countable number of path-connected components 
$$ 
\Omega_n:= \{ \Gamma \in \Omega \; : \; \Gamma\mbox{ is n-admissible} \} .
$$
Defining $ \mathcal{W}_n:= \Pi^{-1} (\Omega_n) \subset \mathcal{W},$ then the family of catenoids 
$\{ C_h \; : \; h \in(0,\pi/2)\}$, is in the boundary of $\mathcal{W}_n$ for every $n \in \z.$ 
\end{remark}

\subsection{Existence results.}
Consider, for any $m \in \n$, $m \geq 2$, the rotation $R_m$, and the finite group $\calR_m$
it generates. 

Fixing $h <\pi/2$, then $\Gamma_h= \sph^1 \times \{-h,h\} \in \fC^\pi$ spans a unique centered catenoid 
$C_h$. 
\begin{proposition} \label{pro:U}
There exists an open neighborhood of $\Gamma_h$, $U \subset \fC_m$, 
such that for any $\Gamma \in U$, then there is
a {\bf unique} annulus $A \in \fA_m$ with $\Pi(A)=\partial A=\Gamma$. 
\end{proposition}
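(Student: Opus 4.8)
The plan is to apply the inverse function theorem in the $\calR_m$-equivariant category, and then use the properness of $\Pi$ on $\fA_m$ to upgrade the resulting local uniqueness to uniqueness inside all of $\fA_m$.

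First I would recall, from the proof of Theorem~\ref{th:symmetric} applied with $G=\calR_m$ ($m\geq2$), that $\fA_m$ and $\fC_m$ are Banach manifolds and that $\Pi_m:=\Pi|_{\fA_m}\colon\fA_m\to\fC_m$ is a smooth Fredholm map of index $0$, with $D\Pi_m|_A(\phi)=\phi_0$ for $\phi\in T_A\fA_m=\fJ(A)^{\calR_m}$. The point I would stress is that $C_h$ is nondegenerate in the equivariant sense: by Proposition~\ref{th:jacobin}, $\fJ^0(C_h)=\mathrm{span}\{\phi(r)\cos\theta,\,\phi(r)\sin\theta\}$, and $R_m$ acts on this plane as the rotation by $2\pi/m$, which for $m\geq2$ has no nonzero fixed vector; hence $\fJ^0(C_h)^{\calR_m}=\{0\}$. (Alternatively, surjectivity of $D\Pi_m|_{C_h}$ is immediate from Proposition~\ref{bvalues} and Remark~\ref{re:leading}, since the obstruction to prescribing boundary values there is orthogonality to $(\cos\theta,\cos\theta)$ and $(\sin\theta,\sin\theta)$, which every $\calR_m$-invariant pair automatically satisfies.) Being Fredholm of index $0$ with trivial kernel, $D\Pi_m|_{C_h}$ is therefore a Banach space isomorphism.

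I would then invoke the inverse function theorem for Banach manifolds to obtain open neighborhoods $V$ of $C_h$ in $\fA_m$ and $U_0$ of $\Gamma_h$ in $\fC_m$ such that $\Pi_m|_V\colon V\to U_0$ is a diffeomorphism; in particular, every $\Gamma\in U_0$ is the boundary of exactly one annulus in $V$. This gives the local existence and the uniqueness-within-$V$ parts of the assertion.

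The remaining, and in my view the only genuinely delicate, step is to promote this to uniqueness in all of $\fA_m$; here I would argue by contradiction. If no neighborhood of $\Gamma_h$ had a unique preimage in $\fA_m$, there would exist $\Gamma_n\to\Gamma_h$ in $\fC_m$ and distinct annuli $A_n\neq A_n'$ in $\fA_m$ with $\Pi_m(A_n)=\Pi_m(A_n')=\Gamma_n$. By Corollary~\ref{co:m-proper}, $\Pi_m$ is proper, so after passing to a subsequence $A_n\to A$ and $A_n'\to A'$ in $\fA_m$ with $\Pi_m(A)=\Pi_m(A')=\Gamma_h$. By \cite[Theorem~2.1]{n-s-t}, the minimal annuli bounded by $\Gamma_h=\sph^1\times\{\pm h\}$ are exactly the translated catenoids $C_{h,z_0}$, and only $C_{h,o}=C_h$ among them is $\calR_m$-invariant; hence $A=A'=C_h$, so $A_n,A_n'\in V$ for $n$ large, contradicting the injectivity of $\Pi_m|_V$. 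It follows that some neighborhood $U\subset U_0$ of $\Gamma_h$ has a unique preimage in $\fA_m$ over each of its points, and shrinking $U$ if necessary so that $U\subset\Pi_m(V)$ makes this preimage nonempty as well. This $U$ is the neighborhood required by the proposition; the essential ingredient beyond the inverse function theorem is the compactness of $\Pi_m$ established in \S\ref{sec:compact}.
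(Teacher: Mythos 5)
Your first step coincides with the paper's: the equivariant nondegeneracy $\fJ^0(C_h)^{\calR_m}=\{0\}$ (from Proposition \ref{th:jacobin}, since $R_m$ with $m\geq 2$ fixes no nonzero vector in the span of $\phi(r)\cos\theta,\ \phi(r)\sin\theta$) plus the inverse function theorem gives neighborhoods $W\ni C_h$ and $U\ni\Gamma_h$ with $\Pi|_W:W\to U$ a diffeomorphism. The divergence is in how you upgrade injectivity on $W$ to uniqueness in all of $\fA_m$, and there your argument has a gap. From distinct $A_n\neq A_n'$ over $\Gamma_n\to\Gamma_h$ you invoke Corollary \ref{co:m-proper} and the Nelli--Sa Earp--Toubiana classification to get $A_n,A_n'\to C_h$, and then conclude ``so $A_n,A_n'\in V$ for $n$ large.'' That last step requires convergence in the topology in which $V$ is open, i.e.\ the $\calC^{2,\alpha}(\overline{A})$ topology of the Banach manifold $\fA_m$ (normal graphs up to the ideal boundary). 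But Theorem \ref{th:last}, which underlies Corollary \ref{co:m-proper}, only delivers convergence smoothly on compact subsets of $\h^2\times\R$; a priori the surfaces could converge in the interior while the normalized difference of the two sheets concentrates at $\partial_\infty\h^2\times\R$, in which case neither $A_n$ nor $A_n'$ ever enters a fixed $\calC^{2,\alpha}$-neighborhood of $C_h$. Local injectivity from the IFT plus compactness in a weaker topology do not combine to give local uniqueness without an extra argument.

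The paper closes exactly this gap by a normalized difference-quotient argument rather than by ``eventually in $V$'': writing $A_n^1,A_n^2$ as normal graphs $u_n^1,u_n^2$ over $C_h$, it sets $v_n=(u_n^2-u_n^1)/\|u_n^2-u_n^1\|_\infty$, which vanishes on $\partial C_h$ and attains the value $1$ at some $p_n\in C_h$. If $p_n$ escapes to the ideal boundary (precisely the scenario your step ignores), a horizontal translation produces in the limit a bounded Jacobi field on the totally geodesic plane $\h^2\times\{\pm h\}$ with an interior maximum and zero boundary values, contradicting the maximum principle; otherwise $p_n\to p_0\in C_h$ and $v_n$ converges to a nontrivial $\calR_m$-invariant element of $\fJ^0(C_h)$, which does not exist. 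Your proof would be complete if you either supplied this difference-quotient step or first proved that the convergence in Theorem \ref{th:last} is actually $\calC^{2,\alpha}$ up to the ideal boundary; as written, the uniqueness half is not justified.
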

\begin{proof}
First notice that all the arguments in Section \ref{sec:manifold} remain true restricted to the space of $\calR_m$-invariant 
functions over $C_h$.

By Proposition \ref{th:jacobin}, the space of decaying Jacobi fields $\fJ_h^0$ is generated by 
$$
\varphi=\frac{1-r^2}{r} \cos \theta, \quad \psi=\frac{1-r^2}{ r} \sin \theta,
$$
so ${\rm Ker} \left(D \left.\left(\Pi |_{\fA_m} \right) \right|_{C_h} \right)=\{0\}$. The Inverse Function Theorem gives
neighborhoods $C_h \in W \subset \fA_m$ and $\Gamma_h \in U \subset \fC_m$ such that $\Pi |_{W}:W \rightarrow U$ 
is a diffeomorphism.
 
To prove the uniqueness, we proceed by contradiction. Assume there exists a sequence $\Gamma_n \in \fC_m$ with 
$\Gamma_n \to \Gamma_h$ and such that there are two distinct $\mathcal{R}_m$-invariant minimal annuli 
$A_n^1 \neq A_n^2$, each satisfying $\Pi(A_n^i)=\Gamma_n$. By Theorem \ref{th:last}, $A_n^1 \to C_h$ and 
$A_n^2\to C_h$. 

Write $A_n^i$ as a normal graph over $C_h$ of a function $u_n^i$, $i=1,2$. Define
$$
v_n:=\left(\frac{u_n^2-u_n^1}{\|u_n^2-u_n^1\|_{\infty}} \right). 
$$
This vanishes on $\del C_h$ since $(u_n^2-u_n^1)|_{\Gamma_h}=0$.

Choose $p_n \in C_h$ so that $v_n(p_n)=1$. If $p_n$ diverges in $\h^2 \times \R$, then take a horizontal translation
$T_n$ such that $T_n(p_n)=(0,t_n)$. Clearly $\{t_n\} \to \{\pm h\}$, so we choose a subsequence for which 
$p_n$ converges to a point in $\sph^1 \times\{h\}.$  Then $v_n \circ {T_n}^{-1}$ converges to a Jacobi field 
on $\h^2 \times \{h\}$ which reaches a maximum at $(0,h)$. This is impossible.  This proves that, up to 
a subsequence, $p_n \to p_0 \in C_h$. 

It is now standard to deduce the existence of a limit $v:=\lim_{n \to \infty} v_n$, which is a nontrivial $\mathcal{R}_m$-invariant 
element of $\fJ^0(C_h)$. However, there are no such elements. This is a contradiction, and hence we have
proved the uniqueness. 
\end{proof}

\begin{remark}\label{re:non-degen}
Reasoning as above, we can also prove that if $A$ is $\calR_m$-invariant and sufficiently close to $C_h$, then 
there are no $\calR_m$-invariant elements of $\fJ^0(A)$. 
\end{remark}

When $m=2$, even more is true. In the following, $U$ denotes the neighborhood of $\Gamma_h$
provided by Proposition \ref{pro:U} for $m=2.$
\begin{proposition} \label{pro:non-degen}
For every neighborhood $U' \subset U \subset \fC_2$ containing $\Gamma_h$, and for any {\it even}
integer $n \neq 0$, there exists $\Gamma \in U' \cap \Omega_n$ such that the unique 
annulus $A \in \fA_2 \cap \mathcal{W}_n$ with $\Pi(A)=\Gamma$ is {\bf non-degenerate}, 
in the sense that $\fJ^0(A)=\{0\}$.
\end{proposition}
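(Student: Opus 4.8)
The plan is to reduce non--degeneracy to a finite--dimensional transversality statement at the catenoid and then to perturb within $\fC_2$. We work throughout in the $\ZZ_2$--isotypical decomposition determined by $R_2$: a function on an $\mathcal R_2$--invariant annulus is \emph{even} if it is $R_2$--invariant and \emph{odd} if $R_2$ sends it to its negative. Since the Jacobi operator commutes with $R_2$, for $A\in\fA_2$ one has $\fJ^0(A)=\fJ^0_{\mathrm{even}}(A)\oplus\fJ^0_{\mathrm{odd}}(A)$. By Remark~\ref{re:non-degen}, $\fJ^0_{\mathrm{even}}(A)=\{0\}$ once $A$ is close enough to $C_h$, so there $\fJ^0(A)=\fJ^0_{\mathrm{odd}}(A)$, and by upper semicontinuity $\dim\fJ^0(A)\le\dim\fJ^0(C_h)=2$; recall from Proposition~\ref{th:jacobin} that $\fJ^0(C_h)=\mathrm{span}\{\varphi_1,\varphi_2\}$ with $\varphi_1=\phi(r)\cos\theta$, $\varphi_2=\phi(r)\sin\theta$, which is an odd space.

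By Proposition~\ref{pro:U}, $\Pi|_{\fA_2}$ is a diffeomorphism of a neighbourhood of $C_h$ onto a neighbourhood $U$ of $\Gamma_h$ in $\fC_2$; write $A(\Gamma)$ for the unique annulus over $\Gamma\in U$. A Lyapunov--Schmidt reduction of the odd Dirichlet problem for $\widehat\calL_{A(\Gamma)}$ over the two--dimensional space $K:=\fJ^0(C_h)$, using self--adjointness of $\widehat\calL_{A(\Gamma)}$, produces a smooth map $F\colon U\to\operatorname{Sym}(2,\R)$ with $\dim\ker F(\Gamma)=\dim\fJ^0(A(\Gamma))$, and $F(\Gamma_h)=0$ because $K\subset\ker\widehat\calL_{C_h}$. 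Thus it suffices to find, inside any prescribed neighbourhood $U'$ of $\Gamma_h$ in $\fC_2$, a curve $\Gamma\in U'\cap\Omega_n$ with $F(\Gamma)$ invertible; then $A(\Gamma)\in\fA_2\cap\mathcal W_n$ is non--degenerate and $\Pi(A(\Gamma))=\Gamma$.

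Two elementary observations set this up. First, since $\Gamma\in\fC_2$ is $\pi$--periodic, $\alpha_\Gamma$ has even winding number, so $\Omega_n\cap\fC_2$ is nonempty precisely for $n$ even; for such $n$, the tangent vector $v_0(\theta)=(\tfrac1n\sin n\theta,-\tfrac1n\cos n\theta)\in T_{\Gamma_h}\fC_2$ is $\pi$--periodic with $((v_0^+)',(v_0^-)')=e^{in\theta}$ nowhere zero, so $\Gamma_h+\varepsilon v_0\in\Omega_n\cap\fC_2$ for all small $\varepsilon>0$. Since $n$--admissibility of a tangent direction is a $C^1$--open condition, the set $V_n\subset T_{\Gamma_h}\fC_2$ of directions $v$ with $\Gamma_h+\varepsilon v\in\Omega_n$ for small $\varepsilon>0$ is open and nonempty (it contains $v_0+sv$ for every $v$ and small $s$). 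Second, differentiating $F$ at $\Gamma_h$ and discarding the $K^\perp$ correction by self--adjointness gives
\[
DF|_{\Gamma_h}(v)\,(\varphi_i,\varphi_j)=\calT\big(\dot u(v),\varphi_i,\varphi_j\big),\qquad i,j\in\{1,2\},
\]
where $\dot u(v)$ is the unique even Jacobi field on $C_h$ with boundary values $v$ (unique since $\fJ^0_{\mathrm{even}}(C_h)=\{0\}$) and $\calT$ is the third variation form of area at $C_h$, which is invariant under isometries preserving $C_h$ and symmetric in its last two slots.

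The main obstacle is to prove that $DF|_{\Gamma_h}\colon T_{\Gamma_h}\fC_2\to\operatorname{Sym}(2,\R)$ is surjective. Since $\dot u(v)$ ranges over all even Jacobi fields of $C_h$ as $v$ varies, one tests against the explicit modes of \S\ref{sec:2}: the rotationally invariant ($n=0$) fields, and the $n=2$ fields $g(t)\cos 2\theta$ and $g(t)\sin 2\theta$. Invariance of $\calT$ under the rotations fixing $C_h$ and under $\theta\mapsto-\theta$ forces an $n=0$ field to contribute a scalar matrix (and the vertical--translation field, being Killing, to contribute $0$), a $\cos 2\theta$ field to contribute a diagonal matrix, and a $\sin 2\theta$ field an off--diagonal one; surjectivity then reduces to a handful of nonvanishing statements for explicit integrals over $C_h$, conveniently rewritten as boundary integrals by integration by parts using $\widehat\calL_{C_h}\varphi_i=0$ and $\varphi_i|_{\partial C_h}=0$, and evaluated with the formulas for the metric and the Jacobi operator of \S\ref{sec:2}. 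Granting surjectivity, $DF|_{\Gamma_h}(V_n)$ is a nonempty open subset of $\operatorname{Sym}(2,\R)$ (open mapping theorem), hence meets the dense set $\{\det\neq0\}$; choosing $v\in V_n$ with $M:=DF|_{\Gamma_h}(v)$ invertible and setting $\Gamma_\varepsilon:=\Gamma_h+\varepsilon v$, we obtain $\Gamma_\varepsilon\in U'\cap\Omega_n$ for small $\varepsilon>0$ with $F(\Gamma_\varepsilon)=\varepsilon M+O(\varepsilon^2)$ invertible, so $A(\Gamma_\varepsilon)$ is the desired non--degenerate annulus. $\square$
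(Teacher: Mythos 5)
Your reduction framework is reasonable, but the proof has a genuine gap exactly where the mathematical content of the proposition lies. You set up a Lyapunov--Schmidt reduction to a map $F\colon U\to\operatorname{Sym}(2,\R)$ and correctly identify that everything hinges on the surjectivity of $DF|_{\Gamma_h}$ — equivalently, on the nonvanishing of the third-variation pairings $\calT(\dot u(v),\varphi_i,\varphi_j)$ for the even $n=0$ and $n=2$ Jacobi modes on $C_h$. But you then write ``Granting surjectivity'' and never establish it: the ``handful of nonvanishing statements for explicit integrals over $C_h$'' is precisely the transversality fact that makes the proposition true, and nothing in your symmetry analysis rules out that, say, the $\cos 2\theta$ mode pairs to zero (in which case the image of $DF|_{\Gamma_h}$ would miss $\mathrm{diag}(1,-1)$ and the whole argument collapses). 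The soft parts — the $\ZZ_2$-isotypical splitting via Remark \ref{re:non-degen}, the openness and nonemptiness of the cone of $n$-admissible directions in $T_{\Gamma_h}\fC_2$ for $n$ even, the open-mapping/density step — are fine, but they carry no weight without the surjectivity.

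For comparison, the paper's proof is two lines: it constructs a family $A_\varepsilon$ with $A_0=C_h$, $\Pi(A_\varepsilon)\subset U'$, $A_\varepsilon\in\mathcal W_n$, and $A_\varepsilon$ $\calR_2$- but not $\calR_{2k}$-invariant, and then invokes Proposition \ref{pro:nondegenerate} (with Remark \ref{re:tur}) to conclude nondegeneracy for almost all $\varepsilon$. The hard work is thus outsourced to \S 10, where the analogue of your missing nonvanishing statement is proved by contradiction: assuming $\fJ^0(A_\varepsilon)\neq\{0\}$ persists, one compares the dilation-generated Jacobi fields $\calD(A_\varepsilon)$ with $\fJ^0(A_\varepsilon)$, normalizes their difference, and shows via \eqref{gerbil} that the limiting boundary values force the second Fourier coefficients $a_2^\pm, b_2^\pm$ of the generating deformation to be constrained — contradicting the freedom to choose them. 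To repair your proof you should either carry out the explicit integral computations you defer, or replace your ``granting surjectivity'' step by an appeal to (or a reproof of) Proposition \ref{pro:nondegenerate}, taking care to produce a generating deformation whose $n=2$ Fourier content is nonzero, which is where evenness of $n$ and the choice of admissible direction enter.
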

\begin{proof}
Since $n$ is even, we can construct a family $\{A_\varepsilon\}$ of minimal annuli satisfying,
for all $|\varepsilon| < \varepsilon_0$:
\begin{enumerate}[(a)]
\item $A_0=C_h$ and $\Pi(A_\varepsilon) \subset U'$;
\item $A_\varepsilon \in \mathcal{W}_n$;
\item $A_\varepsilon $ is $\mathcal{R}_2$-invariant, but {\bf not} $\mathcal{R}_{2 k}$-invariant, $k>1.$
\end{enumerate}
By Proposition  \ref{pro:nondegenerate} below (see also Remark \ref{re:tur}), we deduce that $A_\varepsilon$ is non-degenerate, for almost
all $\varepsilon >0.$ 
\end{proof}

\begin{theorem} \label{th:degree}
If $n$ is even and nonzero, the projection $$\Pi: \calW_n \longrightarrow \Omega_n$$ is a proper map of degree
$\pm 1 \; ($mod $2)$. In particular, given $\Gamma \in \Omega_n$, there exists a properly embedded minimal annulus
such that $\Pi( A)=\Gamma.$
\end{theorem}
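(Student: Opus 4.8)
The plan is to combine mod-$2$ degree theory for the proper Fredholm map $\Pi: \calW_n \to \Omega_n$ with the explicit local analysis of the catenoid family already carried out. Since $\Pi$ restricted to $\calW_n$ is proper (Corollary~\ref{co:proper}) and Fredholm of index $0$ (the index computation in Section~\ref{sec:manifold} applies verbatim to the open set $\calW_n$), the general theory of \cite{White-degree}, as quoted in the proof of Theorem~\ref{th:symmetric}, assigns a well-defined $\ZZ/2$-valued degree $\deg_2(\Pi|_{\calW_n})$. This degree is computed at any regular value $\Gamma\in\Omega_n$ by counting, mod $2$, the points of $\Pi^{-1}(\Gamma)$; it is independent of the regular value within a path-connected component of $\Omega_n$, and $\Omega_n$ is path-connected by the isotopy remark preceding the statement.

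First I would invoke the Sard--Smale theorem to guarantee that regular values are dense in $\Omega_n$. Then the main point is to exhibit one regular value whose fiber has an odd number of points; in fact I will exhibit one with a single point. Proposition~\ref{pro:non-degen} supplies, for any prescribed neighbourhood $U'$ of $\Gamma_h$ in $\fC_2$ and any even nonzero $n$, an $n$-admissible curve $\Gamma\in U'\cap\Omega_n$ such that the annulus $A\in\fA_2\cap\calW_n$ with $\Pi(A)=\Gamma$ is non-degenerate, i.e. $\fJ^0(A)=\{0\}$. Non-degeneracy is exactly the statement that $D\Pi|_A$ is an isomorphism (its kernel is $\fJ^0(A)$ and it has index $0$), so $\Gamma$ is a regular value of $\Pi|_{\calW_n}$. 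It remains to check that $\Pi^{-1}(\Gamma)$ is the single point $\{A\}$: by Proposition~\ref{pro:U} (with $m=2$) there is a neighbourhood $U$ of $\Gamma_h$ over which $\Pi|_{\fA_2}$ is a diffeomorphism onto $U$, and by choosing $U'\subset U$ we ensure uniqueness of the $\calR_2$-invariant filling. But any annulus in $\calW_n$ close enough to the catenoid family projecting into $U'\subset U$ lies in the image of that local diffeomorphism — here one uses the properness of $\Pi|_{\calW_n}$ together with the compactness analysis of Section~\ref{sec:compact} to rule out a second, far-away component of the fiber: a sequence of such extra fillings would, by Theorem~\ref{th:compact2}, subconverge to a minimal annulus with boundary $\Gamma_h$, which by \cite[Theorem~2.1]{n-s-t} must be the catenoid $C_h$, forcing it eventually into the neighbourhood $W$ and contradicting the local uniqueness. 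Hence $\#\Pi^{-1}(\Gamma)=1$ and $\deg_2(\Pi|_{\calW_n})=1$.

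Finally, once the mod-$2$ degree is known to be $1$ (equivalently $\pm 1 \ (\mathrm{mod}\ 2)$), the existence assertion is immediate: for an arbitrary $\Gamma\in\Omega_n$, if $\Gamma$ happens to be a regular value then $\Pi^{-1}(\Gamma)$ is nonempty because its cardinality is odd; if $\Gamma$ is not regular, pick a sequence of regular values $\Gamma_j\to\Gamma$ (Sard--Smale), each with nonempty fiber, choose $A_j\in\Pi^{-1}(\Gamma_j)$, and apply properness of $\Pi|_{\calW_n}$ to extract a subsequence converging to some $A_0\in\calW_n$ with $\Pi(A_0)=\Gamma$. Since elements of $\calW_n\subset\fA$ are properly embedded minimal annuli, this $A_0$ is the desired filling of $\Gamma$.

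The main obstacle is the verification that the chosen regular value $\Gamma$ near $\Gamma_h$ has a fiber consisting of exactly one point — that is, ruling out extraneous components of $\Pi^{-1}(\Gamma)$ that are not detected by the local inverse function theorem around the catenoid. This is where the compactness machinery of Section~\ref{sec:compact} (Theorems~\ref{th:annulus}, \ref{th:catenoid}, \ref{th:daniel}, and especially Theorem~\ref{th:last} and Theorem~\ref{th:compact2}) does the real work, together with the Nelli--Sa~Earp--Toubiana uniqueness theorem identifying the only annulus bounded by a pair of parallel circles. Everything else — the existence of the $\ZZ/2$ degree, its homotopy invariance over the connected set $\Omega_n$, density of regular values, and the passage from odd fiber cardinality to the existence statement — is standard degree theory for proper Fredholm maps and runs exactly as in the proof of Theorem~\ref{th:symmetric}.
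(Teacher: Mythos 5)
Your overall framework (properness from Corollary \ref{co:proper}, Fredholm index $0$, the $\ZZ$- or $\ZZ_2$-valued degree of \cite{White-degree}, Sard--Smale, and the passage from odd degree to existence) matches the paper's. But the central step --- showing the degree is odd --- has a genuine gap. You claim that the regular value $\Gamma$ furnished by Proposition \ref{pro:non-degen} has a fiber consisting of exactly one point, and you justify this by saying that any other filling close to the catenoid would contradict the local uniqueness of Proposition \ref{pro:U}. That proposition, however, only asserts that $\Pi|_{\fA_2}$ is a local diffeomorphism near $C_h$ and that the filling is unique \emph{among $\calR_2$-invariant annuli}: the neighbourhood $W$ lives in $\fA_2$, not in $\fA$. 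In the full moduli space the catenoid is degenerate ($\fJ^0(C_h)$ is two-dimensional, Proposition \ref{th:jacobin}), so the inverse function theorem gives no control there, and a non-symmetric annulus near $C_h$ bounded by the $R_2$-invariant curve $\Gamma$ cannot be excluded --- an $R_2$-invariant boundary does not force the filling to be $R_2$-invariant. Your compactness argument only forces such extra fillings to accumulate at $C_h$; it does not place them in $W \subset \fA_2$, so no contradiction results, and the conclusion $\#\Pi^{-1}(\Gamma)=1$ is unjustified. (Indeed, the theorem is stated only mod $2$ precisely because the fiber cannot be pinned down to a single point.)

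The paper closes this gap with a symmetry-pairing argument that is absent from your proposal: the rotation $R_2$ acts as a diffeomorphism of $\calW_n$ commuting with $\Pi$ over $R_2$-invariant boundary data, so every non-symmetric element $A_i$ of the fiber is paired with the distinct element $R_2(A_i)$, and the local degrees over disjoint neighbourhoods $\fU_i$ and $R_2(\fU_i)$ coincide. Hence all non-symmetric solutions contribute an even number to the degree sum, while the unique $R_2$-invariant, nondegenerate solution $A_0$ (Propositions \ref{pro:U} and \ref{pro:non-degen}) contributes $(-1)^{\mathrm{index}(A_0)}=\pm 1$, giving $\deg(\Pi|_{\calW_n}) \equiv 1 \pmod 2$. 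One still needs the compactness results to confine $\Pi^{-1}(\Gamma)$, for $\Gamma$ a nearby regular value, to the union $\bigcup_i (\fU_i \cup R_2(\fU_i))$; this is where your use of Section \ref{sec:compact} is correctly placed, but it must feed into the pairing count rather than into a (false) uniqueness claim.
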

\begin{proof} By Corollary \ref{co:proper}, $\Pi|_{\mathcal{W}_n}$ is proper. Thus $\Pi|_{\mathcal{W}_n}$ has a well-defined 
degree:
$$
\deg(\Pi|_{\mathcal{W}_n}):= \sum_{A \in \Pi^{-1}(\Gamma)} (-1)^{{\rm index}(A)},
$$
where $\Gamma$ is any regular value of $\Pi$.  Regular values are generic in $\fC$.

The rotation $R_2$ is a diffeomorphism of $\mathcal{W}_n$. Fix $\Gamma_0$ in the open neighborhood $\calU$ of 
Proposition \ref{pro:U}, for $m=2$. Then $\Gamma_0$ spans a unique $R_2$-invariant annulus $A_0$. By Proposition 
\ref{pro:non-degen} we may choose $\Gamma_0$ so that $A_0$ is non-degenerate. 

Enumerate the other (non-congruent) annuli in $\Pi^{-1}(\Gamma_0)$ by $A_1, \ldots ,A_k.$ If $\Gamma_0$ is sufficiently
close to $\sph^1 \times\{-h,h\}$, then any non-symmetric solution $A_i$, $i \in \{1, \ldots , k\}$,
creates $2$ different annuli $A_i^j:=R_2^j(A_i)$, $j=0,1,$ each in $\Pi^{-1}(\Gamma_0)$.

Let $\fU_0$ be an $R-2$-invariant neighborhood of $A_0$ in $\mathcal{W}_n$ for which $\fJ^0(A)=\{0\},$ for all $A \in \fU_0$.
Choose further neighborhoods $\fU_i$ of $A_i$ in $\mathcal{W}_n$, $i=1, \ldots,k$ which are pairwise disjoint
from each other and fro m$\fU_0$, and such that $R_2(\fU_i) \cap \fU_j =\varnothing,$ for all $i,j=1, \ldots,k$.
Now choose $\Gamma \in \Omega_n$ near $\Gamma_0$, which is a regular value of $\Pi$ (possibly $\Gamma_0=\Gamma$.) 
Our compactness results imply that 
$$
\Pi^{-1}(\Gamma) \subset \bigcup_{i=0}^k \left(\fU_i \cup R_2(\fU_i) \right).
$$
Furthermore, clearly $\deg \left(\Pi |_{\fU_i}\right)=\deg \left(\Pi |_{R_2(\fU_i)}\right),$ $i=1, \ldots, k$. Therefore
$$
\deg(\Pi)=(-1)^{{\rm index}(A)}+ 2 \cdot  \left( \sum_{i=1}^k \deg \left(\Pi |_{\fU_i}\right) \right),
$$
where $A$ is the unique element in $\Pi^{-1}(\Gamma) \cap \fU_0.$ This concludes the proof.
\end{proof}

%%%%%%%%%NON-DISJOINT CURVES
\subsection{The asymptotic Dirichlet problem for non-disjoint curves.} 
In the preceding we have considered pairs of curves $(\gamma^+,\gamma^-)$ satisfying $\gamma^+(\theta) >\gamma^-(\theta)$ 
for all $\theta$.  However, we can extend these results about $R_m$-invariant solutions to allow boundary curves 
$\Gamma=(\gamma^+,\gamma^-)$ for which $\gamma^+(\theta) \geq \gamma^-(\theta)$ for all $\theta$, but 
$\gamma^+ \not\equiv \gamma^-$. With this sense of $\gamma_+ \geq \gamma_-$, define
$$
\fC^\ast:=\{ (\gamma^+,\gamma^-) \; :  \: \; \gamma^+ \geq \gamma^- \; \mbox{and} \; \sup_{\theta \in \sph^1} 
\left( \gamma^+(\theta) - \gamma^-(\theta)\right) < \pi  \}
$$
$$
\fC^\ast_m:=\{(\gamma^+,\gamma^-) \in \fC^\ast \; : \; \mbox{$(\gamma^+,\gamma^-)$ are $R_m$-invariant}  \}
$$

%\marginpar{This picture is provisional}
\begin{figure}[htbp]
    \begin{center}
        \includegraphics[width=.40\textwidth]{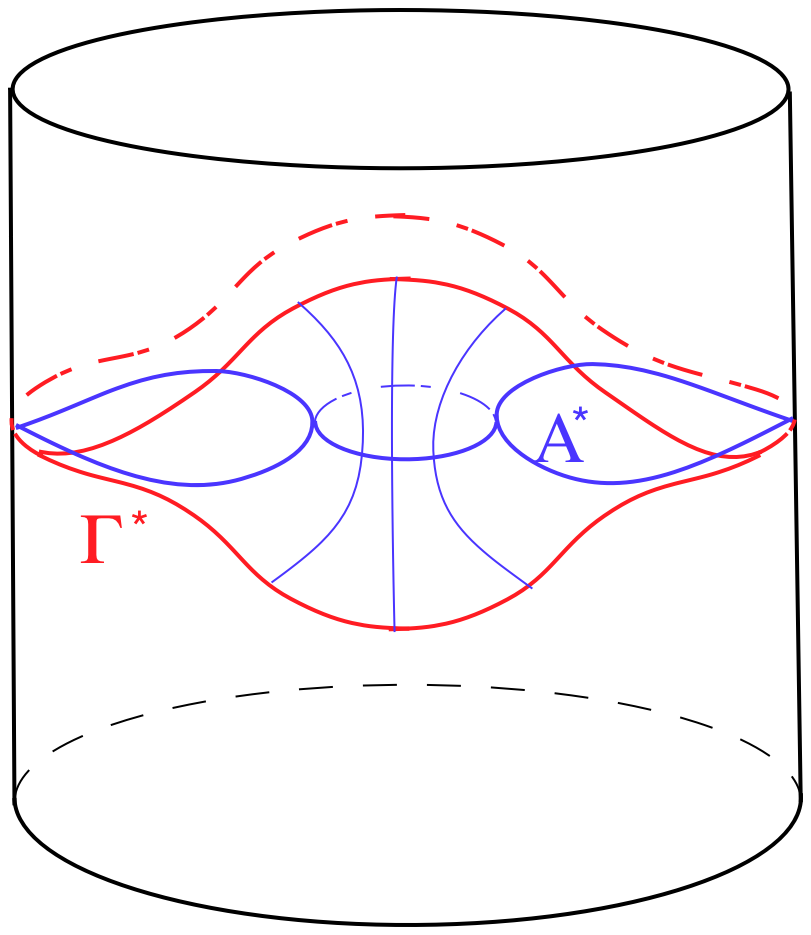}  
           \end{center}
   \caption{Annuli of this kind can be obtained as limits of our examples.} \label{fig:limit} 
\end{figure} 

\begin{theorem} \label{th:limit}
If $\Gamma^* \in \fC^*_m$, then there exists a complete, properly embedded minimal annulus $A^*$ such that $\Pi(A^*)=\Gamma^*.$
\end{theorem}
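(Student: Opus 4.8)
The plan is to produce $A^\ast$ as a limit of \emph{embedded}, $\calR_m$-invariant minimal annuli with strictly separated boundary curves, and then to invoke the compactness results of Section~\ref{sec:compact} to pass to the limit.

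First I would approximate $\Gamma^\ast=(\gamma^+,\gamma^-)$ from inside $\fC_m$. Put $\gamma_n^+:=\gamma^+ +\tfrac1n$ and $\gamma_n^-:=\gamma^-$; then $\Gamma_n:=(\gamma_n^+,\gamma_n^-)$ is $\calR_m$-invariant and of class $\calC^{2,\alpha}$, one has $\gamma_n^+>\gamma_n^-$ everywhere (since $\gamma^+\ge\gamma^-$), and $\sup_\theta(\gamma_n^+-\gamma_n^-)<\pi$ for $n$ large (since $\sup_\theta(\gamma^+-\gamma^-)<\pi$). Thus $\Gamma_n\in\fC_m$ and $\Gamma_n\to\Gamma^\ast$ in $\calC^{2,\alpha}(\sph^1)^2$. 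Applying Theorem~\ref{th:symmetric} with $G=\calR_m$ --- which leaves the catenoid $C_h$ invariant and, because $m\ge 2$, fixes no nonzero element of $\fJ^0(C_h)$ by Proposition~\ref{th:jacobin} --- each $\Gamma_n$ is minimally fillable: there is a properly embedded minimal annulus $A_n\in\fA_m$ with $\Pi(A_n)=\Gamma_n$.

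Then I would pass to the limit. Since $\Gamma_n\to\Gamma^\ast$ in $\calC^{2,\alpha}$ and the limit curves satisfy $\gamma^+\ge\gamma^-$ with $\gamma^+\not\equiv\gamma^-$, Theorem~\ref{th:last} together with Remark~\ref{re:contact-II} shows that, after passing to a subsequence, $A_n$ converges smoothly on compact sets to a properly embedded minimal annulus $A^\ast\in\fA_m$ with $\Pi(A^\ast)=\Gamma^\ast$, which is the surface asserted by the theorem. The only genuinely delicate point is this last step, and it is exactly where the two defining features of $\fC^\ast_m$ enter: the gap bound $\sup_\theta(\gamma^+-\gamma^-)<\pi$ rules out Case~III of Section~\ref{sec:compact} (so no parabolic generalized catenoid forms and the necksize stays bounded), while $\gamma^+\not\equiv\gamma^-$ prevents the limit from degenerating into a twice-covered minimal disk; Remark~\ref{re:contact-II} records precisely that Theorem~\ref{th:last} remains valid under these relaxed hypotheses, so once the $A_n$ are in hand there is nothing more to do.
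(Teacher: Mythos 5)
Your argument is correct and follows essentially the same route as the paper: approximate $\Gamma^\ast$ by curves in $\fC_m$, fill them using Theorem~\ref{th:symmetric}, and pass to the limit via Theorem~\ref{th:last} and Remark~\ref{re:contact-II}. The only difference is that you make the approximating sequence explicit ($\gamma^++\tfrac1n$), which is a harmless refinement.
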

\begin{proof}
Consider a sequence of curves $\Gamma_n \in \fC_m$ converging to $\Gamma^*.$  By Theorem \ref{th:symmetric}, for each $n$
there exists a properly embedded minimal annulus $A_n \in \fA_m$ such that $\Pi(A_n)=\Gamma_n$. 
Now use Theorem \ref{th:last} and Remark \ref{re:contact-II} to deduce that a subsequece of $A_n$ converges, 
smoothly on compact sets, to a minimal properly embedded minimal annulus $A^*$. By construction, this annulus satisfies
$\Pi(A^*)=\Gamma^*.$
\end{proof}

\section{Existence of nondegenerate minimal annuli}
The main result of the previous section is of a somewhat general nature and does not preclude,
for example, the possibility that {\it every} $A \in \fA$ is degenerate, i.e., that it is possible
that $\fJ^0(A) \neq 0$ for all $A \in \fA$.  We prove here that this is not the case.
\begin{proposition} \label{pro:nondegenerate}
There exist minimal annuli arbitrarily near to any catenoid $A_0$ which are nondegenerate. 
\end{proposition}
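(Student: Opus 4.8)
The plan is to prove nondegeneracy by a perturbation-and-transversality argument, in the spirit of White's bumpy metrics theorem adapted to this Fredholm setting. First I would set up a finite-dimensional family of boundary curves passing through $\Gamma_h = \sph^1 \times \{-h,h\}$: pick a smooth compactly-supported map $(s_1,\dots,s_N) \mapsto \Gamma(s) \in \fC^\pi$ with $\Gamma(0) = \Gamma_h$, chosen large enough that the derivatives $\partial_{s_j}\Gamma(0)$ span a complement to the $2$-dimensional obstruction space (the span of $(\cos\theta,\cos\theta)$ and $(\sin\theta,\sin\theta)$, by Remark~\ref{re:leading}) inside $\calC^{2,\alpha}(\sph^1)^2$. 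More robustly, since near any catenoid the enhanced map $\widetilde\Pi$ of Theorem~\ref{pfi0} is a local diffeomorphism, one can work with $\widetilde\Pi$ instead: for parameters $(\Gamma,a,\eta)$ near $\widetilde\Pi(C_h,0,0,0)$ there is a unique nearby annulus $A = A(\Gamma,a,\eta) \in \fA^*$, depending smoothly on the data, and the question becomes whether $A$ is nondegenerate for generic choices.

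The heart of the argument is the \emph{universal moduli space} construction. Consider the map
\[
\Phi(\Gamma,a,\eta,\phi) = \big( L_{A(\Gamma,a,\eta)}\, \phi,\ \phi|_{\del A}\big)
\]
on the bundle whose fiber over $(\Gamma,a,\eta)$ is $\calC^{2,\alpha}(\overline{A})$, landing in $\calC^{0,\alpha}(A) \times \calC^{2,\alpha}(\sph^1)^2$. By Proposition~\ref{Lmapprop}(iii) and the setup of \S\ref{sec:manifold}, for fixed parameters the linearization in $\phi$ is Fredholm of index zero, and $\fJ^0(A)$ is precisely the set of $\phi$ with $\Phi = (0,0)$. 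The claim to establish is that $(0,0)$ is a regular value of the full map $\Phi$ (parameters included), restricted to the sphere bundle $\{\|\phi\|_\infty = 1\}$ to projectivize away the scaling; equivalently, that whenever $L_A\phi = 0$ and $\phi|_{\del A} = 0$ with $\phi \not\equiv 0$, the derivative of $\Phi$ in the $(\Gamma,a,\eta)$ and $\phi$ directions together is surjective. The $\phi$-derivative already hits a closed finite-codimension subspace by the Fredholm property; one must show the parameter derivatives fill up the remaining finite-dimensional cokernel, which by the standard duality is spanned by elements $\psi \in \fJ^0(A)$ paired against $\Phi$ via $\int_A (\text{perturbation of } L)\,\phi\,\psi$ plus boundary terms. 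Differentiating $L_{A(\Gamma,a,\eta)}$ in the parameters produces a first-order operator applied to $\phi$ whose coefficients vary as we move the boundary curve; the nondegeneracy of the pairing $\int_A (\del_{s_j} L)\phi\,\psi \neq 0$ for some $j$ follows because otherwise $\phi\psi$ would satisfy an overdetermined system forcing $\phi$ or $\psi$ to vanish — here one uses unique continuation for the (elliptic) Jacobi operator together with the boundary vanishing. Once regularity of $(0,0)$ is established, the Sard–Smale theorem applied to the projection from the universal moduli space $\Phi^{-1}(0,0)$ (a Banach manifold) to the parameter space gives that for almost every $(\Gamma,a,\eta)$ the annulus $A(\Gamma,a,\eta)$ has no nonzero Jacobi field vanishing on the boundary, i.e., is nondegenerate; and these parameters may be taken arbitrarily close to those of $C_h$.

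The main obstacle I expect is verifying that the parameter variation genuinely sees the cokernel — that is, checking surjectivity of $D\Phi$ at a hypothetical degenerate point. The delicate point is that perturbing the asymptotic boundary curve $\Gamma$ changes $A$ only through the graph function near infinity (via the extension operator $e$ and the solution operator $G$ of \S\ref{sec:manifold}), and one needs the induced variation $\del_{s_j} A$ of the surface to be nontrivial in the interior, where $\fJ^0(A)$ lives; this requires tracking how the Poisson solution operator propagates boundary data inward, and then an integration-by-parts identity converting $\int_A (\del_{s_j}\calN)\,\psi$ into a boundary integral $\int_{\sph^1} (\del_{s_j}\Gamma)^\pm \, \psi_1^\pm$, which is exactly the pairing appearing in Proposition~\ref{bvalues}. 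Nonvanishing of this boundary pairing for a suitable variation is then equivalent to $\psi_1^\pm \not\equiv 0$, which holds for any nonzero $\psi \in \fJ^0(A)$ by the unique continuation / boundary Hopf lemma argument already used in the proof that $D\widetilde\Pi|_{C_h}$ is invertible. A secondary technical nuisance is the degenerate-elliptic nature of $\calL$ at $\del A$, handled as in \S\ref{sec:manifold} by passing to the conjugated nondegenerate operator $L = J\,\wh{\calL}/(1-r^2)^2$ throughout, so that all the elliptic regularity and Sard–Smale machinery applies verbatim.
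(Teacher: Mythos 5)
Your overall strategy (a universal moduli space plus Sard--Smale, in the style of White's bumpy-metric theorem) is structurally reasonable and genuinely different from the paper's proof, but the step on which everything hinges --- surjectivity of $D\Phi$ at a hypothetical degenerate point --- is not established, and the two justifications you offer are, respectively, unsupported and incorrect. The cokernel must be hit by perturbations of the Jacobi operator of the form $\partial_s L = D^2N(v,\cdot)$, where $v$ is the variation field of a family of \emph{minimal} annuli and is therefore itself a Jacobi field determined by its boundary values. Such a $v$ cannot be localized in a small ball, so the ``otherwise $\phi\psi$ solves an overdetermined system, apply unique continuation'' argument --- which works for ambient metric perturbations precisely because those can be supported anywhere --- does not apply; the nonvanishing of the trilinear form $\int_A D^2N(v,\phi)\,\psi$ over this constrained family of $v$ is essentially the content of the proposition and must be proved, not asserted. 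Your second justification is circular: the boundary pairing $\int_{\sph^1}(\partial_{s_j}\Gamma)^{\pm}\psi_1^{\pm}$ is, by Proposition \ref{bvalues}, \emph{identically zero} for every variation $\partial_{s_j}\Gamma$ that is actually realized by a deformation through minimal annuli (that orthogonality is precisely the characterization of admissible boundary data of Jacobi fields); equivalently, it arises from $\int_A (Lv)\psi$ with $Lv=0$, i.e., it reads $0=0$. The variations for which this pairing is nonzero are exactly the ones that do \emph{not} lift to deformations of $A$, so they are unavailable, and the pairing cannot detect the cokernel.

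The paper's proof avoids this second-variation computation entirely. Assuming every nearby $A_\eps$ is degenerate, it uses that $\fJ^0(C_h)$ coincides with the space $\calD(C_h)$ of dilation Jacobi fields, splits the dilation fields on $A_\eps$ into a part in $\fJ^0(A_\eps)$ and a complementary part $w_\eps$, and shows that the normalized $w_\eps$ converge to a Jacobi field on $C_h$ whose boundary values are $\sin(\theta+\beta)\,(\psi_0^{\pm})'(\theta)$, where $\psi$ generates the deformation family. Choosing the second Fourier modes of $\psi_0^{\pm}$ freely then forces first Fourier modes in the limit that violate the constraint \eqref{constraint}, a contradiction. If you wish to pursue your route you would need to carry out the third-variation (eigenvalue-derivative) computation honestly; note also that the paper needs nondegenerate examples that are $\calR_2$-invariant (Remark \ref{re:tur}), which a generic-boundary-curve argument does not directly supply unless redone in the equivariant category.
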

\begin{proof}
We have proved that $\fA$ is a smooth Banach manifold, so it makes sense to talk about smooth curves 
$A_\epsilon$ of minimal annuli which are deformations of the catenoid $A_0$. These are normal graphs over $A_0$ of 
a smooth family of functions $u_\epsilon$, and we set $\psi \in \fJ(A_0)$ to be the $\epsilon$-derivative of $u_\epsilon$ 
at $\epsilon = 0$.  Thus $\psi \in \fJ(A_0)$, so by the results in the previous section, its leading coefficients
are orthogonal to the normal derivatives of every $\phi \in \fJ^0(A_0)$. In other words,
\begin{equation}
(\psi_0^+, \psi_0^-) \perp  (\cos \theta, \cos\theta), \quad (\psi_0^+, \psi_0^-) \perp  (\sin \theta, \sin\theta).
\label{constraint}
\end{equation}
Moreover, we can construct such families of minimal annuli for any Jacobi field which satisfies \eqref{constraint}.

Now, within the space of Jacobi fields $\fJ(A_\eps)$, there are two distinguished subspaces:  the decaying
Jacobi fields $\fJ^0(A_\eps)$ and another space $\calD(A_\eps)$ consisting of Jacobi fields on $A_\eps$ 
generated by horizontal dilations in $\HH^2 \times \RR$.  We are assuming that $\fJ^0(A_\eps)$ is nontrivial,
and standard eigenvalue perturbation theory implies that $\dim \fJ^0(A_\eps) \leq 2$. Furthermore, we
also have that $\calD(A_\eps)$ converges to $\fJ^0(A_0)$ since the latter is also generated by dilations.
Thus when $\eps \neq 0$, the two spaces are quite close to one another.  We are interested in the way in
which one approaches the other.  

Let us first assume that $\dim \fJ^0(A_\eps) = 2$ for all small $\eps$. We handle the other case later. 
Choose a codimension two subspace $W_\eps \subset \fJ(A_\eps)$ which varies smoothly in $\eps$ and which 
is always complementary to $\fJ^0(A_\eps)$ (for example, we can take an orthogonal complement with respect
to some weighted Hilbert structure). Choose any smooth family $\phi_\eps \in \calD(A_\eps)$, and
decompose it into components in each of these subspaces, $\phi_\eps = \psi_\eps + w_\eps$,
where $\psi_\eps \in \fJ^0(A_\eps)$ and $w_\eps \in W_\eps$.  Note that $w_\eps$ and $\phi_\eps$ agree 
at $\del A_\eps$. 

We now observe that the map which assigns to any $w \in W_\eps$ its leading coefficients, i.e.,
boundary values, $(w_0^+, w_0^-)$, has image equal to a codimension $2$ subspace of 
$(\calC^{2,\alpha}(\sph^1))^2$ which depends smoothly on $\eps$ and which equals the
subspace given by the orthogonality conditions \eqref{constraint} when $\eps = 0$.
This map is, by definition, injective and also surjective. Hence by the open mapping theorem,
the norm of any $w \in W_\eps$ is equivalent to the norm of its boundary values.  This means
that the rescaled sequence of functions
\[
\tilde{w}_\eps =  w_\eps/ \sup_{\del A_\eps} |w_\eps|
\]
is uniformly bounded, independently of $\eps$, and always attains the value $1$ at some point of the 
boundary.  Thus it has a well defined limit, $\tilde{w}$, which is therefore an element of $\fJ(A_0)$.  Its 
boundary values must satisfy \eqref{constraint}. 

Now let us compute the boundary values of the original Jacobi field $\phi_\eps$. Parametrizing
the boundary curves of $A_\eps$ by the functions $f_\eps^\pm(\theta)$, suppose that $D_\lambda$
is a family of horizontal dilations and let $A_{\eps,\lambda} = D_\lambda(A_\eps)$, corresponding
in turn to a family of normal graphs over $A_\eps$ by a family of functions $u_{\eps, \lambda}$. 
We calculate readily that at $r = 1$ and at the upper and lower halves, up to an overall constant factor, 
\[
\left. \frac{d\, }{d \lambda} u_{\epsilon, \lambda}(1,\theta)  \right|_{\lambda=0}  = 
(\sin (\theta + \beta) (f_\eps^+)'(\theta),  \sin (\theta + \beta) (f_\eps^-)'(\theta) ),
\]
for some $\beta$ depending on the family of dilations.  This is equal to
the pair boundary values of the Jacobi field $w_\eps$, and hence also of the Jacobi field $\tilde{w}_\eps$. 

In the final limit, we are dividing by the supremum of $\sin(\theta+B) (f_\eps^\pm)'$ and letting
$\eps \to 0$. However, this is equivalent to taking the derivative in $\epsilon$ of this family.
By the earlier definition, the derivative of $f_\eps^\pm$ with respect to $\eps$ equals the Jacobi field
$\psi$ on $A_0$.  This proves finally that the boundary values of the limiting Jacobi field $\tilde{w}$
above must be
\begin{equation}
(\tilde w_0^+, \tilde w_0^-) = ( \sin(\theta + \beta) (\psi_0^+)'(\theta), \sin(\theta + \beta) (\psi_0^-)'(\theta)).
\label{gerbil}
\end{equation}

Now let us choose the Jacobi field $\psi$ generating the family $A_\eps$. Expanding boundary values into 
their Fourier series
\[
\psi_0^\pm(\theta) =  \sum_{k = 0}^\infty (a_k^\pm \cos k\theta + b_k^\pm \sin k \theta)
\]
then the constraint \eqref{constraint} is equivalent to two conditions $a_1^+ + a_1^- = b_{1}^+ + b_{1}^- = 0$,  
%%\marginpar{We probably
%should put in the specific formulas for the $B's$ in terms of the $a_2's$ and $b_2's$.}
and apart from these, all the other Fourier coefficients can be chosen arbitrarily (sufficiently small,
of course, and so that the resulting function has the correct regularity). However, evaluating the expressions 
on the right in \eqref{gerbil} yields a function which contains 
\[
\underbrace{(-a_2^\pm \cos \beta+b_2^\pm \sin \beta)}_{B_1^\pm} \cos \theta + \underbrace{(-b_2^\pm \cos \beta-a_2^\pm \sin \beta)}_{B_2^\pm} \sin \theta 
\]
for any choice of $B_1^\pm$, $B_2^\pm$, since these coefficients depend only on the coefficients $a_2^\pm, b_2^\pm$.  
However, this is a contradiction, since these are the leading coefficients of the element of $\tilde w \in \fJ$. 
This proves that it is impossible that $\dim \fJ^0(A_\eps) = 2$ for all $\eps$. 

We are reduced to the case where $\dim \fJ^0(A_\eps) = 1$ for almost every $\eps$.   In the preceding part 
of the proof, it is not hard to argue slightly differently to show that in fact there cannot even exist a
sequence $\eps_j \searrow 0$ for which $\dim \fJ^0(A_\eps) = 2$ and with appropriate Fourier coefficients nonzero,
so we can assume that the dimension is $1$ for all $\eps \neq 0$.  In this case we can actually still proceed almost 
as before.  The difference is that the family of dilations $D_\lambda$ generating $A_{\eps,\lambda}$ is no longer
arbitrary.   Instead,  observe that the eigenspace $\fJ^0(A_\eps)$ (at least along a sequence
$\eps_j \searrow 0$) must have a limit $E_0$, which is a one-dimensional subspace of $\fJ^0(A_0)$
Since $\calD(A_\eps) \to \fJ^0(A_0)$ smoothly, we can choose a subspace $E_\eps \subset \calD(A_\eps)$
which converges smoothly to $E_0$.  It is now clear that the difference vector $w_\eps$ defined 
in the earlier step of the proof may still be chosen so that its normalization has a limit as $\eps \to 0$.
The remainder of the argument proceeds exactly as before. 
\end{proof}
\begin{remark} \label{re:tur}
It is not hard to see from this argument that we can produce families of solutions $A_\epsilon$ converging
to $A_0$ which are nondegenerate and also invariant with respect to rotation by $\pi$ around the axis of $A_0$.
These are important  in one of our global existence theorems 
(Proposition \ref{pro:non-degen} and Theorem \ref{th:degree}).
\end{remark}

\appendix
\section{Two tall rectangles are not area-minimizing} \label{sec:tall}

The purpose of this appendix is to prove that a couple of tall rectangles in $\h^2 \times \R$ of the same height with
the vertical segments in common are not an area-minimizing surface. Recall that for a complete surface {\em 
area-minimizing} means that any compact piece minimizes the area among all the surfaces with the same boundary.
To prove this, we are going to see that, in general, a couple of horizontal disks connected by vertical 
totally geodesic planes near
infinity
(see Figure \ref{fig:non}) are better competitors for the area functional.
\begin{figure}[htbp]
\begin{center}
\includegraphics[width=.6\textwidth, height=.4\textheight]{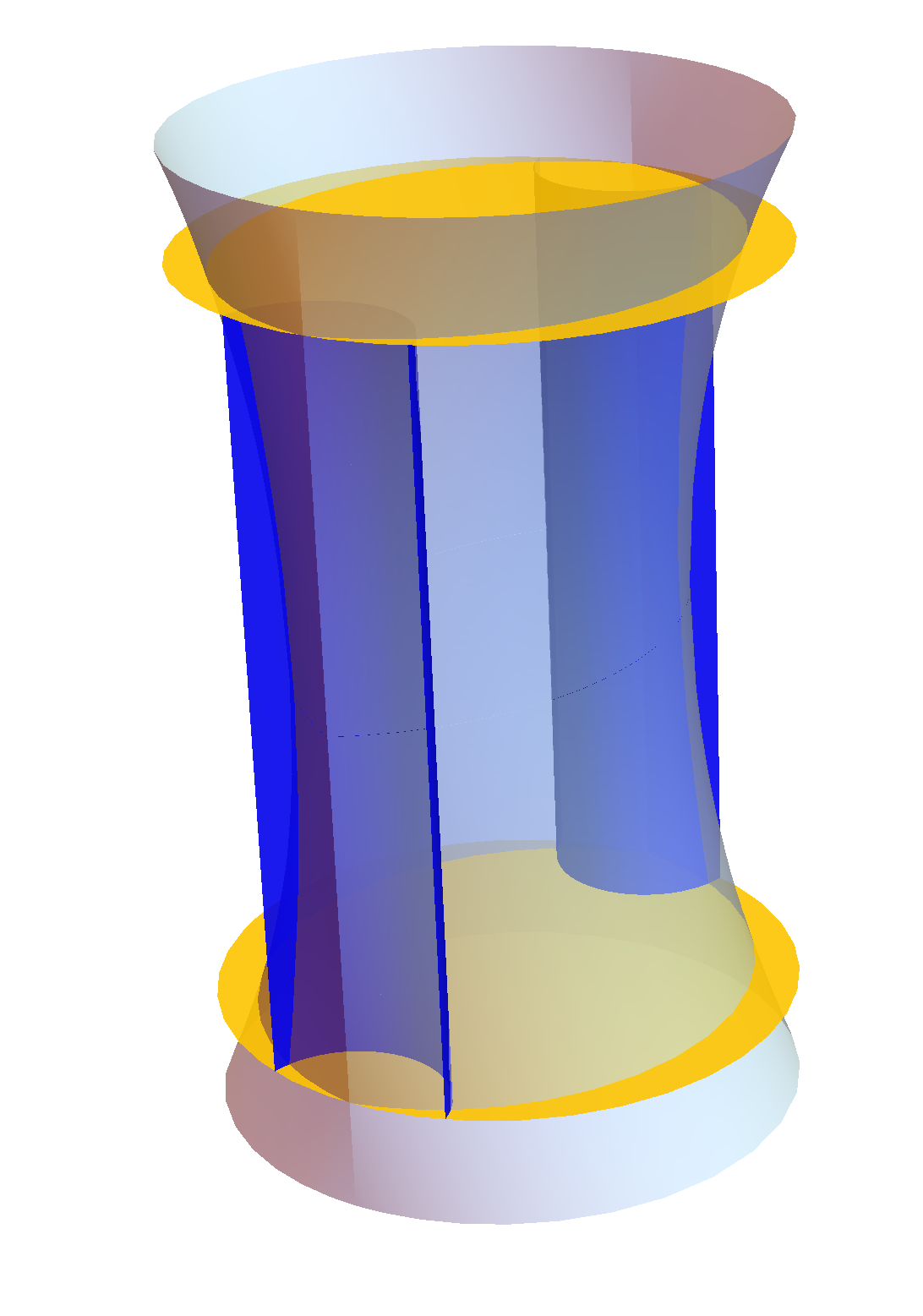}
\caption{The tall rectangles, the horizontal disks and the vertical totally geodesic planes.}
\label{fig:non}
\end{center}
\end{figure}

The simplest analytic representation of these surfaces is obtained using the upper half-space model for $\h^2$. Thus we use
standard coordinates $(x,y) \in \h^2$ with $y > 0$, and hence coordinates $(x,y,t)$ on $\h^2 \times \RR$.  We shall place
the vertical segments of the tall rectangles over the points $(0,0)$ and infinity.  For a given $r >0$ we denote $\Gamma_r$
the geodesic in $\h^2$ with end points $(r,0)$ and $(-r,0)$. Let $P_r:=\Gamma_r \times \R$ be the 
corresponding totally geodesic plane. For simplicity we will write $\Gamma=\Gamma_1$ and $P=P_1$.
Finally, we denote $\Lambda$ as the geodesic given by $\{x=0, y>0\}$. The point in the intersection $\Gamma \cap \Lambda$
is denoted by $o=(0,1)$.

On the geodesic $\Gamma$ we consider $\rho$ the signed distance to the point $o$. Then the induced metric in $P$
is given by $d \rho^2 +dt^2.$ We write a curve in $P$ in coordinates $(\rho,t)$ of the form 
$$\rho \mapsto (\rho,\lambda(\rho)).$$
If we impose that the surface generated by horizontal translations along the geodesic $\Lambda$ of the above curve 
is minimal, then we get (see \cite[page 316]{st1}) that the function $\lambda(\rho)$ satisfies the equation
\begin{equation} \label{eq:l2}
\lambda'(\rho)= \frac{d}{\sqrt{\cosh^2 \rho -d^2}}, \quad d\geq 0.
\end{equation}
If $d>1$ we have 
\begin{equation} \label{eq:l3}
\lambda(\rho)= \int_{\cosh^{-1}(d)}^\rho\frac{d}{\sqrt{\cosh^2 u -d^2}} du.
\end{equation}
We parametrize $\Gamma$ as $\theta \mapsto {\rm e}^{{\rm i}\theta},$ $\theta \in (0,\pi).$ Then it is straightforward
to check that $$\rho= \log \left( \tan \frac \theta2\right).$$
Combining the above expression and \eqref{eq:l2} then we obtain:
\begin{equation} \label{eq:l4}
\lambda'(\theta)= \frac{d \csc (\theta)}{\sqrt{\csc^2(\theta) - d^2}}, 
\end{equation}
and 
$$\lambda(\theta)= \int_{\theta}^{\theta_0} \frac{d \csc( t)}{\sqrt{\csc^2(t) -d^2}} dt, \quad\theta_0=\csc^{-1}(d). $$
So the upper half part of the tall rectangle is given by the parametrization
\begin{equation} \label{eq:parametrization}
\phi(r,\theta)=(r \cos(\theta), r \sin(\theta), \lambda(\theta)), \; \;  r \in (0,\infty), \; \; \theta \in (0,\theta_0).
\end{equation}
The lower half part is parametrized by $(r,\theta) \mapsto (r \cos(\theta), r \sin(\theta),- \lambda(\theta)).$
Let $R_1$ be the complete tall rectangle described above.

The height of the tall rectangle $R_1$ is given by $2 \lambda(0) >\pi$. Moreover, notice that the intersection of the tall rectangle with
the slice $t=\lambda(\theta)$ consists of the straight line $\{ r {\rm e}^{{\rm i} \theta} \; : \; r>0\}.$

Let $R_2$ be the tall rectangle obtained from $R_1$ applying the symmetry $(x,y,t) \mapsto (-x,y,t).$

Fix $r \in (0,1)$ and $\theta \in (0,\theta_0)$. We consider the two horizontal slices, $S_{\lambda(\theta)}$ and $S_{-\lambda(\theta)}$, at height $\lambda(\theta)$ and $- \lambda(\theta)$, respectively,  and the two vertical planes $P_r$ and $P_{1/r}.$ We cut the tall rectangles $R_1 \cup R_2$ with the union 
$$S_{\lambda(\theta)} \cup S_{-\lambda(\theta)} \cup P_r \cup P_{1/r}.$$
The resulting curve is a compact curve in $R_1 \cup R_2$ with two connected components: $\beta_i \subset R_i$, $i=1,2.$
These 
two curves are symmetric with respect to the plane $x=0$. Let $\Sigma_i$ the disk in $R_i$ spanned by $\beta_i$, $i=1,2$. Observe that $\Sigma_i$ is a symmetric bi-graph over the annular sector $\Delta_i$ (see Figure \ref{delta}.)
Moreover we will denote:

\begin{itemize}
\item $D_1=\{ (u \cdot {\rm e}^{{\rm i} v} ,  \lambda(\theta)) \; : \; (u,v) \in (r,1/r) \times (\theta,\pi-\theta)\}.$
\item $D_2=\{ (u \cdot {\rm e}^{{\rm i} v} , - \lambda(\theta)) \; : \; (u,v) \in (r,1/r) \times (\theta,\pi-\theta)\}.$
\item $B_1$ the region of $P_r$ bounded by $\beta_1 \cup\beta_2$ and the curves $D_1 \cap P_r$ and
$D_2 \cap P_r$.
\item $B_2$ the region of $P_{1/r}$ bounded by $\beta_1 \cup\beta_2$ and the curves $D_1 \cap P_{1/r}$ and
$D_2 \cap P_{1/r}$.
\end{itemize}

\begin{figure}[htbp]
\begin{center}
\includegraphics[height=.25\textheight]{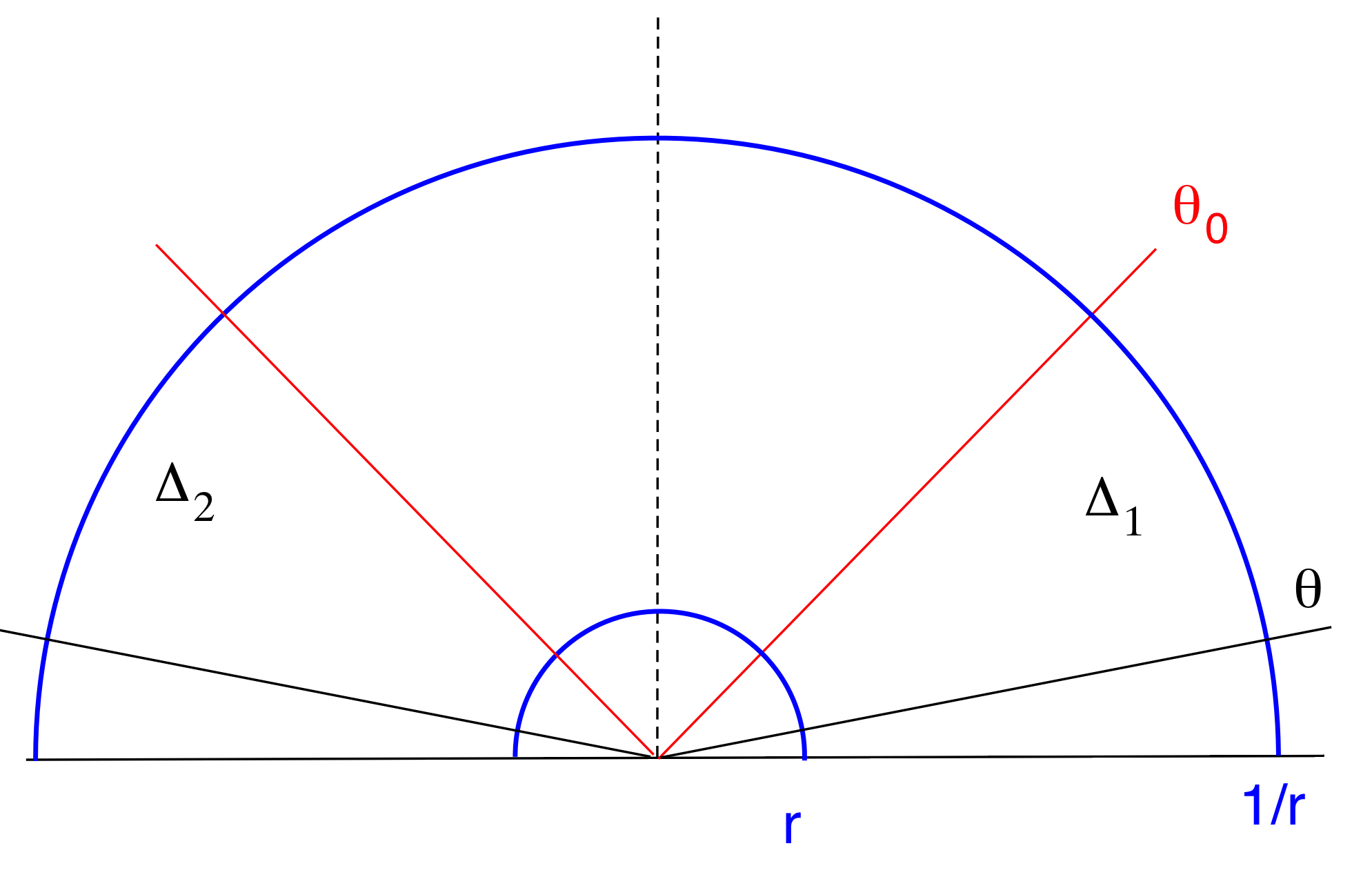}
\caption{The annular sectors $\Delta_1$ and $\Delta_2.$}
\label{delta}
\end{center}
\end{figure}

We want to prove that 
\begin{lemma}\label{lem:areamin}
The area of the region $D_1 \cup D_2 \cup B_1 \cup B_2$ is smaller than the area of $\Sigma_1 \cup \Sigma_2$.
In particular, $R_1 \cup R_2$ is not area-minimizing.
\end{lemma}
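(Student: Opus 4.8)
The plan is to compute the (hyperbolic) areas of all six pieces explicitly in a well-chosen coordinate system and then compare, exploiting the freedom in the two parameters $r\in(0,1)$ and $\theta\in(0,\theta_0)$. First I would introduce coordinates adapted to the vertical totally geodesic planes: writing a point of $\h^2$ as $x=r'\cos\theta'$, $y=r'\sin\theta'$ in the upper half-plane model and setting $s=\log r'$, the hyperbolic metric becomes conformally flat, $g_{\h^2}=\sin^{-2}\theta'\,(ds^{2}+d\theta'^{2})$; the geodesics $\Gamma_\rho$ are the lines $\{s=\mathrm{const}\}$ and the curves equidistant from $\Lambda$ are the lines $\{\theta'=\mathrm{const}\}$, so that, with $\sigma_{0}:=-\log r>0$, one has $P_{r}=\{s=-\sigma_{0}\}$ and $P_{1/r}=\{s=\sigma_{0}\}$. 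In these coordinates the upper sheet of $R_{1}$ is the graph $t=\lambda(\theta')$ over $\{\theta'\in(0,\theta_{0}),\ s\in\R\}$, and from \eqref{eq:l4} one gets $\lambda'(\theta')=d\,(1-d^{2}\sin^{2}\theta')^{-1/2}$, hence $1+\sin^{2}\theta'\,\lambda'(\theta')^{2}=(1-d^{2}\sin^{2}\theta')^{-1}$.

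Next I would use the isometries $t\mapsto-t$, $z\mapsto 1/\bar z$ (this last fixes $\theta'$ and $t$ and interchanges $P_{r}$ with $P_{1/r}$) and $x\mapsto-x$ (interchanging $R_{1}$ and $R_{2}$) to reduce the claimed inequality to $\mathrm{Area}(D_{1})+\mathrm{Area}(B_{1})<\mathrm{Area}(\Sigma_{1})$. A direct integration with the metric above, setting $J(\theta):=\int_{\theta}^{\theta_{0}}\frac{d\theta'}{\sin^{2}\theta'\sqrt{1-d^{2}\sin^{2}\theta'}}$, then gives
\begin{align*}
\mathrm{Area}(\Sigma_{1})&=4\sigma_{0}\,J(\theta),\\
\mathrm{Area}(D_{1})&=4\sigma_{0}\cot\theta,\\
\mathrm{Area}(B_{1})&=4\int_{\theta}^{\theta_{0}}\frac{\lambda(\theta)-\lambda(\theta')}{\sin\theta'}\,d\theta'+2\lambda(\theta)\int_{\theta_{0}}^{\pi-\theta_{0}}\frac{d\theta'}{\sin\theta'}.
\end{align*}
The last formula comes from the observation that, in the intrinsic coordinates $(\theta',t)$ on $P_{r}$ (with metric $\sin^{-2}\theta'\,d\theta'^{2}+dt^{2}$), the region $B_{1}$ does not depend on $r$: it is the rectangle $[\theta,\pi-\theta]\times[-\lambda(\theta),\lambda(\theta)]$ with the two bites $\{|t|<\lambda(\theta')\}$ over $\theta'\in[\theta,\theta_{0}]$ and $\{|t|<\lambda(\pi-\theta')\}$ over $\theta'\in[\pi-\theta_{0},\pi-\theta]$ removed. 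The key structural point is that $\mathrm{Area}(\Sigma_{1})$ and $\mathrm{Area}(D_{1})$ are linear in $\sigma_{0}$ while $\mathrm{Area}(B_{1})$ is independent of it.

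Hence the inequality to be proved is $4\sigma_{0}\bigl(J(\theta)-\cot\theta\bigr)>\mathrm{Area}(B_{1})$, whose right-hand side is a finite positive constant independent of $\sigma_{0}$. Since the non-minimality of $R_{1}\cup R_{2}$ only requires the inequality for \emph{some} admissible $r$ and $\theta$, it is enough to exhibit one angle $\theta\in(0,\theta_{0})$ with $J(\theta)>\cot\theta$ and then let $r\to0$. Writing $\cot\theta=\int_{\theta}^{\pi/2}\csc^{2}\theta'\,d\theta'$, using the elementary identity $\csc^{2}\theta'\bigl((1-d^{2}\sin^{2}\theta')^{-1/2}-1\bigr)=d^{2}\bigl[\sqrt{1-d^{2}\sin^{2}\theta'}\,(1+\sqrt{1-d^{2}\sin^{2}\theta'})\bigr]^{-1}$, and the substitution $\sin\theta'=\tfrac1d\sin\psi$, one finds
\[
\lim_{\theta\to0^{+}}\bigl(J(\theta)-\cot\theta\bigr)=d^{2}\int_{0}^{\pi/2}\frac{d\psi}{(1+\cos\psi)\sqrt{d^{2}-\sin^{2}\psi}}-\sqrt{d^{2}-1},
\]
where we also used $\cot\theta_{0}=\sqrt{d^{2}-1}$. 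Finally, since $\sqrt{d^{2}-\sin^{2}\psi}\le d$ and $\int_{0}^{\pi/2}(1+\cos\psi)^{-1}\,d\psi=1$, the first term is at least $d>\sqrt{d^{2}-1}$, so the limit is strictly positive; therefore $J(\theta)>\cot\theta$ for all sufficiently small $\theta$, and choosing such a $\theta$ together with a large enough $\sigma_{0}$ (equivalently small $r$) completes the argument and shows in particular that $R_{1}\cup R_{2}$ is not area-minimizing.

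The step I expect to be the main obstacle is precisely the comparison $J(\theta)>\cot\theta$ of the two $\sigma_{0}$-linear coefficients: both quantities blow up as $\theta\to0$, and the inequality in fact \emph{fails} for $\theta$ close to $\theta_{0}$ (there $\mathrm{Area}(\Sigma_1)\to0$ while $\mathrm{Area}(D_1)$ stays bounded below), so one must genuinely use the freedom in $\theta$; the positivity of the limit above rests on the clean bound $\sqrt{d^{2}-\sin^{2}\psi}\le d$ after the trigonometric substitution, without which the two divergent integrals resist a direct comparison.
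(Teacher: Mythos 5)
Your proof is correct, and it reaches the same inequality as the paper but organizes the comparison differently. The paper also reduces by symmetry to $\mathrm{Area}(\Sigma_1)>\mathrm{Area}(D_1)+\mathrm{Area}(B_1)$ and computes $\mathrm{Area}(\Sigma_1)$ and $\mathrm{Area}(D_1)$ explicitly (in terms of the elliptic integrals $E$ and $F$; its bracket is exactly $2J(\theta)$ in your notation, so the two computations agree), but it only \emph{bounds} $\mathrm{Area}(B_1)$ by the full rectangle of height $2\lambda(0)$, whose area grows like $\log\cot(\theta/2)$ as $\theta\to0$. Because of this, the paper must couple the two parameters, taking $r(\theta)=\tan^{n}(\theta/2)$ with $n$ large depending on $d$, and then argue via the ratio $f(\theta)$, showing $f(\theta)\to1$ and $f'(\theta)\to -\lambda(0)/n+F(\theta_0|\csc^2\theta_0)-E(\theta_0|\csc^2\theta_0)>0$ as $\theta\to0$. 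Your key simplification is the observation, transparent in the conformal coordinates $(s,\theta')$, that $\mathrm{Area}(\Sigma_1)=4\sigma_0 J(\theta)$ and $\mathrm{Area}(D_1)=4\sigma_0\cot\theta$ are linear in $\sigma_0=-\log r$ while $\mathrm{Area}(B_1)$ is finite and $r$-independent (since $P_r$ with its intrinsic metric, and the defining inequalities of $B_1$ in $(\theta',t)$, do not involve $r$); this decouples the parameters, so one fixed $\theta$ with $J(\theta)>\cot\theta$ plus $r\to0$ suffices. Your positivity argument is the same quantity in disguise: $\lim_{\theta\to0^+}\bigl(J(\theta)-\cot\theta\bigr)=F(\theta_0|d^2)-E(\theta_0|d^2)$, which the paper asserts is positive, while you give the elementary lower bound $d-\sqrt{d^2-1}>0$ via the substitution $\sin\theta'=\tfrac1d\sin\psi$ and $\sqrt{d^2-\sin^2\psi}\le d$. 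What your route buys is a shorter, more elementary proof with no elliptic-function bookkeeping and no coupling $r=r(\theta)$; what the paper's route buys is that it never needs the exact area of $B_1$, only a crude upper bound. One last point in your favor: your remark that the inequality fails for $\theta$ near $\theta_0$ is accurate and consistent with the paper, whose own proof (like yours) establishes the lemma only for a suitable choice of $r$ and $\theta$, which is all that the non-minimizing conclusion requires.
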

\begin{proof}By the symmety of the surface it is enough to prove that for some $\theta$ and $r$ 
$$\frac{Area(\Sigma_1)}{Area( D_1)+Area( B_1 )}>1 \;.$$
From \eqref{eq:parametrization} it is not difficult to see that
\begin{eqnarray*}
A_1(r,\theta)&:=&Area(\Sigma_1)= -2 \log(r)\left(\cot(\theta) \sqrt{4-2 d^2+2 d^2\cos(2 \theta)} + 2 E(\theta | d^2)-2F(\theta | d^2)\right. \\
&-& \left. \cot(\theta_0) \sqrt{4-2 d^2+2 d^2\cos(2 \theta_0)}-2 E(\theta_0|d^2)+2F(\theta_0 |d^2)\right) \; ,
\end{eqnarray*}
where $E$ and $F$ are the classical elliptic functions given by
$$E(\Phi|m)=\int_0^\Phi \sqrt{1-m \sin^2(s)} ds \quad , \quad F(\Phi|m)=\int_0^\Phi \frac{1}{\sqrt{1-m \sin^2(s)}} ds \; .  $$
Similarly, we have 
$$A_2(r,\theta):=Area(D_1)=- 4 \log(r) \cot(\theta) \; .$$
Finally, observe that $Area(B_1)$ is less than the area of the region in $P_r$ given by
$$\{ (r \cdot {\rm e}^{{\rm i} v} , t) \; : \; v \in  (\theta,\pi-\theta), t \in (-\lambda(0),\lambda(0))\}.$$
So, 
$$Area(B_1) < A_3(\theta):=2 \lambda(0) \log\left(\cot^2 \left(\frac{\theta}{2}\right)\right) \; .$$
Now we take $r(\theta)=\tan^n \left(\frac{\theta}{2}\right)$, where $n \in \n$ will be taken big enough in terms of $d$.
Notice that for this choice of $r$ we have 
$$\frac{Area(\Sigma_1)}{Area( D_1)+Area( B_1 )}> f(\theta):=\frac{A_1(r(\theta),\theta)}{A_2(r(\theta),\theta)+A_3(\theta)} \;.$$
It is straitforward that 
$$\lim_{\theta \to 0} f(\theta)=1 \quad ,\quad \lim_{\theta \to 0} f'(\theta)=-\frac{\lambda(0)}{n}+F\left(\theta_0\left|\csc ^2(\theta_0)\right.\right)-E\left(\theta_0\left|\csc ^2(\theta_0)\right.\right).$$
As $F\left(\theta_0\left|\csc^2(\theta_0)\right.\right)-E\left(\theta_0\left|\csc ^2(\theta_0)\right.\right)>0$, 
 then $$\lim_{\theta \to 0} f'(\theta)>0$$
 for $n$ big enough.
 So, we have that for $\theta$ small enough $f(\theta)>1$, which proves our lemma. \end{proof}

\end{document}